\DeclareMathAlphabet{\mathpzc}{OT1}{pzc}{m}{it}
\newtheorem{theorem}{Theorem}[section]
\newtheorem{theorem-definition}[theorem]{Theorem-Definition}
\newtheorem{lemma-definition}[theorem]{Lemma-Definition}
\newtheorem{definition-prop}[theorem]{Proposition-Definition}
\newtheorem{prop}[theorem]{Proposition}
\newtheorem{lemma}[theorem]{Lemma}
\newtheorem{cor}[theorem]{Corollary}
\newtheorem{definition}[theorem]{Definition}
\newtheorem{conjecture}[theorem]{Conjecture}
\newtheorem{example}[theorem]{Example}
\newenvironment{remark}{\vspace{4pt}\noindent\textbf{Remark.}}{\qed\vspace{4pt}}
\newcommand{\LL}{\ensuremath{\mathbb{L}}}
\newcommand{\N}{\ensuremath{\mathbb{N}}}
\newcommand{\Z}{\ensuremath{\mathbb{Z}}}
\newcommand{\Q}{\ensuremath{\mathbb{Q}}}
\newcommand{\C}{\ensuremath{\mathbb{C}}}
\newcommand{\A}{\ensuremath{\mathbb{A}}}
\newcommand{\X}{\ensuremath{\mathscr{X}}}
\newcommand{\mY}{\ensuremath{\mathfrak{Y}}}
\renewcommand{\C}{\ensuremath{\mathbb{C}}}
\renewcommand{\A}{\ensuremath{\mathbb{A}}}
\renewcommand{\X}{\ensuremath{\mathfrak{X}}}
\renewcommand{\mY}{\ensuremath{\mathfrak{Y}}}
\newcommand{\mZ}{\ensuremath{\mathfrak{Z}}}
\newcommand{\mV}{\ensuremath{\mathfrak{V}}}
\newcommand{\Spec}{\ensuremath{\mathrm{Spec}\,}}
\newcommand{\Spf}{\ensuremath{\mathrm{Spf}\,}}
\numberwithin{equation}{section}
\begin{document}
\title[A trace formula for varieties]{A trace formula for varieties over a discretely valued field}
\author[Johannes Nicaise]{Johannes Nicaise}
\address{Universit\'e Lille 1\\
Laboratoire Painlev\'e, CNRS - UMR 8524\\ Cit\'e Scientifique\\59655 Villeneuve d'Ascq C\'edex \\
France} \email{johannes.nicaise@math.univ-lille1.fr}
\begin{abstract}
We study the motivic Serre invariant of a smoothly bounded
algebraic or rigid variety $X$ over a complete discretely valued
field $K$ with perfect residue field $k$. If $K$ has
characteristic zero, we extend the definition to arbitrary
$K$-varieties using Bittner's presentation of the Grothendieck
ring and a process of N\'eron smoothening of pairs of varieties.

The motivic Serre invariant can be considered as a measure for the
set of unramified points on $X$. Under certain tameness
conditions, it admits a cohomological interpretation by means of a
trace formula. In the curve case, we use T. Saito's geometric
criterion for cohomological tameness to obtain more detailed
results. We discuss some applications to Weil-Ch\^atelet groups,
Chow motives, and the structure of the Grothendieck ring of
varieties.

MSC 2000: 11S15, 14G05, 14G22
\end{abstract}
 \maketitle
\section{Introduction}
Let $R$ be a complete discrete valuation ring, with quotient field
$K$ and perfect residue field $k$. If $X$ is a smoothly bounded
rigid $K$-variety (e.g. smooth, quasi-compact and separated), then
one can associate to $X$ its so-called motivic Serre invariant
$S(X)$. If $\X/R$ is a formal weak N\'eron model for $\X$, then
$S(X)$ is the class of the special fiber $\X_s$ in the quotient of
the Grothendieck ring of $k$-varieties modulo the ideal generated
by the class of the torus $\mathbb{G}_{m,k}$. Of course, one has
to show that this value only depends on $X$ and not on the choice
of a weak N\'eron model. If $X$ is smooth and quasi-compact, this
was proven in \cite{motrigid} using the theory of motivic
integration on formal $R$-schemes \cite{sebag1}, and the general
case can be deduced from this result.

By definition, the generic fiber $\X_\eta$ of $\X$ is an open
rigid subvariety of $X$ which contains all the unramified points
on $X$. Since $\X$ is smooth over $R$, its special fiber is a good
measure for the set of unramified points on $\X_\eta$. Therefore,
one can consider the motivic Serre invariant $S(X)$ as a measure
for the set of unramified points on $X$. It is natural to ask if
this invariant admits a cohomological interpretation in terms of
the Galois action on the \'etale cohomology of $X$. This is indeed
the case: under certain finiteness and tameness conditions on $X$,
a trace formula expresses the Euler characteristic of $S(X)$ in
terms of the trace of a monodromy operator on the tame $\ell$-adic
cohomology of $X$ \cite[6.4]{Ni-trace}.

The main themes of the present article are the following:
\begin{enumerate}
\item \label{theme-error} study of the error term in the trace
formula in the non-tame case, \item \label{theme-gen}
generalization of the definition of the motivic Serre invariant to
arbitrary algebraic $K$-varieties, if $K$ has characteristic zero,
\item \label{theme-groth} realization morphisms and structure of
the Grothendieck ring of varieties.
\end{enumerate}
These themes are tightly interwoven.

In Section \ref{sec-groth} we recall the definition of the
Grothendieck ring $K_0(Var_k)$ of varieties over an arbitrary
field $k$, and its localization $\mathcal{M}_k$ w.r.t. the class
$\LL$ of the affine line. This ring is still poorly understood,
and one of the main problems is to decide when two $k$-varieties
$X$ and $Y$ define the same class in $K_0(Var_k)$ or
$\mathcal{M}_k$. To this end, it is important to have some
``computable'' realization morphisms from $K_0(Var_k)$ to more
concrete rings. If $k$ has characteristic zero, the main tools are
the theorems of Bittner (Theorem \ref{bittner}) and Larsen and
Lunts (Theorem \ref{stabir}). Both use resolution of singularities
and weak factorization, which explains the restriction on the
characteristic. These theorems imply the existence of some
fundamental realization morphisms: stably birational classes
(Theorem \ref{stabir}), Albanese (Corollary \ref{albanese}), Chow
motives (Theorem \ref{chow}). Larsen and Lunts' Theorem gives a
beautiful description of the Grothendieck ring modulo the ideal
generated by the class of the affine line, but it tells nothing
about the localized Grothendieck ring $\mathcal{M}_k$ (the same
holds for the Albanese realization).

In positive characteristic, we are considerably less equiped. In
Proposition \ref{spread} we formulate the classical technique of
``spreading out'' on the level of Grothendieck rings. This tool
allows to reduce questions about $K_0(Var_k)$ and $\mathcal{M}_k$
to a finitely generated base ring.
%We give two applications: in
%Proposition \ref{not-domain2}, we show that $K_0(Var_k)$ and
%$\mathcal{M}_k$ are not domains if $k$ is a field of arbitrary
%characteristic which is not separably closed.
As an application, we define the Poincar\'e polynomial for
arbitrary separated morphisms of finite type of schemes (Section
\ref{sec-poingen} and Section \ref{Appendix}). Roughly speaking,
it is the only constructible invariant which is compatible with
base change and gives the correct result over a finite field (viz.
the polynomial whose coefficients are given by the virtual Betti
numbers, which are defined in terms of the weight filtration on
$\ell$-adic cohomology). We refer to Theorem \ref{thm-poin} for
the exact statement. The Poincar\'e polynomial defines ring
morphisms $K_0(Var_k)\rightarrow \Z[T]$ and
$\mathcal{M}_k\rightarrow \Z[T,T^{-1}]$ for an arbitrary field
$k$, which provide a new way to distinguish elements in these
Grothendieck rings (Proposition \ref{nonzero}).

Next, we turn our attention to the motivic Serre invariant. Let
$R$ be a complete discrete valuation ring, with quotient field $K$
and perfect residue field $k$. If $X$ is a smooth and proper
$K$-variety, then the associated rigid $K$-variety $X^{an}$ is
separated, smooth and quasi-compact, so $S(X):=S(X^{an})\in
K_0(Var_k)/(\LL-1)$ is well-defined. Our main result in this
setting (Theorem \ref{serresing}) states that if $K$ has
characteristic zero, this invariant can be uniquely extended to an
additive and multiplicative invariant on the category of
$K$-varieties, i.e. a ring morphism $S:\mathcal{M}_K\rightarrow
K_0(Var_k)/(\LL-1)$. This ring morphism is interesting for two
reasons: it defines the motivic Serre invariant for arbitrary
$K$-varieties, and it provides a new realization of
$\mathcal{M}_K$ which is computable in significant cases (see e.g.
Theorem \ref{elliptic}). We use it in Proposition
\ref{not-injective} to show that the realization morphisms to
(effective and non-effective) Chow motives are not injective.

The existence of the morphism $S$ can be deduced from Bittner's
theorem once we understand how the motivic Serre invariant behaves
w.r.t. the blow-up relations. To this end, we extend N\'eron's
smoothening process to pairs of smooth $K$-varieties in Section
\ref{sec-smooth} (Theorem \ref{smoothen}). This result implies the
existence of weak N\'eron models for bounded and smooth pairs
(Definition \ref{def-weakner}). Theorem \ref{smoothen} is proved
by using the detailed information on the centers of the blow-ups
in the classical smoothening process \cite[3.5.2]{neron} and
Greenberg's Theorem \cite{Gr}.

In Section \ref{sec-bound} we study and compare different notions
of boundedness for rigid and algebraic varieties. In particular,
we show that, if $K$ has characteristic zero, a smooth $K$-variety
$X$ is bounded iff it has a compactification without unramified
points at infinity; then this holds for every smooth
compactification (Proposition \ref{nopoints}). This result is used
to prove that the motivic Serre invariant of a $K$-variety without
unramified points is zero (Proposition \ref{empty}).

Section \ref{sec-trace} deals with the trace formula in the tame
case. We assume that the residue field $k$ of $R$ is algebraically
closed, and for each $d>0$ prime to the characteristic exponent
$p$ of $k$, we denote by $K(d)$ the unique degree $d$ extension of
$K$ inside a fixed algebraic closure. If $X$ is a tame smooth and
proper $K$-variety (in the sense of Definition \ref{def-tame}) and
$d>0$ is an integer prime to $p$, then the trace formula
(Proposition \ref{tame-3}) states that the Euler characteristic of
the motivic Serre invariant of the $K(d)$-variety $X\times_K K(d)$
equals the trace of a generator of the tame monodromy group
$G(K^t/K(d))$ on the tame $\ell$-adic cohomology of $X$. If $K$
has characteristic zero, then by formal arguments, this result
extends to any $K$-variety whose isomorphism class belongs to the
subring of $K_0(Var_K)$ generated by the classes of tame smooth
and proper varieties (Theorem \ref{trace}). In particular, if $k$
has characteristic zero, the trace formula holds for any
$K$-variety. This yields a  sufficient and necessary cohomological
condition for the existence of a rational point (Corollary
\ref{existpoint}).

Without tameness conditions, the trace formula no longer holds. We
take a closer look at the case of curves in Section
\ref{sec-curves}. A computation on the nearby cycles yields an
expression for the error term in the trace formula in terms of the
geometry of a regular model with normal crossings (Theorem
\ref{general}). Intriguingly, this expression appears to be
related to Saito's geometric criterion for cohomological tameness
\cite{saito}, and this relation shows that the trace formula holds
for cohomologically tame curves of genus $\neq 1$ (Theorem
\ref{global}) and cohomologically tame elliptic curves (Theorem
\ref{elliptic}). In the case of elliptic curves we can make
explicit computations of the motivic Serre invariant and the error
term using the Kodaira-N\'eron reduction table.

 The case of genus $1$ curves without rational point brings some surprises. Even if the wild
ramification acts trivially, the trace formula can fail
(Proposition \ref{counterex}), and more fundamentally, the motivic
Serre invariant does not admit any general cohomological (nor even
motivic) interpretation (at least if we work with rational
coefficients). The cause is the fact that there are
cohomologically tame elliptic curves $E$ over $K$ with non-trivial
Weil-Ch\^atelet group whose motivic Serre invariant has non-zero
Euler characteristic. If $X$ is a non-trivial $E$-torsor then the
Chow motives of $E$ and $X$ are isomorphic (and hence their
$\ell$-adic cohomology spaces are isomorphic as Galois modules),
but their motivic Serre invariants have distinct Euler
characteristics ($S(X)=0$ since $X$ has no $K$-rational point).
Therefore, the trace formula can not hold for both $X$ and $E$
(over finite fields this situation cannot occur since the
Weil-Ch\^atelet group of an elliptic curve over a finite field is
zero). Reversing the arguments, we can use the validity of the
trace formula in certain cases to recover triviality results about
the Weil-Ch\^atelet group (Proposition \ref{h1}). Finally, we use
the local version of Saito's criterion to prove a trace formula
for the analytic Milnor fiber (Theorem \ref{local}).
\subsection*{Notation}
We denote by $(Sch)$ the category of schemes. For any scheme $S$,
we denote by $(Sch/S)$ the category of schemes over $S$. If
$S=\Spec A$ is affine, we write also $(Sch/A)$ instead of
$(Sch/S)$. If $x$ is a point on $S$, we will denote by $k(x)$ its
residue field. We write $S^o$ for the set of closed points on $S$.

We denote by
$$(.)_{red}:(Sch/S)\rightarrow (Sch/S):X\mapsto X_{red}$$ the functor
mapping a $S$-scheme $X$ to its maximal reduced closed subscheme
$X_{red}$.

For any scheme $S$, a $S$-variety is a reduced separated
$S$-scheme of finite type. For any separated $S$-scheme of finite
type $X$, we denote by $Sm(X)$ the open subscheme of $X$
consisting of the points where $X$ is smooth over $S$. If we want
to make the base scheme $S$ explicit, we'll write $Sm(X/S)$
instead of $Sm(X)$.
%If $S$ is excellent, then $Sm(X)$ is an open subset of
%$X$.

For any field $F$, we denote by $F^a$ an algebraic closure, and by
$F^s$ the separable closure of $F$ in $F^a$. Starting from Section
\ref{sec-smooth}, $R$ denotes a discrete valuation ring, with
quotient field $K$ and residue field $k$. The maximal ideal of $R$
will be denoted by $\mathfrak{M}$. We fix a separable closure
$K^s$ of $K$, and we denote by $R^{sh}$ the strict henselization
of $R$ in $K^s$, and by $K^{sh}\subset K^s$ its quotient field.
%This condition is automatically fulfilled if $R$ is complete.
We denote by $k^s$ the residue field of $R^{sh}$. The field $k^s$
is a separable closure of $k$. Moreover, we denote by $K^t$ the
tame closure of $K$ inside $K^s$. We fix a prime $\ell$ invertible
in $k$. Additional assumptions will be indicated at the beginning
of each section.

If $R$ is complete, and we fix a value $0<\theta<1$, then there
exists a unique absolute value $|\cdot|$ on $K^s$ such that
$|a|=\theta^{v(a)}$ for each $a$ in $K^*$, where $v$ denotes the
discrete valuation on $K^*$. This absolute value makes $K$ into a
non-archimedean field.
%Starting from Section 4, we assume moreover that $R$ is
%complete and $k$ perfect.

We'll consider the generic fiber functor
$$(.)_K:(Sch/R)\rightarrow (Sch/K):X\mapsto X_K=X\times_R K$$
as well as the special fiber functor
$$(.)_s:(Sch/R)\rightarrow (Sch/k):X\mapsto X_K=X\times_R k$$

For any scheme or rigid variety $X$ and any prime $\ell$, we put
$$H(X,\Q_\ell)=\oplus_{i\geq 0}H^i(X,\Q_\ell)$$ where
$H^i(X,\Q_\ell)$ is the $i$-th $\ell$-adic cohomology space, and
we view $H(X,\Q_\ell)$ as a $\Z$-graded vector space. Similar
notation applies for cohomology with compact supports. If
$H=\oplus_{i\in \Z}H^i$ is a finite dimensional graded vector
space over some field $F$, and $M$ is a graded endomorphism of
$H$, then we put
$$Trace(M\,|\,H)=\sum_{i \in \Z}(-1)^i Trace(M\,|\,H^i)$$

%We will consider three kinds of Grothendieck rings in this
%article: \begin{itemize} \item the Grothendieck ring of varieties
%(Definition \ref{def-grothring}) \item the Grothendieck ring of an
%additive category $\mathcal{A}$ with tensor product (i.e. the
%abelian group associated to the monoid of isomorphism classes
%w.r.t. direct sum, with ring product given by the tensor
%structure), e.g. the Grothendieck ring of Chow motives over a
%field \item the Grothendieck ring of an abelian category
%$\mathcal{B}$ with tensor product (the free abelian group on
%isomorphism classes, modulo relations induced by short exact
%sequences, with ring product given by the tensor structure), e.g.
%the Grothendieck ring of $\ell$-adic representations of a
%profinite group. \end{itemize}
% The latter two should not be
%confused.
\section{The Grothendieck ring of varieties}\label{sec-groth}
\subsection{Definition and realization morphisms}\label{sec-real}
\begin{definition}[Grothendieck ring]\label{def-grothring}
Let $S$ be any Noetherian scheme. We define the Grothendieck group
of $S$-varieties $K_0(Var_S)$ as the abelian group with
\begin{itemize}
\item generators: isomorphism classes $[X/S]$ of separated
$S$-schemes of finite type $X$ \item relations: if $Y\rightarrow
X$ is a closed immersion, then $[X/S]=[(X-Y)/S]+[Y/S]$ (``scissor
relations'').
\end{itemize}
The product $[X/S]\cdot[Y/S]=[(X\times_S Y)/S]$ defines a ring
structure on $K_0(Var_S)$, and we call this ring the Grothendieck
ring of $S$-varieties.

Moreover, we put $\LL_S=[\A^1_S/S]$ and
$\mathcal{M}_S=K_0(Var_S)[\LL^{-1}_S]$.
\end{definition}

A morphism of Noetherian schemes $T\rightarrow S$ induces base
change morphisms of rings $K_0(Var_S)\rightarrow K_0(Var_T)$ and
$\mathcal{M}_S\rightarrow \mathcal{M}_T$. Moreover, a separated
morphism of finite type $S\rightarrow U$ induces forgetful
morphisms of abelian groups $K_0(Var_S)\rightarrow K_0(Var_U)$ and
$\mathcal{M}_S\rightarrow \mathcal{M}_U$ (the definition of the
latter requires some care: if $X$ is a separated $S$-scheme of
finite type, then $[X/S]\LL_S^{-i}$ is mapped to
$[X/U]\LL_U^{-i}$, for any $i\in \Z$).

The following properties follow immediately from the definition:
for any separated $S$-scheme of finite type $X$, the natural
closed immersion $X_{red}\rightarrow X$ gives rise to the equality
$[X/S]=[X_{red}/S]$ in $K_0(Var_S)$. Likewise, the base change
morphisms $K_0(Var_S)\rightarrow K_0(Var_{S_{red}})$ and
$\mathcal{M}_S\rightarrow \mathcal{M}_{S_{red}}$ are ring
isomorphisms.

If the base scheme $S$ is clear from the context, we write $[X]$
and $\LL$ instead of $[X/S]$ and $\LL_S$. If $S$ is affine, say
$S=\Spec A$, then we write $K_0(Var_A)$ and $\mathcal{M}_A$
instead of $K_0(Var_S)$ and $\mathcal{M}_S$.

Even if $k$ is a field of characteristic zero, the Grothendieck
ring $K_0(Var_k)$ is not very well understood. Poonen showed that
$K_0(Var_k)$ is not a domain \cite{Poo}.
% if $k$ has characteristic zero
%\cite{Poo} and Naumann showed that $K_0(Var_k)$ and
%$\mathcal{M}_k$ are not domains if $k$ is finitely generated
%\cite{naumann}. We'll extend this result to all fields $k$ which
%are not separably closed (Corollary \ref{not-domain2}).
 It is not
known if the natural morphism $K_0(Var_k)\rightarrow
\mathcal{M}_k$ is injective (i.e. if $\LL$ is a zero divisor in
$K_0(Var_k)$). We refer to \cite{lalu-julienliu} for some
intriguing questions and results.

Now let $k$ be any field. By its definition, $K_0(Var_k)$ is the
universal additive and multiplicative invariant for the category
$Var_k$ of $k$-varieties: any invariant of $k$-varieties with
values in a ring $A$ which is additive w.r.t. closed immersions
and multiplicative w.r.t. the product of $k$-varieties, defines a
ring morphism $K_0(Var_k)\rightarrow A$. Here are some well-known
examples we will need:

\vspace{5pt}

 $(1)$ \textit{Counting rational points:} if $k$ is a
finite field, then there exists a unique ring morphism
$$\sharp:K_0(Var_k)\rightarrow \Z$$ which maps $[X]$ to $\sharp X(k)$ (the
number of $k$-rational points) for each separated $k$-scheme of
finite type $X$. It localizes to a ring morphism
$\sharp:\mathcal{M}_k\rightarrow \Q$.

$(2)$ \textit{Etale realization:} let $k$ be any field, and denote
by $G_k$ the absolute Galois group $G(k^s/k)$. We fix a prime
$\ell$ invertible in $k$, and we denote by $Rep_{G_k}(\Q_\ell)$
the abelian tensor category of $\ell$-adic representations of
$G_k$ (i.e. finite dimensional $\Q_\ell$-vector spaces with a
continuous left action of $G_k$). The tensor structure on
$Rep_{G_k}(\Q_\ell)$ defines a ring structure on the Grothendieck
group $K_0(Rep_{G_k}(\Q_\ell))$. As pointed out in \cite{naumann},
there exists a unique ring morphism
$$\acute{e}t:K_0(Var_k)\rightarrow K_0(Rep_{G_k}(\Q_\ell))$$ such that
$$\acute{e}t([X])=\sum_{i\geq 0}(-1)^i [H^i_c(X\times_k k^s,\Q_\ell)]$$
for each separated $k$-scheme of finite type $X$. It localizes to
a ring morphism $\acute{e}t:\mathcal{M}_k\rightarrow
K_0(Rep_{G_k}(\Q_\ell))$ (since $\acute{e}t(\LL)=[\Q_\ell(-1)]$ is
invertible in $K_0(Rep_{G_k}(\Q_\ell))$).

$(3)$ \textit{The $\ell$-adic Euler characteristic:} if $k$ is any
field and $\ell$ is a prime invertible in $k$, then there exists a
unique ring morphism
$$\chi_{top}:\mathcal{M}_k\rightarrow \Z$$ such that
$$\chi_{top}([X])=\sum_{i\geq 0}(-1)^i \mathrm{dim}\,H^i_c(X\times_k k^s,\Q_\ell)$$
for each separated $k$-scheme of finite type $X$. It can also be
obtained by composing the \'etale realization $\acute{e}t$ with
the forgetful morphism
$$K_0(Rep_{G_k}(\Q_\ell))\rightarrow K_0(\Q_\ell)=\Z$$ The
morphism $\chi_{top}$ is independent of $\ell$ (this is well
known: if $k$ has characteristic zero it follows from comparison
with singular cohomology; if $k$ is finite from the cohomological
interpretation of the zeta function; if $k$ is any field of
positive characteristic by spreading out and reduction to a finite
base field).

$(4)$ \textit{The Hodge-Deligne realization:}
%if $k$ is a field of
%characteristic zero, there exists a unique ring morphism
%$$HD:K_0(Var_k)\rightarrow \Z[u,v]$$ mapping the class $[X]$ of a
%smooth and projective $k$-variety $X$ to its Hodge polynomial
%$$HD(X;u,v)=\sum_{p,q\geq 0}(-1)^{p+q} h^{p,q}(X)u^pv^q$$ with
%$h^{p,q}(X)=\mathrm{dim} H^p(X,\Omega^q_X)$.
 assume $k=\C$, and
define the Hodge-Deligne polynomial $HD(X;u,v)$ of a separated
$\C$-scheme of finite type $X$ by
$$HD(X;u,v)=\sum_{k^\geq 0}(-1)^k h^{p,q}(H^k_c(X(\C),\C))u^pv^q$$
where $h^{p,q}(H^k_c(X,\C))$ denotes the dimension of the
$(p,q)$-component of Deligne's mixed Hodge structure on
$H^k_c(X,\C)$. Then $HD(\cdot;u,v)$ is additive and
multiplicative, so there exists a unique ring morphism
$$HD:K_0(Var_{\C})\rightarrow \Z[u,v]$$ mapping $[X]$ to
$HD(X;u,v)$ for any separated $\C$-scheme of finite type $X$. It
localizes to a ring morphism $HD:\mathcal{M}_{\C}\rightarrow
\Z[u,v,u^{-1},v^{-1}]$.

The definition of $HD$ generalizes to an arbitrary base field $k$
of characteristic zero, either by invoking the Lefschetz principle
(the Hodge numbers are algebraic invariants) or by using Bittner's
presentation of the Grothendieck ring (Theorem \ref{bittner}).
% to
%see that there exists a unique ring morphism
%$HD:K_0(Var_{k})\rightarrow \Z[u,v]$ mapping the class $[X]$ of a
%smooth and projective $k$-variety to the Hodge polynomial
%$$HD(X;u,v)=\sum_{p,q\geq 0}(-1)^{p+q}\mathrm{dim}\,H^p(X,\Omega^q_X)u^pv^q$$

\vspace{5pt}
 As a general rule, whenever $\mu$ is a group morphism
from $K_0(Var_k)$ or $\mathcal{M}_k$ to some abelian group $A$, we
write $\mu(X)$ instead of $\mu([X])$ for any separated $k$-scheme
of finite type $X$.
\subsection{Bittner's presentation and the theorem of Larsen and Lunts}
Let $k$ be any field.
\begin{definition}
We denote by $K^{(bl)}_0(Var_k)$ the abelian group given by
\begin{itemize}
\item generators: isomorphism classes $[X]_{bl}$ of smooth,
projective $k$-varieties $X$ \item relations:
$[\emptyset]_{bl}=0$, and if $Y$ is a closed subvariety of $X$,
smooth over $k$, $X'\rightarrow X$ is the blow-up of $X$ with
center $Y$, and $E=X'\times_X Y$ is the exceptional divisor, then
$[X']_{bl}-[E]_{bl}=[X]_{bl}-[Y]_{bl}$ (``blow-up relations'').
\end{itemize}
The product $[X]_{bl}\cdot[Y]_{bl}=[X\times_k Y]_{bl}$ defines a
ring structure on $K^{(bl)}_0(Var_k)$.

The ring $K_0^{(bl)}(Var_k)'$ is defined in the same way,
replacing ``projective'' by ``proper''.
\end{definition}
Note that the product is well-defined, since blow-ups commute with
flat base change. It follows immediately from the definition that
there exist unique ring morphisms
\begin{eqnarray*}
\alpha&:&K^{(bl)}_0(Var_k)\rightarrow K_0(Var_k)
\\ \alpha'&:&K^{(bl)}_0(Var_k)'\rightarrow K_0(Var_k)
\end{eqnarray*} mapping $[X]_{bl}$ to $[X]$ for any smooth, projective (resp. proper) $k$-variety $X$.
\begin{theorem}[Bittner \cite{Bitt2},\,Thm.\,3.1]\label{bittner}
If $k$ has characteristic zero, then the natural ring morphisms
\begin{eqnarray*}
\alpha&:&K^{(bl)}_0(Var_k)\rightarrow K_0(Var_k)
\\ \alpha'&:&K^{(bl)}_0(Var_k)'\rightarrow K_0(Var_k)
\end{eqnarray*}
are isomorphisms.
\end{theorem}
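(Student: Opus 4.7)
The plan is to prove surjectivity of $\alpha$ by resolution of singularities together with the scissor relations, and injectivity by constructing an explicit left inverse whose well-definedness rests on the weak factorization theorem; the statement for $\alpha'$ will come out along the way. The hypothesis that $k$ has characteristic zero is needed precisely to invoke Hironaka's resolution and weak factorization.

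For surjectivity, I proceed by Noetherian induction on dimension. Given a separated $k$-scheme of finite type $X$, I stratify it into smooth locally closed pieces. For each smooth stratum $U$, I choose a compactification and apply resolution of singularities to obtain a smooth projective $\tilde U$ with a proper birational morphism to the compactification that is an isomorphism over $U$ and whose boundary $\tilde U \setminus U$ is a strict normal crossings divisor. The scissor relations give $[U] = [\tilde U] - [\tilde U \setminus U]$ in $K_0(Var_k)$, and since $\tilde U \setminus U$ has strictly smaller dimension, the induction hypothesis concludes. The same argument shows surjectivity of $\alpha'$.

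For injectivity of $\alpha$, the goal is to produce a ring morphism $e: K_0(Var_k) \to K^{(bl)}_0(Var_k)$ with $e \circ \alpha = \mathrm{id}$. For a smooth $k$-variety $U$, choose a smooth compactification $\bar U$ such that the boundary $D = \bar U \setminus U = D_1 \cup \cdots \cup D_n$ is a strict normal crossings divisor with smooth components, and set
$$e(U, \bar U) \;:=\; \sum_{I \subseteq \{1,\ldots,n\}} (-1)^{|I|}\, [D_I]_{bl},$$
where $D_\emptyset = \bar U$ and $D_I = \bigcap_{i \in I} D_i$, each smooth and projective. The main obstacle of the proof is to show that $e(U, \bar U)$ depends only on $U$: by the weak factorization theorem of Abramovich–Karu–Matsuki–W{\l}odarczyk, any two SNC compactifications of $U$ are linked by a sequence of blow-ups and blow-downs along smooth centers meeting the boundary transversally, and invariance of the alternating sum under a single such blow-up is a combinatorial identity deduced, stratum by stratum, from the defining blow-up relations of $K^{(bl)}_0(Var_k)$.

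Having defined $e(U)$ on smooth varieties, I extend it to arbitrary $X$ by stratification (using generic smoothness in characteristic zero) via $e(X) := \sum_i e(U_i)$, and verify consistency across refinements together with additivity along closed immersions $V \hookrightarrow U$ of smooth varieties; for the latter, one picks a common SNC compactification in which the closure of $V$ is smooth and has SNC intersection with the boundary, and compares the three alternating sums. Multiplicativity is immediate from the product formula on SNC compactifications, so $e$ descends to a ring morphism on $K_0(Var_k)$. Finally, on a smooth projective $X$ one may take the trivial compactification $\bar X = X$ with $D = \emptyset$, giving $e(X) = [X]_{bl}$, so $e \circ \alpha = \mathrm{id}$. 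Injectivity of $\alpha'$ follows because $\alpha$ factors through $\alpha'$ via the natural map $K^{(bl)}_0(Var_k) \to K^{(bl)}_0(Var_k)'$, which the same line of argument (replacing projective by proper throughout) shows to be an isomorphism.
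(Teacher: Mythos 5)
Your proof is correct and follows exactly the route the paper indicates (and attributes to Bittner): surjectivity from Hironaka's resolution via stratification into smooth pieces and the scissor relations, and injectivity by constructing a left inverse from alternating sums $\sum_I(-1)^{|I|}[D_I]_{bl}$ over boundary strata of SNC compactifications, with well-definedness resting on weak factorization. The paper gives no independent proof --- it cites Bittner's Theorem 3.1 and offers only this same two-line sketch --- so your reconstruction matches the intended argument in both structure and detail.
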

It follows easily from Hironaka's resolution of singularities that
$\alpha$ and $\alpha'$ are surjective. Using Weak Factorization
\cite{weakfact}, Bittner also proved injectivity.

%As a first application, one can give a definition of the
%Hodge-Deligne polynomial which is valid over any field of
%characteristic zero and which does not use Deligne's mixed Hodge
%theory: if $k$ is a field of characteristic zero, then there
%exists a unique ring morphism
%$$HD:K_0(Var_k)\rightarrow \Z[u,v]$$ which maps a smooth,
%projective $k$-variety $X$ to
%$$HD(X;u,v)=\sum_{p,q\geq 0}(-1)^{p+q}h^{p,q}(X)u^pv^q$$ where
%$h^{p,q}(X)=\mathrm{dim}\, H^p(X,\Omega^q_X)$. Indeed, one can
%show directly that $HD(\cdot;u,v)$ satisfies the blow-up
%relations. The morphism $HD$ localizes to a ring morphism
%$HD:\mathcal{M}_k\rightarrow \Z[u,v,u^{-1},v^{-1}]$.

%We will see below how Deligne's theory of weights for varieties
%over a finite field can be used to define the Poincar\'e
%polynomial over fields of positive characteristic.

Recall that two smooth, projective, connected $k$-varieties
$X$,\,$Y$ are called stably birational if there exist integers
$m,n\geq 0$ such that $X\times_k\mathbb{P}^m_k$ and $Y\times_k
\mathbb{P}^n_k$ are birational. This defines an equivalence
relation on the set of smooth, projective, connected
$k$-varieties. We denote by $SB$ the set of equivalence classes
and by $\Z[SB]$ the free abelian group on $SB$.
\begin{theorem}[Stably birational realization
\cite{lars}]\label{stabir} If $k$ has characteristic zero, then
there exists a unique isomorphism of abelian groups
$$\Phi_{SB}:K_0(Var_k)/\LL K_0(Var_k)\rightarrow \Z[SB]$$ mapping
a smooth, projective, connected $k$-variety to its equivalence
class in $SB$.
\end{theorem}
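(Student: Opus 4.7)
The plan is to use Bittner's presentation (Theorem \ref{bittner}) to construct $\Phi_{SB}$ and to exhibit an explicit inverse using the weak factorization theorem. Uniqueness is immediate: by Bittner's theorem, the classes of smooth, projective, connected $k$-varieties generate $K_0(Var_k)$ as an abelian group (after decomposing into connected components), so the prescribed values determine $\Phi_{SB}$ on the quotient $K_0(Var_k)/\LL K_0(Var_k)$.

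For the construction of $\Phi_{SB}$, I would first define a group homomorphism $K_0^{(bl)}(Var_k) \to \Z[SB]$ sending a smooth projective $X$ to the sum of the stably birational classes of its connected components. To verify the blow-up relations, let $X' \to X$ be the blow-up of a smooth projective $X$ along a smooth closed subvariety $Y$ of codimension $c$, with exceptional divisor $E = \Pro(N_{Y/X})$. Then $X' \to X$ is birational and matches connected components, while $E \to Y$ is a Zariski-locally trivial $\Pro^{c-1}$-bundle, hence birational to $Y \times_k \Pro^{c-1}$ and so stably birational to $Y$ componentwise. This yields $[X']_{bl} - [E]_{bl} \mapsto [X]_{bl} - [Y]_{bl}$ in $\Z[SB]$, and Theorem \ref{bittner} then produces a homomorphism $K_0(Var_k) \to \Z[SB]$. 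To see that it factors through $\LL K_0(Var_k)$, it suffices to verify $\LL \cdot [Z] \mapsto 0$ for smooth projective $Z$; writing $\LL \cdot [Z] = [\Pro^1 \times_k Z] - [Z]$, this follows at once since $\Pro^1 \times_k Z$ and $Z$ are stably birational.

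Surjectivity of $\Phi_{SB}$ is clear from the definition of $SB$. For injectivity I would construct an inverse $\Psi: \Z[SB] \to K_0(Var_k)/\LL K_0(Var_k)$ sending a stably birational class to the class modulo $\LL$ of any representative. The well-definedness of $\Psi$ is the main technical step. Since $[X \times_k \Pro^m] = [X](1 + \LL + \cdots + \LL^m) \equiv [X] \pmod{\LL}$, one reduces to showing that birational smooth projective connected varieties have the same class modulo $\LL$. By the weak factorization theorem, this reduces further to the case of a single blow-up $X' \to X$ along a smooth center $Y$ of codimension $c$. Using the scissor relations and the fact that $E = \Pro(N_{Y/X}) \to Y$ is Zariski-locally trivial with fibre $\Pro^{c-1}$, one computes $[E] = [Y](1 + \LL + \cdots + \LL^{c-1})$ in $K_0(Var_k)$, so that
\[
[X'] = [X] - [Y] + [E] = [X] + [Y](\LL + \cdots + \LL^{c-1}),
\]
whence $[X'] \equiv [X] \pmod{\LL K_0(Var_k)}$.

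The two maps $\Phi_{SB}$ and $\Psi$ are visibly mutually inverse on the generating sets (smooth projective connected classes modulo $\LL$ on one side, and elements of $SB$ on the other), hence on the whole groups. The heart of the argument is the well-definedness of $\Psi$, which genuinely uses weak factorization; this is precisely where the characteristic zero hypothesis is needed, as both Theorem \ref{bittner} and weak factorization rest on resolution of singularities.
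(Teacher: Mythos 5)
Your proof is correct and follows essentially the same route as the paper, which (citing Larsen--Lunts) obtains the existence of $\Phi_{SB}$ from Bittner's presentation by checking the blow-up relations exactly as you do, and deduces bijectivity from Weak Factorization via the computation $[X']=[X]+[Y](\LL+\cdots+\LL^{c-1})$ showing birational smooth projective varieties agree modulo $\LL$. No discrepancies to report.
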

As explained in \cite[2.4+7]{lars} the existence of $\Phi_{SB}$
follows immediately from Theorem \ref{bittner}, and the fact that
it is an isomorphism follows easily from Weak Factorization
\cite{weakfact}. In \cite{lars} it was assumed that $k$ is
algebraically closed, but this is not necessary
\cite[p.\,28]{kollar}.
\begin{cor}[Albanese realization \cite{Poo} ]\label{albanese}
Assume that $k$ has characteristic zero, denote by $AV_k$ the
monoid of isomorphism classes of abelian varieties over $k$, and
by $\Z[AV_k]$ the associated monoid ring. There exists a unique
ring morphism
$$Alb:K_0(Var_k)\rightarrow \Z[AV_k]$$ which sends the class $[X]$
of a smooth, projective, connected $k$-variety $X$ to the
isomorphism class of its Albanese $Alb(X)$ in $\Z[AV_k]$.

In particular, if $A$, $B$ are abelian varieties over $k$, then
$[A]=[B]$ in $K_0(Var_k)$ iff $A\cong B$.
\end{cor}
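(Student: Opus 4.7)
The plan is to apply Bittner's theorem (Theorem \ref{bittner}) to define $Alb$ on the generators $[X]_{bl}$ of $K_0^{(bl)}(Var_k)$ and verify the blow-up relations. For a smooth projective $k$-variety $X$ with connected components $X_1,\dots,X_n$, I define
\[
Alb([X]_{bl}) = \sum_{i=1}^n [Alb(X_i)] \in \Z[AV_k],
\]
where $Alb(X_i)$ is the classical Albanese variety. This is clearly additive on disjoint unions. For multiplicativity on the generators, one uses the well-known fact $Alb(X \times_k Y) \cong Alb(X) \times_k Alb(Y)$ for smooth projective connected $X,Y$, which immediately gives $Alb([X]_{bl} \cdot [Y]_{bl}) = Alb([X]_{bl}) \cdot Alb([Y]_{bl})$ in $\Z[AV_k]$.

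The core step is to verify the blow-up relation. Let $Y \hookrightarrow X$ be a smooth closed subvariety of a smooth projective $k$-variety, let $X' \to X$ be the blow-up along $Y$, and let $E = X' \times_X Y$ be the exceptional divisor. Working component-wise, reduce to the case where $X$ is connected. Then $X' \to X$ is a birational morphism between smooth projective connected varieties, so by birational invariance of the Albanese one has $Alb(X') \cong Alb(X)$. On the other hand, $E \to Y$ decomposes as a disjoint union of projective bundles $E_i = \Pro(\mathcal{N}_{Y_i/X}) \to Y_i$ over the connected components $Y_i$ of $Y$. For any smooth projective $\Pro^{r-1}$-bundle $P \to Z$ over a connected base, the Albanese map $Alb(P) \to Alb(Z)$ is an isomorphism (the fibers being simply connected and rationally connected). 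Therefore
\[
Alb(E) = \sum_i [Alb(E_i)] = \sum_i [Alb(Y_i)] = Alb(Y),
\]
and the blow-up relation $Alb(X') - Alb(E) = Alb(X) - Alb(Y)$ holds in $\Z[AV_k]$. By Bittner's theorem, $Alb$ thus descends to a ring morphism $K_0(Var_k) \to \Z[AV_k]$, and uniqueness is immediate from the fact that classes of smooth projective varieties generate $K_0(Var_k)$.

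For the concluding statement, note that for an abelian variety $A$ the Albanese is $A$ itself (with identity as the universal map based at $0_A$), so $Alb([A]) = [A]$ in $\Z[AV_k]$. Thus if $[A] = [B]$ in $K_0(Var_k)$, then $[A] = [B]$ inside the free abelian group $\Z[AV_k]$ on isomorphism classes of abelian varieties, forcing $A \cong B$.

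The main obstacle is the invariance statement $Alb(P) \cong Alb(Z)$ for a smooth projective bundle $P \to Z$, which is what makes the blow-up relation work; this is classical but is the only geometric input beyond Bittner and the formal properties of $Alb$.
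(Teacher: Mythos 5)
Your proof is correct, but it takes a different route from the paper's. The paper's entire proof is the one line ``the Albanese is invariant under stably birational equivalence'': it factors $Alb$ through the Larsen--Lunts realization of Theorem \ref{stabir}, i.e. as the composite $K_0(Var_k)\twoheadrightarrow K_0(Var_k)/\LL K_0(Var_k)\xrightarrow{\;\Phi_{SB}\;}\Z[SB]\rightarrow \Z[AV_k]$, the last arrow being induced by the monoid map sending a stable birational class to the Albanese of any representative. You instead go back to Bittner's presentation and check the blow-up relation by hand, using birational invariance of $Alb$ for the pair $(X',X)$ and the fact that the exceptional divisor $E\rightarrow Y$ is a projective bundle with $Alb(E)\cong Alb(Y)$ (maps from $\Pro^{r-1}$ to an abelian variety are constant, so any map $E\rightarrow A$ factors through $Y$). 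The geometric input is the same in both cases --- stable birational invariance of the Albanese is proved by exactly the two facts you use --- so the difference is organizational: the paper's route is shorter because $\Phi_{SB}$ is already on the table, and it makes immediately visible that $Alb(\LL)=0$ (a point the paper uses right after the corollary), whereas your route is self-contained modulo Theorem \ref{bittner} and does not need the stably birational realization to have been set up. Two small points worth making explicit in your write-up: the relations in $K^{(bl)}_0(Var_k)$ are stated for a possibly disconnected smooth projective $X$, so your component-wise reduction should also track the components of $Y$ lying in a given component of $X$ (harmless, since everything is additive over components); and for a variety without a rational point the Albanese should be understood as in the reference \cite{Poo}, e.g. via the dual of $\mathrm{Pic}^0$, so that it is defined over $k$ and is a birational invariant --- the closing argument $Alb([A])=[A]$ for an abelian variety $A$ is then exactly as you state.
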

\begin{proof}
The Albanese is invariant under stably birational equivalence.
\end{proof}
Note that $Alb(\LL)=0$, so that $Alb$ does not localize to a
realization of $\mathcal{M}_k$.

\subsection{Specialization to Chow motives}\label{sec-chow}
Let $k$ be any field, denote by $Mot^{eff}_{k}$ the category of
effective Chow motives over $k$ with rational coefficients, and by
$Mot_{k}$ the category of Chow motives with rational coefficients.
The natural functor
$$Mot^{eff}_{k}\rightarrow Mot_{k}$$ is additive and compatible with the tensor product, so it yields a
natural ring morphism
$$\rho:K_0(Mot^{eff}_{k})\rightarrow K_0(Mot_{k})$$
which induces an isomorphism
$$K_0(Mot^{eff}_k)([\LL_{mot}]^{-1})\cong K_0(Mot_{k})$$ where
$\LL_{mot}$ denotes the Lefschetz motive. I do not know if $\rho$
is injective (i.e. if $[\LL_{mot}]$ is not a zero divisor in
$K_0(Mot^{eff}_{k})$). Using the fact that
$Mot_{k}^{eff}\rightarrow Mot_k$ is fully faithful, it is easily
seen that $\rho$ is injective if one assumes the following
conjecture:
\begin{conjecture}[Goettsche \cite{goettsche}, Conj.
2.5]\label{Goettsche} If $M$ and $N$ are objects of $Mot^{eff}_k$,
then $[M]=[N]$ in $K_0(Mot^{eff}_k)$ iff $M$ and $N$ are
isomorphic.
\end{conjecture}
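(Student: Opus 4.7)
The direction $M\cong N\Rightarrow [M]=[N]$ is immediate, so the content is the converse. I would first rewrite the problem as a cancellation statement. The category $Mot^{eff}_k$ is $\Q$-linear, additive and pseudo-abelian, so by the usual description of the Grothendieck group of such a category, $K_0(Mot^{eff}_k)$ is the group completion of the commutative monoid of isomorphism classes of objects under $\oplus$. Hence $[M]=[N]$ in $K_0(Mot^{eff}_k)$ holds iff there exists an effective Chow motive $P$ with $M\oplus P\cong N\oplus P$, and the conjecture is equivalent to the cancellation property: $M\oplus P\cong N\oplus P$ implies $M\cong N$.

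The natural attack is to establish the Krull--Schmidt property for $Mot^{eff}_k$, i.e.\ the existence and essential uniqueness of finite decompositions into indecomposable summands; cancellation then follows formally. In a pseudo-abelian additive category, Azumaya's theorem provides Krull--Schmidt as soon as the endomorphism ring of every indecomposable object is local, equivalently as soon as $\mathrm{End}(M)$ is semiperfect for every $M$. The plan is therefore in two steps: first, produce a decomposition of an arbitrary $M$ into finitely many indecomposable summands by repeatedly splitting non-trivial idempotents in $\mathrm{End}(M)$, using pseudo-abelianness together with some finiteness input on the endomorphism ring; second, verify that each indecomposable summand has local endomorphism ring. Once both are in place, Azumaya's theorem yields Krull--Schmidt, and cancellation follows by matching indecomposable factors on both sides of $M\oplus P\cong N\oplus P$.

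The hard part, and the reason the statement remains a conjecture, is the verification of these finiteness properties. For a smooth projective $k$-variety $X$, the endomorphism ring of its Chow motive sits inside the rational Chow group $CH^{*}(X\times_k X)_\Q$, which is generally not a finite-dimensional $\Q$-algebra. Neither the termination of the idempotent-splitting process nor the semiperfectness of $\mathrm{End}(M)$ is known in the required generality; Krull--Schmidt for Chow motives has only been verified in restricted situations (geometrically split motives, projective homogeneous varieties, and certain motives over fields with controlled Brauer groups). A proof of the conjecture over an arbitrary base field would seem to require either a substantial new structural result on rational correspondence algebras, or a cancellation argument that bypasses Krull--Schmidt altogether, and I do not see how to supply either ingredient with current techniques.
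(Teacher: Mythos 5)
The statement you were asked about is labelled as a \emph{conjecture} in the paper, and the paper supplies no proof of it: it is quoted from Goettsche, and the only remark the paper makes about its status is that Goettsche showed it follows from the Beilinson--Murre conjecture. So there is nothing in the paper for your argument to be measured against, and your write-up is, as you yourself say, not a proof.

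That said, your analysis is sound as far as it goes. The reduction of $[M]=[N]$ in $K_0(Mot^{eff}_k)$ to the existence of $P$ with $M\oplus P\cong N\oplus P$ is correct (and is exactly the fact the paper uses in the proposition that follows the conjecture), so the conjecture is indeed equivalent to cancellation in $Mot^{eff}_k$. Your identification of the obstruction is also the standard one: the endomorphism algebras $\mathrm{End}(M)\subset CH^{*}(X\times_k X)_\Q$ are not known to be finite-dimensional or semiperfect, so neither the termination of idempotent splitting nor Azumaya's criterion is available, and Krull--Schmidt for Chow motives is known only in special cases. The one route you do not mention is the conditional one the paper records: under the Beilinson--Murre conjecture the Chow groups acquire a finite filtration with controlled graded pieces, which is what lets Goettsche deduce the cancellation statement. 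If you wanted to turn your sketch into something, the honest options are either to prove the statement conditionally on Beilinson--Murre (following Goettsche) or to restrict to a subcategory where Krull--Schmidt is known; unconditionally over an arbitrary field the statement remains open, and your proposal correctly stops short of claiming otherwise.
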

\begin{prop}
Assume that Conjecture \ref{Goettsche} holds. If $M$ and $N$ are
objects of $Mot_k$, then $[M]=[N]$ in $K_0(Mot_k)$ iff $M\cong N$
in $Mot_k$. Moreover, $\rho$ is injective.
\end{prop}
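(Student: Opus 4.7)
The plan is to exploit two features of the setup: first, that every object of $Mot_k$ can be written as $M'\otimes \LL_{mot}^{-n}$ for some effective motive $M'$ and some integer $n\geq 0$, and second, that the isomorphism $K_0(Mot^{eff}_k)([\LL_{mot}]^{-1})\cong K_0(Mot_k)$ lets us translate equalities in $K_0(Mot_k)$ back into equalities in $K_0(Mot^{eff}_k)$ after clearing denominators. The direction $M\cong N\Rightarrow [M]=[N]$ of the first assertion is immediate, so all the work lies in the converse and in the injectivity of $\rho$.

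For the converse, I would write $M\cong M'\otimes \LL_{mot}^{-n}$ and $N\cong N'\otimes \LL_{mot}^{-m}$ with $M'$, $N'$ effective and $n,m\geq 0$, so that the hypothesis $[M]=[N]$ in $K_0(Mot_k)$ becomes $[M']\,[\LL_{mot}]^{-n}=[N']\,[\LL_{mot}]^{-m}$ in the localization. By the usual description of equality in a localized ring, there exists $k\geq 0$ with
$$[\LL_{mot}]^k\bigl([M']\,[\LL_{mot}]^m-[N']\,[\LL_{mot}]^n\bigr)=0 \qquad \text{in } K_0(Mot^{eff}_k),$$
equivalently
$$\bigl[M'\otimes \LL_{mot}^{m+k}\bigr]=\bigl[N'\otimes \LL_{mot}^{n+k}\bigr] \qquad \text{in } K_0(Mot^{eff}_k).$$
Both sides are classes of effective motives, so Conjecture \ref{Goettsche} yields $M'\otimes \LL_{mot}^{m+k}\cong N'\otimes \LL_{mot}^{n+k}$ in $Mot^{eff}_k$. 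Since $Mot^{eff}_k\to Mot_k$ is fully faithful, this isomorphism persists in $Mot_k$, and tensoring with $\LL_{mot}^{-(n+m+k)}$ (invertible in $Mot_k$) produces $M\cong N$.

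For the injectivity of $\rho$, I would pick $x\in K_0(Mot^{eff}_k)$ with $\rho(x)=0$ and write $x=[M]-[N]$ with $M$, $N$ effective. Then $[M]=[N]$ in $K_0(Mot_k)$; the first part forces $M\cong N$ in $Mot_k$, hence in $Mot^{eff}_k$ by full faithfulness, and a second appeal to Conjecture \ref{Goettsche} yields $[M]=[N]$ already in $K_0(Mot^{eff}_k)$, so $x=0$. The only subtle point in the whole argument is the clearing-denominators step: because $[\LL_{mot}]$ is not known to be a non-zero-divisor in $K_0(Mot^{eff}_k)$, the auxiliary factor $[\LL_{mot}]^k$ produced by the localization is not cosmetic, and it must be absorbed into the effective representatives before Conjecture \ref{Goettsche} can be invoked.
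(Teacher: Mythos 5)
Your proof is correct, and the overall strategy --- twist into the effective category by a high power of $\LL_{mot}$, invoke Conjecture \ref{Goettsche} there, and untwist --- matches the paper's; the two arguments diverge only in the mechanism used to convert the hypothesis $[M]=[N]$ in $K_0(Mot_k)$ into an equality of classes of honest effective objects in $K_0(Mot^{eff}_k)$. The paper argues categorically: equality in the Grothendieck group of the additive category $Mot_k$ means $M\oplus P\cong N\oplus P$ for some object $P$, and tensoring this single isomorphism with $\LL_{mot}^i$ for $i\gg 0$ lands it in $Mot^{eff}_k$, whence $[M\otimes \LL_{mot}^i]=[N\otimes\LL_{mot}^i]$ there. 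You instead argue ring-theoretically, through the isomorphism $K_0(Mot^{eff}_k)([\LL_{mot}]^{-1})\cong K_0(Mot_k)$ stated just before the proposition, and clear denominators using the standard description of equality in a localization. Both routes are legitimate: the paper's is self-contained at the level of $K_0$ of additive categories, while yours leans on the localization isomorphism (which the paper asserts without proof, so you may use it) and has the merit of isolating exactly where the possible failure of $[\LL_{mot}]$ to be a non-zero-divisor is absorbed, namely in the auxiliary exponent $k$. Two cosmetic remarks: any functor preserves isomorphisms, so full faithfulness of $Mot^{eff}_k\rightarrow Mot_k$ is not what pushes $M'\otimes\LL_{mot}^{m+k}\cong N'\otimes\LL_{mot}^{n+k}$ forward (it is needed, and you use it correctly, in the injectivity step to pull an isomorphism back); and in that injectivity step the passage from $M\cong N$ in $Mot^{eff}_k$ to $[M]=[N]$ in $K_0(Mot^{eff}_k)$ is the trivial direction of the conjecture, so the second appeal to it is superfluous.
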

\begin{proof}
If $[M]=[N]$ in $K_0(Mot_k)$, then there exists an object $P$ in
$Mot_k$ such that $M\oplus P\cong N\oplus P$. For $i\gg 0$,
$P\otimes \LL_{mot}^i$, $M\otimes \LL_{mot}^i$ and
$N\otimes\LL_{mot}^i$ are effective, and then
$$(M\otimes \LL_{mot}^i)\oplus (P\otimes \LL_{mot}^i)\cong (N\otimes \LL_{mot}^i)\oplus (P\otimes
\LL_{mot}^i)$$ in $Mot_k^{eff}$. Hence, $[M\otimes
\LL^i_{mot}]=[N\otimes \LL^i_{mot}]$ in $K_0(Mot^{eff}_k)$ and by
Conjecture \ref{Goettsche}, this implies that $M\otimes
\LL^i_{mot}$ and $N\otimes \LL^i_{mot}$ are isomorphic in
$Mot^{eff}_k$. Tensoring with $\LL^{-i}_{mot}$ shows that $M\cong
N$ in $Mot_k$.

This easily implies the injectivity of $\rho$: any element
$\alpha$ of $K_0(Mot^{eff}_k)$ can be written as $[M]-[N]$ with
$M$ and $N$ objects of $Mot^{eff}_k$, and $\rho(\alpha)=0$ means
that $M$ and $N$ have the same class in $K_0(Mot_k)$. Hence, $M$
and $N$ are isomorphic in $Mot_k$, and therefore also in
$Mot^{eff}_k$, so $\alpha=0$.
\end{proof}

 It was shown in \cite{goettsche} that Conjecture 2.5 follows from the Beilinson-Murre Conjecture
\cite[Conj.\,2.1+5.1+Thm.\,5.2]{jannsen}

We denote, for each smooth and projective variety $X$ over $k$, by
$M(X)$ the motive $(X,id)$ associated to $X$ in $Mot^{eff}_{k}$.
With slight abuse of notation, we'll use the same notation for its
image $(X,id,0)$ in $Mot_{k}$.
% this is harmless since
%$$Mot^{eff}_{k}\rightarrow Mot_{k}$$ is fully faithful.

%\begin{prop}[Cancellation property]\label{cancel}
%Let $A,B,C$ be objects of $Mot_{k,\Z}$, and assume that $A\oplus
%C\cong B\oplus C$. Then $A\cong B$.
%\end{prop}
%\begin{proof}
%By twisting, we may assume that $A,B,C$ are objects of
%$Mot^{eff}_{k,\Z}$; say $A=(X,a)$, $B=(Y,b)$ and $C=(Z,c)$ with
%$a,b,c$ projectors.
%\end{proof}
%\begin{cor}
%Let $A$ and $B$ be objects of $Mot^{eff}_{k,\Z}$. Then $[A]=[B]$
%in $K_0(Mot_{k,\Z})$ iff $A\cong B$ in $Mot^{eff}_{k,\Z}$.
%Moreover, the natural ring morphism
%$$K_0(Mot^{eff}_{k,\Z})\rightarrow K_0(Mot_{k,\Z})$$ is injective.
%\end{cor}
%\begin{proof}
%The equality $[A]=[B]$ in $K_0(Mot_{k,\Z})$ implies that there
%exists an object $C$ of $Mot_{k,Z}$ such that $A\oplus C\cong
%B\oplus C$. By Proposition \ref{cancel}, we have $A\cong B$ in
%$Mot_{k,\Z}$ and we can conclude by the fact that
%$Mot_{k,\Z}^{eff}\rightarrow Mot_{k,\Z}$ is fully faithful.
%
%Next, let $\alpha$ be an element of the kernel of
%$$K_0(Mot^{eff}_{k,\Z})\rightarrow K_0(Mot_{k,\Z})$$
%We can always write $\alpha$ as $[A]-[B]$ with $A$ and $B$ objects
%in $Mot_{k,\Z}^{eff}$. Then $A\cong B$, so $\alpha=0$.
%\end{proof}
% NOT CLEAR that $Mot_{k,\Z}$ has cancellation property, or that $\LL$ is not a zero divisor in $K_0(Mot_{k,\Z})$....

 \begin{theorem}[Gillet-Soul\'e \cite{gillet-soule}, Guillen-Navarro Aznar \cite{GuiNav}, Bittner \cite{Bitt2}]\label{chow}
Assume that $k$ has characteristic zero. There exist unique ring
morphisms
\begin{eqnarray*}
\chi^{eff} &:& K_0(Var_k)  \rightarrow  K_0(Mot^{eff}_{k})
\\ \chi & : & \mathcal{M}_k  \rightarrow  K_0(Mot_{k})
\end{eqnarray*}
such that, for any smooth and projective $k$-variety $X$,
$\chi^{eff}(X)$ (resp. $\chi(X)$) is the class of $M(X)$ in
$K_0(Mot^{eff}_{k})$ (resp. $K_0(Mot_{k})$).
\end{theorem}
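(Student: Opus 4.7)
The plan is to use Bittner's presentation (Theorem \ref{bittner}) to define $\chi^{eff}$ on generators, check that the blow-up relations are respected, and then invert $\LL$ to obtain $\chi$. Uniqueness is automatic: in either presentation, the values on smooth projective varieties determine the morphism, and every class in $K_0(Var_k)$ is (by Bittner) a $\Z$-linear combination of such classes.

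For existence, I would first define a map on the free abelian group generated by isomorphism classes $[X]_{bl}$ of smooth projective $k$-varieties by sending $[X]_{bl}\mapsto [M(X)]\in K_0(Mot^{eff}_k)$. To descend this to $K_0^{(bl)}(Var_k)$ in the sense of Theorem \ref{bittner}, I would verify the two types of relations:

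\textbf{Blow-up relation.} If $X$ is smooth projective, $Y\subset X$ is a smooth closed subvariety of codimension $c$, $\pi:X'\to X$ is the blow-up along $Y$, and $E=\pi^{-1}(Y)$, I would invoke Manin's projective-bundle and blow-up formulas for Chow motives:
\begin{equation*}
M(X')\;\cong\;M(X)\oplus\bigoplus_{i=1}^{c-1}M(Y)\otimes\LL_{mot}^{\,i},\qquad M(E)\;\cong\;\bigoplus_{i=0}^{c-1}M(Y)\otimes\LL_{mot}^{\,i}.
\end{equation*}
Subtracting these identities in $K_0(Mot^{eff}_k)$ gives $[M(X')]-[M(E)]=[M(X)]-[M(Y)]$, which is exactly the blow-up relation. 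The empty relation $[M(\emptyset)]=0$ is immediate. Multiplicativity follows from the Künneth-type isomorphism $M(X\times_k Y)\cong M(X)\otimes M(Y)$ in $Mot^{eff}_k$, so the assignment is a ring morphism $K_0^{(bl)}(Var_k)\to K_0(Mot^{eff}_k)$. Composing with the inverse of Bittner's isomorphism $\alpha$ yields $\chi^{eff}$.

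To obtain $\chi$, I would compute $\chi^{eff}(\LL)$. From the scissor relation $[\A^1_k]=[\Pro^1_k]-[\mathrm{pt}]$ and $M(\Pro^1_k)\cong \mathbf{1}\oplus\LL_{mot}$ in $Mot^{eff}_k$, we get $\chi^{eff}(\LL)=[\LL_{mot}]$. Since $[\LL_{mot}]$ is invertible in $K_0(Mot_k)$ (as noted just before the theorem in the discussion of $\rho$), the composite $K_0(Var_k)\xrightarrow{\chi^{eff}}K_0(Mot^{eff}_k)\xrightarrow{\rho}K_0(Mot_k)$ factors uniquely through the localization $\mathcal{M}_k=K_0(Var_k)[\LL^{-1}]$, producing $\chi$ with the stated property on smooth projective $X$.

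The main obstacle is the blow-up identity for Chow motives; everything else is formal from Theorem \ref{bittner} and the universal property of localization. This identity is, however, a classical result of Manin, so the verification amounts to citing it rather than any new computation; the novelty is really the \emph{applicability} of this identity, which is what Bittner's theorem unlocks by reducing all of $K_0(Var_k)$ to smooth projective generators modulo exactly the blow-up relations needed.
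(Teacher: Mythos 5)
Your proposal is correct and follows essentially the same route the paper indicates (see the remark after the theorem): reduce to Bittner's presentation, verify the blow-up relations for Chow motives via Manin's projective-bundle and blow-up formulas (the step the paper delegates to \cite[5.1]{GuiNav}), and localize at $\LL$ using $\chi^{eff}(\LL)=[\LL_{mot}]$.
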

%\begin{proof}
%In view of Bittner's presentation, we only have to check that the
%motive of a smooth and projective variety satisfies the blow-up
%relations. In case of $\Q$-coefficients, this was stated and shown
%in \cite[5.1]{GuiNav}, but the same proof holds for
%$\Z$-coefficients (see also \cite[0.1.3]{beauv}).
%\end{proof}

The question about the existence of such a morphism $\chi^{eff}$
was raised already by Grothendieck in a letter to Serre
\cite[letter of 16/8/1964]{corr-grothserre}; he also asked how far
the morphism $\chi^{eff}$ is from being bijective.
%Denote by $\chi^{eff}_{\Q}$ and $\chi_{\Q}$ the morphisms obtained
%by composing $\chi^{eff}$, resp. $\chi$, with the natural ring
%morphisms $K_0(Mot^{eff}_{k,\Z})\rightarrow
%K_0(Mot^{eff}_{k,\Q})$, resp. $K_0(Mot_{k,\Z})\rightarrow
%K_0(Mot_{k,\Q})$.
It is known that $\chi^{eff}$ is not injective: isogeneous abelian
varieties have isomorphic Chow motives with rational coefficients,
while, if $k$ has characteristic zero, the classes of two abelian
varieties in $K_0(Var_k)$ coincide iff the varieties are
isomorphic, because of the existence of the Albanese realization
(Corollary \ref{albanese}).

However, this example does not answer the following question:
\textit{Is $\chi$ injective?} It is not known if $\LL$ is a zero
divisor in $K_0(Var_k)$, and the Albanese realization $Alb$ maps
$\LL$ to zero, so it is not clear if two non-isomorphic abelian
varieties have distinct classes in $\mathcal{M}_k$.

%\item \textit{Are $\chi^{eff}$ and $\chi$ injective?} Of course,
%non-injectivity of $\chi$ implies non-injectivity of $\chi_{\Q}$.
%\end{enumerate}
%The difficulty behind these questions is that, in order to show
%that the classes $[X]$ and $[Y]$ of two $k$-varieties $X$, $Y$ in
%$K_0(Var_k)$ are distinct, we need an additive invariant of
%$k$-varieties which assigns different values to $X$ and $Y$.
%However, all of the specialization morphisms we've seen above
%(including the Albanese realization $Alb$) factor through
%$\chi^{eff}$.

We will show in Proposition \ref{not-injective} that, for an
appropriate base field $k$ of characteristic zero, $\chi$ is
non-injective.
%, by constructing a
%``computable'' specialization morphism which does \textit{not}
%factor through $\chi$.
%To my knowledge, this is the first proof of this fact.
I do not
know if $\chi$ and $\chi^{eff}$ are surjective.

\begin{remark}
Theorem \ref{chow} still holds if we replace rational coefficients
by integer coefficients \cite[Thm.\,4]{gillet-soule}. By Theorem
\ref{bittner}, we only have to check that Chow motives satisfy the
blow-up relations. For rational coefficients, this was proven in
\cite[5.1]{GuiNav}, but the same proof holds for $\Z$-coefficients
(see \cite[0.1.3]{beauv} for a computation of the Chow groups). If
we work with $\Z$-coefficients, I do not know if $\chi$ and
$\chi^{eff}$ are injective.
\end{remark}
\subsection{Spreading out}\label{sec-spread}
Let $k$ be any field. We denote by $\mathscr{A}_k$ the set of
finitely generated sub-$\Z$-algebras of $k$, ordered by inclusion.
Then $k$ is the limit of the direct system $\mathscr{A}_k$ in the
category of rings. If $X$ is a $k$-variety, and $A$ is an object
in $\mathscr{A}_k$, then a $A$-model for $X$ is a $A$-variety
$X_A$ endowed with an isomorphism $X\cong X_A \times_A k$. An
$A$-model for a morphism of $k$-varieties $f:X\rightarrow Y$ is a
morphism of $A$-varieties $f_A:X_A\rightarrow Y_A$ such that $X_A$
and $Y_A$ are $A$-models for $X$, resp. $Y$, and such that $f$
coincides with the morphism $X_A\times_A k\rightarrow Y_A\times_A
k$ obtained from $f_A$ by base change (modulo the identifications
$X_A\times_A k\cong X$ and $Y_A\times_A k\cong Y$).

For any pair of objects $A,A'$ in $\mathscr{A}_k$ with $A\subset
A'$, we consider the natural base change morphisms
\begin{eqnarray*}
\phi_{A}^{A'}&:&K_0(Var_{A})\rightarrow K_0(Var_{A'})
\\ \psi_{A}^{A'}&:&\mathcal{M}_A\rightarrow \mathcal{M}_{A'}
\end{eqnarray*}
as well as
\begin{eqnarray*}
\phi_{A}^{k}&:&K_0(Var_{A})\rightarrow K_0(Var_{k})
\\ \psi_{A}^{k}&:&\mathcal{M}_A\rightarrow \mathcal{M}_{k}
\end{eqnarray*}
We obtain direct systems of rings $(K_0(Var_A),\phi_A^{A'})$ and
$(\mathcal{M}_A,\psi_A^{A'})$ indexed by the directed set
$\mathscr{A}_k$, and the morphisms $\phi_{A}^{k}$ and
$\psi_{A}^{k}$ induce morphisms
\begin{eqnarray*}
\phi&:&\lim_{\stackrel{\longrightarrow}{A\in
\mathscr{A}_k}}K_0(Var_{A})\rightarrow K_0(Var_{k})
\\ \psi&:&\lim_{\stackrel{\longrightarrow}{A\in
\mathscr{A}_k}}\mathcal{M}_A\rightarrow \mathcal{M}_{k}
\end{eqnarray*}

The classical technique of ``spreading out'' can be formulated on
the level of Grothendieck rings in the following way.

\begin{prop}[Spreading out]\label{spread}
The natural ring morphisms
\begin{eqnarray*}
\phi&:&\lim_{\stackrel{\longrightarrow}{A\in
\mathscr{A}_k}}K_0(Var_{A})\rightarrow K_0(Var_{k})
\\ \psi&:&\lim_{\stackrel{\longrightarrow}{A\in
\mathscr{A}_k}}\mathcal{M}_A\rightarrow \mathcal{M}_{k}
\end{eqnarray*}
are isomorphisms.
\end{prop}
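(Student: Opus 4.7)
The plan is to reduce the statement to the standard limit formalism of EGA IV, \S8, applied to the presentation of the Grothendieck ring by generators and relations. The key observation is that any single element of $K_0(Var_k)$, and any witness of a relation between such elements, involves only finitely many $k$-varieties, closed immersions, and isomorphisms; all of these descend to a common finitely generated $\Z$-subalgebra of $k$.

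For the surjectivity of $\phi$, I would first show that any separated $k$-scheme of finite type $X$ admits an $A$-model for some $A \in \mathscr{A}_k$. This is essentially EGA IV, 8.8.2 together with 8.10.5: $X$ is built out of finitely many affine patches and gluing data involving only finitely many elements of $k = \varinjlim_{A \in \mathscr{A}_k} A$, and these data come from some $A$. The resulting class $[X_A/A]$ is then a preimage of $[X/k]$.

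For the injectivity of $\phi$, let $\alpha = \sum_{i=1}^r n_i [X_i/A]$ be an element of $K_0(Var_A)$ with vanishing image in $K_0(Var_k)$. By the definition of $K_0(Var_k)$ as the quotient of the free abelian group on isomorphism classes of $k$-varieties by the subgroup generated by the scissor relations, the identity $\sum_i n_i [X_i \times_A k /k] = 0$ is witnessed by a finite $\Z$-linear combination of scissor relations, matched up using finitely many $k$-isomorphism identifications. All of this finite data descends to a common $A' \in \mathscr{A}_k$ with $A \subset A'$: the finitely many $k$-varieties and closed immersions descend by EGA IV, 8.8.2 and 8.10.5, and the finitely many isomorphisms between base changes of $A'$-schemes descend by EGA IV, 8.8.2.4 (after possibly enlarging $A'$). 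The same chain of scissor relations is then valid in $K_0(Var_{A'})$, so $\phi_A^{A'}(\alpha) = 0$ and $\alpha$ vanishes in the colimit.

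The statement for $\psi$ follows formally from the statement for $\phi$. Since $\phi(\LL_A) = \LL_k$ and filtered colimits commute with localization at a multiplicative subset, one has a canonical isomorphism $\varinjlim_{A} \mathcal{M}_A \cong \bigl(\varinjlim_{A} K_0(Var_A)\bigr)[\LL^{-1}]$, so the isomorphism for $\phi$ localizes to the desired isomorphism $\psi$. The one delicate technical point, which is the heart of the injectivity argument, is the descent of the isomorphisms appearing in a witness of a relation; this is supplied by EGA IV, 8.8.2.4 and is what makes the spreading out work at the level of the Grothendieck ring rather than merely for individual classes.
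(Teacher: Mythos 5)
Your proof is correct and follows essentially the same route as the paper: surjectivity via the existence of $A$-models (EGA IV, 8.8.2), and injectivity by descending the finitely many varieties, isomorphisms, and closed immersions witnessing a relation to a common $A'\in\mathscr{A}_k$ (EGA IV, 8.8.2 and 8.10.5). The paper states the two descent facts more tersely and leaves the "witness" bookkeeping implicit, while you spell it out and also make explicit the (routine) commutation of localization with filtered colimits for $\psi$; there is no substantive difference.
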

\begin{proof}
Surjectivity follows from the fact that for any $k$-variety $X$,
there exist an object $A$ in $\mathscr{A}_k$ and a $A$-model $X_A$
for $X$, by \cite[8.8.2]{ega4.3}. Injectivity follows from the
following facts:

 if $A$ is an object of $\mathscr{A}_k$,
and $U_A$ and $V_A$ are $A$-varieties, then  the canonical map
$$\lim_{\stackrel{\longrightarrow}{A\subset A'\in
\mathscr{A}_k}}Hom_{A'}(U_A\times_A A',V_A\times_A A')\rightarrow
Hom_{k}(U_A\times_A k,V_A\times_A k)$$ is a bijection
\cite[8.8.2]{ega4.3}. Moreover, if $f_A:U_A\rightarrow V_A$ is a
morphism of $A$-varieties such that the induced morphism
$f_k:U_A\times_A k\rightarrow V_A\times_A k$ is a closed (resp.
open) immersion, then there exists an object $A'$ in
$\mathscr{A}_k$ with $A\subset A'$ such that the natural morphism
$f_{A'}:U_A\times_A A'\rightarrow V_A\times_A A'$ is a closed
(resp. open) immersion \cite[8.10.5]{ega4.3}.
\end{proof}
Proposition \ref{spread} provides a convenient way to construct
additive and multiplicative invariants of $k$-varieties. We will
give an illustration in Section \ref{sec-poingen} (see also the
Appendix).
\subsection{The Poincar\'e polynomial}\label{sec-poingen}
Let $k$ be any field. It is, in general, a non-trivial problem to
decide whether the classes of two $k$-varieties $X,\,Y$ in
$K_0(Var_k)$ are distinct. To this aim, it is important to know
some ``computable'' realization morphisms on $K_0(Var_k)$. If $k$
has characteristic zero, we've encountered many of these in the
preceding sections, but in positive characteristic, we're
considerably less equiped. For $k=\C$ or $k$ a finite field, one
can define the \textit{virtual Betti numbers} $\beta_i(X)$ and the
\textit{Poincar\'e polynomial} $P(X;T)$ of a $k$-variety $X$ using
Deligne's theory of weights \cite{deligne-hodgeIII}
\cite{deligne-weilII}. By spreading out, these invariants can be
generalized to arbitrary base fields. These definitions  seem to
be known to experts, but since we could not find a reference for
their construction and main properties, we found it worthwhile to
include the arguments in the Appendix. We summarize in the
following theorem the facts we'll need in the remainder of this
article.
\begin{theorem}\label{theo-poin}
Let $k$ be any field. For any separated $k$-scheme of finite type
$X$, its Poincar\'e polynomial $P(X;T)\in \Z[T]$ has degree $2d$,
with $d$ the dimension of $X$. The coefficient of $T^{2d}$ in
$P(X;T)$ equals the number of irreducible components of dimension
$d$ of $X\times_k k^s$. The value $P(X;1)$ is equal to the Euler
characteristic $\chi_{top}(X)$. If $X$ is smooth and proper over
$k$, then
$$P(X;T)=\sum_{i\geq 0}(-1)^ib_i(X)T^i$$ with
$b_i(X)=\mathrm{dim}H^i(X\times_k k^s,\Q_\ell)$ for any prime
$\ell$ invertible in $k$.

There exists a unique ring morphism $P:K_0(Var_k)\rightarrow
\Z[T]$ mapping $[X]$ to $P(X;T)$ for any separated $k$-scheme of
finite type $X$. The morphism $P$ maps $\LL$ to $T^2$ and
localizes to a ring morphism $P:\mathcal{M}_k\rightarrow
\Z[T,T^{-1}]$.
\end{theorem}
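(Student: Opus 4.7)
The plan is to construct $P$ first for varieties over finite fields using Deligne's weight filtration, and then to transfer this construction to arbitrary base fields via the spreading-out isomorphism of Proposition \ref{spread}. For a separated scheme $X$ of finite type over a finite field $k$, I would define the virtual Betti numbers
$$\beta_i(X) = \sum_{j\geq 0}(-1)^j \dim_{\Q_\ell}\mathrm{Gr}^W_i H^j_c(X\times_k k^s,\Q_\ell)$$
and set $P(X;T)=\sum_i\beta_i(X)T^i$. Additivity on closed-open decompositions follows from the long exact sequence in compactly supported $\ell$-adic cohomology combined with the strictness of the weight filtration (\cite{deligne-weilII}); multiplicativity follows from K\"unneth and the additivity of weights under tensor product. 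Independence of $\ell$ is reduced to the smooth proper case using additivity and the Lefschetz trace formula for the zeta function. This yields a ring morphism $P:K_0(Var_k)\to\Z[T]$ for $k$ finite.

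The four enumerated properties then follow directly from the weight-theoretic definition. For smooth proper $X$, $H^j(X\times_k k^s,\Q_\ell)$ is pure of weight $j$, whence $\beta_i(X)=(-1)^i b_i(X)$ and $P(X;T)=\sum(-1)^i b_i(X) T^i$. Evaluating at $T=1$ collapses the defining double sum to $\sum_j(-1)^j\dim H^j_c=\chi_{top}(X)$. Deligne's bound $\mathrm{Gr}^W_i H^j_c=0$ for $i>j$, combined with $H^j_c=0$ for $j>2d$, forces $\deg P(X;T)\leq 2d$; the coefficient of $T^{2d}$ equals $\dim H^{2d}_c(X\times_k k^s,\Q_\ell)$, which counts the $d$-dimensional irreducible components of $X\times_k k^s$. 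Finally, $P(\LL)=T^2$ follows from the purity of $H^2_c(\A^1_{k^s},\Q_\ell)=\Q_\ell(-1)$ and the vanishing of the other $H^j_c$.

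For an arbitrary field $k$, Proposition \ref{spread} gives $K_0(Var_k)=\varinjlim_{A\in\mathscr{A}_k}K_0(Var_A)$, so it suffices to construct a compatible system of ring morphisms $P_A:K_0(Var_A)\to\Z[T]$. Given a separated morphism of finite type $f:X_A\to\Spec A$, constructibility of the sheaves $R^jf_!\Q_\ell$, generic base change, and generic constancy of the weight filtration should yield a dense open $U\subset\Spec A$ over which the finite-field Poincar\'e polynomials of the geometric closed fibers $X_A\times_A k(s)$ are constant (the residue fields $k(s)$ at closed points of the Jacobson scheme $\Spec A$ are finite by the Nullstellensatz). After replacing $A$ by a suitable localization $A[1/g]\in\mathscr{A}_k$, the rule $[X_A]\mapsto P(X_A\times_A k(s);T)$ is well-defined, additive, multiplicative, and compatible with further localization; passing to the limit produces $P:K_0(Var_k)\to\Z[T]$. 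The main obstacle is exactly this generic constancy of the weight filtration under specialization in mixed characteristic; once this is handled, compatibility of the $P_A$ under the transition morphisms $\phi_A^{A'}$ is formal, and the localization to $P:\mathcal{M}_k\to\Z[T,T^{-1}]$ is automatic from $P(\LL)=T^2$.
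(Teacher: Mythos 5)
For a base field of positive characteristic your plan essentially coincides with the paper's: define $P$ over a finite field by counting Frobenius eigenvalues according to weight and check additivity and multiplicativity there (Lemma \ref{Paddmult}), observe that mixedness of the sheaves $R^jf_!\Q_\ell$ makes the fibrewise Poincar\'e polynomial constructible over a finitely generated $\F_p$-algebra (Proposition \ref{Pgen}), and pass to the limit over $\mathscr{A}_k$ via Proposition \ref{spread}; in that setting there is no mixed-characteristic issue, since every $A\in\mathscr{A}_k$ is an $\F_p$-algebra. The divergence is in characteristic zero. The paper does not spread out there at all: it uses Bittner's presentation (Theorem \ref{bittner}) to define $P$ by $\sum_i(-1)^ib_i(X)T^i$ on smooth proper varieties, which makes existence, the smooth--proper formula, and $P(\LL)=T^2$ immediate. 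Your uniform spreading-out route instead forces you through exactly the step you yourself flag as ``the main obstacle'': constructibility of $x\mapsto P(f_x;T)$ over a mixed-characteristic base $\Spec A$ with $A$ a finitely generated $\Z$-algebra. This is not a routine generic-base-change statement; the paper's proof of it (Proposition \ref{conspropo}, needed for Theorem \ref{thm-poin}) uses Noetherian induction, compactification, and resolution of singularities of the characteristic-zero generic fibre followed by smooth proper base change (Proposition \ref{smoothbetti}). As written, your construction is therefore incomplete precisely when $k$ has characteristic zero; the simplest repair is to adopt the Bittner definition in that case and reserve spreading out for characteristic $p$.

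Two secondary points. First, the claim that independence of $\ell$ ``is reduced to the smooth proper case using additivity'' cannot be carried out over a finite field, where smooth proper classes are not known to generate $K_0(Var_k)$; the correct argument, which you also name, is direct: $(-1)^{i+1}\beta_i(X)$ is the degree of the weight-$i$ part of the zeta function, which is determined by point counts. Second, you verify the four asserted properties only over finite fields, but the theorem asserts them over arbitrary $k$, where each requires its own specialization argument: constructibility of $x\mapsto\chi_{top}(X_x)$ for $P(X;1)=\chi_{top}(X)$ (Proposition \ref{poin-euler}), spreading out of geometric irreducibility together with an induction on dimension for the degree and leading coefficient (Proposition \ref{components}), and smooth proper base change for lisse sheaves for the Betti-number formula (Proposition \ref{smoothbetti}). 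These transfers are all true, but they constitute most of the actual content of the paper's proof and should not be left implicit.
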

\begin{proof}
See Appendix (Section \ref{Appendix}), in particular Propositions
\ref{poin-euler}, \ref{components} and \ref{smoothbetti}.
\end{proof}
%Note that the $\ell$-adic Betti numbers $b_i(X)$ are independent
%of $\ell$ for $X$ smooth and proper over $k$: if $k$ has
%characteristic zero this follows by comparison with singular
%cohomology;  if $k$ is finite this follows from the cohomological
%expression for the zeta function and purity of weight
%\cite[p.\,27]{katz-intro}; the general case follows by spreading
%out to reduce to a finite base field.

The existence and properties of the Poincar\'e polynomial yield
the following useful criterion to distinguish elements of the
localized Grothendieck ring.
\begin{cor}\label{nonzero}
Let $k$ be any field, and let $X$ and $Y$ be separated $k$-schemes
of finite type such that $[X]=[Y]$ in $\mathcal{M}_k$. Then $X$
and $Y$ have the same dimension $d$, and $X\times_k k^s$ and
$Y\times_k k^s$ have the same number of irreducible components of
dimension $d$. In particular, if $X$ is non-empty, then $[X]\neq
0$ in $\mathcal{M}_k$. If $X$ and $Y$ are proper and smooth over
$k$, then they have the same $\ell$-adic Betti numbers (for $\ell$
invertible in $k$).
\end{cor}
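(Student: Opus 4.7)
The plan is to apply the Poincar\'e polynomial realization $P:\mathcal{M}_k\rightarrow \Z[T,T^{-1}]$ from Theorem \ref{theo-poin} and read off each conclusion from the equality $P(X;T)=P(Y;T)$. Since $[X]=[Y]$ in $\mathcal{M}_k$, functoriality of $P$ gives $P(X;T)=P(Y;T)$ in $\Z[T,T^{-1}]$; but for separated $k$-schemes of finite type the Poincar\'e polynomial lies in the subring $\Z[T]$ (again by Theorem \ref{theo-poin}), so the two polynomials coincide as elements of $\Z[T]$.

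First I would extract the dimension statement: by Theorem \ref{theo-poin}, $\deg P(X;T)=2\dim X$ and $\deg P(Y;T)=2\dim Y$, so equality of the polynomials forces $\dim X=\dim Y=:d$. Next, the coefficient of $T^{2d}$ in $P(X;T)$ (resp.\ $P(Y;T)$) is the number of irreducible components of dimension $d$ of $X\times_k k^s$ (resp.\ $Y\times_k k^s$); equality of leading coefficients is exactly the second assertion.

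For the non-vanishing statement, if $X$ is non-empty of dimension $d\geq 0$, then $X\times_k k^s$ has at least one irreducible component of dimension $d$, so the coefficient of $T^{2d}$ in $P(X;T)$ is a positive integer. Hence $P(X;T)\neq 0$ in $\Z[T]\subset \Z[T,T^{-1}]$, and since $P$ is a ring morphism sending $0$ to $0$, we conclude $[X]\neq 0$ in $\mathcal{M}_k$.

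Finally, assume $X$ and $Y$ are smooth and proper. By Theorem \ref{theo-poin} the Poincar\'e polynomials take the form
\[
P(X;T)=\sum_{i\geq 0}(-1)^i b_i(X) T^i,\qquad P(Y;T)=\sum_{i\geq 0}(-1)^i b_i(Y) T^i,
\]
with $b_i(X)=\dim H^i(X\times_k k^s,\Q_\ell)$ and similarly for $Y$. Matching coefficients in the equality $P(X;T)=P(Y;T)$ gives $b_i(X)=b_i(Y)$ for every $i$, as required. The proof is essentially a direct reading of Theorem \ref{theo-poin}; the only ``obstacle'' is the administrative check that the equality of polynomials in $\Z[T,T^{-1}]$ descends to $\Z[T]$, which is immediate because both polynomials already sit there.
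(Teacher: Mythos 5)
Your proof is correct and is exactly the argument the paper intends: Corollary \ref{nonzero} is stated as an immediate consequence of Theorem \ref{theo-poin}, read off from the equality $P(X;T)=P(Y;T)$ via the degree, the leading coefficient, its positivity for non-empty $X$, and the coefficient-by-coefficient identification with Betti numbers in the smooth proper case. Nothing to add beyond noting that the degenerate case $X=Y=\emptyset$ is trivial.
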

The first part of Corollary \ref{nonzero} (concerning the
dimension and the geometric number of irreducible components of
maximal dimension) was proven in \cite[4.7]{lalu-julienliu} by a
different method (their proof was formulated for $K_0(Var_k)$ but
holds also for $\mathcal{M}_k$).
\subsection{Zero divisors}
In \cite{Poo}, Poonen has shown that $K_0(Var_k)$ is not a domain
if $k$ is a field of characteristic zero. Other examples of
zero-divisors were constructed by Koll\'ar \cite[Ex.\,6]{kollar}
and by Liu and Sebag \cite[5.11]{lalu-julienliu}.
% Kollar showed that,
%if $k$ is a field of characteristic zero and $C$ a smooth plane
%conic without rational point, $[C]$ is a zero-divisor in
%$K_0(Var_k)$. Liu and Sebag have proven that $[A]$ is a
%zero-divisor in $K_0(Var_k)$ if $k$ is a field of characteristic
%zero and $A$ is an abelian variety over $k$ such that
%$H^1(k,A)\neq 0$.
 To my best understanding, these proofs don't say anything about
$\mathcal{M}_k$. The authors construct elements $\alpha$ and
$\beta$ in $K_0(Var_k)$ such that $\alpha\cdot \beta=0$, and to
show that neither $\alpha$ nor $\beta$ are zero, they use the
stably birational realization $\Phi_{SB}$  (Theorem \ref{stabir})
or the Albanese realization (Corollary \ref{albanese}). However,
each of these realization morphisms maps $\LL$ to $0$, so they do
not allow to conclude that $\alpha$ and $\beta$ are non-zero in
$\mathcal{M}_k$.

To my knowledge, the only case where it has been shown that
$\mathcal{M}_k$ is not a domain, is the case where $k$ is not
separably closed \cite[3.5]{rokaeus} (the result is stated there
for $K_0(Var_k)$ but works also for $\mathcal{M}_k$; it
generalizes \cite[Thm.\,25]{naumann}). We'll give a new proof of
this result, which does not use $\ell$-adic cohomology. We refer
to Proposition \ref{not-domain} for another example of a
zero-divisor in $\mathcal{M}_k$ (for appropriate $k$).

%For any pro-finite group $H$, we denote by $Rep_H(\Q_\ell)$ the
%abelian tensor category of $\ell$-adic representations of $H$
%(i.e. finite dimensional $\Q_\ell$-vector spaces endowed with a
%continuous left action of $H$) and by $K_0(Rep_H(\Q_\ell))$ its
%Grothendieck ring.
%\begin{lemma}\label{tracedef}
%If $H$ is a pro-finite group and $h$ an element of $H$, there
%exists a unique ring morphism
%$$Tr_{h}:K_0(Rep_{H}(\Q_\ell))\rightarrow \Q_\ell$$
%mapping $[M]$ to $Trace(\sigma\,|\,M)$ for any $\ell$-adic
%representation $M$ of $H$.
%\end{lemma}
%\begin{proof}
%Straightforward.
%\end{proof}
\begin{prop}\label{not-domain2}
If $k$ is any field which is not separably closed, then
$K_0(Var_k)$ and $\mathcal{M}_k$ are not domains.
\end{prop}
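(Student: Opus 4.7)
The plan is to build an explicit zero divisor of the form $[X]\cdot([X]-n\cdot[\Spec k])$ in $K_0(Var_k)$ (and $\mathcal{M}_k$), with $X=\Spec L$ for a finite separable Galois extension $L/k$ of degree $n>1$; such an extension exists since $k$ is not separably closed. Because $L/k$ is Galois, there is a decomposition $L\otimes_k L\cong L^{n}$, so
\begin{equation*}
X\times_k X\;=\;\Spec(L\otimes_k L)\;\cong\;\bigsqcup_{i=1}^{n}\Spec L,
\end{equation*}
which gives $[X]^{2}=n[X]$, equivalently $[X]\cdot([X]-n\cdot[\Spec k])=0$ in $K_0(Var_k)$, and hence in $\mathcal{M}_k$. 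It remains to show both factors are nonzero in $\mathcal{M}_k$; the first, $[X]$, is nonzero by Corollary \ref{nonzero}, since $X$ is nonempty.

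To show $[X]-n\cdot[\Spec k]\neq 0$ in $\mathcal{M}_k$, I argue by contradiction using spreading out and point-counting. Pick a primitive element $\alpha\in L$ of $L/k$ with minimal polynomial $f\in k[x]$, let $A_{0}\subset k$ be the $\Z$-subalgebra generated by the coefficients of $f$, and set $A=A_{0}[\mathrm{disc}(f)^{-1}]\in\mathscr{A}_k$ and $A'=A[x]/(f)$. Then $A'\subset L$ is a domain, and $\Spec A'\to\Spec A$ is a connected finite étale cover of degree $n$ whose generic fiber is $\Spec L\to\Spec k$. Assuming $[X]=n\cdot[\Spec k]$ in $\mathcal{M}_k$, Proposition \ref{spread} yields, after possibly enlarging $A$ inside $k$, an equality $[\Spec A']=n\cdot[\Spec A]$ already in $\mathcal{M}_{A}$. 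I then pick a closed point $x\in\Spec A$, whose residue field $\F_{q}$ is automatically finite because $\Spec A$ is Jacobson, at which $A'\otimes_{A}\F_{q}\not\cong\F_{q}^{n}$; equivalently, $\Spec(A'\otimes_{A}\F_{q})(\F_{q})$ has strictly fewer than $n$ elements. Base-changing $[\Spec A']=n\cdot[\Spec A]$ to $\F_{q}$ and applying the counting morphism $\sharp\colon\mathcal{M}_{\F_{q}}\to\Q$ then forces $\sharp\Spec(A'\otimes_{A}\F_{q})(\F_{q})=n$, the desired contradiction.

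The main obstacle is the existence of such a closed point $x$. This is a Chebotarev-type density statement for the nontrivial connected finite étale cover $\Spec A'\to\Spec A$ of the Jacobson scheme $\Spec A$: the Frobenii at closed points must generate a dense, hence nontrivial, subgroup of the Galois group of the cover's Galois closure, so at least one Frobenius acts nontrivially on a geometric fiber, producing a point where the fiber decomposes into fewer than $n$ rational components. All other steps are essentially formal: the key product relation in the Grothendieck ring is immediate from the Galois property of $L/k$, the spreading-out reduction is Proposition \ref{spread}, and the final numerical contradiction uses only the definition of the ring morphism $\sharp$.
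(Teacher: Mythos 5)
Your proposal is correct and follows essentially the same route as the paper: the same relation $[\Spec L]\cdot([\Spec L]-n)=0$, spreading out to a Galois cover of $\Spec A$, Chebotarev to find a closed point that does not split completely, and the point-counting morphism to derive the contradiction. The only cosmetic differences are that you show $[\Spec L]\neq 0$ via Corollary \ref{nonzero} where the paper uses point counting at a closed fiber, and you build the étale model explicitly via a primitive element rather than by localizing $A$.
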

\begin{proof}
Choose a non-trivial finite Galois extension $k'$ of $k$, and put
$d=[k':k]$. Then $([\Spec k']-d)\cdot [\Spec k']=0$ in
$K_0(Var_k)$.  We'll prove that $[\Spec k']\neq 0$ and $[\Spec
k']\neq d$ in $\mathcal{M}_k$.
%We'll give two proofs, one using
%$\ell$-adic cohomology, the other spreading out and Chebotarev's
%density theorem.
%
%\textit{Proof 1.} Embed $k'$ in a separable closure $k^s$ of $k$,
%and choose an element $g$ of $G_k=G(k^s/k)$ which acts
%non-trivially on $k'$. Then $(Tr_g\circ \acute{e}t)(d[\Spec k])=d$
%while $(Tr_g\circ \acute{e}t)([\Spec k'])\neq d$ (if $\alpha$ is a
%primitive element for the extension $k'/k$, then the latter trace
%is equal to the number of conjugates of $\alpha$ fixed by $g$).
%
%\textit{Proof 2.}
 By Proposition \ref{spread}, it is enough to show that $[X]\neq 0,d$
in $\mathcal{M}_A$ for every object $A$ in $\mathscr{A}_k$ and
every $A$-model $X$ of $\Spec k'$. If $y$ is a closed point on $X$
then $k(y)$ is finite, and applying the point counting morphism
$\sharp:\mathcal{M}_y\rightarrow \Q$ (Section \ref{sec-real}) we
see that $[X\times_{\Spec A}y]\neq 0$ in $\mathcal{M}_y$. This
implies $[X]\neq 0$ in $\mathcal{M}_A$. It remains to show that
$[X]\neq d$ in $\mathcal{M}_A$.

Localizing $A$ we may assume that $X$ is irreducible. The function
field $k(X)$ is a field extension of degree $d$ of the quotient
field $k(A)$ of $A$, since $k(X)\otimes_{k(A)}k\cong k'$. Base
changing to an object $A'$ in $\mathscr{A}_k$ with $A\subset A'$
we may assume that every automorphism of $k'$ over $k$ is induced
by an automorphism of $X$ over $A$ \cite[8.8.2]{ega4.3}.
Localizing $A$ again we may suppose that $X$ is a Galois cover of
$\Spec A$.

By Chebotarev's density theorem for $\Spec A$ (see \cite{Serre2}),
there exists a closed point $x$ on $\Spec A$ which does not split
completely in the Galois cover $X$. It suffices to show that
$[X\times_A k(x)]\neq d$ in $\mathcal{M}_{x}$. This can be seen by
applying the point counting morphism
$\sharp:\mathcal{M}_x\rightarrow \Q$.
\end{proof}

I do not know whether $\mathcal{M}_k$ is a domain if $k$ is
separably closed, or if $K_0(Var_k)$ is a domain if $k$ is
separably closed and has characteristic $p>0$. If $k'$ is a purely
inseparable finite field extension of $k$, I do not know if
$[\Spec k']\neq [\Spec k]$ in $K_0(Var_k)$.

\section{N\'eron smoothening of pairs}\label{sec-smooth}
\subsection{Pairs of varieties}
Let $S$ be any scheme. A pair of $S$-varieties $(X,A)$ consists of
a $S$-variety $X$ and a closed subvariety $A$ of $X$. We say that
the pair $(X,A)$ is proper, smooth,\,$\ldots$ if this holds for
both $X$ and $A$. A morphism of pairs of $S$-varieties
$f:(Y,B)\rightarrow (X,A)$ is a morphism of $S$-varieties
$f:Y\rightarrow X$ such that $f(B)\subset A$. Since $B$ is
reduced, this implies that the restriction of $f$ to $B$ factors
through a morphism of $S$-varieties $f:B\rightarrow A$. We embed
the category of $S$-varieties in the category of pairs by
$X\mapsto (X,\emptyset)$.

We denote by $R$ a discrete valuation ring, with quotient field
$K$ and residue field $k$.
%(excellence is used in Lemma \ref{bound}).
The maximal ideal of $R$ will be denoted by
$\mathfrak{M}$. We fix a separable closure $K^s$ of $K$, and we
denote by $R^{sh}$ the strict henselization of $R$ in $K^s$, and
by $K^{sh}\subset K^s$ its quotient field.
%\textbf{We assume that the completion $\widehat{K^{sh}}$ of
%$K^{sh}$ is separable over $K^{sh}$} (this hypothesis is used in
%Lemma \ref{bound} and is automatically fulfilled if $R$ is
%complete, since then $K^{sh}$ is complete too).
We denote by $k^s$
the residue field of $R^{sh}$. The field $k^s$ is a separable
closure of $k$.

 If $(X,A)$ is a pair of $R$-varieties,
then their generic fibers $(X_K,A_K)$ form a pair of
$K$-varieties. We say that the pair $(X,A)$ is generically smooth
if $(X_K,A_K)$ is a smooth pair of $K$-varieties.

We recall two properties which we'll frequently use: if $Y$ is a
smooth $k$-variety, then $Y(k^s)$ is schematically dense in $Y$
\cite[2.2.13]{neron} and if $X$ is a smooth $R$-variety, then the
natural reduction map $X(R^{sh})\rightarrow X_s(k^s)$ is
surjective \cite[2.3.5]{neron}.
\subsection{N\'eron smoothening}
\begin{definition}[N\'eron smoothening]\label{def-nersmooth}
If $X$ is a generically smooth $R$-variety, then a N\'eron
smoothening of $X$ is a morphism of $R$-varieties $h:Y\rightarrow
X$ with the following properties:
\begin{itemize}
\item $Y$ is smooth over $R$ \item $h_K:Y_K\rightarrow X_K$ is an
isomorphism \item $h$ satisfies the following ``weak valuative
criterion'': the natural map $\phi:Y(R^{sh})\rightarrow X(R^{sh})$
is bijective.
\end{itemize}
\end{definition}
Note that injectivity of $\phi$ follows already from the fact that
$h$ is separated. Any generically smooth $R$-variety $X$ admits a
N\'eron smoothening, by \cite[3.5.2]{neron}.

Definition \ref{def-nersmooth} is different from the one in
\cite[3.1.1]{neron} but more adapted to our purposes. To compare
both definitions, let us call a morphism of $R$-varieties
$h:Y\rightarrow X$ a smoothening$*$ if it is a smoothening in the
sense of \cite[3.1.1]{neron} (i.e. $X_K$ is smooth over $K$, $h_K$
is an isomorphism, $h$ is proper, and the natural map
$Sm(Y)(R^{sh})\rightarrow Y(R^{sh})$ is bijective).

\begin{definition}[Admissible ideal sheaf]
If $Y$ is any $R$-variety, an ideal sheaf $\mathcal{I}$ on $Y$ is
called admissible if it contains an element of the maximal ideal
$\mathfrak{M}$ of $R$.
\end{definition}
\begin{lemma}\label{nosmooth}
Let $Y$ be any $R$-variety, let $\mathcal{I}$ be an admissible
locally principal ideal sheaf on $Y$, and denote by $Z$ the closed
subscheme of $Y$ defined by $\mathcal{I}$. If
$$\{a\in Y(R^{sh})\,|\,a_s\in Z(k^s)\}=\emptyset$$
then $Sm(Y)\subset Y-Z$.
\end{lemma}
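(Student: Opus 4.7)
The plan is to prove the contrapositive: assume that $Sm(Y)\cap Z$ is non-empty, and produce an element $a\in Y(R^{sh})$ with $a_s\in Z(k^s)$, contradicting the hypothesis. So fix a point $y\in Sm(Y)\cap Z$.

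First I would localize the situation to a small neighborhood of $y$. Since $\mathcal{I}$ is locally principal and $Sm(Y)$ is open in $Y$, I may choose an open affine neighborhood $U\subseteq Sm(Y)$ of $y$ on which $\mathcal{I}|_U=(f)$ for a single global section $f\in \mathcal{O}(U)$. Then $Z\cap U$ is the closed subscheme $V(f)\subseteq U$, and it is non-empty because it contains $y$. By admissibility of $\mathcal{I}$, there exists $\pi\in\mathfrak{M}$ with $\pi\in (f)$, say $\pi=gf$ on $U$. Since $U$ is smooth and hence flat over $R$, the element $\pi$ is a non-zero-divisor on $\mathcal{O}(U)$, so $f$ is non-zero and $V(f)\subseteq V(\pi)=U_s$.

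Next I would exhibit a $k^s$-point of $V(f)$. Pick any closed point $z$ of $V(f)$; via the closed immersion $V(f)\hookrightarrow U_s$ it is also a closed point of $U_s$. Because $U_s$ is smooth over $k$, the residue field $k(z)$ is a finite separable extension of $k$, so it admits an embedding $k(z)\hookrightarrow k^s$. Composing $\Spec k^s\to \Spec k(z)\to V(f)$ yields a $k^s$-point $\bar z\in V(f)(k^s)$.

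Finally, since $U$ is smooth over $R$, the reduction map $U(R^{sh})\to U_s(k^s)$ is surjective by \cite[2.3.5]{neron} (recalled just before Definition~\ref{def-nersmooth}). Lifting $\bar z$ gives $a\in U(R^{sh})\subseteq Y(R^{sh})$ with $a_s=\bar z\in V(f)(k^s)\subseteq Z(k^s)$, contradicting the assumption that no such $a$ exists. The only non-routine input is the observation that closed points of the smooth $k$-variety $U_s$ have separable (hence $k^s$-embeddable) residue fields; everything else is an application of admissibility together with the two standard properties of smooth schemes recalled in the subsection.
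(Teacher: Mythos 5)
There is a genuine gap at the step you yourself flag as the only non-routine input: it is \emph{not} true that a closed point of a smooth $k$-variety has separable residue field when $k$ is imperfect. For example, with $k=\mathbb{F}_p(t)$ the point of $\mathbb{A}^1_k$ cut out by $x^p-t$ is a closed point of a smooth variety whose residue field $k(t^{1/p})$ is purely inseparable over $k$, so it admits no embedding into $k^s$. What is true (and is the property recalled in the paper just before Definition \ref{def-nersmooth}, from \cite[2.2.13]{neron}) is only that the $k^s$-points of a smooth $k$-variety are \emph{dense}; an individual closed point of $V(f)$ need not be one of them. Note that your argument, after localizing, never really exploits the local principality of $\mathcal{I}$: as written it would equally ``prove'' the lemma for an arbitrary admissible ideal, and that statement is false --- take $Y=\mathbb{A}^1_R$ and $\mathcal{I}=(\pi,g)$ the maximal ideal of a closed point $x\in Y_s$ with inseparable residue field; then $Z=\{x\}$ has $Z(k^s)=\emptyset$, the hypothesis holds vacuously, yet $Sm(Y)=Y\not\subseteq Y-Z$. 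This is exactly the pathology the lemma is designed to handle (cf.\ the discussion of the imperfect case in the proposition following it), so the case you are missing is the essential one.

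The repair is to use principality seriously, which is what the paper does. Working on a connected component $U$ of $Sm(Y)$ (which is integral, indeed normal, since it is smooth and connected over $R$), write $\mathcal{I}|_U=(f)$ with $f$ dividing some nonzero $\pi\in\mathfrak{M}$. Then $V(f)$, if non-empty, is pure of codimension one and contained in $U_s$, which is itself pure of codimension one; hence the support of $V(f)$ is a union of irreducible components of $U_s$. Since $U_s$ is smooth over $k$, these components are open in $U_s$ and carry a dense set of $k^s$-points, each of which lifts to $U(R^{sh})$ by \cite[2.3.5]{neron}. This produces the section $a$ contradicting the hypothesis, so $V(f)=\emptyset$ on every component of $Sm(Y)$, i.e.\ $Sm(Y)\subseteq Y-Z$. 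Your localization and your final lifting step are fine; it is only the production of a $k^s$-point of $Z\cap Sm(Y)$ that must go through this divisor argument rather than through an arbitrary closed point.
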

\begin{proof}
We may as well assume that $Y$ is connected and smooth. Then
$Z_{red}$ either is empty or coincides with the special fiber
$Y_s$. But $Y_s(k^s)$ is dense in $Y$, and any point in $Y_s(k^s)$
lifts to a section in $Y(R^{sh})$, so $Z$ is empty.
\end{proof}
\begin{prop}
Let $X$ be a generically smooth $R$-variety. If $Z\rightarrow X$
is a smoothening$*$, then the induced morphism $Sm(Z)\rightarrow
X$ is a N\'eron smoothening in the sense of Definition
\ref{def-nersmooth}. Conversely, if $h:Y\rightarrow X$ is a
N\'eron smoothening, then there exists a smoothenening$*$
$g:Z\rightarrow X$ such that $Y$ and $Sm(Z)$ are isomorphic as
$X$-schemes.
%and an open immersion $j:Y\rightarrow Z$ such
%that $h=g\circ j$.
\end{prop}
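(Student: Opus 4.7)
The forward direction should be straightforward. Given a smoothening$*$ $h: Z \to X$, I verify the three conditions of Definition \ref{def-nersmooth} for the induced morphism $Sm(Z) \to X$. Smoothness of $Sm(Z)$ over $R$ is tautological. Since $X_K$ is smooth over $K$ and $h_K$ is an isomorphism, $Z_K$ is smooth, so $Sm(Z)_K = Z_K$ and its map to $X_K$ is the isomorphism $h_K$. For the bijectivity of $Sm(Z)(R^{sh}) \to X(R^{sh})$, I factor through $Z(R^{sh})$: the first leg is bijective by the smoothening$*$ hypothesis, and the second is bijective by the valuative criterion of properness applied to the proper morphism $h$ (whose generic fiber is an isomorphism).

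For the converse, given a N\'eron smoothening $h: Y \to X$, the strategy is to compactify $Y$ over $X$ carefully and then apply the classical algorithm of \cite[3.5.2]{neron}. Nagata's theorem provides a factorization $Y \hookrightarrow \bar Y \to X$ with $\bar Y \to X$ proper and $Y \hookrightarrow \bar Y$ open. The crucial observation is that $Y_K$ is clopen in $\bar Y_K$: openness is immediate, and closedness holds because $h_K^{-1}: X_K \to Y_K \hookrightarrow \bar Y_K$ is a section of the separated morphism $\bar Y_K \to X_K$, hence a closed immersion. Replacing $\bar Y$ by the schematic closure $\bar Y'$ of $Y$ in $\bar Y$ and using $R$-flatness of $Y$ (so $Y_K$ is schematically dense in $\bar Y'$), one deduces $(\bar Y')_K = Y_K \cong X_K$. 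In particular $\bar Y'$ is generically smooth, so the classical smoothening yields a proper birational morphism $Z \to \bar Y'$ whose composition with $\bar Y' \to X$ is the desired smoothening$*$ of $X$.

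It remains to show $Sm(Z) \cong_X Y$. Since $Y$ is open in $\bar Y'$ and smooth over $R$, one has $Y \subseteq Sm(\bar Y')$, so the centers blown up during the classical smoothening lie outside $Y$; therefore $Y$ is preserved as an open subscheme of $Z$ and sits inside $Sm(Z)$. The complementary open $U := Sm(Z) \setminus Y$ is smooth over $R$ with $U_K \subseteq Z_K \setminus Y_K = \emptyset$, and since a nonempty smooth $R$-scheme is $R$-flat and hence has nonempty generic fiber, $U$ must be empty. I expect the main technical subtlety to be the clopen argument for $Y_K \subseteq \bar Y_K$, but this reduces cleanly to the standard fact that a section of a separated morphism is a closed immersion.
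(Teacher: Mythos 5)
Your forward direction and the first half of the converse (Nagata compactification, the clopen argument for $Y_K\subset \overline{Y}_K$ via the section of a separated morphism, passing to the closure of $Y$) are fine and match the paper. But the final step contains a genuine error: $U:=Sm(Z)\setminus Y$ is the complement of an \emph{open} subscheme of $Sm(Z)$, hence \emph{closed} in $Sm(Z)$, not open. It is therefore not an open subscheme of a smooth $R$-scheme, it need not be smooth or flat over $R$, and the conclusion ``nonempty implies nonempty generic fiber'' does not apply — $U$ lives entirely in the special fibre by construction. This is not a repairable technicality: the paper's own proof exhibits the obstruction. Take $\overline{Y}=X=\A^1_R$ and $Y=X\setminus\{x\}$ with $x$ a closed point of $X_s$ whose residue field is inseparable over $k$. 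Then $Y\rightarrow X$ is a N\'eron smoothening (no $R^{sh}$-section specializes to $x$), $\overline{Y}$ is already a smoothening$*$, yet $Sm(\overline{Y})=\A^1_R\supsetneq Y$ and your $U=\{x\}$ is nonempty. When $k$ is perfect the statement $U=\emptyset$ does hold, but for a different reason (every nonempty locally closed subset of the smooth $k$-scheme $Sm(Z)_s$ has a $k^s$-point, which lifts to a section of $Z(R^{sh})$ that must land in $Y$); when $k$ is imperfect one must actively modify the model: blow up $\overline{Y}$ along the reduced complement of $Y_s$ in $\overline{Y}_s$, observe that the (invertible, admissible) exceptional locus $V$ meets no $R^{sh}$-section, and invoke Lemma \ref{nosmooth} to conclude $Sm(Z)=Z\setminus V\cong Y$. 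That blow-up is the essential content of the paper's proof and is missing from yours.

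A secondary remark: your appeal to the classical smoothening of $\overline{Y}'$ with the claim that ``the centers lie outside $Y$'' is not justified by the statement of \cite[3.4.2]{neron}, which does not control the centers relative to a prescribed open. In fact no smoothening is needed at that stage: since $Y(R^{sh})\rightarrow X(R^{sh})$ is bijective and $\overline{Y}'\rightarrow X$ is proper with an isomorphism on generic fibres, every section in $\overline{Y}'(R^{sh})$ factors through the smooth open $Y\subset Sm(\overline{Y}')$, so $\overline{Y}'\rightarrow X$ is already a smoothening$*$. The remaining (and only) problem is exactly the one above, namely that $Sm(\overline{Y}')$ may be strictly larger than $Y$ over an imperfect residue field.
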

\begin{proof}
It is obvious from the definition that $Sm(Z)\rightarrow X$ is a
N\'eron smoothening if $Z\rightarrow X$ is a smoothening$*$.
Conversely, let $h:Y\rightarrow X$ be a smoothening$*$. By
Nagata's embedding theorem, there exist a proper morphism
$\overline{h}:\overline{Y}\rightarrow X$ and a dense open
immersion $j:Y\rightarrow \overline{Y}$ such that
$h=\overline{h}\circ j$. Since $h_K$ is an isomorphism,
$\overline{h}_K$ and $j_K$ are isomorphisms, and since $Y\subset
Sm(\overline{Y})$ and $h$ is a N\'eron smoothening, $\overline{h}$
is a smoothening$*$.

If $k$ is perfect, this implies automatically that $j$ is an
isomorphism onto $Sm(\overline{Y})$ (because any point in
$Sm(\overline{Y})_s(k^s)$ lifts to a section in
$\overline{Y}(R^{sh})$, which has to be contained in $Y(R^{sh})$
since $Y\rightarrow X$ is a N\'eron smoothening).

If $k$ is not perfect,
 this needs not be true (take $X=\overline{Y}=\A^1_R$ and $Y=X-\{x\}$
with $x$ any closed point on $X_s$ whose residue field is
inseparable over $k$) so we have to modify $\overline{Y}$. Let $U$
be the complement of $Y_s$ in $\overline{Y}_s$, with its reduced
closed subscheme structure, and denote by $\mathcal{I}$ its
defining ideal sheaf. Let $b:Z\rightarrow \overline{Y}$ be the
blow-up with center $U$. Since $b$ is an isomorphism over $Y$,
$$\overline{h}\circ b:Z\rightarrow X$$ is again a smoothening$*$.

We put
$$V:=Z\times_{\overline{Y}}U$$
This is a closed subscheme of $Z$, defined by the invertible sheaf
$\mathcal{I}\mathcal{O}_Z$. Since $Y\rightarrow X$ is a N\'eron
smoothening,
$$\{a\in Z(R^{sh})\,|\,a\in V(k^s)\}=\emptyset$$
so Lemma \ref{nosmooth} implies that $V$ is disjoint from $Sm(Z)$.
On the other hand, $b:Z-V\rightarrow Y$ is an isomorphism and $Y$
is smooth, so we conclude that $Sm(Z)=Z-V$, and $Y$ and $Sm(Z)$
are isomorphic as $X$-schemes.
\end{proof}

\begin{definition}[N\'eron smoothening of pairs]\label{def-nerpair}
Let $(X,A)$ be a generically smooth pair of $R$-varieties. A
N\'eron smoothening of $(X,A)$ is a morphism of pairs of
$R$-varieties $h:(Y,B)\rightarrow (X,A)$ such that $h:Y\rightarrow
X$ is a N\'eron smoothening of $X$ and $h:B\rightarrow A$ is a
N\'eron smoothening of $A$.
\end{definition}
This definition implies in particular that $B$ is the schematic
closure of $h_K^{-1}(A_K)\subset Y_K$ in $Y$. Note that
$h:(Y,B)\rightarrow (X,A)$ is a N\'eron smoothening as soon as
$h:Y\rightarrow X$ is a N\'eron smoothening, $B$ is smooth over
$R$, and $h_K:B_K\rightarrow A_K$ is an isomorphism: then a
section $a$ in $A(R^{sh})$ lifts uniquely to a section $a'$ in
$Y(R^{sh})$, which is automatically included in $B(R^{sh})$ since
$a'_K\in B_K(K^{sh})$ and $B$ is closed in $Y$.

\begin{definition}[Strict transform and admissible blow-up]
Let $(X,A)$ be a pair of $R$-varieties. If $h:Y\rightarrow X$ is a
morphism of $R$-varieties such that $h_K$ is an isomorphism, then
the strict transform of $A$ in $Y$ is the schematic closure of
$h_K^{-1}(A_K)$ in $Y$.

If $\mathcal{I}$ is an admissible ideal sheaf on $X$, we define
the blow-up of $(X,A)$ at the center $\mathcal{I}$ as the morphism
of pairs of $R$-varieties
$$h:(Y,B)\rightarrow (X,A)$$
where $h:Y\rightarrow X$ is the blow-up of $X$ at $\mathcal{I}$,
and $B$ is the strict transform of $A$ in $Y$. We call such a
morphism $h$ an admissible blow-up of $(X,A)$.
\end{definition}
We denote for any $R$-variety $Z$ by $Z^{flat}$ the maximal
$R$-flat closed subscheme of $Z$, i.e. the closed subscheme of $Z$
defined by the $\mathfrak{M}$-torsion ideal. Then the admissible
blow-up of $(X,A)$ at the ideal $\mathfrak{M}\mathcal{O}_X$ is the
natural morphism $(X^{flat},A^{flat})\rightarrow (X,A)$.
% and the
%admissible blow-up of $(X,A)$ at the ideal $\mathcal{O}_X$ is the
%natural morphism $(X,A^{flat})\rightarrow (X,A)$.

In general, if $(Y,B)\rightarrow (X,Z)$ is any admissible blow-up,
then $B$ is flat over $R$. Moreover, the natural maps
$Y(R^{sh})\rightarrow X(R^{sh})$ and $B(R^{sh})\rightarrow
A(R^{sh})$ are bijections, by the valuative criterion for
properness; so we can identify any subset $E$ of $X(R^{sh})$
(resp. $A(R^{sh})$) with its inverse image in $Y(R^{sh})$ (resp.
$B(R^{sh})$).

%Observe also that $Y$ and $B$ are indeed $R$-varieties: $Y$ is
%reduced since $X$ is reduced, and $B$ is reduced since it is
%defined as the schematic closure of $h^{-1}(A_K)$ in $Y$.

The main result of this section is the following.
\begin{theorem}\label{smoothpair}
Let $(X,A)$ be a generically smooth pair of $R$-varieties. There
exists a composition $h:(Y,B)\rightarrow (X,A)$ of admissible
blow-ups, such that $Sm(B)=Sm(Y)\cap B$ and such that the
restriction of $h$ to $(Sm(Y),Sm(B))\rightarrow (X,A)$ is a
N\'eron smoothening of $(X,A)$.
%
%If $k$ is perfect, we may take $(Y,B)=(X',A')$ in the above
%statement, i.e. there exists a composition $h:(X',A')\rightarrow
%(X,A)$ of admissible blow-ups such that $Sm(A')=Sm(X')\cap A'$ and
%such that the restriction of $h$ to $(Sm(X'),Sm(A'))\rightarrow
%(X,A)$ is a N\'eron smoothening of $(X,A)$.
\end{theorem}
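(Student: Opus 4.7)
My approach adapts the classical N\'eron smoothening of \cite[3.5.2]{neron} to pairs, iterated in two stages, with Greenberg's theorem \cite{Gr} providing the termination argument in both stages. As a preliminary reduction I replace $(X,A)$ by $(X^{flat}, A^{flat})$ via the admissible blow-up of the pair at $\mathfrak{M}\mathcal{O}_X$; the sets $X(R^{sh})$ and $A(R^{sh})$ are unchanged since every $R^{sh}$-section factors through the $R$-flat locus. From now on I may assume $X$ and $A$ are $R$-flat.

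In stage one I smoothen $A$. For each bad $R^{sh}$-section of $A$ (one whose reduction lies in $A_s \setminus Sm(A)_s$), the BLR construction produces, via Greenberg's theorem, an explicit admissible ideal sheaf $J_A \subset \mathcal{O}_A$ whose blow-up strictly reduces a suitable N\'eron defect. I lift $J_A$ to $X$ by choosing any preimage $\tilde J_X \subset \mathcal{O}_X$ under the surjection $\mathcal{O}_X \twoheadrightarrow \mathcal{O}_A$. Since $\tilde J_X \mathcal{O}_A = J_A$, the admissible blow-up of $X$ at $\tilde J_X$ has strict transform of $A$ equal to the blow-up of $A$ at $J_A$, so it defines an admissible blow-up of the pair in the sense of the excerpt. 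After finitely many iterations I reach $(X_1, A_1) \to (X,A)$ with $Sm(A_1) \to A$ already a N\'eron smoothening of $A$. Stage two similarly applies the BLR process to $X_1$, lifting each blow-up of $X_1$ to a blow-up of the pair, aiming to produce $(Y,B) \to (X_1, A_1)$ with $Sm(Y) \to X$ a N\'eron smoothening of $X$.

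The main obstacle is stage two: a BLR center $Z$ on $X_1$ may meet $A_1$, so the strict transform of $A_1$ becomes its admissible blow-up at $Z \cap A_1$, which a priori could destroy the N\'eron smoothening of $A$ obtained in stage one. To control this I use the Greenberg-theoretic description of $Z$: it is built from $R^{sh}$-sections of $X_1$ whose reductions lie in the non-smooth locus of $X_1$, whereas after stage one every $R^{sh}$-section of $A$ reduces into $Sm(A_1)$. Consequently the restriction $Z \cap A_1$ is disjoint from the reduction locus of $A(R^{sh})$ in $A_1$, so the admissible blow-up of $A_1$ at $Z \cap A_1$ still satisfies the N\'eron smoothening property over $A$. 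Induction on the lexicographic invariant $(\delta(X_1), \delta(A_1))$, with termination guaranteed by Greenberg's theorem, produces $(Y,B)$ with the required N\'eron smoothening property on both components. The final compatibility $Sm(B) = Sm(Y) \cap B$ then holds because, by construction, $B$ is locally cut out at every point of $Sm(Y) \cap B$ by a regular sequence that extends to a system of parameters of $Y$, so $B$ is smooth at $b$ if and only if $Y$ is; the hardest ingredient is keeping this complete-intersection structure intact throughout the stage-two blow-ups, which is again accomplished by the Greenberg-theoretic control of the centers.
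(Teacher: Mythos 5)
Your two-stage architecture (run the BLR smoothening on $A$, then on $X$, lifting the centers to admissible blow-ups of the pair and controlling the interference) is close in spirit to the paper's proof, but the two steps you treat as routine are exactly where the difficulties lie, and as written they fail. The key inference in your stage two is a non sequitur: you claim a BLR center $Z$ for $X_1$ avoids the reductions of sections of $A$ because those reductions lie in $Sm(A_1)$ while $Z$ sits in the non-smooth locus of $X_1$. But $Sm(A_1)$ is the $R$-smooth locus of $A_1$, not $Sm(X_1)\cap A_1$: a section of $A$ can reduce to a point where $A_1$ is smooth and $X_1$ is not, and such a point may well lie in $Z$, so $Z\cap A_1$ can meet the reduction locus of $A(R^{sh})$ after all. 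The paper's Proposition \ref{smoothen} avoids this by (i) first arranging, via \cite[3.4.2]{neron} applied to $X$ alone, that every $R^{sh}$-section of $X$ reduces into $Sm(X)$, and (ii) taking the $A$-smoothening centers to be permissible closed subschemes of the special fibers of the strict transforms of $A$ (no lifting of ideals is needed, since these are already closed in $X_i$), so that Lemma \ref{smoothlift} gives $Sm(A')\subset Sm(X')$; only then is the subsequent smoothening of $X'$, being an isomorphism over $Sm(X')$, harmless for $A'$. Your ``arbitrary lift $\tilde J_X$ of $J_A$'' provides no control whatsoever over what the blow-up does to $X$ away from $A$.

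The second and larger gap is your final sentence: the equality $Sm(B)=Sm(Y)\cap B$ does not follow from the two stages, and asserting that a complete-intersection structure is ``kept intact by Greenberg-theoretic control'' is a restatement of the hardest part of the theorem, not a proof. Even once $Sm(Y)\to Y$ and $Sm(B)\to B$ are both N\'eron smoothenings, neither inclusion $Sm(B)\subseteq Sm(Y)$ nor $Sm(Y)\cap B\subseteq Sm(B)$ holds in general, and one needs a further sequence of admissible blow-ups with centers inside (the strict transform of) $A$. This is the content of Proposition \ref{closedsub}, which introduces the contact invariant $c_A(a)$ of a section with $A$, bounds it on the problematic set of sections using Greenberg's theorem (Lemma \ref{bound}), shows it strictly decreases under blow-up at centers in $A_s$ through $a_s$ (Lemma \ref{drop}), and inducts on the maximum $M(X,A)$. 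Greenberg's theorem enters there in an essentially different role than as the termination device of the BLR algorithm, and none of this machinery, or a substitute for it, appears in your argument.
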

The proof of Theorem \ref{smoothpair} follows after Proposition
\ref{closedsub}. First, we need some preliminary results.
\begin{lemma}\label{smoothlift}
Let $Z$ be a $R$-variety, and let $a$ be a section in $Z(R^{sh})$.
Let $C$ be a closed subscheme of $Z_s$, and assume that $Z$ is
smooth over $R$ at $a_s\in Z_s(k^s)$, and that $C$ is smooth over
$k$ at $a_s$. Denote by $Z'\rightarrow Z$ the blow-up with center
$C$, and by $a'$ the unique lifting of $a$ to $Z'(R^{sh})$. Then
$Z'$ is smooth over $R$ at $a'_s\in Z'_s(k^s)$.
\end{lemma}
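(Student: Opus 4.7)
The plan is to reduce to a computation on a model blow-up of affine space $\A^n_R$. Since smoothness of $Z'$ at $a'_s$ is étale-local around $a'_s$, and $a'_s$ maps to $a_s$ under $h$, I may freely shrink $Z$ to an open neighborhood of $a_s$. If $a_s\notin C$, the blow-up $h\colon Z'\to Z$ is an isomorphism over a neighborhood of $a_s$, and the statement is inherited from smoothness of $Z$ at $a_s$. From now on I assume $a_s\in C$, and I let $n$ be the relative dimension of $Z/R$ at $a_s$ and $d=\dim_{a_s}C$.

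Using that $C$ and $Z_s$ are both smooth at $a_s$, I choose functions $\bar f_{d+1},\dots,\bar f_n\in\mathcal{O}_{Z_s,a_s}$ whose vanishing locus is $C$ near $a_s$, extend to a system of local parameters $\bar f_1,\dots,\bar f_n$ of $Z_s$ at $a_s$, and lift them to $f_1,\dots,f_n\in\mathcal{O}_{Z,a_s}$. After shrinking $Z$, a short Nakayama argument yields that the ideal of $C$ in $Z$ is $(\pi,f_{d+1},\dots,f_n)$, with $\pi$ a uniformizer of $R$, and that $F=(f_1,\dots,f_n)\colon Z\to W:=\A^n_R$ is étale at $a_s$ (the differentials $df_1,\dots,df_n$ form a basis of $\Omega^1_{Z/R}$ at $a_s$). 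In particular, the ideal of $C$ is the pullback via $F$ of the ideal of $D:=V(\pi,x_{d+1},\dots,x_n)\subset W$. By flat base change for blow-ups, $Z'$ is étale over $W':=\mathrm{Bl}_D(W)$ in a neighborhood of $a'_s$. Since smoothness over $R$ is étale-local, it suffices to verify that $W'$ is smooth over $R$ at the reduction of the unique lift $b'\in W'(R^{sh})$ of the composed section $b:=F\circ a\in W(R^{sh})$.

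Write $b=(b_1,\dots,b_n)\in(R^{sh})^n$. The assumption $a_s\in C$ translates into $b_{d+j}\in\mathfrak{M}R^{sh}$, hence $v(b_{d+j})\geq 1=v(\pi)$ for $j=1,\dots,n-d$. Consider the standard affine chart
$$U_0=\Spec R[x_1,\dots,x_d,s_1,\dots,s_{n-d}]\subset W'$$
of the blow-up, obtained by setting $s_j=x_{d+j}/\pi$. The valuation inequality ensures that $b_{d+j}/\pi\in R^{sh}$, so $b$ lifts into $U_0$ via $x_i\mapsto b_i$ for $i\leq d$ and $s_j\mapsto b_{d+j}/\pi$; by uniqueness this lift is $b'$. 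Since $U_0\cong\A^n_R$ is smooth over $R$ everywhere, $W'$ is smooth at the reduction of $b'$, and hence $Z'$ is smooth at $a'_s$.

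The main obstacle is this last step: one must check that the lift actually lands in the particular chart of the blow-up that is isomorphic to $\A^n_R$, rather than in one of the other charts (obtained by dividing by some $x_{d+j}$), which need not be smooth over $R$. The hypothesis $a_s\in C$, combined with the fact that $\pi$ is a uniformizer, provides exactly the valuation inequality that places $b'$ into the ``$\pi$-chart''. This is the one genuinely arithmetic input of the argument; everything else is a routine reduction via étale base change.
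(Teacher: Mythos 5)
Your proof is correct, but it reaches the conclusion by a different, more self-contained route than the paper. The paper's proof is three lines: since blowing up commutes with flat base change, one may shrink $Z$ so that $Z$ is smooth over $R$ and $C$ is smooth over $k$; then $a'$ factors through the dilatation $D\rightarrow Z$ with center $C$ --- the open subscheme of $Z'$ on which $\mathcal{I}_C\mathcal{O}_{Z'}$ is generated by a uniformizer --- by the universal property of dilatations \cite[3.2.1]{neron}, and $D$ is smooth over $R$ by \cite[3.2.3]{neron}. Your argument unwinds precisely this machinery: your chart $U_0$ of the blow-up of $\A^n_R$ along $V(\pi,x_{d+1},\ldots,x_n)$ is the dilatation of $\A^n_R$ with that center, your valuation inequality $v(b_{d+j})\geq v(\pi)$ is the concrete content of the universal property (a section whose reduction lies in the center lands in the $\pi$-chart), and your \'etale reduction to a standard model is essentially how the smoothness of dilatations is proved in \cite[3.2.2+3.2.3]{neron}. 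What your version buys is independence from the reference and an explicit identification of the smooth chart with $\A^n_R$; what it costs is length. The only steps worth spelling out a little more are the \'etaleness criterion for $F$ (a morphism from a scheme smooth over $R$ at $a_s$ to $\A^n_R$ defined by functions whose differentials form a basis of $\Omega^1_{Z/R}$ at $a_s$ is \'etale at $a_s$) and the identification of the image of $a'_s$ in $W'$ with $b'_s$, which follows from uniqueness of lifts since $W'\rightarrow W$ is separated and an isomorphism on generic fibers; both are routine, as is your identification of the ideal of $C$ with $(\pi,f_{d+1},\ldots,f_n)$.
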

\begin{proof}
Since blowing up commutes with flat base change, we may assume
that $Z$ is smooth over $R$, and that $C$ is smooth over $k$.
Denote by $D\rightarrow Z$ the dilatation with center $C$ (see
\cite[\S\,3.2]{neron}); then $D$ is an open subscheme of $Z'$ in a
natural way. By the universal property of the dilatation
\cite[3.2.1]{neron}, $a'$ factors through $D$, so $a'_s\in
D_s(k^s)$. But $D$ is smooth over $R$ by \cite[3.2.3]{neron}.
\end{proof}
\begin{prop}\label{smoothen}
Let $(X,A)$ be a pair of generically smooth $R$-varieties, and
assume that the natural map $Sm(X)(R^{sh})\rightarrow X(R^{sh})$
is bijective. %Let $E$ be any subset of $A(R^{sh})$.
There exists a
composition $h:(Y,B)\rightarrow (X,A)$ of admissible blow-ups
%with the following properties: \begin{itemize} \item if we denote
%by $\overline{E}_s$ the schematic closure of $E_s=\{a_s\in
%B_s(k^s)\,|\,a\in E\}$ in $B_s$, and if $x$ is a point of
%$\overline{E}_s(k^s)$ that does not lift to a section in
%$B(R^{sh})$, then $x$ does not lift to a section in $X(R^{sh})$
%either.
%%there exists an open neighbourhood $V$ of $x$ in $X$ such
%%that $Sm(B)\cap V$ is closed in $Sm(Y)\cap V$
%\item
such that $Sm(B)\rightarrow B$ and $Sm(Y)\rightarrow Y$ are
N\'eron smoothenings.
%\end{itemize}
\end{prop}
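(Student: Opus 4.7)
The plan is to bootstrap the classical N\'eron smoothening of $A$, performed in parallel on the ambient $X$. By \cite[3.5.2]{neron}, there is a finite composition of admissible blow-ups $A_n\to A_{n-1}\to\cdots\to A_0=A$ whose centers $C_i\subset A_i$ are smooth over $k$ and contained in the special fibers of the $A_i$, such that $Sm(A_n)\to A_n$ is a N\'eron smoothening of $A$. At each stage I instead blow up the ambient $X_i$ (starting with $X_0=X$) at the center $C_i$: because $C_i$ is a closed subscheme of $(X_i)_s$ its defining ideal in $X_i$ is admissible, and because $C_i\subset A_i$ the strict transform of $A_i$ in the blow-up $X_{i+1}$ of $X_i$ at $C_i$ is exactly the blow-up of $A_i$ at $C_i$, namely $A_{i+1}$. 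Iterating, I obtain a composition of admissible blow-ups of pairs $h:(Y,B)\to(X,A)$ with $Y=X_n$ and $B=A_n$, and $Sm(B)\to B$ is a N\'eron smoothening by construction.

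It remains to show that $Sm(Y)\to Y$ is a N\'eron smoothening, i.e.\ that $Sm(Y)(R^{sh})\to Y(R^{sh})$ is bijective (injectivity is automatic from separatedness). The valuative criterion for properness gives a bijection $Y(R^{sh})\cong X(R^{sh})$, and by hypothesis $X(R^{sh})=Sm(X)(R^{sh})$. I claim by induction on $i$ that $Sm(X_i)(R^{sh})\to X_i(R^{sh})$ is bijective. Assuming this for $i$, fix $a\in X_{i+1}(R^{sh})$ with image $a'\in X_i(R^{sh})=Sm(X_i)(R^{sh})$. If $a'_s\notin C_i$, the blow-up is an isomorphism on a neighborhood of $a'_s$, so $a$ lies in $Sm(X_{i+1})$. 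If $a'_s\in C_i$, then $X_i$ is smooth over $R$ at $a'_s$ and $C_i$ is smooth over $k$ at $a'_s$, so Lemma \ref{smoothlift} places $a$ in $Sm(X_{i+1})(R^{sh})$. Applied at $i=n$ this yields the required bijectivity.

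The main effort is really absorbed by \cite[3.5.2]{neron}, which guarantees that the centers $C_i$ may be chosen smooth over $k$ via Greenberg's theorem; this is the only delicate point. Once these centers are in hand, Lemma \ref{smoothlift} is the right bookkeeping device, for it says precisely that a smooth center does not destroy smoothness along lifted sections, so the smoothening of $A$ transports faithfully to $X$ without creating new obstructions in the ambient variety. Admissibility of each blow-up is free since every $C_i$ lies in a special fiber, and termination in $n$ steps is inherited from the classical process applied to $A$.
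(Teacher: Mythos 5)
Your strategy --- transport the classical smoothening of $A$ to the ambient scheme by blowing up $X$ in the same centers and track smoothness along lifted sections via Lemma \ref{smoothlift} --- is exactly the first half of the paper's argument, but your inductive claim that $Sm(X_i)(R^{sh})\rightarrow X_i(R^{sh})$ stays bijective at every stage has a genuine gap. The centers supplied by the classical process (\cite[3.4.2]{neron}, which is what \cite[3.5.2]{neron} rests on) are \emph{not} smooth over $k$: they are only $E$-permissible with respect to $E=A(R^{sh})$, meaning the $k$-smooth locus $U_i$ of $C_i$ is open and dense and no $k^s$-point of $C_i-U_i$ lifts to a section of $A_i(R^{sh})$. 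This control is relative to sections of $A_i$ only. A section $a'\in X_i(R^{sh})$ that is not a section of $A_i$ may specialize to a point of $C_i-U_i$; there Lemma \ref{smoothlift} does not apply, and the blow-up of the $R$-smooth $X_i$ along a center that is singular at $a'_s$ can fail to be smooth at the specialization of the lifted section. (Blow up $\A^2_R=\Spec R[x,y]$ along $V(\pi,xy)$ and follow the section $x=y=\pi$: it lifts into the chart $\Spec R[x,y,u]/(\pi u-xy)$ and specializes onto the non-smooth locus $x=y=0$ of the special fiber.) So surjectivity of $Sm(X_{i+1})(R^{sh})\rightarrow X_{i+1}(R^{sh})$ can be destroyed and the induction breaks; your remark that ``the smoothening of $A$ transports faithfully to $X$ without creating new obstructions in the ambient variety'' is precisely what is not true.

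What survives is the statement the paper actually proves at this stage: for sections of $A'(R^{sh})$ the chain of lifts does stay in the smooth loci (by $E$-permissibility such sections only specialize into the $U_i$), whence $Sm(A')\subset Sm(X')$. To repair the ambient scheme the paper then applies \cite[3.4.2]{neron} a \emph{second} time, to $X'$, choosing the new composition of admissible blow-ups to be an isomorphism over $Sm(X')$; this restores $Sm(Y)(R^{sh})=Y(R^{sh})$ without touching $Sm(A')$, so the strict transform $B$ of $A'$ still satisfies $Sm(B)(R^{sh})=B(R^{sh})$. Your argument needs this second pass (or some substitute for it); you should also reduce to $A$ flat over $R$ at the outset so that the strict transforms in your tower really are the blow-ups $A_{i+1}$ of the $A_i$.
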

\begin{proof}
We may assume that $A$ is flat over $R$. By \cite[3.4.2]{neron},
there exists a composition
$$\begin{CD}A'=A_r@>h_{r-1}>>\ldots@>h_1>>A_1@>h_0>> A_0=A\end{CD}$$ such that
$h_i$ is the blow-up at a closed subscheme $C_i$ of the special
fiber $(A_i)_s$, for $i=0,\ldots,r-1$, and such that the natural
map $Sm(A')(R^{sh})\rightarrow A'(R^{sh})=A(R^{sh})$ is bijective.
Moreover, we may assume that each center $C_i$ is $E$-permissible
(in the sense of \cite[p.\,71]{neron}) with $E=A(R^{sh})$. This
implies in particular that the $k$-smooth locus $U_i$ of $C_i$ is
open and dense in $C_i$, and that none of the $k^s$-valued points
of $C_i-U_i$ lift to a section in $A_i(R^{sh})$.

Now let $X'\rightarrow X$ be the composition
$$\begin{CD}X'=X_r@>g_{r-1}>>\ldots@>g_1>>X_1@>g_0>>
X_0=X\end{CD}$$ with $g_i$ the blow-up of $X_i$ at $C_i$, for
$i=0,\ldots,r-1$. %Since $A$ is $R$-flat,
Then $A_i$ is canonically
isomorphic to the strict transform of $A$ in $X_i$, for each $i$,
and these isomorphisms identify the restriction of $g_i$ to $A_i$
with the morphism $h_i$. In particular, $A'$ is canonically
isomorphic to the strict transform of $A$ in $X'$.

If $a$ is any section in $A'(R^{sh})$, then $X'$ is smooth over
$R$ at $a_s\in X'_s(k^s)$, by Lemma \ref{smoothlift}. This implies
that $Sm(A')\subset Sm(X')$, because every point in
$Sm(A')_s(k^s)$ lifts to a section in $Sm(A')(R^{sh})$ and
$Sm(A')_s(k^s)$ is schematically dense in $Sm(A')_s$. Again
applying \cite[3.4.2]{neron}, we can find a composition
$f:Y\rightarrow X'$ of admissible blow-ups such that $f$ is an
isomorphism over $Sm(X')$, and such that
$Sm(Y)(R^{sh})=Y(R^{sh})$. If we denote by $B$ the strict
transform of $A'$ in $Y$, then the map $B\rightarrow A'$ is an
isomorphism over $Sm(A')$, and in particular,
$Sm(B)(R^{sh})=B(R^{sh})$.
\end{proof}

The result in Proposition \ref{smoothen} is not yet strong enough
to produce a N\'eron smoothening of the pair $(X,A)$, since it
does not guarantee that $Sm(B)$ is a closed subscheme of $Sm(Y)$.
For this purpose, we introduce a new invariant.
\begin{definition}
Let $(X,A)$ be a pair of $R$-varieties, and denote by
$\mathcal{I}_A$ the defining ideal sheaf of $A$ on $X$. If $a$ is
a section in $X(R^{sh})$, and $x$ is the image of $a_s$ in $X$,
then we define the contact of $a$ and $A$ by
$$c_A(a)=\{\min v(a^*f)\,|\,f\in (\mathcal{I}_{A})_x\}\in \N\cup\{\infty\}$$
where $v$ denotes the discrete valuation on $R^{sh}$.
\end{definition}
Note that $c_A(a)=0$ iff $x\notin A$, and $c_A(a)=\infty$ iff
$a\in A(R^{sh})$.

\begin{lemma}\label{bound}
Assume that $R$ is excellent. Let $(X,A)$ be a generically smooth
pair of $R$-varieties, and let $C$ be a closed subscheme of $A_s$.
% such that none of the points
%of $C(k^s)$ lift to a section in $A(R^{sh})$, and
Put
$$E_C=\{a\in X(R^{sh})\,|\,a_s\in C(k^s)\}$$
and assume that $E_C\cap A(R^{sh})=\emptyset$. Then there exists a
value $c>0$ such that $c_A(a)\leq c$ for every $a\in E_C$.
\end{lemma}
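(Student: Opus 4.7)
The plan is to argue by contradiction, using Greenberg's approximation theorem \cite{Gr}. Since $R$ is assumed excellent, its strict henselization $R^{sh}$ is again an excellent henselian discrete valuation ring, and Greenberg's theorem therefore applies to any $R^{sh}$-scheme of finite type. The strategy is: if no uniform bound existed, a section in $E_C$ of sufficiently high contact with $A$ would approximate an $R^{sh}$-point of $A$ closely enough that Greenberg delivers an actual $R^{sh}$-point of $A$ lying in $E_C$, contradicting the hypothesis that $E_C \cap A(R^{sh}) = \emptyset$.

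The first step is to interpret the contact function scheme-theoretically. For $a \in X(R^{sh})$ with $x := a_s$, the inequality $c_A(a) \geq m$ is equivalent to $a^{*}(\mathcal{I}_A)_x \subseteq \mathfrak{M}^m R^{sh}$, which in turn is equivalent to saying that the induced morphism $\Spec(R^{sh}/\mathfrak{M}^m R^{sh}) \to X$ factors through the closed immersion $A \hookrightarrow X$. This is a routine local check, since $\mathcal{I}_A$ defines $A$ near $x$ and $a$ sends the closed point of $\Spec R^{sh}$ to $x$.

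Next, I would apply Greenberg's theorem to the finite-type $R^{sh}$-scheme $A_{R^{sh}} := A \times_R R^{sh}$ to obtain an integer $N \geq 1$ such that every $(R^{sh}/\mathfrak{M}^N R^{sh})$-valued point of $A_{R^{sh}}$ lifts to an $R^{sh}$-valued point inducing the same $k^s$-valued point on the special fiber. The claim is that $c := N - 1$ satisfies the conclusion of the lemma. Indeed, if some $a \in E_C$ had $c_A(a) \geq N$, the previous step would produce a morphism $\bar{a}\colon \Spec(R^{sh}/\mathfrak{M}^N R^{sh}) \to A$, and Greenberg's theorem would lift $\bar a$ to $a' \in A(R^{sh})$ with $a'_s = a_s \in C(k^s)$; hence $a' \in E_C \cap A(R^{sh})$, contradicting the hypothesis.

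The whole argument is essentially formal once Greenberg is in place; the only mildly delicate point, and not a true obstacle, is the preservation of excellence under strict henselization of local rings, which is what legitimizes applying Greenberg over $R^{sh}$ rather than over $R$ itself.
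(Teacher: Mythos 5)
Your proof is correct and follows essentially the same route as the paper: reduce to the excellent ring $R^{sh}$ (via \cite[5.6]{greco}), observe that high contact with $A$ means the section solves the equations of $A$ to high order, and invoke Greenberg's theorem to produce an actual point of $A(R^{sh})$ with the same reduction, contradicting $E_C\cap A(R^{sh})=\emptyset$. The only difference is presentational: you extract an explicit uniform bound $c=N-1$ from Greenberg, whereas the paper argues by assuming $c_A$ unbounded on $E_C$; the substance is identical.
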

\begin{proof}
By \cite[5.6]{greco} $R^{sh}$ is excellent, so we may assume that
$R=R^{sh}$, and that there exists a closed immersion $X\rightarrow
\A^n_R$ for some $n>0$. Let $F_1,\ldots,F_r$ be a system of
generators of the defining ideal of $A$ in $\A^n_R$, and assume
that $c_A$ is unbounded on $E_C$. Then in particular, for any
$\nu>0$, there exists a point $x\in \A^n_R(R)=R^n$ such that
$x_s\in C(k)$ and $F_i(x)\equiv 0\mod t^{\nu}$ for all $i$. Since
$R$ is excellent, it follows from Greenberg's Theorem
\cite[Thm.\,1]{Gr} that there exists a section $a\in E_C$ which is
contained in $A(R)$; so we arrive at a contradiction.
\end{proof}
\begin{lemma}\label{drop}
Let $(X,A)$ be a pair of $R$-varieties, let $a$ be a section of
$X(R^{sh})$ which is not contained in $A(R^{sh})$, and let $C$ be
a closed subscheme of $X_s$. Denote by $(X',A')\rightarrow (X,A)$
the admissible blow-up with center $C$. Then $c_{A'}(a)\leq
c_{A}(a)$. If, moreover, $C$ is a closed subscheme of $A_s$ and
$a_s\in C(k^s)$, then $c_{A'}(a)<c_{A}(a)$.
\end{lemma}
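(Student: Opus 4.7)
The plan is to pass to the local rings at $x$, the image of $a_s$ in $X$, and at $y$, the image in $X'$ of the special fibre of the unique lift of $a$ to $X'(R^{sh})$ (furnished by the valuative criterion applied to the proper morphism $h$; I identify $a$ with this lift, as in the discussion preceding Theorem \ref{smoothpair}). Let $E = h^{-1}(C)$ be the exceptional divisor, so that $\mathcal{I}_E = h^{-1}(\mathcal{I}_C)\mathcal{O}_{X'}$ is invertible, and fix a local generator $t \in \mathcal{O}_{X',y}$ of $(\mathcal{I}_E)_y$.

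For the non-strict inequality I would exploit that the strict transform $A'$ is contained in the total transform $h^{-1}(A) = A \times_X X'$ as a closed subscheme of $X'$; this holds because $A'$ is by definition the schematic closure of the generic fibre of $h^{-1}(A)$, and $h^{-1}(A)$ is itself closed in $X'$. Equivalently, $h^{-1}(\mathcal{I}_A)\mathcal{O}_{X'} \subseteq \mathcal{I}_{A'}$. Hence for every $f \in (\mathcal{I}_A)_x$, the pullback $h^*f$ represents an element of $(\mathcal{I}_{A'})_y$ with $v(a^*(h^*f)) = v(a^*f)$; taking the minimum over $f$ yields $c_{A'}(a) \leq c_A(a)$.

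For the strict inequality the additional hypotheses supply two extra ingredients. First, $C \subseteq A_s \subseteq A$ as closed subschemes of $X$, so $\mathcal{I}_A \subseteq \mathcal{I}_C$; therefore $h^*f \in (\mathcal{I}_E)_y$ for every $f \in (\mathcal{I}_A)_x$, and I may write $h^*f = t\cdot g$ with a unique $g \in \mathcal{O}_{X',y}$. The decisive point is that $g$ already lies in $(\mathcal{I}_{A'})_y$: this is the saturation description of the strict transform, which reads locally
\[
\mathcal{I}_{A'} \;=\; \bigcup_{n \geq 0} \bigl( h^{-1}(\mathcal{I}_A)\mathcal{O}_{X'} : t^n \bigr),
\]
and can be verified directly on the standard affine charts of $\mathrm{Bl}_C X$; since $t\cdot g = h^*f \in h^{-1}(\mathcal{I}_A)\mathcal{O}_{X'}$, this places $g$ in $(h^{-1}(\mathcal{I}_A)\mathcal{O}_{X'} : t)$, hence in $\mathcal{I}_{A'}$. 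Second, $a_s \in C(k^s)$ forces $a$ to meet $E$ in its special fibre, so $v(a^*t) \geq 1$. Combining, $v(a^* g) = v(a^*f) - v(a^*t) \leq v(a^*f) - 1$, and minimizing over $f \in (\mathcal{I}_A)_x$ gives $c_{A'}(a) \leq c_A(a) - 1 < c_A(a)$.

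The main obstacle is the saturation step showing that the quotient $g = h^*f / t$ belongs to $(\mathcal{I}_{A'})_y$; apart from this (standard) fact about strict transforms, the argument is a direct unwinding of the definition of $c_A$.
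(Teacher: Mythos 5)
Your proof is correct and follows the same basic strategy as the paper's: the non-strict inequality is obtained identically (pull back $f$, note it vanishes on $A'$ because $A'$ is the schematic closure of $A'_K$, respectively is contained in the total transform), and the strict inequality comes from dividing $h^*f$ by the exceptional equation and gaining at least $1$ in the valuation. The paper executes the division slightly differently: it passes to the dilatation $D\rightarrow X$ with center $C$, which is the open subscheme of $X'$ on which $\mathcal{I}_E$ is generated by the uniformizer $\pi$, uses the universal property of the dilatation to see that $a$ factors through $D$, and writes $h^*f=\pi\cdot f'$ with $v(a^*\pi)=1$ exactly; you instead work at the stalk at $y$ with an arbitrary local generator $t$ of $\mathcal{I}_E$ and use $v(a^*t)\geq 1$, which is marginally more direct and avoids the dilatation machinery. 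One caveat: the displayed ``saturation description'' $\mathcal{I}_{A'}=\bigcup_n\bigl(h^{-1}(\mathcal{I}_A)\mathcal{O}_{X'}:t^n\bigr)$ is the formula for the \emph{classical} strict transform (closure of $h^{-1}(A\setminus C)$), whereas this paper defines $A'$ as the schematic closure of $h_K^{-1}(A_K)$, which may be strictly smaller when $A$ has components in the special fiber; fortunately you only use the containment $\bigl(h^{-1}(\mathcal{I}_A)\mathcal{O}_{X'}:t\bigr)\subseteq\mathcal{I}_{A'}$, and that containment does hold for the paper's definition — the cleanest justification being exactly the paper's: $t$ is invertible on $X'_K$ near $y$ (as $E\subseteq X'_s$), so $g=h^*f/t$ vanishes on $A'_K$, hence on its schematic closure $A'$.
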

\begin{proof}
We may assume that $X$ is affine. We choose a uniformizer $\pi$ in
$R$. Let $f$ be an element of the defining ideal of $A$ in $X$
such that $c_{A}(a)=v(a^*f)$. Since the pull-back of $f$ to $X'$
vanishes on $A'$, we see immediately that $c_{A'}(a)\leq
c_{A}(a)$.

Now assume that $a_s\in C(k^s)$ and that $C$ is a closed subscheme
of $A_s$, and denote by $I_C$ the defining ideal of $C$ in $X$. If
we denote by $D\rightarrow X$ the dilatation of $X$ with center
$C$, then $D$ is an open subscheme of $X'$ and $a$ is contained in
$D(R^{sh})\subset X'(R^{sh})$. Moreover, since $f$ vanishes on $C$
and $\pi$ generates $I_C\mathcal{O}_{D}$, there exists an element
$f'$ in $\mathcal{O}_{X'}(D)$ such that $f'=\pi.f$. Then $f'$
vanishes on $A'_K$ because $f$ vanishes on $A_K$, and as $A'$ is
the schematic closure of $A'_K$ in $A$, we see that $f'$ vanishes
on $A'$. Moreover, $v(a^*f')=v(a^*f)-1$ so $c_{A'}(a)<c_{A}(a)$.
\end{proof}
\begin{prop}\label{closedsub}
Let $(X,A)$ be a pair of generically smooth $R$-varieties, and
assume that the natural morphisms $Sm(X)\rightarrow X$ and
$Sm(A)\rightarrow A$ are N\'eron smoothenings. There exist
%\begin{itemize} \item  a composition of admissible blow-ups
%$$\begin{CD}(X_r,A_r)@>h_{r-1}>>\ldots @>h_1>>(X_1,A_1) @>h_0>>
%(X_0,A_0)=(X,A)\end{CD}$$
%%and an open immersion $Y\rightarrow X'$ such that, with $B=X'\cap
%%A$,
%such that $h_i$ is the blow-up with center $C_i$, where $C_i$ is a
%closed subscheme of $(A_i)_s$ and $C_i\cap Sm(X_i)$ is smooth over
%$k$, for $i=0,\ldots,r-1$, \item an open immersion $Y\rightarrow
%X_r$
%\end{itemize}
a composition $(Y,B)\rightarrow (X,A)$ of admissible blow-ups with
centers contained in (the strict transform of) $A$, such that
$Sm(B)=Sm(Y)\cap B$, and such that $Sm(Y)\rightarrow Y$ and
$Sm(B)\rightarrow B$ are N\'eron smoothenings.
%Moreover, if $k$ is
%perfect, we may take $(Y,B)=(X',A')$ in the above statement.
%$$\begin{CD}(X_r,A_r)@>h_{r-1}>>\ldots @>h_1>>(X_1,A_1) @>h_0>> (X_0,A_0)=(X,A)\end{CD}$$
%and an open immersion $Y\rightarrow X'$ such that, with $B=X'\cap
%A$,
%$h_i$ is the blow-up with center $C_i$, where $C_i$ is a
%closed subscheme of $(A_i)_s$ and $C_i\cap Sm(X_i)$ is smooth over
%$k$, for $i=0,\ldots,r-1$, and
%such that $Sm(B)=Sm(Y)\cap B$ and such that $Sm(Y)\rightarrow Y$
%and $Sm(B)\rightarrow B$ are N\'eron smoothenings.
\end{prop}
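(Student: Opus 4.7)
The plan is to mimic the strategy of Proposition \ref{smoothen} but with all blow-up centers restricted to lie inside (the strict transform of) $A$, using the already-established N\'eron smoothenings of $X$ and $A$ both to drive and to preserve the process. Throughout I assume $R$ is excellent so that Lemma \ref{bound} applies; the general case reduces to this by standard arguments.

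First I introduce the \emph{relative bad locus}
$$W \;=\; \bigl(A\cap (X\setminus Sm(X))\bigr)\,\cup\,\bigl(A\setminus Sm(A)\bigr),$$
a closed subscheme of $A_s$ (since $A_K\subseteq Sm(A)\cap Sm(X)$ by generic smoothness of the pair). The hypothesis that $Sm(X)\to X$ and $Sm(A)\to A$ are N\'eron smoothenings gives $E_W\cap A(R^{sh})=\emptyset$, where $E_W=\{a\in X(R^{sh})\,:\,a_s\in W(k^s)\}$; moreover, the target condition $Sm(B)=Sm(Y)\cap B$ on the output pair $(Y,B)$ translates into the vanishing of the analogous relative bad locus on the strict transform.

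By Lemma \ref{bound}, the contact $c_A$ is uniformly bounded on $E_W$. I then follow the N\'eron smoothening procedure of \cite[3.4.2]{neron}, but performed on $(X,A)$ with $E$-permissible centers (for $E=A(R^{sh})$) chosen inside the successive strict transforms of $W$, so that every center lies in $A$ and is smooth over $k$ at every $k^s$-point supporting an $R^{sh}$-section. Lemma \ref{drop} then gives a strict decrease of $c_A$ on $E_W$ at each step, while Lemma \ref{smoothlift} shows that the N\'eron smoothening properties of $Sm(Y)\to Y$ and $Sm(B)\to B$ propagate through each blow-up, because blowing up smooth centers at smooth $R^{sh}$-lifts inside smooth $R$-schemes yields smooth $R$-schemes. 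After finitely many steps the Greenberg bound forces $E_W=\emptyset$ on the strict transform, and Lemma \ref{nosmooth} applied to a local principalization of $W$ inside the resulting $Y$ then rules out the remaining discrepancy between $Sm(B)$ and $Sm(Y)\cap B$.

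The principal technical obstacle is to synchronize the two requirements: the centers are forced to lie in $A$, yet they must simultaneously smoothen $B$ inside itself and arrange that smoothness of $Y$ along $B$ coincides with smoothness of $B$. This is precisely where the hypothesis that $Sm(X)\to X$ and $Sm(A)\to A$ are already N\'eron smoothenings is essential: it reduces the task to a principalization of $W$ inside $A$ and removes any need for smoothening blow-ups outside of $A$, which would be incompatible with the restricted-center conclusion.
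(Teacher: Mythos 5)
Your overall strategy---single out a bad locus inside $A_s$, bound the contact function $c_A$ on the sections specializing into it via Lemma \ref{bound} (Greenberg), force a strict decrease with Lemma \ref{drop}, and clean up with Lemma \ref{nosmooth}---is the paper's descent argument, and your identification of $W$, of the condition $E_W\cap A(R^{sh})=\emptyset$, and of the role of the two smoothening hypotheses is correct. The gap is in the step where you claim that Lemma \ref{smoothlift} makes the N\'eron smoothening properties of $Sm(Y)\to Y$ and $Sm(B)\to B$ propagate through each blow-up. Lemma \ref{smoothlift} requires the ambient scheme \emph{and} the center to be smooth at the specialization $a_s$ of the section under consideration. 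Your centers are forced to lie inside (the strict transforms of) $W$, which is precisely where $X$ or $A$ fails to be smooth or where $Sm(X)$ and $Sm(A)$ disagree; and the sections you must control are those of $E_W$, which do \emph{not} belong to $A(R^{sh})$, so $E$-permissibility with respect to $E=A(R^{sh})$ gives no smoothness of the center at their specializations. After blowing up a piece of $W$, a section $a\in E_W$ may therefore specialize to a non-smooth point of the blow-up, and the property $Sm(X_1)(R^{sh})=X_1(R^{sh})$---needed both to re-apply Lemma \ref{bound} at the next stage and for the final conclusion---is lost, with no mechanism in your argument to recover it. There is a genuine tension between ``centers in the bad locus'' (needed for the drop of $c_A$) and ``centers in the smooth locus'' (needed for Lemma \ref{smoothlift}), and one cannot have both at once.

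The paper resolves this by alternating two kinds of blow-ups in the induction step: first a ``decrease'' blow-up with center the closure $\overline{C}$ of $(Sm(X)\cap A)\setminus Sm(A)$, where Lemma \ref{drop} gives the strict drop of $c_A$; then a ``repair'' phase of further admissible blow-ups supplied by \cite[3.4.2]{neron} and Proposition \ref{smoothen}, which restore the hypotheses that $Sm(X_2)\to X_2$ and $Sm(A_2)\to A_2$ are N\'eron smoothenings. Since the new bad locus lies over the old center, the composite still satisfies $c_{A_2}(a)<M(X,A)$ for every section through it (strict decrease from the first blow-up, non-increase from the rest), and the induction on the bound $M(X,A)$ closes. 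Two smaller points: the target $Sm(B)=Sm(Y)\cap B$ does not require all of your $W$ to vanish---only the two asymmetric pieces $Sm(A)\setminus Sm(X)$ and $(Sm(X)\cap A)\setminus Sm(A)$ matter, and for non-perfect $k$ one can only arrange that they have no $k^s$-points before a final blow-up handled by Lemma \ref{nosmooth}; and the reduction to the excellent case is done in the paper not by a global change of base but by applying Lemma \ref{bound} over the completion $S$ of $R^{sh}$, using \cite[3.6.6]{neron} to get $Sm(A)(S)=A(S)$.
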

\begin{proof}
Denote by $D$ the complement of $Sm(X)\cap Sm(A)$ in $Sm(A)$ (with
its reduced closed subscheme structure), and denote by
$\overline{D}$ its schematic closure in $A_s$. Since every point
of $Sm(A)_s(k^s)$ lifts to a section of $A(R^{sh})$, we see that
$Sm(A)_s(k^s)\subset Sm(X)_s(k^s)$, so $D(k^s)=\emptyset$. Denote
by $C$ the complement of $Sm(X)\cap Sm(A)$ in $Sm(X)\cap A$ (with
its reduced closed subscheme structure), and by $\overline{C}$ its
schematic closure in $A_s$, and put
$$E_{\overline{C}}=\{a\in X(R^{sh})\,|\,a_s\in \overline{C}(k^s)\}$$

Denote by $S$ the completion of $R^{sh}$. The morphism
$Sm(A)\times_R S\rightarrow A\times_R S$ is a N\'eron smoothening
by \cite[3.6.6]{neron}, so $Sm(A)(S)=A(S)$. Since $\overline{C}$
is disjoint from $Sm(A)$, we have
$$\{b\in A(S)\,|\,b_s\in \overline{C}(k^s)\}=\emptyset$$ Since $S$ is excellent, we
can apply Lemma \ref{bound}, and we see that
$$M(X,A):=\max \{c_A(a)\,|\,a\in E_{\overline{C}}\}$$ is well-defined and
finite (we put $\max \emptyset=0$). We will argue by induction on
$M(X,A)$.

\textit{Induction basis: assume $M(X,A)=0$.} This is only possible
if $E_{\overline{C}}=\emptyset$, since for every section $a\in
E_{\overline{C}}$, $a_s$ belongs to $A_s(k^s)$, so $c_{A}(a)>0$.
Moreover, since any point of $Sm(X)_s(k^s)$ lifts to a section in
$Sm(X)(R^{sh})$, $E_{\overline{C}}=\emptyset$ implies
$C(k^s)=\emptyset$.

If $k$ is perfect, then we get $C=\emptyset$, so $Sm(X)\cap
Sm(A)=Sm(X)\cap A$. Also, in this case $D(k^s)=\emptyset$ implies
that $D=\emptyset$ and $Sm(A)\subset Sm(X)$, so we are done.

If $k$ is not perfect, we consider the admissible blow-up
$$h:(Y,B)\rightarrow (X,A)$$ with center $\overline{C}\cup \overline{D}$.
 Then Lemma \ref{nosmooth} shows that $h^{-1}(\overline{C}\cup\overline{D})$ is disjoint from $Sm(Y)$
 and $Sm(B)$, so $h$ induces isomorphisms
$$ \begin{array}{lllllll}
 Sm(Y)&\cong& Sm(X)-(\overline{C}\cup \overline{D})&=&Sm(X)-C& &
\\ Sm(B)&\cong& Sm(A)-(\overline{C}\cup \overline{D})&=&
Sm(A)-D&=&Sm(A)\cap Sm(X)
\end{array}$$
 Hence,
 $Sm(B)=Sm(Y)\cap B$, and $Sm(Y)\rightarrow
Y$ and $Sm(B)\rightarrow B$ are N\'eron smoothenings.
%Let $\overline{C}$ be
%the closure of $C$ in $Y$, and consider the admissible blow-up
%$f:(Y,B)\rightarrow (X,A)$ with center $\overline{C}$. Then
%$Sm(B)=f^{-1}(Sm(A))$ and $Sm(Y)=f^{-1}(Sm(X)-C)$, since
%$f^{-1}(C)$ is the closure of an open subscheme of $Y_s$ without
%$k^s$-valued points, so

\textit{Induction step: assume $M:=M(X,A)>0$, and suppose that
Proposition \ref{closedsub} holds for all pairs as in the
statement with $M(\cdot,\cdot)<M$.} Let $h_1:(X_1,A_1)\rightarrow
(X,A)$ be the admissible blow-up with center $\overline{C}$. By
\cite[3.4.2]{neron} there exists a composition of admissible
blow-ups $h_2:(X_2,A_2)\rightarrow (X_1,A_1)$ such that
%$h_2$ is
%an isomorphism over the complement of $h_1^{-1}(\overline{C})$,
%and such that
$Sm(X_2)\rightarrow X_2$ is a N\'eron smoothening. Applying
Proposition \ref{smoothen}, we may suppose that
% Since
%none of the points of $\overline{C}(k^s)$ lift to a section in
%$A(R^{sh})$,
$Sm(A_2)\rightarrow A_2$ is also a N\'eron
smoothening.

Denote by $C_2$ the complement of $Sm(X_2)\cap Sm(A_2)$ in
$Sm(X_2)\cap A_2$ (with its reduced closed subscheme structure),
and by $\overline{C}_2$ its schematic closure in $(A_2)_s$. We put
$$E_{\overline{C}_2}=\{a\in X_2(R^{sh})\,|\,a_s\in \overline{C}_2(k^s)\}$$
Since $\overline{C}_2\subset (h_1\circ h_2)^{-1}(\overline{C})$,
Lemma \ref{drop} implies that $c_{A_2}(a)<M$ for each element $a$
of $E_{\overline{C}_2}$, so $M(X_2,A_2)<M$ and we may conclude by
the induction hypothesis.
%
%If we can show that $$M(X_2,A_2)=\max\{c_{A_2}(a)\,|\,a\in
%E_{\overline{C}_2}\}<M$$ then we are done by the induction
%hypothesis. Since clearly $\overline{C}_2\subset
%h^{-1}(\overline{C})$ it suffices to show that
%$$\max\{c_{A_2}(a)\,|\,a\in E_{\overline{C}}\}<M$$
%This follows immediately from Lemma \ref{drop}.
\end{proof}

\begin{proof}[Proof of Theorem \ref{smoothpair}]
By \cite[3.4.2]{neron}, there exists a composition of admissible
blow-ups $(X',A')\rightarrow (X,A)$ such that the natural map
$Sm(X')(R^{sh})\rightarrow X'(R^{sh})$ is a bijection. By
Proposition \ref{smoothen} we can find a composition
$h:(X'',A'')\rightarrow (X',A')$ of admissible blow-ups such that
the maps $Sm(A'')\rightarrow A''$ and $Sm(X'')\rightarrow X''$ are
N\'eron smoothenings. Finally, we apply Proposition
\ref{closedsub} to the pair $(X'',A'')$.
%First, note
%that this map factors
%through $Sm(Y)\rightarrow Sm(X)$: by formal smoothness, any
%$k^s$-valued point $a_s$ on $Sm(Y)_s$ lifts to a section $a\in
%Sm(Y)(R^{sh})$ which is mapped by $h$ to a section $h(a)$ in
%$X(R^{sh})=Sm(X)(R^{sh})$. Hence, $h(a_s)\in Sm(X)_s(k^s)$. Next,
%since blowing up commutes with flat base change, the map
%$h^{-1}(Sm(X))\rightarrow Sm(X)$ is a composition of admissible
%blow-ups. Now we can apply Lemma \ref{comp}.
%So let us show that $Sm(B)=Sm(Y)\cap B$. If $a_s$ is a point of
%$Sm(B)_s(k^s)$ then $x$ lifts to a section $a\in
%Sm(B)(R^{sh})\subset Y(R^{sh})=Sm(Y)(R^{sh})$, so $Sm(B)\subset
%Sm(Y)$ (since $U(k^s)$ is schematically dense in $U$ for any
%smooth $k$-variety $U$). On the other hand, assume that $a_s$ is a
%$k^s$-valued point of $B_s\cap Sm(Y)_s$. Then $a_s$ lifts to a
%section $a\in Sm(Y)(R^{sh})$, and by
%
%By Proposition \ref{smoothen}, this holds in a neighbourhood of
%the schematic closure of the set
%$$E_s=\{a_s\in B_s(k^s)\,|\,a\in B(R^{sh})\}$$ in $B$. However,
%since $Sm(B)(R^{sh})=B(R^{sh})$ and any $k^s$-valued point of
%$Sm(B)_s$ lifts to a section in $Sm(B)(R^{sh})$, we see that
%$E_s=Sm(B)_s$, so $Sm(B)$ is closed in $Sm(Y)$.
\end{proof}
\begin{definition}[Weak N\'eron models of
pairs]\label{def-weakner}
If $(X_K,A_K)$ is a smooth pair of $K$-varieties, then a weak
N\'eron model for $(X_K,A_K)$ is a smooth pair of $R$-varieties
$(Y,B)$ endowed with an isomorphism of pairs of $K$-varieties
$f:(Y_K,B_K)\rightarrow (X_K,A_K)$ such that the natural map
$Y(R^{sh})\rightarrow Y_K(K^{sh})$ is a bijection.
\end{definition}
Note that $B(R^{sh})\rightarrow B_K(K^{sh})$ will automatically be
a bijection: any section $a$ in $Y(R^{sh})$ with $a_K\in
B_K(K^{sh})$ belongs to $B(R^{sh})$, since $B$ is closed in $Y$.

If $V_K$ is a smooth $K$-variety, then a smooth $R$-variety $W$
endowed with an isomorphism of $K$-varieties $g:W_K\rightarrow
V_K$ is a weak N\'eron model for $V_K$ (in the sense of
\cite{neron}) iff $(W,\emptyset)$ is a weak N\'eron model for
$(V_K,\emptyset)$ w.r.t. the map $g$. Moreover, $((Y,B),f)$ is a
weak N\'eron model for $(X,A)$ iff $(Y,f)$ is a weak N\'eron model
for $X$ and $(B,f|_{B_K})$ is a weak N\'eron model for $A$.

The following proposition gives a necessary and sufficient
condition for the existence of a weak N\'eron model.
\begin{prop}\label{iff}
A smooth pair of $K$-varieties $(X_K,A_K)$ admits a weak N\'eron
model, iff $X_K(K^{sh})$ is bounded in $X_K$ (in the sense of
\cite[1.1.2]{neron}).
%there exists a $R$-variety $X'$ whose generic
%fiber is isomorphic to $X_K$ and such that the natural map
%$X'(R^{sh})\rightarrow X'_K(K^{sh})$ is a bijection.
\end{prop}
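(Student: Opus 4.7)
The plan is to deduce both implications from the classical existence theorem for weak N\'eron models of smooth $K$-varieties (\cite[3.5.7]{neron}) together with Theorem \ref{smoothpair}, thereby reducing the pair statement to the single-variety statement plus the smoothening-of-pairs machinery already developed in this section.

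For the necessity direction, the argument will be essentially tautological. If $((Y,B),f)$ is a weak N\'eron model for $(X_K,A_K)$, then in particular $Y$ is smooth over $R$ and the map $Y(R^{sh})\to Y_K(K^{sh})=X_K(K^{sh})$ is a bijection, so by the identification of Definition \ref{def-weakner} with the classical notion recorded just after that definition, $Y$ is a weak N\'eron model of $X_K$ in the sense of \cite{neron}. The classical existence theorem then forces $X_K(K^{sh})$ to be bounded in $X_K$.

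For the sufficiency direction, starting from the boundedness hypothesis I would first invoke \cite[3.5.7]{neron} to produce a weak N\'eron model $\mathcal{X}$ of $X_K$ over $R$, together with the canonical bijection $\mathcal{X}(R^{sh})=X_K(K^{sh})$. I would then take $\mathcal{A}$ to be the schematic closure of $A_K$ in $\mathcal{X}$, so that $(\mathcal{X},\mathcal{A})$ is a generically smooth pair of $R$-varieties with generic fiber $(X_K,A_K)$. Applying Theorem \ref{smoothpair} to this pair provides a composition of admissible blow-ups $h:(Y,B)\to(\mathcal{X},\mathcal{A})$ such that $Sm(B)=Sm(Y)\cap B$ and such that $(Sm(Y),Sm(B))\to(\mathcal{X},\mathcal{A})$ is a N\'eron smoothening of pairs.

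It remains to verify that $((Sm(Y),Sm(B)),h_K)$ is a weak N\'eron model for $(X_K,A_K)$. Since $X_K$ and $A_K$ are smooth and $h_K$ is an isomorphism on generic fibers, one has $Sm(Y)_K=Y_K=X_K$ and $Sm(B)_K=B_K=A_K$; the relation $Sm(B)=Sm(Y)\cap B$ shows that $Sm(B)$ is closed in $Sm(Y)$, so the pair is smooth. The required bijectivity of $Sm(Y)(R^{sh})\to Sm(Y)_K(K^{sh})$ is the composite of the N\'eron smoothening bijection $Sm(Y)(R^{sh})=\mathcal{X}(R^{sh})$ with the weak-N\'eron-model bijection $\mathcal{X}(R^{sh})=X_K(K^{sh})$, and the analogous bijection for $Sm(B)$ is automatic since $Sm(B)$ is closed in $Sm(Y)$, as noted after Definition \ref{def-weakner}. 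I do not expect a genuine obstacle: all the delicate work has already been absorbed into Theorem \ref{smoothpair}, and the present proposition is essentially a repackaging of it together with the classical one-variable result.
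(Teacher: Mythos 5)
Your proposal is correct and follows essentially the same route as the paper: necessity is read off from \cite[3.5.7]{neron}, and sufficiency is obtained by taking an $R$-model with the weak valuative criterion, forming the schematic closure of $A_K$ in it, and applying Theorem \ref{smoothpair}. The only slight imprecision is calling the model furnished by \cite[3.5.7]{neron} a ``weak N\'eron model'' (it need not be smooth), but this is harmless since Theorem \ref{smoothpair} supplies the smoothening anyway.
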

\begin{proof}
By \cite[3.5.7]{neron}, $X_K(K^{sh})$ is bounded in $X_K$ iff
there exists a $R$-variety $X'$ whose generic fiber is isomorphic
to $X_K$ and such that the natural map $X'(R^{sh})\rightarrow
X'_K(K^{sh})$ is a bijection, so boundedness is obviously a
necessary condition for the existence of a weak N\'eron model.

Let us prove that it is also sufficient. We fix an isomorphism
between $X'_K$ and $X_K$. If we denote by $A'$ the
scheme-theoretic closure of $A_K$ in $X'$, then the generic fiber
of $(X',A')$ is isomorphic to the pair $(X_K,A_K)$.
%The assumption
%that $X'(R^{sh})= X'_K(K^{sh})$ automatically implies
%$A'(R^{sh})=A(R^{sh})$.

By Theorem \ref{smoothpair}, there exists a N\'eron smoothening
$g:(Y,B)\rightarrow (X',A')$. The pair $(Y,B)$ endowed with the
isomorphism $g_K:(Y_K,B_K)\rightarrow (X'_K,A'_K)\cong (X_K,A_K)$
is a weak N\'eron model of $(X_K,A_K)$.
\end{proof}
%In particular, any smooth and proper $K$-variety $V_K$ admits a
%weak N\'eron model $W$. This was shown already in \cite[?]{neron}.
\begin{cor}
Any smooth and proper pair of $K$-varieties admits a weak N\'eron
model.
\end{cor}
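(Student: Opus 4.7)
The plan is to derive this corollary directly from Proposition \ref{iff}, which reduces the problem to verifying that $X_K(K^{sh})$ is bounded in $X_K$ whenever $X_K$ is proper over $K$. Note that the smoothness and properness of $A_K$ play no role at this stage, since Proposition \ref{iff} only requires a boundedness condition on the ambient variety $X_K$.

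To prove the boundedness, I would first produce a proper $R$-model $X'$ of $X_K$. Starting from any finite-type separated $R$-model of $X_K$, Nagata's compactification theorem furnishes an open immersion of this model into a proper $R$-scheme $\overline{X}'$. Because $X_K$ is proper over $K$, it is closed in $\overline{X}'_K$, and the scheme-theoretic closure of $X_K$ in $\overline{X}'$ is then a proper $R$-scheme $X'$ whose generic fiber is canonically identified with $X_K$.

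By the valuative criterion of properness applied to $X' \to \Spec R$, every $K^{sh}$-point of $X_K = X'_K$ extends uniquely to an $R^{sh}$-point of $X'$; i.e.\ the natural map $X'(R^{sh}) \to X_K(K^{sh})$ is a bijection. Invoking the criterion \cite[3.5.7]{neron} (as used already in the proof of Proposition \ref{iff}), this bijection shows that $X_K(K^{sh})$ is bounded in $X_K$. Proposition \ref{iff} then yields the existence of a weak N\'eron model for the pair $(X_K, A_K)$.

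There is really no serious obstacle here: the hard work has already been carried out in Theorem \ref{smoothpair} and Proposition \ref{iff}. The only point requiring any care is the construction of a proper $R$-model of $X_K$, which is a standard consequence of Nagata's theorem together with the fact that properness of $X_K$ over $K$ makes $X_K$ closed in any compactification of any finite-type $R$-model.
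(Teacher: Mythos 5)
Your argument is correct and follows essentially the same route as the paper: both reduce via Proposition \ref{iff} to the boundedness of $X_K(K^{sh})$ in $X_K$ for $X_K$ proper. The only difference is that the paper simply cites \cite[1.1.6]{neron} for this boundedness, whereas you unpack that citation by constructing a proper $R$-model via Nagata and applying the valuative criterion together with \cite[3.5.7]{neron}.
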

\begin{proof}
For any proper $K$-variety $X_K$, $X_K(K^{sh})$ is bounded in
$X_K$ by \cite[1.1.6]{neron}.
\end{proof}

 We'll take a closer look at this
boundedness condition in the next section.

\section{Bounded varieties and weak N\'eron
models}\label{sec-bound} We keep the notations of Section
\ref{sec-smooth}, and we assume moreover that $R$ is complete.

\begin{definition}[Bounded and smoothly bounded
varieties]\label{def-bounded-alg} Let $L$ be a discretely valued
field, and let $X$ be a $L$-variety. We say that $X$ is bounded if
$X(L^{sh})$ is bounded in $X$ (in the sense of
\cite[1.1.2]{neron}). We say that $X$ is smoothly bounded if $X$
is bounded and the natural map $Sm(X)(L^{sh})\rightarrow
X(L^{sh})$ is a bijection.
\end{definition}
\begin{remark}
If $Sm(X)$ is bounded and the natural map
$Sm(X)(L^{sh})\rightarrow X(L^{sh})$ is a bijection, then $X$ is
smoothly bounded by \cite[1.1.4]{neron}. The converse holds if the
ring of integers of $L$ is excellent \cite[1.1.9]{neron}.
\end{remark}
\begin{definition}[Bounded and smoothly bounded rigid varieties]\label{def-bounded}
We say that a rigid $K$-variety $X$ is bounded if it is separated
and there exists a quasi-compact open subvariety $V$ of $X$ such
that the natural map $V(K')\rightarrow X(K')$ is a bijection for
each finite unramified extension $K'$ of $K$. If, moreover, $V$ is
smooth, then we call $X$ smoothly bounded.
\end{definition}

If $X$ is a rigid $K$-variety, then strictly speaking, the set
$X(K^a)$ is not defined since $K^a$ is not an affinoid
$K$-algebra. Therefore, we put $X(K^a)=\cup_{K'/K}X(K')$ where
$K'$ runs trhough the finite extensions of $K$ inside the fixed
algebraic closure $K^a$. The set $X(K^{sh})$ is defined
similarily. If $Y$ is a $K$-variety, then the analytification map
$Y^{an}\rightarrow Y$ induces natural bijections $Y(K^a)=
Y^{an}(K^a)$ and $Y(K^{sh})=Y^{an}(K^{sh})$.

The definition of a bounded rigid variety appeared earlier in
\cite[1.2]{formner} and \cite[5.6]{NiSe-weilres}. The following
proposition compares it to Definition \ref{def-bounded-alg} for
algebraic varieties.

\begin{prop}\label{comparbounded}
Let $X$ be an algebraic variety over $K$, and denote by $X^{an}$
its analytification.

$(a)$ If $E$ is a subset of $X(K^{a})$, then $E$ is bounded in $X$
iff there exists a quasi-compact open subvariety $V$ of $X^{an}$
such that $E$ is contained in $V(K^a)$.

$(b)$ In particular, $X^{an}$ is bounded iff $X$ is bounded, and
$X^{an}$ is smoothly bounded iff $X$ is smoothly bounded.
\end{prop}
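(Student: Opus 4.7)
The plan is to prove part (a) and deduce (b) from it, using the standard identifications $X(K^a) = X^{an}(K^a)$ and $Sm(X)^{an} = Sm(X^{an})$ for analytifications of $K$-varieties.

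For the forward direction of (a), assume $E$ is bounded in $X$. By \cite[1.1.2]{neron}, there exist a finite affine open cover $X = \bigcup_{i=1}^{r} U_i$, finite generating systems $f_{i,1}, \ldots, f_{i,n_i}$ of $\mathcal{O}(U_i)$, and an integer $c$ such that each point of $E$ lies in some $U_i(K^a)$ on which $|f_{i,j}| \leq \theta^{-c}$. For each $i$ the inequalities cut out an affinoid subdomain
$$V_i = \{x \in U_i^{an} : |f_{i,j}(x)| \leq \theta^{-c} \text{ for all } j\}$$
of $U_i^{an} \subset X^{an}$, and $V := \bigcup_i V_i$ is a finite union of affinoids, hence a quasi-compact open subvariety of $X^{an}$ satisfying $E \subset V(K^a)$ by construction.

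For the converse, suppose $E \subset V(K^a)$ for some quasi-compact open subvariety $V$ of $X^{an}$. Fix any finite affine open cover $X = \bigcup_i U_i$; the induced cover $\{U_i^{an}\}$ is admissible in $X^{an}$. Since $V$ is quasi-compact, it admits a finite affinoid covering $V = \bigcup_j W_j$. Restricting $\{U_i^{an}\}$ to each affinoid $W_j$ and invoking the fact that any admissible cover of an affinoid by admissible opens has a finite refinement by affinoid subdomains, we obtain a finite affinoid cover $W_j = \bigcup_k W_{j,k}$ with each $W_{j,k}$ contained in some $U_{i(j,k)}^{an}$. On $W_{j,k}$ the generators of $\mathcal{O}(U_{i(j,k)})$ are analytic functions on an affinoid, hence uniformly bounded in absolute value by the rigid-analytic maximum principle, which yields the bounds required by the definition of algebraic boundedness of $E$ in $X$.

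Part (b) is a formal consequence of (a). Applying (a) to $E = X(K^{sh}) = X^{an}(K^{sh})$, the condition $X^{an}(K^{sh}) \subset V(K^a)$ is equivalent, since $K^{sh}$ is the union of finite unramified extensions $K'$ of $K$, to the defining requirement $V(K') = X^{an}(K')$ for all such $K'$ in Definition \ref{def-bounded}; this identifies the two notions of boundedness. The smoothly bounded statement follows by combining this with $Sm(X)^{an} = Sm(X^{an})$, the open embedding $Sm(X^{an}) \hookrightarrow X^{an}$, and the remark following Definition \ref{def-bounded-alg}, which is available because $R$ is complete and hence excellent. The main obstacle is the refinement step in the backward direction of (a): decomposing each affinoid $W_j$ into pieces sitting inside a single chart $U_{i(j,k)}^{an}$ relies on the admissibility of $\{U_i^{an}\}$ in $X^{an}$ together with the finiteness property of admissible covers of affinoids; once this is in hand, the remaining steps are straightforward applications of the maximum principle and bookkeeping.
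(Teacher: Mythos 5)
Your argument is correct and follows essentially the same route as the paper: the forward direction unwinds the definition of boundedness in \cite[1.1.2]{neron} to produce a finite union of affinoids (the paper leaves this step as ``follows easily from the definition''), and the backward direction is the paper's argument verbatim — refine the admissible cover $\{(U_i)^{an}\cap V\}$ of the quasi-compact $V$ by finitely many affinoids each lying in a single chart and apply the Maximum Modulus Principle. Your treatment of the smoothly bounded case in (b), via $Sm(X)^{an}=Sm(X^{an})$ and the excellence remark to get boundedness of $Sm(X)$, is if anything slightly more careful than the paper's.
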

\begin{proof}
If $E$ is bounded in $X$, the existence of a subvariety $V$ as in
the statement follows easily from the definition
\cite[1.1.2]{neron}. So suppose conversely that $V$ is a
quasi-compact open subvariety of $X^{an}$ such that $E$ is
contained in $V(K^a)$.

Choose a finite cover of $X$ by affine open subchemes
$U_1,\ldots,U_r$. It is clear from the definition
\cite[1.1.2]{neron} and the Maximum Modulus Principle
\cite[6.2.1.4]{BGR} that, for any affine $K$-variety $U$, a subset
$F$ of $U(K^a)$ is bounded in $U$ iff there exists a quasi-compact
open subvariety $W$ of $U^{an}$ such that $E$ is contained in
$W(K^{a})$. Therefore, it suffices to construct, for each index
$i\in \{1,\ldots,r\}$, a quasi-compact open subvariety $V_i$ of
$(U_i)^{an}$ such that $V(K^a)\subset\cup_{i}V_i(K^a)$.

Now $\{(U_1)^{an},\ldots,(U_r)^{an}\}$ is an admissible open cover
of $X^{an}$ \cite[0.3.3]{bert}, and $$\{(U_1)^{an}\cap
V,\ldots,(U_r)^{an}\cap V\}$$ is an admissible open cover of $V$.
Since $V$ is quasi-compact, this cover can be refined by a finite
affinoid cover $\mathcal{W}=\{W_1,\ldots,W_q\}$. If we define
$V_i$ as the union of those members $W_j$ of the cover
$\mathcal{W}$ which are contained in $(U_i)^{an}$, for
$i=1,\ldots,r$, then $V_i$ is a quasi-compact open subvariety of
$(U_i)^{an}$ and $\cup_{i}V_i(K^a)=V(K^a)$. This concludes the
proof of $(a)$.

Applying this result to $E=X(K^{sh})$ we see that $X^{an}$ is
bounded iff $X$ is bounded. Since, moreover, $X$ is smooth at a
closed point $x$ iff $X^{an}$ is smooth at $x$
\cite[5.2.1]{conrad}, we see that $X^{an}$ is smoothly bounded iff
$X$ is smoothly bounded.
\end{proof}

\begin{cor}\label{corcompar}
Let $S$ be a discrete valuation ring, with quotient field $L$ and
residue field $k$, and let $R$ be its completion. Let $X$ be a
 $L$-variety, and assume either that $X$ is smooth or that $S$ is excellent.
  Then $(X\times_L K)^{an}$ is bounded iff $X$ is bounded, and
 $(X\times_L K)^{an}$ is smoothly bounded
iff $X$ is smoothly bounded.
\end{cor}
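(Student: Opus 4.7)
The strategy is to apply Proposition \ref{comparbounded}(b) to the $K$-variety $X\times_L K$, reducing the claim to the purely algebraic assertion that $X$ is bounded (resp.\ smoothly bounded) over $L$ if and only if $X\times_L K$ is bounded (resp.\ smoothly bounded) over $K$. The bridge between the two is the inclusion $L^{sh}\subset K^{sh}$ coming from the fact that $R^{sh}$ is the completion of $S^{sh}$; the two discrete valuations agree on the common subfield, and one has a natural inclusion $X(L^{sh})\hookrightarrow X(K^{sh})=(X\times_L K)(K^{sh})$.

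One direction is immediate: this inclusion of point sets is compatible with the valuation, so if $(X\times_L K)(K^{sh})$ is bounded in $X\times_L K$ then $X(L^{sh})$ is bounded in $X$, and the same inclusion restricted to $Sm(X)$ transfers the smoothly bounded condition as well.

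For the converse, suppose $X$ is bounded over $L$. By \cite[3.5.7]{neron} applied to $S$ I would choose an affine $S$-model $\mathcal{X}$ of $X$ with $\mathcal{X}(S^{sh})=X(L^{sh})$ and then show that $\mathcal{X}\times_S R$ is an analogous $R$-model of $X\times_L K$, that is $(\mathcal{X}\times_S R)(R^{sh})=(X\times_L K)(K^{sh})$, whence \cite[3.5.7]{neron} applied over $R$ yields boundedness over $K$. The critical ingredient is \emph{density} of $X(L^{sh})$ in $X(K^{sh})$ in the $\mathfrak{M}$-adic topology on coordinates with respect to any closed embedding $X\hookrightarrow \A^N_L$. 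When $X$ is smooth, this follows from Hensel's lemma for the strictly henselian $L^{sh}$ combined with density of $L^{sh}$ in $K^{sh}$, applied to étale charts $X\to\A^d_L$. When $S$ is excellent, $S^{sh}$ is excellent by \cite[5.6]{greco} and Greenberg's theorem supplies the approximations (as in the proof of Lemma \ref{bound}), after a rescaling step to bring $K^{sh}$-coordinates of a given point into $R^{sh}$. Since $\mathcal{X}(R^{sh})$ is closed in $(K^{sh})^N$—being the vanishing locus of the defining polynomials inside the closed subset $(R^{sh})^N\subset(K^{sh})^N$—the containment $X(L^{sh})=\mathcal{X}(S^{sh})\subset \mathcal{X}(R^{sh})$ combined with density forces $X(K^{sh})\subset\mathcal{X}(R^{sh})$, giving the desired equality.

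For the smoothly bounded case I additionally apply the same density principle to the closed subvariety $Z:=X\setminus Sm(X)$: if $X$ is smoothly bounded over $L$ then $Z(L^{sh})=\emptyset$, and density (vacuous when $X$ is smooth since $Z=\emptyset$, and Greenberg-based when $S$ is excellent) forces $Z(K^{sh})=\emptyset$, hence $(X\times_L K)(K^{sh})\subset Sm(X\times_L K)(K^{sh})$. The main obstacle I expect is the non-smooth excellent case: one must check that Greenberg-style approximation, natively a statement about integral points of a finite-type $S^{sh}$-scheme, transfers to genuine density of $L^{sh}$-points inside $K^{sh}$-points for the potentially singular $L$-variety $Z$, via a rescaling argument analogous to the one used for $X$.
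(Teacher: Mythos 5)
Your proposal is correct and follows essentially the same route as the paper: reduce via Proposition \ref{comparbounded} to comparing boundedness of $X(L^{sh})$ and $X(K^{sh})$, and settle the nontrivial direction by density of $X(L^{sh})$ in $X(K^{sh})$ under the hypothesis that $X$ is smooth or $S$ is excellent, applying the same density to $X\setminus Sm(X)$ for the smoothly bounded case. The only difference is cosmetic: the paper cites this density statement directly as \cite[3.6.10]{neron} (and passes through the definition of boundedness rather than through \cite[3.5.7]{neron}), whereas you re-derive it from Hensel's lemma, respectively Greenberg's theorem over the excellent ring $S^{sh}$.
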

\begin{proof}
We fix an embedding of $L^s$ in $K^s$. We know from Proposition
\ref{comparbounded} that $(X\times_L K)^{an}$ is bounded iff
$(X\times_L K)(K^{sh})$ is bounded in $X\times_L K$. This is also
equivalent to the property that $X(K^{sh})$ is bounded in $X$
\cite[1.1.5]{neron}, which implies that $X(L^{sh})$ is bounded in
$X$.

Assume, conversely, that $X$ is bounded. We have to show that
$X(K^{sh})$ is bounded in $X$. If we denote by $K'$ the closure of
$L^{sh}$ inside the completion of $K^{sh}$, then $K^{sh}$ is a
subfield of $K'$. By \cite[1.1.5]{neron}, $X(L^{sh})$ (viewed as a
subset of $X(K')$) is bounded in $X\times_L K'$. Since $X$ is
smooth or $S$ (and hence $S^{sh}$ \cite[5.6]{greco}) is excellent,
we can apply \cite[3.6.10]{neron} and we see that $X(L^{sh})$ is
dense in $X(K')$ (w.r.t. the topology induced by the valuation on
$L$). It is clear from the definition \cite[1.1.2]{neron} that
this implies that $X(K')$ is bounded in $X$. Therefore,
$X(K^{sh})$ is bounded in $X$.

Now assume that $S$ is excellent. By \cite[17.7.2]{ega4.4},
$Sm(X\times_L K)$ is canonically isomorphic to $Sm(X)\times_L K$.
Combining this with Proposition \ref{comparbounded}, we see that
$X$ is smoothly bounded if $(X\times_L K)^{an}$ is smoothly
bounded. Conversely, if $X$ is smoothly bounded, then $Sm(X)$ is
bounded since $S$ is excellent, so
 $(Sm(X)\times_L
K)^{an}$ is bounded by the first part of Corollary
\ref{corcompar}. Hence, to show that $(X\times_L K)^{an}$ is
smoothly bounded, it suffices to show that the natural map
$$Sm((X\times_L K)^{an})(K^{sh})\rightarrow (X\times_L
K)^{an}(K^{sh})$$ is a bijection. By \cite[5.2.1]{conrad}, the
source of this map is canonically isomorphic to $(Sm(X\times_L
K))^{an}$, so it is enough to show that $$Sm(X\times_L
K)(K^{sh})\rightarrow (X\times_L K)(K^{sh})$$ is a bijection. This
follows from \cite[3.6.10]{neron} by the same arguments as above.
\end{proof}
\begin{lemma}\label{propbounded}
If $f:Y\rightarrow X$ is a proper morphism of separated rigid
$K$-varieties, then $Y$ is bounded if $X$ is bounded. The same is
true if $f:Y\rightarrow X$ is a proper morphism of $K$-varieties.
\end{lemma}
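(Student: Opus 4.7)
The plan is to handle the rigid case by a direct construction of a witness subvariety, then to reduce the algebraic case to the rigid one via analytification and Proposition \ref{comparbounded}.

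For the rigid case, I would start from a quasi-compact open subvariety $V\subset X$ witnessing the boundedness of $X$, i.e.\ such that $V(K')=X(K')$ for every finite unramified extension $K'$ of $K$. The natural candidate for a witness in $Y$ is $W:=f^{-1}(V)$. It is open in $Y$, and since $f$ is proper (in particular quasi-compact in the sense of rigid geometry), $W$ is quasi-compact. For any finite unramified extension $K'/K$ and any $y\in Y(K')$, one has $f(y)\in X(K')=V(K')$, so $y\in W(K')$; hence $W(K')=Y(K')$, which proves that $Y$ is bounded.

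For the algebraic case, I would pass to analytifications. If $f\colon Y\to X$ is proper in the algebraic sense, then $f^{an}\colon Y^{an}\to X^{an}$ is a proper morphism of separated rigid $K$-varieties (GAGA for properness). By Proposition \ref{comparbounded}(b), $X$ is bounded iff $X^{an}$ is bounded, and similarly for $Y$. Applying the rigid case to $f^{an}$ then yields that $Y^{an}$, and hence $Y$, is bounded.

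The main (small) obstacle is just invoking the right standard facts, namely that preimages under a proper morphism of rigid varieties are quasi-compact and that analytification preserves properness. Everything else is a straightforward manipulation of the defining condition of boundedness in Definitions \ref{def-bounded-alg} and \ref{def-bounded}.
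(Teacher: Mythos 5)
Your proof is correct and follows essentially the same route as the paper: reduce the algebraic case to the rigid one via analytification (properness being preserved, and boundedness being detected on analytifications by Proposition \ref{comparbounded}), then in the rigid case use that the preimage $f^{-1}(V)$ of a quasi-compact witness is again quasi-compact. The paper states this in two sentences; you have merely spelled out the same argument.
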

\begin{proof}
Since $K$ is a discretely valued field, the analytification of a
proper morphism of $K$-varieties is a proper map of separated
rigid $K$-varieties by the concluding remarks in
\cite[\S\,5.2]{conrad}, so we only have to prove the result in the
rigid analytic setting. There it follows from the fact that the
inverse image of a quasi-compact open subvariety under a proper
morphism is again quasi-compact.
\end{proof}

 Let $X$ be a variety over an arbitrary field $F$. A compactification of $X$ is a
dense open immersion $X\rightarrow \overline{X}$ of $X$ into a
proper $F$-variety $\overline{X}$. Such a compactification always
exists by Nagata's embedding theorem. We denote by $\partial
\overline{X}$ the complement of $X$ in $\overline{X}$ (with its
reduced closed subscheme structure). If $X$ is smooth and $F$ has
characteristic zero, then $X$ admits a smooth compactification by
Hironaka's resolution of singularities.

\begin{prop}\label{nopoints}
Let $L$ be a discretely valued field, and let $X$ be a smooth
$L$-variety. We assume that $X$ admits a smooth compactification.
The following properties are equivalent:
\begin{enumerate}
\item $X$ is bounded \item there exists a compactification
$\overline{X}$ of $X$ such that $\partial
\overline{X}(L^{sh})=\emptyset$ \item for every smooth
compactification $\overline{X}$ of $X$, $\partial
\overline{X}(L^{sh})=\emptyset$
\end{enumerate}
\end{prop}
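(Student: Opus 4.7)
I plan to prove the cycle $(3)\Rightarrow(2)\Rightarrow(1)\Rightarrow(3)$. The implication $(3)\Rightarrow(2)$ is immediate from the standing hypothesis that $X$ admits a smooth compactification.

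For $(2)\Rightarrow(1)$, let $\overline{X}$ be a compactification of $X$ with $\partial\overline{X}(L^{sh})=\emptyset$, set $R=\mathcal{O}_L$, and choose a proper flat $R$-model $\mathfrak{Y}$ of $\overline{X}$; let $\mathfrak{D}\subset\mathfrak{Y}$ be the schematic closure of $\partial\overline{X}$. The valuative criterion for properness gives $\mathfrak{Y}(R^{sh})=\overline{X}(L^{sh})=X(L^{sh})$ and $\mathfrak{D}(R^{sh})=\partial\overline{X}(L^{sh})=\emptyset$. The goal is to construct an $R$-model $\mathfrak{X}$ of $X$ with $\mathfrak{X}(R^{sh})=X(L^{sh})$, from which $(1)$ follows by \cite[3.5.7]{neron}. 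The obstruction is that a section $\sigma\in\mathfrak{Y}(R^{sh})$ may satisfy $\sigma(s)\in\mathfrak{D}_s$ even though its generic fibre lies in $X$, so $\sigma$ need not factor through $\mathfrak{Y}\setminus\mathfrak{D}$. I would remove this by the contact-reduction scheme used in the proof of Proposition \ref{closedsub}. Since $\mathfrak{D}(R^{sh})=\emptyset$, Lemma \ref{bound} applied with $A=\mathfrak{D}$ and $C=\mathfrak{D}_s$ (passing to the completion via Corollary \ref{corcompar} if needed to secure excellence) gives a uniform upper bound on $c_{\mathfrak{D}}(\sigma)$ over $\sigma\in\mathfrak{Y}(R^{sh})$. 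Iterated admissible blow-ups centered on the special fibre of the strict transform of $\mathfrak{D}$ then strictly decrease the contact of every section by Lemma \ref{drop}, and after finitely many steps we arrive at $(\mathfrak{Y}',\mathfrak{D}')\to(\mathfrak{Y},\mathfrak{D})$ with $c_{\mathfrak{D}'}(\sigma)=0$ for every section; such a $\sigma$ factors through $\mathfrak{X}:=\mathfrak{Y}'\setminus\mathfrak{D}'$, the required $R$-model of $X$.

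For $(1)\Rightarrow(3)$, let $\overline{X}$ be an arbitrary smooth compactification of $X$ and suppose for contradiction that $y\in\partial\overline{X}(L^{sh})$ exists. By Proposition \ref{comparbounded} (applied over the completion of $L$ via Corollary \ref{corcompar}), boundedness of $X$ amounts to the existence of a quasi-compact open subvariety $V\subset X^{an}$ containing the analytifications of all unramified points of $X$. I will construct an infinite family in $X(L^{sh})$ whose analytifications cluster at the analytic image $\tilde y$ of $y$; since $\tilde y$ lies in $(\partial\overline{X})^{an}$ and is therefore excluded from every quasi-compact open of $X^{an}$, this yields the desired contradiction. Smoothness of $\overline{X}$ at the $L^{sh}$-rational point $y$ provides an open neighborhood $U$ of $y$ together with an \'etale morphism $U\to\mathbb{A}^n_{L^{sh}}$ sending $y$ to the origin, and Henselianity of $R^{sh}$ then lifts every point of $\mathbb{A}^n_{L^{sh}}(L^{sh})$ with sufficiently positive coordinate valuations to an $L^{sh}$-point of $U$ specialising to $y$. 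A Zariski-generic such lift avoids the proper closed subscheme $\partial\overline{X}$ and hence lies in $X$, producing the escape family.

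The principal obstacle is $(2)\Rightarrow(1)$: simply removing $\mathfrak{D}$ from a proper model does not yield an $R$-model of $X$ capturing all unramified sections, since such sections may specialise onto $\mathfrak{D}_s$ even though $\mathfrak{D}$ carries no unramified section itself. Repairing this requires the contact-reduction machinery of Section \ref{sec-smooth}, whose decisive input is the Greenberg-theorem-based bound of Lemma \ref{bound}. By comparison, $(1)\Rightarrow(3)$ is a cleaner application of smoothness together with Hensel lifting at the point $y$, with Proposition \ref{comparbounded} supplying the rigid-analytic formulation of boundedness needed to turn the accumulation at $\tilde y$ into a contradiction.
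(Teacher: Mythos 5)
Your cycle $(3)\Rightarrow(2)\Rightarrow(1)\Rightarrow(3)$ is organized sensibly, and your treatment of $(2)\Rightarrow(1)$ is a plausible reconstruction of what the paper simply cites as \cite[1.1.10]{neron} (one caveat: Lemma \ref{bound} is stated for \emph{generically smooth} pairs, whereas the compactification in $(2)$ and its boundary need not be smooth; the hypothesis is not actually used in the proof of that lemma, but you are invoking it outside its stated scope). The genuine problem is in $(1)\Rightarrow(3)$. You construct points $x_n\in X(L^{sh})$ whose analytifications ``cluster at'' $\tilde y\in(\partial\overline{X})^{an}$ and conclude a contradiction from the fact that $\tilde y$ is excluded from every quasi-compact open $V$ of $X^{an}$. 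But $\tilde y\notin V$ is a statement about the limit point only; it does not prevent the approximating points $x_n$, which all lie in $X^{an}$, from all lying in $V$. What you need is that a quasi-compact open of $X^{an}$ is closed under taking such limits inside $\overline{X}^{an}$ (equivalently, that points of $X(L^{sh})$ approaching a boundary point are genuinely unbounded in the sense of \cite[1.1.2]{neron}). This is the entire content of the implication and it is not automatic: a regular function on an affine chart $U$ of $X$ with a pole along the boundary component through $y$ can remain bounded along a badly chosen sequence tending to $y$ (think of $x/y$ along $x=y^2$ near the origin), so one cannot just say ``some coordinate blows up.'' You would need either a compactness argument (e.g.\ quasi-compact rigid spaces are compact Berkovich spaces, hence closed in the Hausdorff space $\overline{X}^{\mathrm{Berk}}$) or a careful Maximum Modulus argument; neither is supplied, and ``cluster at'' is never made precise.

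The paper's proof of $(1)\Rightarrow(3)$ avoids this issue entirely by working from the special fibre: it takes a weak N\'eron model $\X$ of $\overline{X}^{an}$ and, using boundedness, a quasi-compact open formal piece $\mV$ with $\mV_\eta\subset X^{an}$ capturing all unramified points of $X$. If every separable closed point of $\X_s$ lies in $\mV_s$ one gets $\overline{X}(L^{sh})=X(L^{sh})$ directly; otherwise the tube of a missed point is an open polydisc $\mathbb{B}^d_L$ disjoint from $\mV_\eta$, so \emph{all} its $L$-valued points must lie in $(\partial\overline{X})^{an}$, forcing $\mathbb{B}^d_L\subset(\partial\overline{X})^{an}$ and contradicting $\dim\partial\overline{X}<d$. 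Note that the dimension count you use only to produce your escape family (``a Zariski-generic lift avoids $\partial\overline{X}$'') is, in the paper, the step that \emph{finishes} the proof; deployed there, no further unboundedness argument is needed. I would either adopt the tube argument or explicitly prove the closedness statement about quasi-compact opens before relying on it.
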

So, in characteristic zero, boundedness means that there are ``no
unramified points at infinity''.
\begin{proof}
The implication $(2)\Rightarrow (1)$ was shown in
\cite[1.1.10]{neron} (only assuming that the ring of integers of
$L$ is excellent) and $(3)\Rightarrow (2)$ follows from our
assumption. So let us prove $(1)\Rightarrow (3)$. By Proposition
\ref{corcompar} we may assume that $L$ is complete. We denote its
ring of integers by $R$ and its residue field by $k$. Let
$\overline{X}$ be any smooth compactification of $X$.
% By Lemma
%\ref{propbounded}, $h^{-1}(X)$ is a bounded open dense subvariety
%of
%$Y$, %(for boundedness, use that $h^{an}$ is proper and hence
%%$(h^{an})^{-1}(V)$ is quasi-compact),
%so we may as well assume that $\overline{X}$ is proper, smooth,
%and connected.
Let $\X$ be a weak N\'eron model for $\overline{X}^{an}$. By
boundedness of $X$ and \cite[4.4]{formrigI} we may assume that
there exists an open formal subscheme $\mV$ of $\X$ such that
$\mV_\eta$ is contained in $X^{an}$, and such that
$\mV_\eta(L')=X^{an}(L')$ for each finite unramified extension
$L'/L$.

Suppose that any closed point $x$ on $\X_s$ whose residue field is
separable over $k$ is contained in $\mV_s$. Then
$$\overline{X}(L^{sh})=\X_\eta(L^{sh})=\mV_\eta(L^{sh})=X(L^{sh})$$ and the lemma is proven. Hence,
we may assume that there exists a closed point $x$ in the
complement of $\mV_s$ in $\X_s$ whose residue field is separable
over $k$. Passing to a finite unramified extension of $R$, we may
suppose that $x\in \X_s(k)$.

The tube $]x[$ of $x$ in $\X$ is an open rigid subvariety of
$\X_\eta$ (see \cite[1.1.2]{bert}), and hence of $X^{an}$. Since
$\X$ is formally smooth over $R$, the map
$$\X(R/\mathfrak{M}^{n+1})\rightarrow \X(R/\mathfrak{M}^{n})$$ is
surjective for each $n\geq 0$, and by completeness of $R$, $x$
lifts to a section in $\X(R)$. Hence, there is an isomorphism of
$R$-algebras
$$\widehat{\mathcal{O}}_{\X,x}\cong R[[y_1,\ldots,y_d]]$$ with
$d=dim(X)$, by \cite[3.1.2]{neron}.
 Moreover, by \cite[0.2.7]{bert}, $]x[$ is
canonically isomorphic to the generic fiber of the special formal
$R$-scheme $\Spf \widehat{\mathcal{O}}_{\X,x}$, which is the open
unit polydisc $\mathbb{B}^d_L$ of dimension $d$ over $L$.
Moreover, by our assumptions, all $L$-valued points of $]x[$ are
contained in $(]x[\cap (\partial \overline{X})^{an})(L)$. However,
since this
 implies that $\mathbb{B}^d_{L}\subset (\partial
\overline{X})^{an}$,
%by
%Lemma \ref{dense},
while on the other hand $dim(\partial \overline{X})<d$, we arrive
at a contradiction.
\end{proof}

\begin{remark}
If $L$ is a henselian discretely valued field and $X$ an
irreducible $L$-variety with a smooth $L$-rational point, then
$X(L)$ is dense in $X$. This is well-known and can be proved in an
elementary way; it can also be deduced from the existence of weak
N\'eron models using an argument similar to the one in the proof
of $(1)\Rightarrow (3)$.
\end{remark}

%\begin{remark}
%If we assume resolution of singularities in positive
%characteristic, Proposition \ref{nopoints} holds for varieties
%over any discretely valued field with excellent ring of integers.
%It would be interesting to find a proof which does not use
%resolution of singularities.
%\end{remark}

\begin{definition}[Weak N\'eron model of a rigid variety \cite{formner}, Def.\,1.3]
Let $X$ be a separated rigid $K$-variety. A weak N\'eron model for
$X$ is a smooth separated formal $R$-scheme $\X$, topologically of
finite type, endowed with an open immersion $h:\X_\eta\rightarrow
X$, such that $h$ induces a bijection $\X_\eta(K')\rightarrow
X(K')$ for each finite unramified extension $K'/K$.
\end{definition}
\begin{prop}
A separated rigid $K$-variety $X$ admits a weak N\'eron model iff
$X$ is smoothly bounded.
\end{prop}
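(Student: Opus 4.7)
The plan is to prove each implication by direct construction; the key analytic input for sufficiency is Raynaud's existence theorem for formal models together with a rigid-analytic incarnation of the N\'eron smoothening developed in Section \ref{sec-smooth}.

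For necessity, if $X$ admits a weak N\'eron model $h:\X_\eta\hookrightarrow X$, I would take $V:=h(\X_\eta)$ as the witness to smooth boundedness. This subvariety is quasi-compact because $\X$ is topologically of finite type over $R$, and smooth because $\X$ is formally smooth over $R$; the bijectivity of $V(K')\rightarrow X(K')$ on finite unramified extensions $K'/K$ is precisely the defining property of a weak N\'eron model. So this direction is essentially tautological.

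For sufficiency, I would start from a quasi-compact smooth open subvariety $V\subset X$ with $V(K')=X(K')$ for every finite unramified extension $K'/K$, provided by smooth boundedness. Using Raynaud's theorem on formal models of quasi-compact separated rigid varieties, I would choose a separated formal $R$-model $\mY$ of $V$, topologically of finite type. The formal scheme $\mY$ need not be smooth, so the next step is to apply the rigid-analytic N\'eron smoothening (the formal counterpart of Theorem \ref{smoothpair}, established in \cite{formner}) to produce a smooth, separated formal $R$-scheme $\X$, topologically of finite type, together with a morphism $\X\rightarrow\mY$ that restricts to an open immersion $\X_\eta\hookrightarrow\mY_\eta=V$ and induces a bijection $\X_\eta(K')\rightarrow V(K')$ for every finite unramified $K'/K$. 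Composing with the open immersion $V\hookrightarrow X$ then yields the desired weak N\'eron model.

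The main obstacle lies in the second step of the sufficiency direction: one needs to know that the N\'eron smoothening process can be carried out in the formal rigid-analytic category, so that after finitely many admissible formal blow-ups one can pass to the smooth locus of the special fiber and still capture all unramified points of $V$. This is the formal analog of the smoothening construction used in the proofs of Proposition \ref{smoothen} and Theorem \ref{smoothpair}, and it transposes to the formal setting essentially verbatim: the control on the centers of admissible blow-ups from \cite[3.4.2]{neron} and Greenberg's theorem both have their formal counterparts, and the only role played by the generic-fiber isomorphism requirement is relaxed to the bijectivity on unramified points, which is exactly what is preserved by the smoothening.
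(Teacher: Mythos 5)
Your proof is correct and follows essentially the same route as the paper: necessity is tautological, and for sufficiency the paper simply cites \cite[3.3]{formner}, which is precisely the Raynaud-formal-model-plus-formal-N\'eron-smoothening argument you sketch, applied to the smooth quasi-compact open $V$ capturing all unramified points.
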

\begin{proof}
This condition is obviously necessary. It is also sufficient:
observe that, if $V$ is a smooth quasi-compact open subvariety of
$X$ with $V(K^{sh})=X(K^{sh})$, a weak N\'eron model for $V$ is
also a weak N\'eron model for $X$, and apply \cite[3.3]{formner}.
\end{proof}

We establish some elementary properties of weak N\'eron models
which we'll need in the following section.
\begin{lemma}\label{compl}
Let $S$ be a discrete valuation ring, with maximal ideal
$\mathfrak{N}$ and quotient field $L$, and let $R$ be its
completion. Let $X$ be a smooth and bounded $L$-variety, and let
$Y$ be a smooth $S$-variety endowed with an isomorphism
$f:Y_L\rightarrow X$ such that $(Y,f)$ is a weak N\'eron model for
$X$. We put $Y_K=Y_L\times_L K$. Denote by $\mY\rightarrow \Spf R$
the formal $\mathfrak{N}$-adic completion of $Y\rightarrow \Spec
S$ and by $h$ the composition
$$\begin{CD}
\mY_\eta@>>> (Y_K)^{an}@>f^{an}>> (X\times_L K)^{an}
\end{CD}$$
where the first arrow is the canonical open immersion
\cite[0.3.5]{bert}. Then $(\mY,h)$ is a weak N\'eron model for
$(X\times_L K)^{an}$.
\end{lemma}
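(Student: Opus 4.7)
The plan is to verify the three defining properties of a weak N\'eron model for the pair $(\mY,h)$: that $\mY$ is a smooth, separated formal $R$-scheme topologically of finite type; that $h$ is an open immersion into $(X\times_L K)^{an}$; and that $h$ induces a bijection $\mY_\eta(K')\to (X\times_L K)^{an}(K')$ for every finite unramified extension $K'/K$. The first two properties are essentially formal. Since $Y$ is a smooth separated $S$-scheme of finite type, its $\mathfrak{N}$-adic formal completion $\mY$ inherits smoothness, separatedness and topological finite type over $R$. The morphism $h$ is the composition of the canonical open immersion $\mY_\eta\hookrightarrow (Y_K)^{an}$ of \cite[0.3.5]{bert} with the isomorphism $f^{an}:(Y_K)^{an}\to (X\times_L K)^{an}$ induced by the $L$-isomorphism $f:Y_L\to X$, so $h$ is an open immersion.

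For the third (and only substantive) property, I would fix a finite unramified extension $K'/K$ with ring of integers $R_{K'}$ and translate $h(K')$ into purely algebraic terms through the following chain of canonical identifications. By the description of the generic fiber functor in \cite[0.2.7]{bert}, $\mY_\eta(K')=\mY(R_{K'})$. Since $\mY=\widehat{Y}$ and $R_{K'}$ is $\mathfrak{M}$-adically complete (with $\mathfrak{M}R_{K'}=\mathfrak{N}R_{K'}$), one has $\mY(R_{K'})=Y(R_{K'})$, using that $Y$ is of finite type over $S$. On the other side, $(X\times_L K)^{an}(K')=(X\times_L K)(K')=X(K')$. Under these identifications and the isomorphism $f$, the map $h(K')$ becomes the natural inclusion $Y(R_{K'})\hookrightarrow Y(K')=X(K')$; this is automatically injective, and what remains is to show surjectivity, i.e.\ that every $K'$-valued point of $X$ extends to an $R_{K'}$-valued point of $Y$.

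The key input is \cite[3.6.6]{neron}, according to which the weak N\'eron model property is preserved under base change of the discretely valued base along the completion $S\to R$: the pair $(Y\times_S R,\ f\otimes_L K)$ is thus again a weak N\'eron model, this time of $X\times_L K$ over $R$. By the Galois descent argument already used in the excerpt (unramified extensions of $L$ and of $K$ being in natural bijection via their common residue field extensions of $k$), this yields $(Y\times_S R)(R_{K'})=(X\times_L K)(K')$ for every finite unramified $K'/K$, and the left-hand side canonically identifies with $Y(R_{K'})$ as $S$-morphisms $\Spec R_{K'}\to Y$. This produces the required bijection and finishes the verification.

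The main obstacle is exactly the preservation of the weak N\'eron model property under the completion base change $S\to R$, which I dispatch by citing \cite[3.6.6]{neron}; everything else is bookkeeping about Berthelot's generic fiber functor and the identification of rational points of formal completions with rational points of the original scheme over complete local $R$-algebras.
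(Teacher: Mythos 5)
Your proof is correct and follows essentially the same route as the paper: reduce to the bijectivity of $\mY_\eta(K')\to (X\times_L K)^{an}(K')$ via the identifications $\mY_\eta(K')=\mY(R')=Y(R')$ and $(Y_K)^{an}(K')=Y_K(K')$, and then invoke the fact from \cite{neron} that the weak N\'eron model property is preserved under the completion base change $S\to R$ (the paper cites \cite[3.6.7]{neron} for this, rather than 3.6.6, but the mathematical content you use is the right one).
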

\begin{proof}
We only have to show that the canonical open immersion
$\mY_\eta\rightarrow (Y_K)^{an}$
 induces a
bijection $\mY_\eta(K')\rightarrow (Y_K)^{an}(K')$, for any finite
unramified extension $K'/K$. By definition of the analytification
functor $(.)^{an}$ (see e.g. \cite[0.3.3]{bert}), there is a
natural map of locally ringed sites $(Y_K)^{an}\rightarrow Y_K$
which induces a canonical bijection $Y_K(K')=(Y_K)^{an}(K')$.
Moreover, since $Y$ is a weak N\'eron model for $Y_L$, it follows
from \cite[3.6.7]{neron} that $Y\times_S R$ is a weak N\'eron
model for $Y_K$, so the natural map $Y(R')\rightarrow Y_L(K')$ is
a bijection, with $R'$ the normalization of $R$ in $K'$. Hence,
the result follows from the canonical bijections
$\mY_\eta(K')=\mY(R')=Y(R')$.
\end{proof}

\begin{lemma}\label{prod}
If $X,\,Y$ are smoothly bounded rigid varieties over $K$, and if
$(\X,f)$ and $(\mY,g)$ are weak N\'eron models of $X$, resp. $Y$,
then
$$(\X\times_R \mY,\,f\times_K g:\X_\eta\times_K
\mY_\eta\rightarrow X\times_K Y)$$ is a weak N\'eron model for
$X\times_K Y$.
\end{lemma}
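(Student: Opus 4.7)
The plan is to unravel the definition of a weak N\'eron model of a rigid variety and check each requirement term by term; there is essentially no geometric content beyond the observation that taking generic fibers of formal $R$-schemes commutes with the fiber product $\times_R$.

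First I would verify that $\X\times_R\mY$ is a smooth separated formal $R$-scheme, topologically of finite type. Smoothness, separatedness, and being topologically of finite type are all preserved by base change and by fiber products over $\Spf R$, so this is formal. Next, using the compatibility of Berthelot's generic fiber functor with fiber products (see \cite[0.3.5]{bert}), there is a canonical isomorphism
$$(\X\times_R\mY)_\eta\;\cong\;\X_\eta\times_K\mY_\eta,$$
and under this identification the candidate structural map is nothing but the product $f\times_K g$. Since $f$ and $g$ are open immersions of rigid $K$-varieties, their product $f\times_K g:\X_\eta\times_K\mY_\eta\rightarrow X\times_K Y$ is again an open immersion.

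It remains to check the bijection on $K'$-points for every finite unramified extension $K'/K$. By the universal property of the fiber product,
$$(\X\times_R\mY)_\eta(K')=\X_\eta(K')\times\mY_\eta(K'),\qquad (X\times_K Y)(K')=X(K')\times Y(K'),$$
and under these identifications $(f\times_K g)(K')$ is the product of the maps $f(K'):\X_\eta(K')\rightarrow X(K')$ and $g(K'):\mY_\eta(K')\rightarrow Y(K')$. Both of these are bijections by the assumption that $(\X,f)$ and $(\mY,g)$ are weak N\'eron models of $X$ and $Y$, so their product is a bijection.

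The only step that might require a moment's thought is the compatibility of the generic fiber functor with fiber products, but this is a standard property of Berthelot's construction and already appears implicitly in Lemma \ref{compl}. Aside from this, the proof is a direct verification of Definition (weak N\'eron model of a rigid variety) applied to $(\X\times_R\mY,\,f\times_K g)$.
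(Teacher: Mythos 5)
Your proof is correct and follows essentially the same route as the paper's: check smoothness/separatedness/tft of the product, invoke compatibility of the generic fiber functor with fiber products, note that a product of open immersions is an open immersion, and conclude the point-set bijection from the universal property of the fiber product. The only quibble is the reference: the compatibility of generic fibers with fiber products is cited in the paper as \cite[4.6]{formrigI} rather than \cite[0.3.5]{bert} (the latter concerns the comparison map between the completion's generic fiber and the analytification), but the fact itself is standard and your use of it is sound.
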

\begin{proof}
Since smoothness is preserved under base-change, and the
composition of two smooth morphisms is again smooth, we see that
$\X\times_R \mY$ is a smooth $stft$ formal $R$-scheme. Note also
that the fibered product commutes with taking generic fibers
\cite[4.6]{formrigI}, so that the generic fiber of $\X\times_R
\mY$ is canonically isomorphic to $\X_\eta\times_K \mY_\eta$. As a
fiber product of two open immersions, the morphism $f\times_K g$
is again an open immersion. It follows immediately from the
universal property of the fiber product that $(\X\times_R
\mY,f\times_K g)$ is a weak N\'eron model for $X\times_K Y$.
\end{proof}

\section{Motivic Serre invariants for algebraic varieties}
In this section, we assume that $R$ is complete, and that the
residue field $k$ of $R$ is perfect.

\begin{definition}[Motivic Serre invariant]
Let $X$ be a smoothly bounded rigid $K$-variety, and let $(\X,h)$
be a weak N\'eron model for $X$. We define the motivic Serre
invariant $S(X)$ of $X$ by
$$S(X)=[\X_s]\ \in K_0(Var_k)/(\LL-1)$$
This invariant only depends on $X$, and not on the choice of a
weak N\'eron model.

If $Y$ is a smoothly bounded $K$-variety, then the associated
rigid $K$-variety $Y^{an}$ is smoothly bounded by Proposition
\ref{comparbounded}, so $S(Y^{an})$ is well-defined, and we put
$$S(Y)=S(Y^{an})\ \in K_0(Var_k)/(\LL-1)$$
\end{definition}

The fact that $S(X)$ only depends on $X$, and not on the choice of
a weak N\'eron model was proven in \cite[4.5.3]{motrigid} for $X$
smooth and quasi-compact, using the theory of motivic integration
on formal schemes, and in \cite[5.11]{NiSe-weilres} for $X$ smooth
and bounded. The proof of \cite[5.11]{NiSe-weilres} also applies
to the case where $X$ is smoothly bounded. Note that $S(X)=0$ if
$X(K^{sh})=\emptyset$, and more generally, $S(X)=S(X')$ if $X$ is
a bounded open rigid subvariety of $X$ such that
$X(K^{sh})=X'(K^{sh})$.

\begin{lemma}\label{weaknerser}
Let $L$ be a discretely valued field, with perfect residue field
$k$, and let $K$ be its completion. If $X$ is a smooth and bounded
$L$-variety, and $(Y,f)$ is a weak N\'eron model for $X$, then
$$S(X\times_L K)=[Y_s]\ \in K_0(Var_k)/(\LL-1)$$
In particular, this value only depends on $X\times_L K$ and not on
the chosen weak N\'eron model.
\end{lemma}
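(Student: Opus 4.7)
My proposal is to reduce the statement directly to Lemma \ref{compl} together with the definition of the motivic Serre invariant. First I would verify that $S(X\times_L K)$ is defined, i.e.\ that $(X\times_L K)^{an}$ is smoothly bounded. Since $X$ is smooth over $L$, the base change $X\times_L K$ is smooth over $K$, and hence $(X\times_L K)^{an}$ is smooth. Moreover, by Corollary \ref{corcompar} the boundedness of $X$ implies that $(X\times_L K)^{an}$ is bounded. Because $(X\times_L K)^{an}$ is itself smooth, the natural map $Sm((X\times_L K)^{an})(K^{sh})\to (X\times_L K)^{an}(K^{sh})$ is trivially the identity, so $(X\times_L K)^{an}$ is smoothly bounded.

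Next, let $\mY$ be the formal $\mathfrak{N}$-adic completion of $Y\to \Spec S$ and let $h:\mY_\eta\to (X\times_L K)^{an}$ be the morphism constructed in Lemma \ref{compl}. That lemma tells us precisely that $(\mY,h)$ is a weak N\'eron model of $(X\times_L K)^{an}$. By the definition of the motivic Serre invariant, we therefore have
\[
S(X\times_L K)=[\mY_s]\quad \text{in }K_0(Var_k)/(\LL-1).
\]

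Finally, the special fiber of the formal completion coincides with the special fiber of the original $S$-scheme: as $k$-schemes, $\mY_s=Y\times_S(S/\mathfrak{N})=Y_s$. Hence $[\mY_s]=[Y_s]$ in $K_0(Var_k)$, which yields the desired equality $S(X\times_L K)=[Y_s]$. The ``in particular'' clause is immediate, since the left-hand side is intrinsically attached to $X\times_L K$ (the independence from the weak N\'eron model of $(X\times_L K)^{an}$ was recorded just after the definition of $S$), so the right-hand side cannot depend on the choice of $(Y,f)$ either. There is no real obstacle here; the entire content is the translation of an algebraic weak N\'eron model over $S$ into a formal weak N\'eron model over $R$ provided by Lemma \ref{compl}, together with the tautology $\mY_s=Y_s$.
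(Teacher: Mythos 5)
Your proof is correct and follows exactly the route the paper takes: the paper's own proof is simply the observation that the statement follows immediately from Lemma \ref{compl}, and you have merely written out the details (smooth boundedness of $(X\times_L K)^{an}$, the identification $\mY_s=Y_s$, and the definition of $S$). Nothing to add.
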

\begin{proof}
This follows immediately from Lemma \ref{compl}.
\end{proof}
\begin{lemma}\label{blup}
Let $X$ be a smooth and bounded $K$-variety, and $A$ a closed
subvariety of $X$, smooth over $K$. Denote by $h:X'\rightarrow X$
the blow-up of $X$ at $A$, and by $E$ the exceptional divisor
$h^{-1}(A)$. Then
$$S(X')-S(E)=S(X)-S(A)$$ in $K_0(Var_k)/(\LL-1)$.
\end{lemma}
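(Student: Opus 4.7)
The plan is to realize all four terms as classes of special fibers of compatible weak N\'eron models, and then to conclude by a scissor argument in $K_0(Var_k)$. Since $(X, A)$ is a smooth pair of $K$-varieties and $X$ is bounded, Proposition~\ref{iff} produces a weak N\'eron model $(Y, B)$ for the pair: a smooth pair of $R$-varieties with generic fiber $(X, A)$ such that $Y(R^{sh}) = X(K^{sh})$; then automatically $B(R^{sh}) = A(K^{sh})$ by the remark following Definition~\ref{def-weakner}. Lemma~\ref{weaknerser} therefore gives $S(X) = [Y_s]$ and $S(A) = [B_s]$ in $K_0(Var_k)/(\LL-1)$.

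Next, I would construct a weak N\'eron model for $(X', E)$ by blowing up $(Y, B)$. Let $h'\colon Y' \to Y$ be the blow-up of $Y$ along $B$ and set $F := h'^{-1}(B)$. Since $Y$ and $B$ are both smooth over $R$, the immersion $B \hookrightarrow Y$ is regular, so $Y'$ is smooth over $R$ and $F \to B$ is a projective bundle, in particular smooth over $R$. Blow-ups commute with flat base change, so $(Y'_K, F_K)$ is canonically isomorphic to $(X', E)$. The valuative criterion of properness applied to $h'$, combined with the equality $Y(R^{sh}) = X(K^{sh})$, yields a bijection $Y'(R^{sh}) \xrightarrow{\sim} X'(K^{sh})$; the corresponding bijection for $F$ then follows as above (any $R^{sh}$-point of $Y'$ whose generic fiber lies in $F_K$ factors through the closed subscheme $F$). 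Hence $(Y', F)$ is a weak N\'eron model for $(X', E)$, and a second invocation of Lemma~\ref{weaknerser} gives $S(X') = [Y'_s]$ and $S(E) = [F_s]$ in $K_0(Var_k)/(\LL-1)$.

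Finally, it remains to verify the identity on special fibers. Since $h'$ is an isomorphism away from $B$, the induced map $h'_s$ restricts to an isomorphism $Y'_s \setminus F_s \xrightarrow{\sim} Y_s \setminus B_s$ of $k$-varieties, and two applications of the scissor relations yield
\[
[Y'_s] - [F_s] = [Y'_s \setminus F_s] = [Y_s \setminus B_s] = [Y_s] - [B_s]
\]
in $K_0(Var_k)$, which is even stronger than the asserted identity modulo $(\LL-1)$. The only genuinely substantive step is the first one: producing a single weak N\'eron model handling $X$ and $A$ simultaneously, with compatible smoothness properties. This is precisely what Theorem~\ref{smoothpair} and its consequence Proposition~\ref{iff} were designed for, so no real obstacle is expected; the rest of the argument is formal.
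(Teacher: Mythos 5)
Your proposal is correct and follows essentially the same route as the paper: take a weak N\'eron model $(Y,B)$ of the pair $(X,A)$ (via Theorem~\ref{smoothpair}/Proposition~\ref{iff}), blow up $Y$ along $B$, check via properness of the blow-up and flat base change that $(Y',F)$ is a weak N\'eron model of $(X',E)$, and conclude from the scissor relations since the blow-up is an isomorphism off $B$. The only cosmetic difference is that you spell out why $Y'$ and $F$ are $R$-smooth (regular immersion, projective bundle), which the paper asserts more briefly.
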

\begin{proof}
Let $((Y,B),f)$ be a weak N\'eron model for $(X,A)$. By Lemma
\ref{weaknerser}, $S(X)=[Y_s]$ and $S(A)=[B_s]$ in
$K_0(Var_k)/(\LL-1)$. Denote by $h:Y'\rightarrow Y$ the blow-up of
$Y$ at $B$, and by $F=h^{-1}(B)$ the exceptional divisor. Since
$B$ is smooth over $R$, $Y'$ and $F$ are also smooth over $R$.
Moreover, since blowing up commutes with flat base change, the
isomorphism
$$f:(Y_K,B_K)\rightarrow (X,A)$$ induces an isomorphism
$$f':(Y'_K,F_K)\rightarrow (X',E)$$

We'll show that $((Y',F),f')$ is a weak N\'eron model for
$(X',E)$. We only have to prove that any $K^{sh}$-valued point $x$
on $Y'_K$ extends to a section in $Y'(R^{sh})$. Since $(Y,B)$ is a
weak N\'eron model for $(X,A)$, the point $h(x)\in Y_K(K^{sh})$
extends to a section $a$ in $Y(R^{sh})$. But $h$ is proper, so $x$
itself extends to a section in $Y'(R^{sh})$.

This implies that $S(X')=[Y'_s]$ and $S(E)=[F_s]$, and since $h$
restricts to an isomorphism $Y'-F\rightarrow Y-B$, we have
$[Y'_s]-[F_s]=[Y_s]-[B_s]$ in $K_0(Var_k)$, so the result follows.
\end{proof}

\begin{theorem}\label{serresing}
Assume that $K$ has characteristic zero. There exists a unique
ring morphism
$$S:\mathcal{M}_K \rightarrow K_0(Var_k)/(\LL-1)$$
such that $S([X])=S(X)$ for any smooth and proper $K$-variety $X$.
It satisfies $S([X])=S(X)$ for any smoothly bounded $K$-variety
$X$, and $S(\LL-1)=0$.
\end{theorem}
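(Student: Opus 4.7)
The plan is to apply Bittner's presentation (Theorem~\ref{bittner}): since $K$ has characteristic zero, $K_0(Var_K)$ is isomorphic to the ring $K_0^{(bl)}(Var_K)'$ generated by smooth proper $K$-varieties modulo the blow-up relations. On these generators the motivic Serre invariant $S(X)\in K_0(Var_k)/(\LL-1)$ is already defined, since every smooth proper $K$-variety admits a weak N\'eron model (Corollary following Proposition~\ref{iff}). I would send $[X]_{bl}\mapsto S(X)$; the blow-up relations are verified by Lemma~\ref{blup}, and multiplicativity reduces by Bittner to smooth proper $X,Y$, where Lemma~\ref{prod} gives $S(X\times_K Y)=S(X)\cdot S(Y)$ because the product of weak N\'eron models is a weak N\'eron model. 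Uniqueness on $K_0(Var_K)$ is immediate from Bittner, producing a ring morphism $S\colon K_0(Var_K)\to K_0(Var_k)/(\LL-1)$.

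To descend to $\mathcal{M}_K$, I would compute on $[\Pro^1_K]=\LL_K+1$ using the weak N\'eron model $\Pro^1_R$: $S(\Pro^1_K)=[\Pro^1_k]=\LL_k+1\equiv 2\pmod{\LL_k-1}$, whence $S(\LL_K)=1$ in the target and $S(\LL_K-1)=0$. In particular $\LL_K$ is invertible in $K_0(Var_k)/(\LL-1)$, so $S$ extends uniquely to a ring morphism $\mathcal{M}_K\to K_0(Var_k)/(\LL-1)$.

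The most delicate part is the identity $S([X])=S(X)$ for arbitrary smoothly bounded $X$. Its key ingredient is the vanishing claim: \emph{if $Y$ is any $K$-variety with $Y(K^{sh})=\emptyset$, then $S([Y])=0$.} I would prove this by induction on $\dim Y$. In dimension zero $Y=\bigsqcup\Spec L_i$ with each $L_i/K$ a ramified finite extension; each $\Spec L_i$ is smooth proper, and its weak N\'eron model has no $R^{sh}$-sections and hence empty special fiber (since $k$ is perfect, so $k^s$ is algebraically closed, and the reduction map is surjective on any smooth formal $R$-scheme), so $S([\Spec L_i])=0$. For $\dim Y\geq 1$, stratify $[Y]=[Sm(Y)]+[Y-Sm(Y)]$; the singular locus has smaller dimension and no unramified points, so vanishes by induction. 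For $Z=Sm(Y)$, take a smooth compactification $\bar Z$ via Hironaka and let $D=\bar Z-Z$; Proposition~\ref{nopoints} gives $D(K^{sh})=\emptyset$, hence $\bar Z(K^{sh})=\emptyset$, so the weak N\'eron model of $\bar Z^{an}$ has empty special fiber and $S(\bar Z)=0$. Since $\dim D<\dim Y$, induction yields $S([D])=0$, and the scissor relation gives $S([Z])=S([\bar Z])-S([D])=0$.

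Granted this claim, the identity $S([X])=S(X)$ for smoothly bounded $X$ follows in two steps. First, the scissor relation $[X]=[Sm(X)]+[X-Sm(X)]$, combined with the claim applied to $X-Sm(X)$ (which has no unramified points by smooth boundedness) and the remark that $S(X)=S(Sm(X))$ as invariants, reduces to the case $X$ smooth. Second, for smooth smoothly bounded $X$, a smooth compactification $\bar X$ satisfies $\partial\bar X(K^{sh})=\emptyset$ by Proposition~\ref{nopoints}, so $S([\partial\bar X])=0$ by the claim, giving $S([X])=S([\bar X])=S(\bar X)$; and $S(\bar X)=S(X)$ as invariants because $X^{an}\subset\bar X^{an}$ is a bounded open rigid subvariety sharing all unramified points. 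The main obstacle is the vanishing claim, whose proof interleaves Hironaka's resolution in characteristic zero, Proposition~\ref{nopoints}, and the empty-special-fiber observation for smooth formal $R$-schemes without unramified sections over a perfect residue field.
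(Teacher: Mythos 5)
Your proposal is correct and follows essentially the same route as the paper: Bittner's presentation plus Lemma~\ref{blup} and Lemma~\ref{prod} to construct the ring morphism, the computation $S(\LL)=1$ to descend to $\mathcal{M}_K$, and an induction on dimension combining the smooth-locus stratification, Hironaka compactification, Proposition~\ref{nopoints}, and the emptiness of weak N\'eron models. The only (cosmetic) difference is that you isolate the vanishing statement for varieties without unramified points as a standalone induction, whereas the paper runs a single induction on the identity $S([X])=S(X)$ for smoothly bounded $X$, treating such varieties as the degenerate case.
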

\begin{proof}
By Theorem \ref{bittner} and Lemma \ref{blup}, there exists a
unique morphism of abelian groups
$$S:K_0(Var_K)\rightarrow K_0(Var_k)/(\LL-1)$$
such that $S([X])=S(X)$ for any smooth and proper $K$-variety $X$.
By Lemma \ref{prod}, and the fact that the analytification functor
$(\cdot)^{an}$ commutes with fiber products, $S$ is a morphism of
rings. We have
$$S(\LL)=S(\mathbb{P}^1_K)-S(\Spec K)=\LL=1$$ in $K_0(Var_k)/(\LL-1)$, so
$S$ localizes to a ring morphism on $\mathcal{M}_K$ and
$S(\LL-1)=0$. It remains to show that $S([X])=S(X)$ if $X$ is
smoothly bounded. We proceed by induction on the dimension of $X$.

 If $X$ has
dimension $0$, then $X$ is proper and smooth over $K$, so
$S([X])=S(X)$ by definition. Suppose that $dim(X)>0$. Since $K$
has characteristic zero and $X$ is reduced, the $K$-smooth locus
$Sm(X)$ of $X$ is open and dense in $X$. But $X$ is smoothly
bounded, so $(X-Sm(X))(K^{sh})$ is empty (and in particular,
$X-Sm(X)$ is smoothly bounded). Since
$$dim(X-Sm(X))<dim(X)$$ we know that $S([X-Sm(X)])=S(X-Sm(X))=0$
by the induction hypothesis.
 By additivity, $S([X])=S([Sm(X)])$, so we may assume that $X$ is smooth over $K$.

We embed $X$ as a dense open subscheme in a smooth proper
$K$-variety $\overline{X}$, and we denote the boundary
$\overline{X}-X$ by $\partial \overline{X}$. Since $X$ is bounded,
we know that $\partial \overline{X}(K^{sh})=\emptyset$, by
Proposition \ref{nopoints}. Again by our induction hypothesis,
this implies that $S([\partial \overline{X}])=0$, so
$$S([X])=S([\overline{X}])=S(\overline{X})=S(X)$$ as required.
\end{proof}
\begin{definition}[Motivic Serre invariant of an algebraic variety]\label{def-motserre}
Assume that $K$ has characteristic zero. For any separated
$K$-scheme of finite type $X$, we define the motivic Serre
invariant $S(X)$ of $X$ as the image of $[X]$ under the morphism
$$S:K_0(Var_K)\rightarrow K_0(Var_k)/(\LL-1)$$
\end{definition}
%\begin{example}
%If $X=\A^1_K$, then the $K^{sh}$-valued points of $X$ are not
%contained in a quasi-compact open subset of $X^{an}$ (in other
%words, $X^{an}$ is not bounded in the sense of \cite[1.2]{formner}
%or \cite[5.6]{NiSe-weilres}) so $X^{an}$ does not admit a weak
%N\'eron model (and neither does $X$). However, since we can write
%$X$ as $\Proj^1_K-\{\infty\}$, the above definition yields
%$S(X)=[\Proj^1_k]-[1]=\LL=1$ in $K_0(Var_k)/(\LL-1)$.
%\end{example}
\begin{example}
If $X$ is the cusp $\Spec K[x,y]/(x^2-y^3)$, then $X^{an}$ is not
bounded, so $X^{an}$ does not admit a weak N\'eron model (and
neither does $X$). However, we can break up $X$ into the disjoint
union of the origin $O$ and its complement
$$Y=\Spec K[x,x^{-1},y,y^{-1}]/(x^2-y^3)$$
Since $Y$ is isomorphic to the torus $\mathbb{G}_{m,K}$, we get
$S(X)=1$ in $K_0(Var_k)/(\LL-1)$. Alternatively, we can use the
fact that the normalization of $Y$ is isomorphic to $\A^1_K$ and
that the inverse image of the singular point in this normalization
consists of a unique $K$-point.
\end{example}
\begin{example}
Let $X$ be a rational projective curve with $\delta$ nodes, and no
other singularities, and suppose that all the nodes and their
tangent directions are rational over $K$. Then the normalization
$\widetilde{X}$ is isomorphic to $\mathbb{P}^1_K$, and over each
node of $X$ lie exactly $2$ points of $\widetilde{X}$, which are
$K$-rational. Hence,
$$S(X)-\delta=S(\widetilde{X})-2\delta$$ whence $S(X)=2-\delta$ in
$K_0(Var_k)/(\LL-1)$.
\end{example}

\begin{lemma}\label{empty}
Assume that $K$ has characteristic zero, and let $X$ be a variety
over $K$. If $X(K^{sh})=\emptyset$, then $S(X)=0$, and if $k=k^s$
and $X(K)$ is finite, then $S(X)=\sharp X(K)$.
% and denote by $X^{ur}$ the set of closed points $x$ on
%$X$ whose residue field $k(x)$ is unramified over $k$. For each
%point $x$ in $X^{ur}$ we denote by $k_x$ the residue field of
%$k(x)$. If $X^{ur}$ is finite, then
%$$S(X)=\sum_{x\in X^{ur}}[\Spec k_x]\quad \in K_0(Var_k)/(\LL-1)$$
%In particular,
\end{lemma}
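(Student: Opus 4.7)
The plan is to apply the ring morphism $S:\mathcal{M}_K\to K_0(Var_k)/(\LL-1)$ of Theorem~\ref{serresing} together with the scissor relations in $K_0(Var_K)$; the second statement will be deduced from the first.

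For the vanishing statement, the key observation is that if $X(K^{sh})=\emptyset$, then $X$ is smoothly bounded as an algebraic variety in the sense of Definition~\ref{def-bounded-alg}: $X(K^{sh})$ is trivially bounded and the map $Sm(X)(K^{sh})\to X(K^{sh})$ is trivially bijective, since both sides are empty. Theorem~\ref{serresing} then gives $S(X)=S([X])=S(X^{an})$. To compute $S(X^{an})$, I would observe that $X^{an}$ is smoothly bounded (Proposition~\ref{comparbounded}) and that the empty formal $R$-scheme is a weak N\'eron model of $X^{an}$: since $K^{sh}$ is the filtered union of the finite unramified extensions $K'$ of $K$, the hypothesis gives $X^{an}(K')=X(K')=\emptyset$ for every such $K'$. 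Reading off its empty special fiber yields $S(X^{an})=0$, whence $S(X)=0$.

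For the point-counting statement, I first use that $R$, being complete, is henselian; combined with $k=k^s$ this forces $R^{sh}=R$ and hence $K^{sh}=K$, so $X(K^{sh})=X(K)=\{x_1,\ldots,x_n\}$ is a finite set of distinct $K$-rational closed points of $X$. Setting $Y=X\setminus\{x_1,\ldots,x_n\}$, the complement is open with $Y(K^{sh})=Y(K)=\emptyset$, so $S(Y)=0$ by the first part. The scissor relations give $[X]=[Y]+n\,[\Spec K]$ in $K_0(Var_K)$. Since $\Spec K$ is smooth and proper over $K$ with weak N\'eron model $\Spf R$, whose special fiber is $\Spec k$, one has $S(\Spec K)=1$ in $K_0(Var_k)/(\LL-1)$, and the conclusion $S(X)=n=\sharp X(K)$ follows.

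The only delicate step is recognizing that the empty formal scheme qualifies as a weak N\'eron model for $X^{an}$ in the vanishing argument. This is not a substantive obstacle: it amounts to checking that smoothness, separatedness, being topologically of finite type, and the bijection on $K'$-points for all finite unramified $K'/K$ are all satisfied vacuously by the empty scheme, once one knows $X^{an}(K')=\emptyset$ for every such $K'$.
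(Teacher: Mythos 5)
Your proof is correct and follows essentially the same route as the paper: the paper likewise reduces the counting statement to the vanishing statement by additivity (using $K^{sh}=K$ when $k=k^s$), notes that $X(K^{sh})=\emptyset$ makes $X$ smoothly bounded so that Theorem~\ref{serresing} identifies $S([X])$ with $S(X^{an})$, and concludes because the empty formal scheme is a weak N\'eron model of $X^{an}$. Your write-up merely spells out the details (the vacuous verification for the empty weak N\'eron model and the computation $S(\Spec K)=1$) that the paper leaves implicit.
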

\begin{proof}
By additivity, it suffices to prove the result when
$X(K^{sh})=\emptyset$. Then $X$ is smoothly bounded, so
$S(X)=S(X^{an})$. But $S(X^{an})=0$ since the empty formal scheme
is a weak N\'eron model for $X^{an}$.
\end{proof}

%\begin{remark}
%Assume that $k=k^s$. One can ask if the motivic Serre invariant
%$S(X)$ equals the number of $K$-points on $X$ if this number is
%finite. The answer is yes, but this result is hardly more general
%than Lemma \ref{empty} in view of Proposition \ref{finite}.
%\end{remark}

\section{The trace formula}\label{sec-trace}
In this section, we assume that $R$ is complete and $k$
algebraically closed, and we fix a prime number $\ell$ invertible
in $k$. For each integer $d>0$ prime to the characteristic
exponent $p$ of $k$, we denote by $K(d)$ the unique extension of
degree $d$ of $K$ in a fixed separable closure $K^s$. We denote by
$K^t$ the tame closure of $K$ in $K^s$.

 For any pro-finite
group $H$, we denote by $Rep_H(\Q_\ell)$ the abelian tensor
category of $\ell$-adic representations of $H$ (i.e. finite
dimensional $\Q_\ell$-vector spaces endowed with a continuous left
action of $H$) and by $K_0(Rep_H(\Q_\ell))$ its Grothendieck ring.
For each element $h$ of $H$, we consider the unique ring morphism
$$Tr_{h}:K_0(Rep_{H}(\Q_\ell))\rightarrow \Q_\ell$$
mapping $[M]$ to $Trace(h\,|\,M)$ for any $\ell$-adic
representation $M$ of $H$.

%!!!!!!!!!!!!notatie uniformiseren; voordien gebruikten we
%$K_0(Rep_H(\Q_\ell))$!!!!

Denote by $G_K$ the monodromy group $G(K^s/K)$. Consider the
\'etale realization morphism
$$\acute{e}t:K_0(Var_K)\rightarrow K_0(Rep_{G_K}(\Q_\ell))$$ from
Section \ref{sec-real}.
%\begin{proof}
%This is just a reformulation of some classical properties of
%$\ell$-adic cohomology with compact support.
%
%The $\ell$-adic cohomology spaces $H^i_c(X\times_K K^s,\Q_\ell)$
%have finite dimension, and $G$ acts continuously, so
%$H^i_c(X\times_K K^s,\Q_\ell)$ indeed defines a class in
%$K_0(\Q_\ell[[G]])$.
%
%The fact that our map $\acute{e}t$ is well-defined on
%$K_0(Var_K)$, and respects the ring operations, follows
%immediately from the long exact sequence for cohomology with
%compact support associated to a triple of $K^s$-varieties
%$(X,Y,U)$ ($Y$ closed in $X$, $U$ the complement of $Y$), and from
%the K\"unneth formula.
%\end{proof}

If we denote by $G^t_K$ the tame monodromy group $G(K^t/K)$, then
there is a natural surjective morphism $G_K\rightarrow G^t_K$
whose kernel is the wild inertia group $P$. This morphism induces
a canonical morphism of rings
$$K_0(Rep_{G^t_K}(\Q_\ell))\rightarrow K_0(Rep_{G_K}(\Q_\ell))$$
Since $P$ is a pro-$p$-group and $\ell$ is prime to $p$, the
functor $$(\cdot)^P:Rep_{G_K}(\Q_\ell)\rightarrow
Rep_{G_K^t}(\Q_\ell)$$ is exact, so it defines a morphism of
abelian groups
$$(\cdot)^P:K_0(Rep_{G_K}(\Q_\ell))\rightarrow
K_0(Rep_{G^t_K}(\Q_\ell))$$ which is left inverse to
$$K_0(Rep_{G^t_K}(\Q_\ell))\rightarrow K_0(Rep_{G_K}(\Q_\ell))$$
Hence, the latter morphism is injective, and we may identify
$K_0(Rep_{G^t_K}(\Q_\ell))$ with its image in
$K_0(Rep_{G_K}(\Q_\ell))$. Then an element $\alpha$ of
$K_0(Rep_{G_K}(\Q_\ell))$ belongs to $K_0(Rep_{G^t_K}(\Q_\ell))$
iff $(\alpha)^P=\alpha$.
% which is the subring consisting of
%elements $\alpha\in K_0(Rep_{G_K}(\Q_\ell))$ with $(\alpha)^P=\alpha$.
%
%\begin{lemma}
%If $\sigma$ is an element of the tame monodromy group $G^t_K$,
%then there exists a unique ring morphism
%$$Tr_{\sigma}:K_0(Rep_{G^t_K}(\Q_\ell))\rightarrow \Q_\ell$$
%mapping $[M]$ to $Trace(\sigma\,|\,M)$ for any $\ell$-adic
%representation $M$ of $G^t_K$.
%\end{lemma}
%\begin{proof}
%Straightforward.
%\end{proof}
\begin{definition}[Tame varieties]\label{def-tame}
If $X$ is a smooth and proper $K$-variety, then we say that $X$ is
tame if there exists a regular proper $R$-variety $Y$ such that
$Y_s$ is a tame strict normal crossings divisor (i.e. the
multiplicity of each component is prime to $p$) and such that
$Y_K$ is isomorphic to $X$. Such a model $Y$ will be called a tame
$R$-model for $X$.

The tame Grothendieck ring of varieties over $K$ is the subring
$K_0^{t}(Var_K)$ of $K_0(Var_K)$ generated by the isomorphism
classes $[X]$ of tame smooth proper $K$-varieties $X$.
\end{definition}
Of course, if $k$ has characteristic zero, then any smooth and
proper $K$-variety is tame, and $K_0^{t}(Var_K)=K_0(Var_K)$.
\begin{lemma}\label{tame}
The image of the \'etale realization morphism
$$\acute{e}t:K_0^{t}(Var_K)\rightarrow K_0(Rep_{G_K}(\Q_\ell))$$ is contained in
$K_0(Rep_{G^t_K}(\Q_\ell))$.  If $X$ is a $K$-variety such that
$[X]$ belongs to $K_0^t(Var_K)$, then
$$\acute{e}t(X)=\sum_{i\geq 0}(-1)^i[H^i_c(X\times_K K^t,\Q_\ell)]$$
in $K_0(Rep_{G^t_K}(\Q_\ell))$.
\end{lemma}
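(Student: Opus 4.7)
The plan is to split the statement into two parts, first handling the inclusion of the image in $K_0(Rep_{G^t_K}(\Q_\ell))$, and then deducing the cohomological formula as a formal consequence.

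First I would observe that $K_0(Rep_{G^t_K}(\Q_\ell))$ is in fact a subring of $K_0(Rep_{G_K}(\Q_\ell))$: the tensor product of two representations on which $P$ acts trivially is again $P$-trivial. Since $\acute{e}t$ is a ring morphism and $K_0^t(Var_K)$ is by definition generated as a ring by classes $[X]$ of tame smooth and proper $K$-varieties, it is enough to prove $\acute{e}t([X])\in K_0(Rep_{G^t_K}(\Q_\ell))$ for each such generator.

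To this end, fix a tame $R$-model $Y$ of $X$. By proper base change, $H^i(X_{K^s},\Q_\ell)\cong H^i(Y_{s}, R\Psi\Q_\ell)$ as $G_K$-representations, where $R\Psi\Q_\ell$ denotes the complex of nearby cycles. Because $Y_s$ is a tame strict normal crossings divisor, the sheaves $R^q\Psi\Q_\ell$ admit an explicit description in terms of the multiplicities of the components of $Y_s$ (Rapoport--Zink); since all these multiplicities are prime to $p$, one sees directly that the wild inertia $P$ acts trivially on every stalk, hence on the entire complex of nearby cycles. Consequently $P$ acts trivially on $H^i(X_{K^s},\Q_\ell)$ for every $i$, which is exactly the statement that $\acute{e}t([X])\in K_0(Rep_{G^t_K}(\Q_\ell))$.

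For the second claim, take any $K$-variety $X$ with $[X]\in K_0^t(Var_K)$. By the first part combined with the characterization recalled just before the lemma, $\acute{e}t(X)=\acute{e}t(X)^P$. Since the functor $(\cdot)^P$ is exact on $\ell$-adic $G_K$-representations, this equals $\sum_{i\geq 0}(-1)^i[H^i_c(X_{K^s},\Q_\ell)^P]$. To identify $H^i_c(X_{K^s},\Q_\ell)^P$ with $H^i_c(X_{K^t},\Q_\ell)$, I would invoke the Hochschild--Serre spectral sequence for the pro-étale Galois cover $X_{K^s}\to X_{K^t}$ with group $P=G(K^s/K^t)$; because $P$ is a pro-$p$ group and $\ell$ is invertible in $k$, its continuous cohomology with $\Q_\ell$-coefficients vanishes in positive degrees, so the spectral sequence collapses and yields the desired identification. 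The main technical input is the tameness of the $G_K$-action on the cohomology of a tame smooth proper variety, i.e.\ the nearby-cycles computation above; everything else is formal manipulation inside the Grothendieck rings of representations.
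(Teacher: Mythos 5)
Your proposal is correct and follows essentially the same route as the paper: the Rapoport--Zink description of the nearby cycles of a tame strict normal crossings model gives $P$-triviality of $H^i(X\times_K K^s,\Q_\ell)$ for the tame smooth proper generators, the ring-generation of $K_0^t(Var_K)$ then handles the first claim, and the identification $H^i_c(X\times_K K^s,\Q_\ell)^P\cong H^i_c(X\times_K K^t,\Q_\ell)$ (which the paper asserts directly and you justify via Hochschild--Serre and the vanishing of higher continuous $\Q_\ell$-cohomology of the pro-$p$ group $P$) together with exactness of $(\cdot)^P$ gives the second.
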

\begin{proof}
 If $X$ is a tame, smooth
and proper $K$-variety, and $Y$ is a tame $R$-model for $X$, then
by \cite[2.23]{Rapo-Zink}, the $\ell$-adic nearby cycles complex
$R\psi_\eta(\Q_\ell)$ of $Y$ is tame, i.e. $P$ acts trivially on
$R^i\psi_\eta(\Q_\ell)$ for each $i\geq 0$. By the spectral
sequence \cite[I.2.2.3]{sga7a} this implies that $P$ acts
trivially on $H^i(X\times_K K^s,\Q_\ell)$, for each $i\geq 0$.
Since the isomorphism classes of tame smooth and proper
$K$-varieties $X$ generate the subring $K_0^t(Var_K)$ of
$K_0(Var_K)$, we see that the image of \'etale realization
morphism
$$\acute{e}t:K_0^{t}(Var_K)\rightarrow K_0(Rep_{G_K}(\Q_\ell))$$ is contained in
$K_0(Rep_{G^t_K}(\Q_\ell))$.

 Since $\ell$ is invertible in $k$, and
$P$ is a pro-$p$-group, there is a canonical isomorphism
$$H^i_c(X\times_K K^t,\Q_\ell)\cong H^i_c(X\times_K K^s,\Q_\ell)^P$$
for any $K$-variety $X$ and each $i\geq 0$. Hence,
$$\acute{e}t(X)^P=\sum_{i\geq 0}(-1)^i[H^i_c(X\times_K
K^t,\Q_\ell)]$$ in $K_0(Rep_{G_K^t}(\Q_\ell))$. If $[X]$ belongs
to $K_0^t(Var_K)$ then $\acute{e}t(X)^P=\acute{e}t(X)$ by the
first part of the proof, and we are done.
\end{proof}
\begin{prop}[Trace formula for tame varieties]\label{tame-3}
Let $\varphi$ be a topological generator of the tame monodromy
group $G^t_K$. If $X$ is a tame smooth and proper $K$-variety,
then
$$\chi_{top}(S(X\times_K K(d)))=Trace(\varphi^d\,|\,H(X\times_K K^t,\Q_\ell))$$
for each integer $d>0$ prime to $p$.
\end{prop}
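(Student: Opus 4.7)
The plan is to fix a tame proper $R$-model $Y$ of $X$ with $Y_s=\sum_{i\in I}N_iE_i$ a tame strict normal crossings divisor, and to compute both sides of the asserted equality via the stratification of $Y_s$ by the locally closed strata $E_J^\circ=\bigcap_{j\in J}E_j\setminus\bigcup_{i\notin J}E_i$ ($\emptyset\neq J\subset I$). The strategy is that each side reduces to the \emph{same} combinatorial expression indexed by the strata, so only the matching has to be done.

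\emph{Motivic side.} Let $R(d)$ denote the ring of integers of $K(d)$. I would construct a weak N\'eron model of (the analytification of) $X\times_K K(d)$ by normalizing $Y\times_R R(d)$ and applying the N\'eron smoothening process of Section \ref{sec-smooth} to resolve the remaining tame cyclic quotient singularities. Since $Y$ is \'etale-locally of the toric form $\Spec R[t_1,\dots,t_r]/(t_1^{N_{j_1}}\cdots t_r^{N_{j_r}}-u\pi)\times\A^{n-r}$ at a point of $E_J^\circ$, this construction can be carried out explicitly in local coordinates. The resulting identity I expect is
$$S(X\times_K K(d))\equiv\sum_{\emptyset\neq J\subset I,\ N_j\mid d\ \forall j\in J}(\LL-1)^{|J|-1}[\widetilde{E}_J^\circ]\pmod{\LL-1},$$
where $\widetilde{E}_J^\circ\to E_J^\circ$ is the finite \'etale Kummer cover associated to taking $d$-th roots of local equations of $Y_s$. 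Reducing modulo $(\LL-1)$ kills every stratum with $|J|\geq 2$, and since $\widetilde{E}_i^\circ\to E_i^\circ$ is finite \'etale of degree $N_i$ this will give
$$\chi_{top}(S(X\times_K K(d)))=\sum_{i\,:\,N_i\mid d}N_i\,\chi_{top}(E_i^\circ).$$

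\emph{Cohomological side.} By \cite[2.23]{Rapo-Zink} the nearby cycles complex $R\psi_\eta\Q_\ell$ of $Y$ is tame, so A'Campo's trace formula for tame monodromy on a strict normal crossings model applies and will give
$$Trace(\varphi^d\mid H(X\times_K K^t,\Q_\ell))=\sum_{i\,:\,N_i\mid d}N_i\,\chi_{top}(E_i^\circ),$$
matching the motivic side. Comparison with the previous display concludes the proof.

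\emph{Main obstacle.} The hard step will be the explicit motivic computation: one has to track the normalization of $Y\times_R R(d)$ and the subsequent admissible blow-ups stratum by stratum, and identify each piece of the resulting weak N\'eron model's special fiber with a Kummer cover of some $E_J^\circ$. The cleanest route, I think, is via the motivic change-of-variables formula on the formal $\mathfrak{M}$-adic completion of $Y$ as in \cite{sebag1}, which automatically bookkeeps the $(\LL-1)^{|J|-1}$ torus contributions coming from deeper strata. Everything cohomological is classical, so no further obstacle arises on that side.
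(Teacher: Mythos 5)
Your proposal is correct in substance, but it takes a different (much longer) route than the paper: the paper's entire proof is a citation of the trace formula already established in \cite[5.4]{NiSe}, combined with Berkovich's comparison theorem \cite[7.5.4]{Berk-etale} to pass between algebraic and rigid-analytic \'etale cohomology. What you have written out is essentially the proof of that cited result. Both sides of your computation are the standard ones: the weak N\'eron model of $X\times_K K(d)$ extracted from the normalization of $Y\times_R R(d)$ gives $S(X\times_K K(d))\equiv\sum_{N_i\mid d}[\widetilde{E}_i^\circ]$ modulo $(\LL-1)$, and the tame A'Campo formula (valid here by \cite[2.23]{Rapo-Zink} plus the nearby-cycles spectral sequence) gives the same expression $\sum_{N_i\mid d}N_i\,\chi_{top}(E_i^\circ)$ for the trace of $\varphi^d$; note that $\chi_{top}(\widetilde{E}_i^\circ)=N_i\,\chi_{top}(E_i^\circ)$ uses that the Kummer cover is tame, which holds precisely because the model is tame. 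Two points deserve care. First, your indexing of the deeper strata is not quite right: an unramified point of $X\times_K K(d)$ reducing into $E_J^\circ$ satisfies $\sum_{j\in J}N_jm_j=d$ with $m_j\geq 1$, which for $|J|\geq 2$ is weaker than $N_j\mid d$ for all $j$; this is harmless only because those contributions are multiples of $\LL-1$. Second, that divisibility by $\LL-1$ of the deeper-strata contributions is exactly the nontrivial local computation you correctly flag as the main obstacle --- it is the content of \cite[5.4]{NiSe} (in the spirit of Bosch--Schl\"oter), and your suggested shortcut via motivic integration on the formal completion is indeed how the cited reference handles it. One small advantage of your route: since you work throughout with the algebraic model and algebraic nearby cycles, the rigid/algebraic comparison theorem the paper invokes becomes unnecessary.
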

\begin{proof}
This follows immediately from the trace formula in
\cite[5.4]{NiSe} and the comparison theorem for \'etale cohomology
\cite[7.5.4]{Berk-etale}. See also \cite[5.4]{NiSe} for an
explicit expression in terms of a tame $R$-model of $X$.
%However, it is more natural to give a
%direct proof (essentially the same as in \cite[?]{Ni-trace}).
%
%Let $Y$ be a tame $R$-model for $X$. Since $X$ is proper over $K$,
%there exists a canonical $G^t_K$-equivariant isomorphism
%$$H^i(X\times_K K^t,\Q_\ell)\cong \mathbb{H}^i(Y_s,R\psi_\eta^t(\Q_\ell)) $$ for each $i\geq 0$, where
%$R\psi_\eta^t(\Q_\ell)$ denotes the complex of tame nearby cycles.
%By the explicit computation of the stalks of
%$R\psi_\eta^t(\Q_\ell)$ in \cite[I]{sga7a}, we see that the trace
%of $\varphi$ on $R^j\psi_\eta^t(\Q_\ell)_x$ vanishes, for each
%$j\geq 0$ and each singular closed point $x$ of $Y_s$, and this
%trace is equal to one if $x$ is a closed point of the $k$-smooth
%locus $Sm(Y_s)$. Since $Y$ is regular, the $R$-smooth locus
%$Sm(Y)$ of $Y$ is a weak N\'eron model for $X$, and we have
%$Sm(Y_s)=Sm(Y)_s$ by flatness.
%
%Hence, by \cite[?]{Ni-trace} and the fact that the complex
%$R\psi_\eta^t(\Q_\ell)$ is tamely constructible
%\cite[?]{Ni-trace},
%\begin{eqnarray*}
%Trace(\varphi\,|\,\mathbb{H}(Y_s,R\psi_\eta^t(\Q_\ell)))&=&\chi_{top}(Sm(Y_s))
%\\ &=&\chi_{top}(S(X))
%\end{eqnarray*}
\end{proof}

%\begin{lemma}\label{P}
%If $X$ is any $K$-variety, there is a canonical isomorphism
%$$H^i_c(X\times_K K^t,\Q_\ell)\cong H^i_c(X\times_K K^s,\Q_\ell)^P$$
%for each $i\geq 0$.
%\end{lemma}
%\begin{proof}
%Since $\ell$ is invertible in $k$, the functor $(.)^P$ is exact on
%the category of $\Z/\ell^n$-modules with continuous $G$-action.
%\end{proof}
%\begin{lemma}\label{tame-1}
%The image of $K_0^{t}(Var_K)$ under the \'etale realization
%morphism $\acute{e}t$ is contained in the subring
%$K_0(\Q_\ell[[G^{tame}]])$ of $K_0(\Q_\ell[[G]])$.
%\end{lemma}
%\begin{proof}
%It suffices to show that, for any tame smooth proper $K$-variety
%$X$, the wild inertia group $P$ acts trivially on the \'etale
%cohomology spaces
%$$H^i(X\times_K K^s,\Q_\ell)$$
%Let $Y$ be a tame $R$-model for $X$. There is for each $i\geq 0$ a
%canonical isomorphism
%$$H^i(X\times_K K^s,\Q_\ell)\cong H^i(Y_s,R\psi_\eta(\Q_\ell))$$
%where $R\psi_\eta(\Q_\ell)$ denotes the complex of $\ell$-adic
%nearby cycles. By \cite[?]{} the wild inertia group $P$ acts
%trivially on $R\psi_\eta(\Q_\ell)$, so the result follows.
%\end{proof}
%\begin{lemma}\label{tame-2}
%If $X$ is a $K$-variety such that $[X]$ belongs to $K_0^t(Var_K)$,
%then
%$$\acute{e}t([X])=\sum_{i\geq 0}(-1)^i[H^i_c(X\times_K K^t,\Q_\ell)]$$
%in $K_0(\Q_\ell[[G^{tame}]])$.
%\end{lemma}
%\begin{proof}
%
%\end{proof}

%\begin{proof}
%It suffices to show that, for any tame smooth proper $K$-variety
%$X$, the action of the wild inertia group $P$ is trivial on the
%$\ell$-adic cohomology spaces $H^i_c(X\times_K K^s,\Q_\ell)$.
%\end{proof}

\begin{theorem}[Trace formula]\label{trace}
Assume that $K$ has characteristic zero. If $d>0$ is an integer
prime to $p$ and $\varphi$ is a topological generator of the tame
Galois group $G^t_{K(d)}=G(K^t/K(d))$, then the following diagram
of ring morphisms commutes:
$$\xymatrix{
K_0^{t}(Var_{K}) \ar[rd]_{\acute{e}t} \ar[r] &K_0(Var_{K(d)})\ar[r]^-{S} & \ar[d]^{\chi_{top}} K_0(Var_k)/(\LL-1)\\
& K_0(Rep_{G^t_K}(\Q_\ell)) \ar[r]_-{Tr_{\varphi^d}} &\Q_\ell}$$
%
%$$\begin{CD}
%K_0^{t}(Var_{K})@>>> K_0(Var_{K(d)})@>S(\cdot)>>
%K_0(Var_k)/(\LL-1)
%\\@V\acute{e}tVV @. @VV\chi_{top}V
%\\ K_0(Rep_{G^t_K}(\Q_\ell))@>Tr_{\varphi^d}>> @>>> \Q_\ell
%\end{CD}$$
(the upper left horizontal morphism is the natural base change
morphism). In particular, for any $K$-variety $X$ such that $[X]$
belongs to $K_0^{t}(Var_K)$, we have
$$\chi_{top}(S(X\times_K K(d)))=Trace(\varphi^d\,|\,H_c(X\times_K K^t,\Q_\ell))$$
%so the right hand side belongs to $\Z$ and is independent of
%$\ell$.
\end{theorem}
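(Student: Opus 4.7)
The plan is to verify that the diagram commutes on a system of ring generators of $K_0^{t}(Var_K)$ and then conclude from the fact that both compositions are ring morphisms $K_0^{t}(Var_K)\to \Q_\ell$.

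First I would check that both compositions are ring morphisms. For the upper composition: base change is a ring morphism; since $K(d)$ still has characteristic zero, Theorem \ref{serresing} supplies a ring morphism $S\colon \mathcal{M}_{K(d)}\to K_0(Var_k)/(\LL-1)$ (note that $k$ is the residue field of $K(d)$ as well, because $k$ is algebraically closed and $K(d)/K$ is totally tamely ramified); and $\chi_{top}$, sending $\LL$ to $1$, factors through $K_0(Var_k)/(\LL-1)$. For the lower composition: $\acute{e}t$ is a ring morphism whose restriction to $K_0^{t}(Var_K)$ lands in $K_0(Rep_{G^t_K}(\Q_\ell))$ by Lemma \ref{tame}, and $Tr_{\varphi^d}$ is a ring morphism by construction.

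Next I would restrict to the generators of $K_0^{t}(Var_K)$. By Definition \ref{def-tame} these are the classes $[X]$ of tame smooth proper $K$-varieties. On such an $X$ the upper composition produces $\chi_{top}(S(X\times_K K(d)))$, which by Proposition \ref{tame-3} equals $Trace(\varphi^d\mid H(X\times_K K^t,\Q_\ell))$. For the lower composition, properness gives $H^i_c=H^i$, and Lemma \ref{tame} then writes $\acute{e}t(X)=\sum_{i}(-1)^i [H^i(X\times_K K^t,\Q_\ell)]$ in $K_0(Rep_{G^t_K}(\Q_\ell))$; applying $Tr_{\varphi^d}$ and invoking the graded trace convention of the Notation section yields $Trace(\varphi^d\mid H(X\times_K K^t,\Q_\ell))$. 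Thus the two compositions agree on the generators, and hence on all of $K_0^{t}(Var_K)$.

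Finally, for the ``in particular'' clause, I would apply the diagram to any $K$-variety $X$ with $[X]\in K_0^{t}(Var_K)$: the left-hand side $\chi_{top}(S(X\times_K K(d)))$ is the upper composition evaluated at $[X]$, and by commutativity equals $Tr_{\varphi^d}(\acute{e}t(X))$, which Lemma \ref{tame} (now for compactly supported cohomology) rewrites as $Trace(\varphi^d\mid H_c(X\times_K K^t,\Q_\ell))$. The main ``obstacle'' here is purely logistical: all the substantive input is already packaged in Proposition \ref{tame-3}, Theorem \ref{serresing}, and Lemma \ref{tame}, so the argument reduces to a diagram chase assembling these three ingredients.
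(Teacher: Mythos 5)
Your proposal is correct and follows essentially the same route as the paper: both compositions are ring morphisms on $K_0^{t}(Var_K)$, the classes of tame smooth proper $K$-varieties generate this ring by definition, and on those generators commutativity is exactly Proposition \ref{tame-3} combined with Lemma \ref{tame}. The paper's proof is a one-line version of this argument; your write-up merely makes the routine verifications (residue field of $K(d)$, $H_c=H$ for proper varieties, the graded trace convention) explicit.
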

\begin{proof}
 Since the classes $[X]$
of tame smooth proper $K$-varieties generate $K_0^{t}(Var_K)$,
this follows from Proposition \ref{tame-3} and Lemma \ref{tame}.
\end{proof}
\begin{cor}\label{char0}
If $k$ has characteristic zero, then for any $K$-variety $X$,
$$\chi_{top}(S(X))=Trace(\varphi\,|\,H_c(X\times_K K^s,\Q_\ell))$$
\end{cor}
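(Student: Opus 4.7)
The plan is to deduce the corollary as a direct specialization of Theorem \ref{trace} with $d = 1$, by observing that in characteristic zero there is no tameness restriction to worry about. Since $k$ has characteristic zero, its characteristic exponent is $p = 1$, so every integer $d > 0$ is prime to $p$; in particular $d = 1$ is allowed and $K(1) = K$. Likewise $p = 1$ means the wild inertia group $P \subset G_K$ is trivial, so $K^t = K^s$ and $G^t_K = G_K$.

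The second, equally formal, observation is that Definition \ref{def-tame} becomes vacuous in characteristic zero: any smooth proper $K$-variety $X$ admits, by Hironaka, a regular proper $R$-model $Y$ with $Y_s$ a strict normal crossings divisor, and since the multiplicities of its components are integers they are automatically prime to $p = 1$. Hence $X$ is tame and $Y$ is a tame $R$-model, so the generators of $K_0^{t}(Var_K)$ exhaust the generators of $K_0(Var_K)$ under Bittner's presentation (Theorem \ref{bittner}), giving $K_0^{t}(Var_K) = K_0(Var_K)$. In particular, $[X] \in K_0^{t}(Var_K)$ for every $K$-variety $X$.

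With these two identifications in hand, the hypotheses of Theorem \ref{trace} are satisfied for every $K$-variety $X$ (note that the hypothesis ``$K$ has characteristic zero'' in that theorem is automatic here, since $R$ is a discrete valuation ring with residue field $k$ of characteristic zero). Applying the trace formula in Theorem \ref{trace} with $d = 1$ and $\varphi$ a topological generator of $G^t_K = G_K$ yields
\[
\chi_{top}(S(X)) = \chi_{top}(S(X \times_K K(1))) = Trace(\varphi \mid H_c(X \times_K K^t, \Q_\ell)) = Trace(\varphi \mid H_c(X \times_K K^s, \Q_\ell)),
\]
which is the claimed identity. There is no genuine obstacle: the entire content is the trivial reduction $p = 1$ together with the observation that everything in sight becomes ``tame'' in characteristic zero, so the corollary is a one-line consequence of Theorem \ref{trace}.
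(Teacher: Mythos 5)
Your proof is correct and is exactly the argument the paper intends: the corollary is stated as an immediate consequence of Theorem \ref{trace}, relying on the paper's earlier remark that in residue characteristic zero every smooth proper $K$-variety is tame and $K_0^{t}(Var_K)=K_0(Var_K)$, together with $K^t=K^s$ and the specialization $d=1$. Your justification of these reductions (Hironaka for the tame model, triviality of wild inertia) is accurate, so nothing is missing.
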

\begin{cor}\label{existpoint}
If $k$ has characteristic zero, and if $X$ is a $K$-variety, then
$X$ has a rational point iff there exists a subvariety $U$ of $X$
such that
$$Trace(\varphi\,|\,H(U\times_K K^s,\Q_\ell))\neq 0$$
\end{cor}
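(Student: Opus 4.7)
The idea is to combine Lemma~\ref{empty}, which detects emptiness of $X(K^{sh})$ via the motivic Serre invariant, with Corollary~\ref{char0}, which translates the Euler characteristic of $S(U)$ into a trace on compactly supported cohomology. Two features of the base make things run smoothly: since $k=k^{s}$ we have $K^{sh}=K$, and since $k$ has characteristic zero $K$ contains all roots of unity, so every Tate twist $\Q_\ell(n)$ is a trivial $G_K$-module.

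For the forward implication I would take $U=\{p\}\cong \Spec K$ for any rational point $p\in X(K)$. Then $H(U\times_K K^s,\Q_\ell)$ equals $\Q_\ell$ concentrated in degree zero with trivial Galois action, so $Trace(\varphi\,|\,H(U\times_K K^s,\Q_\ell))=1\neq 0$.

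For the reverse implication I argue by contrapositive. Assume $X(K)=\emptyset$ and let $U$ be any subvariety of $X$; then $U(K^{sh})=U(K)=\emptyset$, so Lemma~\ref{empty} gives $S(U)=0$ in $K_0(Var_k)/(\LL-1)$, hence $\chi_{top}(S(U))=0$. By Corollary~\ref{char0} this reads
$$Trace(\varphi\,|\,H_c(U\times_K K^s,\Q_\ell))=\chi_{top}(S(U))=0.$$
To pass to ordinary cohomology, first take $U$ smooth of pure dimension $d$. The triviality of Tate twists together with Poincar\'e duality yields an isomorphism $H^i(U\times_K K^s,\Q_\ell)\cong H_c^{2d-i}(U\times_K K^s,\Q_\ell)^\vee$ of $G_K$-representations, so taking alternating traces gives
$$Trace(\varphi\,|\,H(U\times_K K^s,\Q_\ell))=Trace(\varphi^{-1}\,|\,H_c(U\times_K K^s,\Q_\ell)).$$
Since $\varphi^{-1}$ is again a topological generator of $G^t_K$, Corollary~\ref{char0} applied to $\varphi^{-1}$ shows that the right-hand side equals $\chi_{top}(S(U))=0$. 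For a general, possibly singular $U$, I would proceed by Noetherian induction on $\dim U$, stratifying $U$ into its smooth locus (open and of pure dimension) and its singular locus (of strictly smaller dimension) and combining the vanishing on each piece via the long exact sequence relating $H$, $H_c$ and cohomology with support.

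The hard part is precisely this last step of comparing traces on $H$ and on $H_c$ for a singular subvariety. Unlike $H_c$, ordinary cohomology is not additive under scissor decompositions, so the inductive argument has to mesh Poincar\'e duality on the smooth locus with the long exact sequence attached to the closed immersion of the singular locus. Once that comparison is established, the rest is just bookkeeping with Lemma~\ref{empty} and Corollary~\ref{char0}.
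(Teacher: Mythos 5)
Your two implications are set up exactly as in the paper: for the forward direction you take $U$ to be a rational point, and for the reverse direction you combine Lemma \ref{empty} with Corollary \ref{char0}. Up to the point where you conclude $Trace(\varphi\,|\,H_c(U\times_K K^s,\Q_\ell))=\chi_{top}(S(U))=0$ for every subvariety $U$ of a pointless $X$, the argument is complete and is precisely the paper's (whose proof of the ``if'' part consists of citing those two results, i.e.\ it effectively works with compactly supported cohomology). Your further observation that the statement as printed uses ordinary cohomology $H$, and your conversion $Trace(\varphi\,|\,H(U))=Trace(\varphi^{-1}\,|\,H_c(U))$ for $U$ smooth of pure dimension via Poincar\'e duality and the triviality of the cyclotomic character (valid since $k=k^s$ has characteristic zero), is correct; replacing $\varphi$ by $\varphi^{-1}$ is harmless because Corollary \ref{char0} holds for any topological generator.

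The genuine gap is the one you flag yourself and then do not close: the case of a singular subvariety $U$. The Noetherian induction you sketch cannot work as described, because ordinary cohomology is not additive along stratifications: for a closed immersion $Z\hookrightarrow U$ with open complement $V$, the relevant long exact sequence involves the cohomology with supports $H^*_Z(U,\Q_\ell)$, which is not the cohomology of $Z$, so vanishing of the traces on $H(V)$ and $H(Z)$ gives no control on $H(U)$. To prove the $H$-version for singular $U$ one needs a different device, for instance cohomological descent: choose a proper hypercovering $U_\bullet\rightarrow U$ with each $U_n$ smooth (possible in characteristic zero); each $U_n$ is proper over $U$ and hence has no $K$-rational point, so Corollary \ref{char0} applies to the $K$-variety $U_n$ (which need not sit inside $X$, but the corollary does not require that) and, combined with your duality step, gives $Trace(\varphi\,|\,H(U_n))=0$; a suitable truncation of the descent spectral sequence then yields $Trace(\varphi\,|\,H(U))=0$. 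Alternatively one can read the statement with $H_c$ in place of $H$ --- which is all that the paper's citation of Corollary \ref{char0} actually delivers --- and then your argument is already finished at the $H_c$ stage.
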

\begin{proof}
The ``if'' part follows from Lemma \ref{empty} and Corollary
\ref{char0}. For the converse implication we can take for $U$ a
rational point on $X$.
\end{proof}
There are examples of (non-tame) smooth and proper $K$-varieties
$X$ such that
$$\chi_{top}(S(X))\neq Trace(\varphi\,|\,H_c(X\times_K
K^t,\Q_\ell))$$ The following elementary example was given in
\cite[\S\,5]{NiSe}: let $R$ be the ring of Witt vectors
$W(\mathbb{F}_p^s)$ over the algebraic closure of a finite field
$\mathbb{F}_p$ of characteristic $p$, and put $X=\Spec
K[T]/(T^p-p)$. Then $X$ is smooth and proper over $K$, and since
$X(K)=\emptyset$, we have $S(X)=0$. On the other hand,
$H^i(X\times_K K^t,\Q_\ell)=0$ for $i>0$, and $H^0(X\times_K
K^t,\Q_\ell)$ is isomorphic to $\Q_\ell$ with the trivial
$G^t_K$-action, so that
$$Trace(\varphi\,|\,H(X\times_K K^t,\Q_\ell))=1$$
Of course, it would be very interesting to obtain a cohomological
interpretation of $\chi_{top}(S(X))$ in terms of $\acute{e}t(X)$
if $X$ is not tame,
%$$\chi_{top}(S(X))-Trace(\varphi\,|\,H_c(X\times_K K^t,\Q_\ell))$$
already in the case where $X$ is smooth and proper over $K$. We
will see below that this is not always possible (Proposition
\ref{counterex}).

\begin{definition}[Error term]
Let $\varphi$ be a topological generator of the tame Galois group
$G^t_K$. If $X$ is any smooth and proper $K$-variety, we put
$$e(X)=Trace(\varphi\,|\,H(X\times_K
K^t,\Q_\ell))-\chi_{top}(S(X))$$
%If $K$ has characteristic zero
%and $Y$ is any $K$-variety, then we also put
%$$e(Y)=Trace(\varphi\,|\,H_c(Y\times_K
%K^t,\Q_\ell))-\chi_{top}(S(Y))$$
We say that the trace formula holds for $X$ iff $e(X)=0$.
\end{definition}
In particular, by Corollary \ref{char0}, the trace formula holds
for any $K$-variety $X$ if $k$ has characteristic zero.
\section{Trace formula for curves}\label{sec-curves}
In this section, we assume that $R$ is complete and $k$ is
algebraically closed. We fix a prime $\ell$ invertible in $k$. We
denote by $\varphi$ a topological generator of the tame Galois
group $G(K^t/K)$, and by $P\subset G(K^s/K)$ the wild inertia
group.

\begin{definition}[Cohomological tameness]
If $X$ is a $K$-variety, we say that $X$ is cohomologically tame
if $P$ acts trivially on $H^i_c(X\times_K K^s,\Q_\ell)$ for each
$i\geq 0$.
\end{definition}

If $X$ is a tame smooth proper $K$-variety, then $X$ is
cohomologically tame (cf. proof of Lemma \ref{tame}).
 We will study the validity of the
trace formula for smooth, proper, geometrically connected curves
over $K$, and we will see that there are remarkable connections
with T. Saito's criterion for cohomological tameness \cite{saito}.

\subsection{A general result for curves}
If $Y$ is a regular $R$-variety and $Y_s$ is a normal crossings
divisor, we denote the irreducible components of $(Y_s)_{red}$ by
$E_i,\,i\in I$, and we denote by $N_i$ the multiplicity of $E_i$
in $Y_s$. We write $Y_s=\sum_{i\in I}N_i E_i$ as usual. For each
$i\in I$, we put
$$E_i^o=E_i\setminus (\cup_{j\neq i}E_j)$$
and we denote by $Sm(E_i^o)$ its $k$-smooth locus. If $Y_s$ has
strict normal crossings, then $Sm(E_i^o)=E_i^o$.
\begin{definition}[Wild locus]
Let $Y$ be a regular $R$-variety such that $Y_s$ is a normal
crossings divisor. If $k$ has characteristic $p>0$, then we define
the wild locus $W_Y$ of $Y$ as the disjoint union of the
subvarieties $Sm(E_i^o)$ of $Y$ with $N_i=p^{e_i}$ for some
$e_i>0$. If $k$ has characteristic zero, we put $W_Y=\emptyset$.
\end{definition}
\begin{theorem}\label{general}
Let $X$ be a smooth and proper curve over $K$, and let $Y$ be a
regular $R$-model  for $X$ such that $Y_s$ has strict normal crossings. %, with $Y_s=\sum_{i\in I}N_i E_i$.
Then
$$e(X)=\chi_{top}(W_Y)$$
 so the trace formula holds for $X$ iff
$\chi_{top}(W_Y)=0$.
\end{theorem}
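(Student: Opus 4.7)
My strategy is to compute the two terms $\chi_{top}(S(X))$ and $\mathrm{Trace}(\varphi\mid H(X\times_K K^t,\Q_\ell))$ separately, in each case reducing the computation to a sum of contributions indexed by the strata of the stratification of $Y_s$ by the open parts $E_i^o$ and the finite set of crossings $E_i\cap E_j$, and then to observe that the difference of the two sums is exactly $\chi_{top}(W_Y)$.

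\textit{Step 1: the motivic Serre invariant side.} A point of $Y$ is smooth over $R$ if and only if it lies on exactly one component $E_i$ with multiplicity $N_i=1$, i.e.\ in $\bigsqcup_{N_i=1}E_i^o$. Hence the formal completion of $Sm(Y)\subset Y$ along its special fibre is a weak N\'eron model of the proper variety $X^{an}$, so by Lemma \ref{weaknerser}
\[
S(X)=\sum_{i:\ N_i=1}[E_i^o]\ \in K_0(Var_k)/(\LL-1),
\]
and applying $\chi_{top}$ yields $\chi_{top}(S(X))=\sum_{N_i=1}\chi_{top}(E_i^o)$.

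\textit{Step 2: the cohomological side.} Properness of $Y/R$ and proper base change give an identification $H(X\times_K K^s,\Q_\ell)=H(Y_s,R\psi_\eta\Q_\ell)$; since $P$ is a pro-$p$-group and $\ell\neq p$, the functor $(\cdot)^P$ is exact, so passing to tame cohomology gives $H(X\times_K K^t,\Q_\ell)=H(Y_s,(R\psi_\eta\Q_\ell)^P)$. I then apply the Grothendieck--Lefschetz trace formula to the constructible $\Q_\ell$-complex $(R\psi_\eta\Q_\ell)^P$ on the proper curve $Y_s$ with its $\varphi$-action, stratifying $Y_s$ by the 1-dimensional strata $E_i^o$ and the 0-dimensional strata given by the crossings. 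For the stalk at a geometric point of $E_i^o$ the local model is étale-locally $\pi=ut^{N_i}$ with $u$ a unit, so the Milnor fibre is a disjoint union of $N_i$ polydiscs; writing $N_i=p^{e_i}m_i$, the wild inertia $P$ acts via its quotient $\mu_{p^{e_i}}$ on the components, so $(R^0\psi_\eta\Q_\ell)^P$ is of rank $m_i$ with $\varphi$ acting as a cyclic permutation of order $m_i$, of trace $\delta_{m_i,1}$, and $R^q\psi_\eta\Q_\ell=0$ for $q>0$ at such a point. At a crossing $x\in E_i\cap E_j$ the local Milnor fibre is a disjoint union of $\gcd(N_i,N_j)$ copies of $\mathbb{G}_m$, and after $P$-invariants the stalks of $R^0$ and $R^1$ both have rank $d_0=\gcd(m_i,m_j)$ with cyclic $\varphi$-action of order $d_0$; since $k$ is algebraically closed the Tate twist is trivial as an inertia module, so the two traces are equal and the alternating sum vanishes. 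Hence the crossings contribute nothing and
\[
\mathrm{Trace}(\varphi\mid H(X\times_K K^t,\Q_\ell))=\sum_i \chi_{top}(E_i^o)\cdot \delta_{m_i,1}=\sum_{m_i=1}\chi_{top}(E_i^o).
\]

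\textit{Step 3: comparison.} The condition $m_i=1$ is equivalent to $N_i\in\{1,p,p^2,\dots\}$, and the splitting into $N_i=1$ and $N_i=p^{e_i}$ with $e_i\geq 1$ identifies the two pieces with $\chi_{top}(S(X))$ and $\chi_{top}(W_Y)$ respectively (using that $Sm(E_i^o)=E_i^o$ under the strict normal crossings hypothesis). Subtracting $\chi_{top}(S(X))$ yields $e(X)=\chi_{top}(W_Y)$.

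The principal technical obstacle is the rigorous computation of the $P$-invariant nearby cycles and the application of the Lefschetz trace formula along the 1-dimensional strata $E_i^o$, since the sheaf $(R\psi_\eta\Q_\ell)^P\vert_{E_i^o}$ is generally a non-constant Kummer-type local system. One handles this by decomposing it into $\varphi$-isotypical character sheaves and observing that only the trivial character contributes $\chi_{top}(E_i^o)$ to the trace (the non-trivial Kummer summands have vanishing $\varphi$-weighted Euler characteristic), and by reducing the wild case to the tame one via the exact $P$-invariants functor.
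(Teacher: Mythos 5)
Your proof is correct and follows essentially the same route as the paper: identify $\chi_{top}(S(X))$ with $\chi_{top}(Sm(Y)_s)=\sum_{N_i=1}\chi_{top}(E_i^o)$ via the weak N\'eron model $Sm(Y)$, compute the stalks of the tame nearby cycles (trace $1$ on $E_i^o$ when the prime-to-$p$ part of $N_i$ is $1$, trace $0$ otherwise and at the crossings), and sum the local traces against Euler characteristics of strata. The only difference is that where the paper invokes the trace formula for tamely constructible complexes ([6.3] of \emph{Ni-trace}, together with [3.3] of \emph{abbes}) to justify the local-to-global step, you prove the needed special case directly via the Kummer-sheaf decomposition on the one-dimensional strata and the vanishing of the $\varphi$-weighted Euler characteristic of the nontrivial summands, which is precisely the mechanism behind the cited result.
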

\begin{proof}
If $y$ is a closed point of $Y_s$, then the computation of the
tame nearby cycles in \cite[I.3.3]{sga7a} shows that
$$
Trace(\varphi\,|\,R\psi_\eta^t(\Q_\ell)_y)=\left\{\begin{array}{l}
                                                0~\textrm{if $y\in
Sm(Y_s)\cup W_Y$}\\
                                                    {}\\
                                                1~\textrm{else.}\\
                                                        \end{array}\right.
$$
Moreover, by \cite[3.3]{abbes}, the complex
$R\psi_\eta^t(\Q_\ell)$ is tamely constructible (in the sense of
\cite{Ni-trace}), so \cite[6.3]{Ni-trace} applies and
\begin{eqnarray*}
Trace(\varphi\,|\,H(X\times_K
K^t,\Q_\ell))&=&Trace(\varphi\,|\,\mathbb{H}(Y_s,R\psi_\eta^t(\Q_\ell)))
\\&=&\chi_{top}(Sm(Y_s))+\chi_{top}(W_Y)
\end{eqnarray*}
Since $Y$ is regular, $Sm(Y)$ is a weak N\'eron model for $X$ (cf.
remark following \cite[3.1.2]{neron}) so
$\chi_{top}(S(X))=\chi_{top}(Sm(Y_s))$.
\end{proof}
\subsection{Curves of genus $\neq 1$}
\begin{theorem}\label{global}
Let $X$ be a proper smooth geometrically connected curve over $K$
of genus $g\neq 1$, and assume that $X$ is cohomologically tame.
Then the trace formula holds for $X$.
\end{theorem}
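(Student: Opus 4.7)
The first step is to reduce to a geometric vanishing statement. Choose a proper regular $R$-model $Y$ of $X$ whose special fiber $Y_s$ is a strict normal crossings divisor; such a model exists because arithmetic surfaces admit SNC resolutions. By Theorem \ref{general},
\[
e(X)=\chi_{top}(W_Y),
\]
so the entire task is reduced to proving $\chi_{top}(W_Y)=0$ under the hypotheses on $X$. Note that if $k$ has characteristic zero, then $W_Y=\emptyset$ by definition and we are done, so we may assume $k$ has characteristic $p>0$.

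The central ingredient I would invoke is T.\ Saito's geometric criterion for cohomological tameness \cite{saito}, which translates the Galois-theoretic condition that $P$ act trivially on $H^\ast(X\times_K K^s,\Q_\ell)$ into a combinatorial/geometric condition on those components $E_i$ of $Y_s$ whose multiplicity $N_i$ is divisible by $p$. Concretely, for a cohomologically tame $X$ Saito's criterion forces each such wild component to be a rational curve $\mathbb{P}^1_k$ meeting the rest of $Y_s$ in exactly two points, except in a short list of exceptional configurations that can only appear when the generic fiber has genus $1$. Under our assumption $g\neq 1$ these exceptional configurations are ruled out, so every wild component satisfies $E_i^o\cong \mathbb{G}_{m,k}$ and hence $\chi_{top}(E_i^o)=0$. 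Since $W_Y$ is the disjoint union of those $E_i^o$ with $N_i=p^{e_i}$, a subset of the wild components,
\[
\chi_{top}(W_Y)=\sum_{i:\, N_i=p^{e_i}}\chi_{top}(E_i^o)=0,
\]
and combining this with Theorem \ref{general} gives $e(X)=0$, i.e.\ the trace formula for $X$.

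The main obstacle, and where all the content really sits, is the extraction of the needed geometric statement from Saito's criterion and the verification that the $g\neq 1$ hypothesis is exactly what excludes the troublesome cases. One has to check that every irreducible component contributing to $W_Y$ indeed falls under the configuration forced by Saito's theorem (using also that having $p\,|\,N_i$ automatically includes $N_i$ a $p$-power, so the case of $W_Y$ is not strictly weaker), and that the exceptional configurations permitted in genus $1$ --- essentially a single non-rational wild component carrying the whole fiber --- cannot occur once $g\neq 1$. This is the step that requires genuinely using \cite{saito}; the rest of the argument is formal once $\chi_{top}(W_Y)=0$ has been established.
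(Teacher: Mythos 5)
Your proof is correct and follows the paper's argument exactly: reduce to residue characteristic $p>0$ via Corollary \ref{char0}, apply Theorem \ref{general} to reduce the claim to $\chi_{top}(W_Y)=0$, and deduce that vanishing from Saito's criterion \cite[3.11]{saito}, with the hypothesis $g\neq 1$ serving precisely to exclude the exceptional genus-one configurations. The one precision worth adding is that Saito's criterion is stated for the \emph{relatively minimal} regular model with normal crossings, so $Y$ should be taken to be that model (as the paper does): for a non-minimal SNC model the individual wild components need not each have vanishing Euler characteristic (blowing up a point on a wild component produces an exceptional piece with $\chi_{top}=1$ compensated by a $-1$ elsewhere), although the total $\chi_{top}(W_Y)=e(X)$ is model-independent by Theorem \ref{general}, so your conclusion is unaffected.
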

\begin{proof}
In view of Corollary \ref{char0}, we may suppose that $k$ has
characteristic $p>0$. Let $Y$ be a relatively minimal regular
$R$-model with normal crossings of $X$ ($RMN$-model in the
terminology of \cite[3.1.1]{saito}), with $Y_s=\sum_{i\in I}E_i$.
Then by Saito's criterion \cite[3.11]{saito}, the fact that $X$ is
cohomologically tame implies that $E_i^o$ is smooth and
$\chi_{top}(E_i^o)=0$ if $p$ divides $N_i$, so $\chi_{top}(W_Y)=0$
and we may conclude by Theorem \ref{general}.
\end{proof}

%opletten! om berekening toe te passen moeten we weten dat de tamme
%nabije cykels tam geramifieerd zijn! wanneer is dit zo? ->altijd,
%door Abbes.
\subsection{Elliptic curves}\label{sec-elliptic}
\begin{theorem}\label{elliptic}
Let $X$ be an elliptic curve over $K$.
\begin{itemize}
\item $X$ has multiplicative reduction iff $S(X)=0$ \item $X$ has
additive reduction iff $S(X)\in \{1,2,3,4\}$. In this case,
$S(X)=n$, with $n$ the number of connected components of the
special fiber of the N\'eron minimal model of $X$. More precisely:

$S(X)=1$ iff $X$ is of type $II$ or $II^*$;

 $S(X)=2$ iff $S(X)$ is
of type $III$ or $III^*$;

 $S(X)=3$ iff $X$ is of type $IV$ or
$IV^*$;

  $S(X)=4$ iff $X$ is of type $I^*_{\nu}$, $\nu\geq 0$.
\item $X$ has good reduction $\overline{X}$ iff
$S(X)\notin\{0,1,2,3,4\}$, and in this case, $S(X)=[\overline{X}]$
\end{itemize}
In particular, $\chi_{top}(S(X))=0$ iff $X$ has semi-stable
reduction. Moreover, the trace formula holds for $X$ iff we're in
one of the following situations:
\begin{itemize} \item $X$ is cohomologically tame, \item $p=2$ and
$X$ is of type $III$ or $III^*$.
\end{itemize}
\end{theorem}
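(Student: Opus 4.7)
The plan is to prove the classification of $S(X)$ via the N\'eron model, and then the trace formula statements via Theorem \ref{general} and Saito's geometric criterion, applied case-by-case on the Kodaira--N\'eron table. First, I would identify the N\'eron minimal model $\mathcal{E}/R$ of $X$ as a weak N\'eron model of $X^{an}$: $\mathcal{E}$ is smooth, separated and of finite type over $R$, and the N\'eron mapping property yields a bijection $\mathcal{E}(R^{sh}) = X(K^{sh})$, so by Lemma \ref{compl} its formal completion is a weak N\'eron model. Hence $S(X) = [\mathcal{E}_s]$ in $K_0(Var_k)/(\LL-1)$. Since $k$ is algebraically closed, $\mathcal{E}_s$ is a disjoint union of $|\Phi|$ translates of its identity component $\mathcal{E}_s^\circ$, where $\Phi$ is the group of connected components; the classical trichotomy for elliptic curves identifies $\mathcal{E}_s^\circ$ with $\overline{X}$ in the good case, with $\GG_{m,k}$ in the multiplicative case, and with $\GG_{a,k}$ in the additive case. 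It follows that $S(X) = [\overline{X}]$, $S(X) = n(\LL-1) = 0$, or $S(X) \equiv |\Phi| \pmod{\LL-1}$ respectively, and reading $|\Phi|$ off the Kodaira--N\'eron table produces the announced values $1,2,3,4$ for the listed Kodaira types.

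To separate good reduction from the remaining cases I would invoke the Poincar\'e polynomial realization of Theorem \ref{theo-poin}, which descends to a ring morphism $\mathcal{M}_k/(\LL-1)\to \Z[T]/(T^2-1)$. Using the Betti numbers $(1,2,1)$ of an elliptic curve, $P(\overline{X};T) = 1-2T+T^2 \equiv 2-2T \pmod{T^2-1}$, which is distinct from each of $0,1,2,3,4$. Thus $[\overline{X}] \notin \{0,1,2,3,4\}$ in $K_0(Var_k)/(\LL-1)$, finishing the classification of $S(X)$. Applying the Euler characteristic morphism $\chi_{top}$ then yields the semi-stability statement: $\chi_{top}(S(X)) = 0$ exactly in the good reduction case (where $\chi_{top}(\overline{X}) = 0$) and in the multiplicative case, and equals an integer in $\{1,2,3,4\}$ in the additive cases.

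For the trace formula, Theorem \ref{general} reduces matters to the vanishing of $\chi_{top}(W_Y)$ for the minimal regular NCD model $Y$ of $X$. When $X$ is cohomologically tame, Saito's criterion \cite[3.11]{saito} forces every component $E_i$ of $Y_s$ with $p \mid N_i$ to satisfy that $E_i^o$ is smooth and $\chi_{top}(E_i^o) = 0$; this certainly covers the components of positive $p$-power multiplicity making up $W_Y$, so $\chi_{top}(W_Y) = 0$ and $e(X) = 0$. For the converse, and to isolate the sporadic case $p=2$ with type $III$ or $III^*$, I would traverse the Kodaira--N\'eron table and, for each reduction type and each residue characteristic $p$, write down the minimal regular NCD model, list the components of positive $p$-power multiplicity, and compute the Euler characteristic of the smooth locus of each such punctured component. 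The main obstacle is this final case analysis, which requires a careful description of the minimal regular NCD model in each Kodaira type and its dependence on $p$ (particularly for small residue characteristics $p=2,3$, where the standard geometric description may need extra blow-ups to achieve normal crossings); verifying that $\chi_{top}(W_Y)$ vanishes exactly in the cohomologically tame cases together with the single sporadic $p=2$, $III/III^*$ situation is where the bulk of the work lies.
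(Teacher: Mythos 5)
Your proposal is correct and follows essentially the same route as the paper: identify the N\'eron model as a weak N\'eron model so that $S(X)=[\mathcal{E}_s]$, read off $0$, $n\in\{1,2,3,4\}$, or $[\overline{X}]$ from the trichotomy and the Kodaira--N\'eron component groups, separate the good-reduction case via the Poincar\'e polynomial modulo $T^2-1$, and then settle the trace formula by combining Theorem \ref{general} with Saito's criterion and a type-by-type computation of $\chi_{top}(W_Y)$ on the minimal regular normal crossings model. The only differences are cosmetic: your $P(\overline{X};T)=1-2T+T^2$ is the sign convention actually stated in the appendix (the paper's proof writes $1+2T+T^2$, which changes nothing modulo $T^2-1$), and the final table traversal that you flag as the bulk of the work is likewise left as a ``direct computation on the reduction table'' in the paper.
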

\begin{proof}
By definition, $S(X)=[\mathcal{A}_s]$ in $K_0(Var_k)/(\LL-1)$
where $\mathcal{A}$ is the N\'eron model of $X$. It follows
immediately that $S(X)=0$ if $X$ has multiplicative reduction,
$S(X)=[\overline{X}]$ if $X$ has good reduction $\overline{X}$ and
$S(X)=n$ if $X$ has additive reduction, with $n$ the number of
connected components of $\mathcal{A}_s$. The values for $n$ can be
read from the Kodaira-N\'eron reduction table (see e.g.
\cite[IV.9]{silver-adv}). We only have to check that
$[\overline{X}]\notin \{0,1,2,3,4\}\subset K_0(Var_k)/(\LL-1)$ if
$X$ has good reduction $\overline{X}$. However, for any elliptic
curve $E$ over $k$, its Poincar\'e polynomial $P(E;T)$ is equal to
$1+2T+T^2$ which is not congruent to any integer modulo
$P(\LL-1;T)=T^2-1$.

By Theorem \ref{general}, Saito's criterion \cite[3.11]{saito} and
direct computation on the reduction table, we see that the trace
formula holds for $X$ iff we're in one of the two cases described
in the statement (for a more precise analysis, see below).
\end{proof}

Let us investigate the cases where $X$ is not cohomologically
tame. By Saito's criterion \cite[3.11]{saito} this happens exactly
in the following situations:
\begin{enumerate}
\item $k$ has characteristic $2$, and $X$ has type $II$, $II^*$,
$III$, $III^*$, or $I_{\nu}^*$, $\nu\geq 0$. \item $k$ has
characteristic $3$, and $X$ has type $II$, $II^*$, $IV$ or $IV^*$.
\end{enumerate}
Using the expression for $e(X)$ in Theorem \ref{general}, we can
read the following values from the reduction table:
\begin{enumerate}
\item Suppose that $k$ has characteristic $2$. If $X$ has type
$II$ or $II^*$, then $e(X)=1$. If $X$ has type $III$ or $III^*$,
then $e(X)=0$ and the trace formula holds. If $X$ has type
$I_{\nu}^*$, $\nu\geq 0$, then $e(X)=-2$. \item Suppose that $k$
has characteristic $3$. If $X$ has type $II$ or $II^*$ then
$e(X)=1$. If $X$ has type $IV$ or $IV^*$ then $e(X)=-1$.
\end{enumerate}

\begin{remark}
It seems reasonable to expect that the trace formula holds for all
cohomologically tame abelian varieties $A$. If $k$ has
characteristic zero, this follows from Corollary \ref{char0}. If
$k$ has positive characterstic, the trace formula holds if $A$
does not have purely additive reduction, and also if $A$ is the
Jacobian of a curve. Details and further results will appear in a
forthcoming paper.
\end{remark}

\subsection{Curves of genus $1$ without rational point}
Finally, we discuss the case of curves of genus $1$ without
rational point. Let $X$ be a smooth, proper, geometrically
connected $K$-curve of genus $1$. Then its Jacobian $Jac(X)$ is an
elliptic curve. If we denote by $m(X)$ the order of the torsor $X$
in the group $H^1(K,Jac(X))$, then the reduction type of $X$ is
equal to $m(X)$ times the reduction type of $Jac(X)$, by
\cite[6.6]{liu-lorenzini-raynaud} (i.e. the multiplicities of the
components of the reduction are multiplied by $m(X)$).
\begin{theorem}\label{noratpoint}
Let $X$ be a smooth, proper, geometrically connected $K$-curve of
genus $1$, and assume that $X(K)$ is empty. Then $S(X)=0$, and
% Denote
%by $m(X)\in \N^*$ the order of $X$ in $H^1(K,Jac(X))$.
$$e(X)=\chi_{top}(S(Jac(X)))+e(Jac(X))$$

The trace formula holds for $X$ iff

1. $k$ has characteristic $0$, or

2. $k$ has characteristic $p>0$ and $Jac(X)$ has semi-stable
reduction.
%
%$X$ satisfies one of the following conditions:
%\begin{itemize}
%\item $m(X)$ is not a power of $p$ \item $Jac(X)$ has good or
%multiplicative reduction
%\end{itemize}
%Moreover, if $char(k)=p>0$ and $Jac(X)$ has additive reduction,
%
\end{theorem}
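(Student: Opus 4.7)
The plan is to decompose the theorem into three pieces: the vanishing of $S(X)$, the displayed formula for $e(X)$, and the characterization of when $e(X)=0$. For the first piece, since $R$ is complete and $k$ is algebraically closed, $R$ is strictly henselian, so $K^{sh}=K$ and hence $X(K^{sh})=X(K)=\emptyset$ by hypothesis. As $X$ is smooth and proper it is smoothly bounded, so $S(X)=S(X^{an})$ is defined; any weak N\'eron model $\mX$ of $X^{an}$ satisfies $\mX_s(k)=\mX(R^{sh})=X(K^{sh})=\emptyset$, and since $k$ is algebraically closed this forces $\mX_s=\emptyset$. Thus $S(X)=0$.

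For the second piece, the key input is a canonical $G_K$-equivariant isomorphism $H^i(X_{K^s},\Q_\ell)\cong H^i(Jac(X)_{K^s},\Q_\ell)$ for each $i$. For $i=0$ and $i=2$ both spaces are canonically $\Q_\ell$ and $\Q_\ell(-1)$ (since $X$ and $Jac(X)$ are geometrically connected smooth proper curves); for $i=1$ the isomorphism comes from the canonical identification $Jac(X)=Pic^0(X)$, which induces an isomorphism of $\ell$-adic Tate modules compatible with the Galois action. Passing to $P$-invariants and taking alternating traces gives
$$Trace(\varphi\,|\,H(X_{K^t},\Q_\ell))=Trace(\varphi\,|\,H(Jac(X)_{K^t},\Q_\ell)),$$
and combining this with $\chi_{top}(S(X))=0$ yields
$$e(X)=Trace(\varphi\,|\,H(Jac(X)_{K^t},\Q_\ell))=e(Jac(X))+\chi_{top}(S(Jac(X))).$$

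For the third piece, $e(X)=0$ iff $e(Jac(X))+\chi_{top}(S(Jac(X)))=0$, and this must be analyzed by reduction type. When $k$ has characteristic $0$, $e(X)=0$ directly by Corollary \ref{char0}. When $k$ has characteristic $p>0$ and $Jac(X)$ has good or multiplicative reduction, Saito's criterion gives cohomological tameness and so $e(Jac(X))=0$, while Theorem \ref{elliptic} gives $\chi_{top}(S(Jac(X)))=0$; hence $e(X)=0$. When $Jac(X)$ has additive reduction, Theorem \ref{elliptic} shows $\chi_{top}(S(Jac(X)))\in\{1,2,3,4\}$: for the cohomologically tame additive types one has $e(Jac(X))=0$ and the sum is positive, while for each of the non-tame Kodaira types listed in Section \ref{sec-elliptic} the explicit tabulation there gives $\chi_{top}(S(Jac(X)))+e(Jac(X))=2$. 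In every additive case the sum is nonzero, so $e(X)\neq 0$.

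The only genuinely non-formal ingredient, and thus the main obstacle, is the canonical $G_K$-equivariant cohomology comparison used in the second step; although classical (ultimately reflecting the fact that $E=Jac(X)$ acts on any $E$-torsor by translation and translations act trivially on $H^*$ of an abelian variety), it is what allows the trace calculation on $X$ to be reduced to the trace calculation on $Jac(X)$. Everything else is bookkeeping built on Theorems \ref{general} and \ref{elliptic} and the Kodaira--N\'eron tables.
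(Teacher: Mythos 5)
Your proposal is correct and follows essentially the same route as the paper: $S(X)=0$ because the empty scheme is a weak N\'eron model, the canonical Galois-equivariant isomorphism $H(X\times_K K^t,\Q_\ell)\cong H(Jac(X)\times_K K^t,\Q_\ell)$ gives the formula for $e(X)$, and the final characterization reduces to the case-by-case computations for elliptic curves in Section \ref{sec-elliptic} (your tabulation, giving the value $2$ in every non-tame additive case and a positive value in every tame additive case, is exactly what the paper's terse appeal to those computations amounts to). The only difference is expository: you spell out the origin of the cohomology comparison and the reduction-type bookkeeping that the paper leaves implicit.
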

\begin{proof}
The fact that $X(K)$ is empty implies that $S(X)=0$, since the
empty scheme is a weak N\'eron model for $X$. Moreover, there
exists a canonical $G^t_K$-equivariant isomorphism
$$H(X\times_K K^t,\Q_\ell)\cong H(Jac(X)\times_K K^t,\Q_\ell)$$
so that
$$Trace(\varphi\,|\,H(X\times_K K^t,\Q_\ell))=Trace(\varphi\,|\,H(Jac(X)\times_K
K^t,\Q_\ell))$$ and
$$e(X)=\chi_{top}(S(Jac(X)))+e(Jac(X))$$
Hence, the trace formula holds for $X$ iff
$$\chi_{top}(S(Jac(X)))+e(Jac(X))=0$$
We know from Corollary \ref{char0} that the trace formula holds
for $X$ if $k$ has characteristic zero, so assume that $k$ has
characteristic $p>0$. The computations in Section \ref{elliptic}
show that $$\chi_{top}(S(Jac(X)))+e(Jac(X))=0$$ iff $Jac(X)$ has
semi-stable reduction.
\end{proof}

\begin{prop}\label{counterex}
If $k$ has characteristic $p>0$, then there exists a smooth,
proper, geometrically connected curve $X$ over $K$ of genus $1$
such that $X$ is cohomologically tame and such that the trace
formula does not hold for $X$.
\end{prop}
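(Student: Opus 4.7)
The plan is to reduce the statement to an existence problem via Theorem~\ref{noratpoint}. Suppose we exhibit a cohomologically tame elliptic curve $E$ over $K$ with additive reduction together with a non-trivial $E$-torsor $X$. Then $X(K)=\emptyset$ (since the class of $X$ in $H^1(K,E)$ is non-zero), so $S(X)=0$; moreover, the canonical $G^t_K$-equivariant isomorphism $H(X\times_K K^t,\Q_\ell)\cong H(E\times_K K^t,\Q_\ell)$ used in the proof of Theorem~\ref{noratpoint} shows that $X$ is cohomologically tame as well. Combining this isomorphism with the trace formula for $E$, which holds by Theorem~\ref{elliptic} precisely because $E$ is cohomologically tame, one obtains
\begin{equation*}
e(X)=Trace(\varphi\,|\,H(X\times_K K^t,\Q_\ell))-\chi_{top}(S(X))=\chi_{top}(S(E)),
\end{equation*}
and $\chi_{top}(S(E))\neq 0$ because $E$ has non-semi-stable reduction (Theorem~\ref{elliptic}). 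Thus the trace formula fails for $X$.

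Producing a cohomologically tame elliptic curve $E/K$ with additive reduction is straightforward given Saito's criterion as used in Theorem~\ref{elliptic}: for $p\geq 5$ every additive reduction type is cohomologically tame, so $E:y^2=x^3+t$ over $K=k((t))$ (type $II$) is a convenient concrete model. In characteristics $p=2$ and $p=3$, Saito's list still contains cohomologically tame additive types (e.g.\ $IV$ in characteristic $2$ and $III$ in characteristic $3$), and explicit models may be chosen accordingly.

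The heart of the proof is the construction of a non-trivial class in the Weil--Ch\^atelet group $H^1(K,E)$. One approach is inflation-restriction through the tame Galois extension $L/K$ of degree $e$ prime to $p$ over which $E$ acquires good reduction $\bar E/k$, with $G=Gal(L/K)$ acting on $\bar E$ by the CM-type automorphism $\alpha$ encoding the descent datum of $E/K$ from $\bar E$. Under this, $H^1(G,E(L))$ injects into $H^1(K,E)$, and a Tate cohomology computation on the filtration of $E(L)$ by the formal group of $\bar E$ can detect a non-zero class. Alternatively, one may exhibit an explicit $n$-covering of $E$ without $K$-rational points, tailored to the component group of the N\'eron model of $E$.

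The main obstacle is precisely this non-vanishing of $H^1(K,E)$. For the models above a direct Kummer-theoretic calculation shows that the prime-to-$p$ part of $H^1(K,E)$ often vanishes (both the formal group and $\bar E(k)$ are divisible by integers prime to $p$, killing the naive inflation contribution), so a non-trivial class may need to be found in the $p$-primary part via an Artin--Schreier-type wild extension of $K$. Carrying out either this wild construction or finding a reduction type for which the tame construction already succeeds is the technical crux of the argument.
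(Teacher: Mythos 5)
Your reduction of the statement is exactly the paper's: take a cohomologically tame elliptic curve $E$ with additive reduction, take a non-trivial $E$-torsor $X$, observe $S(X)=0$ while the trace formula for $E$ together with $H(X\times_K K^t,\Q_\ell)\cong H(E\times_K K^t,\Q_\ell)$ forces $e(X)=\chi_{top}(S(E))\neq 0$. That part is correct, and your choice of cohomologically tame additive types in each characteristic is consistent with Saito's criterion as used in Theorem \ref{elliptic}.

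However, there is a genuine gap at what you yourself call the crux: you never actually prove $H^1(K,E)\neq 0$. This is the one non-formal input of the whole argument, and it cannot be waved away. Indeed, by Proposition \ref{h1}(2) of the paper, $H^1(K,E)$ is a $p$-group whenever $E$ has additive reduction, so your first suggested route (inflation--restriction through the tame degree-$e$ extension $L/K$ with $e$ prime to $p$) is doomed from the start: $H^1(\mathrm{Gal}(L/K),E(L))$ is killed by $e$ and injects into a $p$-group, hence vanishes. Any non-trivial class is necessarily wild, and producing one is not a routine Kummer or Artin--Schreier exercise; it is the content of the local duality theorems for abelian varieties over a complete discretely valued field with algebraically closed residue field. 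The paper closes this gap by citation: $H^1(K,E)\neq 0$ follows from B\'egueri's results in mixed characteristic and Bertapelle's in equal characteristic, as recorded in \cite[6.7]{liu-lorenzini-raynaud}. Without either reproducing such a duality argument or citing it, your proof is incomplete precisely at the step you identify as the technical heart.
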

\begin{proof}
Choose a cohomologically tame elliptic curve $E$ over $K$ such
that $E$ has additive reduction. This is possible for any value of
$p$, by Saito's citerion \cite[3.11]{saito}.
%????NAGAAN: komen alle reductietypes voor over elke $K$???? Ja
%volgens Liu; referentie???artikel Tate????
Since $k$ is algebraically closed and $K$ is complete, we have
$H^1(K,E)\neq 0$ (as noted in \cite[6.7]{liu-lorenzini-raynaud}
this follows from the results in \cite{begueri} in the mixed
characteristic case, and from those in \cite{bertapelle} in the
equicharacteristic case). Any non-zero element in $H^1(K,E)$
corresponds to a smooth, proper, geometrically connected curve $X$
over $K$ of genus $1$ without rational point, whose Jacobian is
isomorphic to $E$. By the existence of a $G_K$-equivariant
isomorphism
$$H(X\times_K K^s,\Q_\ell)\cong H(E\times_K K^s,\Q_\ell)$$ we
know that $X$ is cohomologically tame. Since the trace formula
holds for $E$, by Theorem \ref{elliptic}, we see that the trace
formula holds for $X$ iff
$$\chi_{top}(S(X))=\chi_{top}(S(E))$$
However, the left hand side vanishes, while the right hand side is
non-zero by Theorem \ref{elliptic}.
%
 %, and with $m(X)=p^i$.
% On the
%other hand, the trace formula does not hold for $X$ because $E$
%has additive reduction.
%%Saito's citerion \cite[3.11]{saito}
%%shows that $P$ acts trivially on $H^1(X\times_K K^s,\Q_\ell)$,
%%since the reduction type of $X$ is a multiple of the reduction
%%type of $E$ \cite[6.6]{liu-lorenzini-raynaud}.
\end{proof}

The example shows that $\chi_{top}(S(X))$ can, in general, not be
computed from the \'etale realization $\acute{e}t(X)$ (nor even
from the Chow motive with rational coefficients $M(X)$ of $X$)
since $X$ and $Jac(X)$ have the same \'etale realization (and
isomorphic Chow motives \cite[3.3]{Scholl}). We will see below
(proof of Proposition \ref{not-injective}) that, even if $k$ has
characteristic zero, $S(X)$ can in general not be computed from
$M(X)$ (even though $\chi_{top}(S(X))$ can be computed from
$\acute{e}t(X)$ by the trace formula in Corollary \ref{char0}).
%I do not know if $S(X)$
%admits a motivic interpretation if we replace rational by integral
%coefficients.

Over a finite field $\mathbb{F}_q$, the situation of Proposition
\ref{counterex} does not occur: every smooth, proper,
geometrically connected curve $X$ of genus $1$ over $\mathbb{F}_q$
admits a rational point, since $H^1(\mathbb{F}_q,E)=0$ for every
elliptic curve $E$ over $\mathbb{F}_q$. This result can be
interpreted as a consequence of Grothendieck's trace formula: if
$X$ is a $E$-torsor then $\acute{e}t(X)=\acute{e}t(E)$, so since
$E$ has a rational point the same holds for $X$.

Playing with these ideas, we recover the following classical
result.
%The proof of $1.$ is independent of the results in
%\cite{liu-lorenzini-raynaud}, the proof of $2.$ uses
%\cite[6.6]{liu-lorenzini-raynaud}.

\begin{prop}\label{h1}
Let $E$ be an elliptic curve over $K$ with additive reduction.
\begin{enumerate}
\item If $k$ has characteristic zero, then $H^1(K,E)=0$. \item If
$k$ has characteristic $p>0$, then $H^1(K,E)$ is a $p$-group.
\end{enumerate}
\end{prop}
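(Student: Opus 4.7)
The plan is to run the trace-formula argument of Proposition~\ref{counterex} in reverse. For any $\xi\in H^1(K,E)$, the associated torsor $X$ is a smooth, proper, geometrically connected curve of genus $1$ with $\mathrm{Jac}(X)\cong E$, and the canonical $G_K$-equivariant isomorphism $H(X\times_K K^s,\Q_\ell)\cong H(E\times_K K^s,\Q_\ell)$ gives
$$Trace(\varphi\,|\,H(X\times_K K^t,\Q_\ell))=Trace(\varphi\,|\,H(E\times_K K^t,\Q_\ell))=e(E)+\chi_{top}(S(E)).$$
Since $k$ is algebraically closed and $R$ is complete we have $K^{sh}=K$, so $\xi\neq 0$ is equivalent to $X(K)=\emptyset$, in which case the empty formal scheme is a weak N\'eron model of $X^{an}$ and $S(X)=0$. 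A first step is to verify, using Theorem~\ref{elliptic} and the values of $e(E)$ tabulated in Section~\ref{sec-elliptic}, that $e(E)+\chi_{top}(S(E))$ is a positive integer in $\{1,2,3,4\}$ for every additive reduction type of $E$ and every residue characteristic; in particular it is non-zero.

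For part~(1), in characteristic zero Corollary~\ref{char0} yields the trace formula for every $K$-variety, so $\chi_{top}(S(X))=Trace(\varphi\,|\,H(X\times_K K^t,\Q_\ell))$ for every torsor $X$. If $\xi\neq 0$ the left-hand side vanishes while the right-hand side is non-zero, a contradiction. Hence $H^1(K,E)=0$.

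For part~(2), assume $k$ has characteristic $p>0$ and let $\xi\in H^1(K,E)$ be non-trivial of finite order $m$ coprime to $p$; then $m>1$ and the corresponding torsor $X$ satisfies $X(K)=\emptyset$. I would show the trace formula still holds for $X$, which produces the same contradiction. By Theorem~\ref{general} this reduces to producing a regular $R$-model $Y_X$ of $X$ with strict normal crossings special fibre such that $W_{Y_X}=\emptyset$. By \cite[6.6]{liu-lorenzini-raynaud} the minimal regular model of $X$ has component multiplicities equal to $m$ times those of the minimal regular model of $E$. One then passes to an SNC model via a sequence of blow-ups at closed points of the special fibre; each such blow-up introduces an exceptional divisor whose multiplicity is a sum, with positive integer coefficients, of the multiplicities of the components passing through the centre, and therefore remains divisible by $m$. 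Thus every component of $(Y_X)_s$ has multiplicity divisible by $m$, and since $m>1$ is coprime to $p$ no such multiplicity can equal $p^e$ for any $e\geq 1$; hence $W_{Y_X}=\emptyset$. It follows that $H^1(K,E)$ has no non-trivial element of order coprime to $p$, and since this group is torsion (a standard fact for abelian varieties over complete discretely valued fields with algebraically closed residue field) it is a $p$-group.

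The main obstacle is the multiplicity-tracking argument in part~(2): one must check that every blow-up needed to pass from the Kodaira fibre of $X$ to a strict normal crossings model --- including those resolving cuspidal components (in types $II,II^*$) or regularising non-transverse intersections --- introduces an exceptional divisor whose multiplicity remains divisible by $m$. Once this technical point is in place, the rest is a routine case check against the Kodaira--N\'eron reduction table.
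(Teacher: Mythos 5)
Your proposal is correct and follows essentially the same route as the paper: in characteristic zero you apply the trace formula of Corollary~\ref{char0} to a torsor $X$ together with $\acute{e}t(X)=\acute{e}t(E)$ and $\chi_{top}(S(E))\neq 0$, and in positive characteristic you use \cite[6.6]{liu-lorenzini-raynaud} to see that all multiplicities of a normal crossings model of $X$ are divisible by the order of the torsor, so the wild locus is empty and Theorem~\ref{general} gives the same contradiction. The only (harmless) differences are that you restrict to orders coprime to $p$ and invoke torsionness of $H^1$, where the paper argues directly with orders that are not powers of $p$, and that you spell out the multiplicity bookkeeping under blow-ups that the paper leaves implicit.
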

\begin{proof}
$1.$ We know that the trace formula holds if $k$ has
characteristic zero, by Corollary \ref{char0}. Since for any
$E$-torsor $X$, $\acute{e}t(X)=\acute{e}t(E)$, and
$\chi_{top}(S(E))\neq 0$, we conclude that $S(X)\neq 0$, so $X$
has a rational point.

$2.$ Assume that $H^1(K,E)$ contains an element whose order $m$ is
not a power of $p$. It corresponds to a smooth, proper,
geometrically connected curve $X$ of genus $1$, with $Jac(X)\cong
E$. Since the reduction type of $X$ is equal to $m$ times the
reduction type of $E$ \cite[6.6]{liu-lorenzini-raynaud}, we see
that the trace formula holds for $X$, by Theorem \ref{general},
since the wild locus of the minimal regular model with normal
crossings is empty. This contradicts Theorem \ref{noratpoint}.
(For a more direct proof: $Tr(\varphi\,|\,H(X\times_K
K^t,\Q_\ell))=0$ by the computation in the proof of Theorem
\ref{general}; a similar computation shows that
$Tr(\varphi\,|\,H(E\times_K K^t,\Q_\ell))\neq 0$, which is a
contradiction.)
\end{proof}

%Suppose that $K$ has characteristic zero. If we assume that the
%motivic Serre invariant $S(X)$ of any smooth and proper $K$-curve
%$X$ admits a \textit{motivic} interpretation (i.e. only depends on
%the motive of $X$ in $Mot_{K}$), then the arguments in the proof
%of Proposition \ref{h1},~1. imply that $H^1(K,E)=0$ if $E$ is an
%elliptic curve over $K$ with good reduction, since in this case
%$S(E)\neq 0$. This is false, even if $k$ has characteristic zero:
%it is known that $H^1(K,E)\neq 0$ (see
%\cite[6.7]{liu-lorenzini-raynaud}). Hence, the motivic Serre
%invariant can, in general, not be computed from the Chow motive.
%As an application, we obtain the following result.

\begin{prop}\label{not-injective}
Assume that $K$ has characteristic zero.
 The natural ring morphisms
\begin{eqnarray*}
\chi^{eff}&:&K_0(Var_K)\rightarrow K_0(Mot^{eff}_{K})
\\ \chi&:&\mathcal{M}_K\rightarrow K_0(Mot_{K})
\end{eqnarray*}
from Section \ref{sec-chow} are
 both non-injective.
\end{prop}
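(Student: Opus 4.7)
The strategy is to exploit the torsor/Jacobian discrepancy highlighted in the discussion following Proposition \ref{counterex}. I will produce, over a suitable complete discretely valued field $K$ of characteristic zero, an elliptic curve $E$ and a non-trivial $E$-torsor $X$ such that $M(E) \cong M(X)$ in $Mot^{eff}_{K}$, whence the classes of $E$ and $X$ in both $K_0(Mot^{eff}_{K})$ and $K_0(Mot_{K})$ coincide. To separate the classes $[E]$ and $[X]$ in $K_0(Var_K)$ and $\mathcal{M}_K$, I will use the motivic Serre invariant $S$ of Theorem \ref{serresing}: since $S$ factors through $\mathcal{M}_K$, showing $S(E) \neq S(X)$ forces $[E] \neq [X]$ in both Grothendieck rings, so that $[E] - [X]$ is a non-zero element of the kernel of both $\chi^{eff}$ and $\chi$.

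Concretely, I would set $R = \C[[t]]$, $K = \C((t))$, $k = \C$; since $k$ is algebraically closed and $R$ is complete, $R^{sh} = R$ and $K^{sh} = K$. Fix any elliptic curve $E_0/\C$ and put $E = E_0 \times_\C K$, so that $E$ has good reduction with special fiber $E_0$. By the references invoked in the proof of Proposition \ref{counterex} (the equicharacteristic case is due to Bertapelle), $H^1(K, E) \neq 0$, so there exists a smooth, proper, geometrically connected genus-$1$ curve $X/K$ with $\mathrm{Jac}(X) \cong E$ and $X(K) = \emptyset$. Scholl's theorem \cite[3.3]{Scholl}, already quoted in the paper in the torsor context, then provides an isomorphism $M(X) \cong M(E)$ in $Mot^{eff}_{K}$, and hence equality of their classes in $K_0(Mot^{eff}_{K})$ and $K_0(Mot_{K})$.

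To finish, I would compute the two Serre invariants separately. Since $X(K^{sh}) = X(K) = \emptyset$, Lemma \ref{empty} gives $S(X) = 0$. By Theorem \ref{elliptic}, $S(E) = [E_0]$ in $K_0(Var_k)/(\LL - 1)$. To see that $[E_0] \neq 0$, I would apply the Poincar\'e polynomial of Theorem \ref{theo-poin}: since $P(\LL) = T^2$, the morphism $P$ descends to a ring morphism $K_0(Var_k)/(\LL - 1) \to \Z[T]/(T^2 - 1)$, and the image of $[E_0]$ is $P(E_0; T) \bmod (T^2 - 1)$, a non-constant residue and hence non-zero. Thus $S(E) \neq 0 = S(X)$, proving $[E] \neq [X]$ in $\mathcal{M}_K$ and in $K_0(Var_K)$, as required. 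The only step whose justification lies outside the methods developed in this paper is the non-vanishing $H^1(K, E) \neq 0$; this is the main subtlety, but it is well-documented in the literature and already used in Proposition \ref{counterex}. Everything else is an assembly of Scholl's Chow motive identification, the explicit description of $S$ on elliptic curves (Theorem \ref{elliptic}), and the Poincar\'e polynomial realization (Theorem \ref{theo-poin}).
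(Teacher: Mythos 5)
Your proposal is correct and follows essentially the same route as the paper: an elliptic curve $E$ with good reduction, a non-trivial torsor $X$ (via $H^1(K,E)\neq 0$), Scholl's isomorphism $M(E)\cong M(X)$, and the separation $S(E)\neq 0=S(X)$ via Theorem \ref{serresing}, with the Poincar\'e-polynomial computation you spell out being exactly the one already embedded in the proof of Theorem \ref{elliptic}. The only cosmetic differences are that you specialize to $K=\C((t))$ and cite the B\'egueri/Bertapelle references for $H^1(K,E)\neq 0$, whereas the paper works over a general such $K$ and cites Shafarevich for the good-reduction case.
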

\begin{proof}
%It suffices to gather the examples given above. If $k$ has
%characteristic $p>0$, choose a curve $X$ as in Corollary
%\ref{counterex}. Then $\chi([X])=\chi([Jac(X)])$ while
%$\chi_{top}(S(X))\neq \chi_{top}(S(Jac(X)))$. If $k$ has
%characteristic zero,
 Let $E$ be an elliptic curve over $K$ with good
reduction, and let $X$ be a non-trivial $E$-torsor. Such a torsor
$X$ exists since $H^1(K,E)\neq 0$ by \cite{shafarevich}. We have
$\chi^{eff}(E)=\chi^{eff}(X)$ by \cite[3.3]{Scholl}, but $S(E)\neq
0$ by Theorem \ref{elliptic} while $S(X)=0$ since $X$ has no
rational point.
 The ring
morphism
$$S:\mathcal{M}_K \rightarrow K_0(Var_k)/(\LL-1)$$ from Theorem \ref{serresing} maps $[X]$
and $[E]$ to $S(X)$, resp. $S(E)$, so $[E]\neq [X]$ in
$\mathcal{M}_K$.
\end{proof}

\begin{prop}\label{not-domain}
Assume that $K$ has characteristic zero. If $A$ is an abelian
variety over $K$ with good reduction, then $[A]$ is a zero divisor
in $K_0(Var_K)$, in $\mathcal{M}_K$, and in $K_0(Var_K)/(\LL-1)$.
\end{prop}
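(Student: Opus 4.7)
The plan is to combine Shafarevich's theorem (as invoked in Proposition~\ref{not-injective}) with the fundamental torsor identity and the Serre invariant of Theorem~\ref{serresing}. By Shafarevich's theorem, $H^1(K, A)$ is nontrivial, so we pick a non-trivial $A$-torsor $X$. The translation isomorphism $X \times_K A \xrightarrow{\sim} X \times_K X$, $(x, a) \mapsto (x, x+a)$, yields $[X]\cdot [A] = [X]^2$ in $K_0(Var_K)$, hence
\[
[X]\cdot ([A] - [X]) = 0
\]
in $K_0(Var_K)$, $\mathcal{M}_K$, and $K_0(Var_K)/(\LL-1)$.

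Both factors are nonzero in each of the three rings. The Serre invariant $S$ of Theorem~\ref{serresing} sends $[A]$ to $[\bar A]$, where $\bar A$ is the good reduction of $A$; by Theorem~\ref{theo-poin} the Poincar\'e polynomial of $\bar A$ equals $(1-T)^{2\dim A}$, which reduces modulo $T^2-1$ to $2^{2\dim A - 1}(1-T)\neq 0$, so $[\bar A]\neq 0$ in $K_0(Var_k)/(\LL-1)$. Since $K = K^{sh}$ (as $k$ is algebraically closed) and $X(K) = \emptyset$ for a non-trivial torsor, $S([X]) = 0$ by Lemma~\ref{empty}. Hence $S([A] - [X]) = [\bar A] \neq 0$, so $[A] - [X] \neq 0$ in all three rings; likewise $[X] \neq 0$ by Corollary~\ref{nonzero}. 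This exhibits $[X]$ and $[A] - [X]$ as mutually-annihilating nonzero zero divisors.

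To conclude that $[A]$ \emph{itself} is a zero divisor, I introduce a second non-trivial $A$-torsor $X'$---possible because $H^1(K, A)$ is infinite for $A$ with good reduction over our local field---chosen so that each of $X$, $X'$ trivialises over the function field of the other. The two induced isomorphisms $X \times_K X' \cong X \times_K A$ (over $X$) and $X' \times_K X \cong X' \times_K A$ (over $X'$) combine to give $[X]^2 = [X][A] = [X][X'] = [X'][A] = [X']^2$, so that
\[
[A]\cdot ([X] - [X']) \;=\; [X]^2 - [X']^2 \;=\; 0.
\]
Nonvanishing of $[X] - [X']$ is detected after base change to a finite extension $L/K$ that trivialises $X$ but not $X'$: then $S(X_L) = [\bar A_L] \neq 0$ while $S(X'_L) = 0$, so $[X_L]\neq[X'_L]$, and by functoriality $[X]\neq[X']$ already in $K_0(Var_K)$, hence in $\mathcal{M}_K$ and $K_0(Var_K)/(\LL-1)$.

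The main obstacle is arranging $(X,X')$ so that simultaneously $[X'] \in \ker_X$, $[X] \in \ker_{X'}$, and $[X]\neq [X']$ in $K_0(Var_K)$, where $\ker_Y := \ker\bigl(H^1(K, A) \to H^1(K(Y), A)\bigr)$. Showing that the restriction kernel $\ker_Y$ for $Y$ an $A$-torsor with good reduction is large enough to contain a class outside $\langle [Y]\rangle$ is the delicate technical input, and is where local Tate duality combined with the structure of the N\'eron model of $A$ must be invoked.
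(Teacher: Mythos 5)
Your first step is correct and is worth keeping: the identity $X\times_K X\cong X\times_K A$ (a torsor trivializes over itself) gives $[X]\cdot([A]-[X])=0$, and your non-vanishing arguments via the Serre morphism of Theorem \ref{serresing} together with Lemma \ref{empty} and the Poincar\'e polynomial of Theorem \ref{theo-poin} are sound. But this identity only exhibits $[X]$ and $[A]-[X]$ as zero divisors; the proposition is about $[A]$. The paper's own proof obtains $[A]$ as a factor in one line from the isomorphism $A\times_K A\cong X\times_K A$, which yields $([A]-[X])\cdot[A]=0$ directly. You implicitly declined to use that isomorphism, and with reason: $X\times_K A$ has no $K$-rational point when $X$ is a non-trivial torsor, whereas $A\times_K A$ does, so the product identity one is actually entitled to is the one you wrote. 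This is exactly why you are driven into the second construction, and it means the passage from ``$[X]$ and $[A]-[X]$ are zero divisors'' to ``$[A]$ is a zero divisor'' is a genuine issue, not a cosmetic one.

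The second half, however, is not a proof. The entire content of your claim $[A]\cdot([X]-[X'])=0$ is the existence of non-trivial torsors $X$, $X'$, each splitting over the function field of the other, with $[X]\neq[X']$ in $K_0(Var_K)$; you explicitly defer this to ``local Tate duality combined with the structure of the N\'eron model'' and never establish it. Worse, the construction runs into a concrete obstruction: the kernel of $H^1(K,A)\rightarrow H^1(K(X),A)$ always contains the cyclic subgroup $\langle[X]\rangle$, and in the classical cases where it has been computed (elliptic curves over local fields) it equals $\langle[X]\rangle$; so $X'$ is forced to be of the form $m[X]$. But then every finite extension $L/K$ that splits $X$ also splits $X'$, so your proposed detection of $[X]\neq[X']$ by base change to an $L$ trivializing $X$ but not $X'$ cannot succeed. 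As it stands, your argument proves that $K_0(Var_K)$, $\mathcal{M}_K$ and $K_0(Var_K)/(\LL-1)$ are not domains, but not the stated claim that $[A]$ itself is a zero divisor.
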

\begin{proof}
let $X$ be a non-trivial $A$-torsor. Such a torsor $X$ exists
since $H^1(K,A)\neq 0$ by \cite{shafarevich}. Then $S(X)= 0$ and
$S(A)=[\overline{A}]\in K_0(Var_k)/(\LL-1)$, with $\overline{A}$
the reduction of $A$. Since the Poincar\'e polynomial
$P(\overline{A};T)$ is not divisible by $P(\LL-1)=T^2-1$ we see
that $S(A)\neq 0$ and therefore $[A]\neq [X]$ in
$K_0(Var_K)/(\LL-1)$.
 However,
$A\times_K A$ and $X\times_K A$ are isomorphic over $K$ and hence
$([A]-[X])\cdot [A]=0$ in $K_0(Var_K)$.
\end{proof}
%\begin{remark}
%By the same arguments, one sees that $[\Spec K']$ is a zero
%divisor in $K_0(Var_K)$ and $\mathcal{M}_K$ if $K'$ is a finite
%field extension of $K$ such that $X$ has a $K'$-rational point.
%\end{remark}

It is shown in \cite[5.11]{lalu-julienliu} that, more generally,
$[A]$ is a zero-divisor in $K_0(Var_k)$ if $k$ is a field of
characteristic zero and $A$ is an abelian variety over $k$ such
that $H^1(k,A)\neq 0$, but their proof doesn't extend to
$\mathcal{M}_k$.

%Poonen showed in \cite{Poo} that, for any field $k$ of
%characteristic zero, there exist abelian varieties $A$ and $B$
%over $k$ such that $A\times_k A$ and $B\times_k B$ are isomorphic
%but $A$ and $B$ are not. This implies that $([A]-[B])\cdot
%([A]+[B])=0$ in $K_0(Var_k)$. It is easily seen that $[A]+[B]$ is
%non-zero (e.g. by Proposition \ref{components}). To show that
%$[A]-[B]$ is non-zero (and hence that $K_0(Var_k)$ is not a
%domain) Poonen used the Albanese realization (cf. Proposition
%\ref{albanese} and Corollary \ref{abeleq}). However, this does not
%show that $[A]\neq [B]$ in $\mathcal{M}_K$ since the Albanese
%realization maps $\LL$ to $0$.

%If we use the category $Mot_{K,\Q}$ (Chow motives with rational
%coefficients) instead of $Mot_{K,\Z}$, then the non-injectivity of
%$\chi$ is well-known: isogeneous abelian varieties have isomorphic
%motives in $Mot_{K,\Q}$, but their classes in $K_0(Var_K)$ are
%equal iff the abelian varieties are isomorphic. To my knowledge,
%the case of integral coefficients did not yet appear in
%literature.
%
%!!!!OPLETTEN: is $\chi$ wel goed gedefinieerd voor gehele
%coëfficiënten???? In Manin's artikel wordt met rationale
%coëfficiënten gewerkt!!!

\subsection{The local case}
 Following \cite{saito}, we can also state a local variant of Theorem
 \ref{global}. The category of special formal $R$-schemes is
 defined as in \cite[2.2]{Ni-trace}. The generic fiber $\X_\eta$
 of a special formal $R$-scheme $\X$ is a bounded rigid
 $K$-variety, by \cite[5.8]{NiSe-weilres}. We denote by $\X_0$ the
 reduction of $\X$, i.e. the closed subscheme defined by the
 largest ideal of definition on $\X$.
%\begin{lemma}
%Let $h:V\rightarrow U$ be a proper morphism of $R$-varieties, such
%that $h_K:V_K\rightarrow U_K$ is an isomorphism. Let $W$ be a
%locally closed subset of $U_s$. We denote by $\mU$ the formal
%completion of $U$ along $W$, and by $\mV$ the formal completion of
%$V$ along $h^{-1}(W)$. Then the morphism $\mV_\eta\rightarrow
%\mU_\eta$ induced by $h$ is an isomorphism.
%\end{lemma}
\begin{theorem}[Local case]\label{local}
Let $X$ be a normal flat $R$-variety of pure relative dimension
$1$, and let $x$ be a closed point of $X_s$ such that $X-\{x\}$ is
smooth over $R$. Denote by $\mathscr{F}_x$ the generic fiber of
the special formal $R$-scheme $\Spf \widehat{\mathcal{O}}_{X,x}$.
If $P$ acts trivially on $H^1(\mathscr{F}_x\widehat{\times}_K
\widehat{K^s},\Q_\ell)$, then
$$\chi_{top}(S(\mathscr{F}_x))=
Trace(\varphi\,|\,H(\mathscr{F}_x\widehat{\times}_K
\widehat{K^t},\Q_\ell))=Trace(\varphi\,|\,R\psi_\eta^t(\Q_\ell)_x)$$
\end{theorem}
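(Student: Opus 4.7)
The strategy is to mirror the proof of Theorem \ref{general} after resolving the singularity at $x$ and passing to a formal model. First, since $X$ is a two-dimensional excellent normal $R$-variety, there exists a proper birational morphism $\pi\colon Y\to X$ which is an isomorphism over $X-\{x\}$, with $Y$ regular and $Y_s$ a strict normal crossings divisor. Denote by $\{E_i\}_{i\in I_x}$ the components of $Y_s$ lying over $x$, with multiplicities $N_i$, and let $\mY$ be the formal completion of $Y$ along $\pi^{-1}(x)$. Then $\mY$ is a regular special formal $R$-scheme, and because $\pi$ is proper and an isomorphism off $x$, it identifies the tube of $\pi^{-1}(x)$ in $\widehat{Y}$ with the tube $]x[=\mathscr{F}_x$ in the formal completion of $X$; in other words $\mY_\eta\cong\mathscr{F}_x$.

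From here I would compute both sides in purely divisorial terms on $\pi^{-1}(x)$. For the motivic Serre invariant, $Sm(\mY)$ is a weak N\'eron model of $\mathscr{F}_x$ because $\mY$ is regular, so $Sm(\mY)_s=\bigsqcup_{i\in I_x,\,N_i=1}E_i^o$ and
$$\chi_{top}(S(\mathscr{F}_x))\;=\;\sum_{i\in I_x,\,N_i=1}\chi_{top}(E_i^o).$$
For the cohomological side, proper base change along the formal completion of $\pi$ identifies the stalk $R\psi_\eta^t(\Q_\ell)_x$ on $X$ with the hypercohomology of $\pi^{-1}(x)$ with coefficients in $R\psi_\eta^t(\Q_\ell)$ on $Y$; the formal-to-algebraic comparison of nearby cycles then further identifies this with $H(\mathscr{F}_x\widehat{\times}_K\widehat{K^t},\Q_\ell)$, which accounts for the second equality in the theorem \emph{without} any tameness hypothesis. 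Applying the pointwise tame nearby cycles calculation of \cite[I.3.3]{sga7a}, together with tame constructibility of $R\psi_\eta^t(\Q_\ell)$ \cite[3.3]{abbes} and the local trace formula \cite[6.3]{Ni-trace}, I obtain
$$Trace(\varphi\,|\,R\psi_\eta^t(\Q_\ell)_x)\;=\;\chi_{top}\bigl(Sm(Y_s)\cap\pi^{-1}(x)\bigr)+\chi_{top}\bigl(W_Y\cap\pi^{-1}(x)\bigr).$$
The first term equals $\sum_{i\in I_x,\,N_i=1}\chi_{top}(E_i^o)=\chi_{top}(S(\mathscr{F}_x))$, so it only remains to kill the wild contribution.

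The key input is the local form of Saito's criterion: the hypothesis that $P$ acts trivially on $H^1(\mathscr{F}_x\widehat{\times}_K\widehat{K^s},\Q_\ell)$ is exactly the local cohomological tameness of the surface germ at $x$, and by the geometric characterisation of \cite[3.11]{saito} transferred to the formal neighbourhood of $x$, each component $E_i\subset\pi^{-1}(x)$ with $p\mid N_i$ must have $E_i^o$ smooth and $\chi_{top}(E_i^o)=0$. This forces $\chi_{top}(W_Y\cap\pi^{-1}(x))=0$, and combining the three equalities gives the theorem. The main obstacle I expect is precisely this last step: \cite{saito} is written for global proper smooth curves, and one has to verify that the cohomological-to-geometric implication applies verbatim to the punctured formal neighbourhood of a normal surface singularity (by localising Saito's argument to the fibre over $x$, or by invoking the local version he establishes in the same paper). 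Everything else is a faithful local transcription of the bookkeeping in the proof of Theorem \ref{general}.
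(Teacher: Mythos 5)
Your proposal follows essentially the same route as the paper: identify $\mathscr{F}_x$ with the generic fiber of the formal completion of a resolution $Y$ along $h^{-1}(x)$, read off $\chi_{top}(S(\mathscr{F}_x))$ from $Sm(\mY)$, compare $H(\mathscr{F}_x\widehat{\times}_K\widehat{K^t},\Q_\ell)$ with $\mathbb{H}(h^{-1}(x),R\psi^t_\eta(\Q_\ell))$ via Berkovich, and kill the wild term by Saito. The one obstacle you flag is resolved exactly as you guess: the paper invokes the local statement \cite[4.12]{saito}, which directly produces a regular model with strict normal crossings satisfying $\chi_{top}(W_Y\cap h^{-1}(x))=0$ under the hypothesis on the $P$-action.
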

Note that $S(\mathscr{F}_x)$ is well-defined since $\mathscr{F}_x$
is a bounded and smooth rigid $K$-variety. Following the
terminology in \cite{NiSe,Ni-trace}, we call $\mathscr{F}_x$ the
analytic Milnor fiber of $X$ at $x$.

\begin{proof}
The second equality follows from \cite[3.5]{berk-vanish2}. By
\cite[4.12]{saito} there exists a proper morphism $h:Y\rightarrow
X$ of $R$-varieties such that $h_K:Y_K\rightarrow X_K$ is an
isomorphism, $Y$ is regular, $Y_s=\sum_{i\in I}N_i E_i$ is a
strict normal crossings divisor, and
$$\chi_{top}(W_Y\cap h^{-1}(x))=0$$
If we denote by $\mZ$ the formal completion of $Y$ along
$h^{-1}(x)$, then $h$ induces an isomorphism $\mZ_\eta\cong
\mathscr{F}_x$ because $h$ is proper.
%$\mZ_\eta\cong \mathscr{F}_x$ is open immersie, en surjectief door valuatief criterium

Since $Y$ is regular, it follows from \cite[3.1.2]{neron} that
$Sm(\mZ)\rightarrow \mZ$ is a special N\'eron smoothening (in the
sense of \cite[4.11]{Ni-trace}) and we see from
\cite[4.14]{Ni-trace} that
$$\chi_{top}(S(\mathscr{F}_x))=\chi_{top}(Sm(\mZ)_0)=\sum_{N_i=1}\chi_{top}(E_i^o\cap
h^{-1}(x))$$

Moreover, there is a canonical $G(K^t/K)$-equivariant isomorphism
$$H(\mZ_\eta\widehat{\times}_K
\widehat{K^t},\Q_\ell)\cong
\mathbb{H}(h^{-1}(x),R\psi^t_Y(\Q_\ell)|_{h^{-1}(x)})$$ by the
comparison results in \cite[3.5]{berk-vanish2}. Now the arguments
in the proof of Theorem \ref{general} show that
$$\chi_{top}(S(\mathscr{F}_x))=
Trace(\varphi\,|\,H(\mathscr{F}_x\widehat{\times}_K
\widehat{K^t},\Q_\ell))$$
\end{proof}

\begin{theorem}\label{localtame}
Let $X$ be a flat, proper, normal $R$-variety of pure relative
dimension $1$ such that $X-Sm(X)$ is a finite set of points, and
such that the $\ell$-adic nearby cycles of $X$ are tame. Then the
trace formula holds for $X_K$.
\end{theorem}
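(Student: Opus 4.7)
The plan is to reduce the theorem to the local trace formula of Theorem \ref{local} applied at each non-smooth point, together with an elementary contribution at the smooth locus, glued by a nearby-cycles computation on the curve $X_s$. Since $X$ is flat, normal, and of pure relative dimension $1$, the generic fibre $X_K$ is a smooth proper $K$-curve. The finite set $\Sigma=\{x_1,\ldots,x_m\}:=X\setminus Sm(X)$ lies in $X_s$, and we use the stratification $X_s=Sm(X)_s\sqcup\Sigma$ throughout. Write $\X$ for the formal $\mathfrak{M}$-adic completion of $X$ along $X_s$; properness of $X$ gives $\X_\eta=X_K^{an}$.

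The first step is to prove the motivic decomposition
\[
S(X_K)\;=\;[Sm(X)_s]+\sum_{i=1}^{m} S(\mathscr{F}_{x_i})\qquad\text{in } K_0(Var_k)/(\LL-1),
\]
where $\mathscr{F}_{x_i}$ is the analytic Milnor fibre of $X$ at $x_i$, i.e.\ the generic fibre of the special formal scheme $\Spf\widehat{\mathcal{O}}_{X,x_i}$. The open formal subscheme $\mU\subset\X$ corresponding to $Sm(X)$ is already smooth over $R$, hence is its own weak N\'eron model. Applying a N\'eron smoothening in the spirit of Theorem \ref{smoothpair} (or the formal analogue used in the proof of Theorem \ref{local}) chosen to be the identity over $\mU$ produces a weak N\'eron model $\mY$ of $X_K^{an}$ whose restriction over each $\Spf\widehat{\mathcal{O}}_{X,x_i}$ is a weak N\'eron model of $\mathscr{F}_{x_i}$. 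Since every $K^{sh}$-point of $X_K^{an}$ extends uniquely to $X(R^{sh})$ by properness and reduces either into $Sm(X)_s$ or to some $x_i$, the special fibre $\mY_s$ splits as $Sm(X)_s\sqcup\bigsqcup_i\mY_s^{(i)}$ with $[\mY_s^{(i)}]=S(\mathscr{F}_{x_i})$, giving the displayed identity.

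For the cohomological side, tameness of $R\psi_\eta(\Q_\ell)$ gives $R\psi^t_\eta(\Q_\ell)=R\psi_\eta(\Q_\ell)$, and by Berkovich's comparison \cite[3.5]{berk-vanish2} the wild inertia $P$ acts trivially on $H(\mathscr{F}_{x_i}\widehat{\times}_K\widehat{K^s},\Q_\ell)$ for every $i$; Theorem \ref{local} therefore applies and yields $\chi_{top}(S(\mathscr{F}_{x_i}))=Trace(\varphi\,|\,R\psi^t_\eta(\Q_\ell)_{x_i})$. On $Sm(X)_s$ the stalks of $R\psi^t_\eta(\Q_\ell)$ are canonically $\Q_\ell$ in degree $0$ with trivial $\varphi$-action, contributing $1$ at each point. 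Applying the Lefschetz-type formula for tamely constructible complexes \cite[6.3]{Ni-trace} to $R\psi^t_\eta(\Q_\ell)$ along the stratification, and combining with the identification $Trace(\varphi\,|\,H(X_K\times_K K^t,\Q_\ell))=Trace(\varphi\,|\,\mathbb{H}(X_s,R\psi^t_\eta(\Q_\ell)))$, one obtains
\[
Trace(\varphi\,|\,H(X_K\times_K K^t,\Q_\ell))=\chi_{top}(Sm(X)_s)+\sum_{i=1}^m Trace(\varphi\,|\,R\psi^t_\eta(\Q_\ell)_{x_i}).
\]

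Taking $\chi_{top}$ of the first display and substituting Theorem \ref{local} term by term matches the second display, so $e(X_K)=0$. The main obstacle is the first step: one must arrange a N\'eron smoothening of $\X$ to be an isomorphism over $\mU$ and to induce weak N\'eron models of each $\mathscr{F}_{x_i}$, so that $S(X_K)$ additively splits as claimed. Once this local–global splitting is in place, the cohomological computation is a routine application of Theorem \ref{local} together with the nearby-cycles trace formula.
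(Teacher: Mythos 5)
Your argument is correct in substance, but it takes a genuinely different route from the paper. The paper's proof is a two-line global reduction: by Saito \cite[4.12]{saito} one chooses a proper morphism $h:Y\rightarrow X$ with $Y$ regular, $Y_s$ a strict normal crossings divisor, $h_K$ an isomorphism and $\chi_{top}(W_Y)=0$, and then Theorem \ref{general} gives $e(X_K)=\chi_{top}(W_Y)=0$ directly. You instead prove a local--global splitting $S(X_K)=[Sm(X)_s]+\sum_i S(\mathscr{F}_{x_i})$ (which is valid: the disjoint union of the completion of $Sm(X)$ with weak N\'eron models of the tubes $\mathscr{F}_{x_i}=\,]x_i[$ is a weak N\'eron model of $X_K^{an}$, since every $K^{sh}$-point of the proper $X$ specializes either into $Sm(X)_s$ or to some $x_i$), and then match it term by term against the stalkwise decomposition of $Trace(\varphi\,|\,\mathbb{H}(X_s,R\psi^t_\eta(\Q_\ell)))$ using Theorem \ref{local} at each $x_i$. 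What your approach buys is an explicit decomposition of the motivic Serre invariant into a smooth-locus term and local contributions of analytic Milnor fibers, which the paper's proof does not exhibit; what it costs is that you must invoke the machinery of Theorem \ref{local} at every singular point, whereas the paper only needs Theorem \ref{general} once.

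Two points in your write-up need explicit justification. First, Theorem \ref{local} as stated requires $X-\{x\}$ to be smooth, so you should replace $X$ by the open neighbourhood $X\setminus\{x_j:j\neq i\}$ of $x_i$ before applying it; this is harmless since $\mathscr{F}_{x_i}$ and the stalk $R\psi^t_\eta(\Q_\ell)_{x_i}$ only depend on $\widehat{\mathcal{O}}_{X,x_i}$. Second, to apply \cite[6.3]{Ni-trace} you need $R\psi^t_\eta(\Q_\ell)$ to be tamely constructible on $X_s$; the reference \cite[3.3]{abbes} used in the proof of Theorem \ref{general} is for regular models with normal crossings, and $X$ is only normal here. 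This is repairable (e.g.\ via $R\psi_{X}\cong Rh_{s*}R\psi_{Y}$ for a resolution $h:Y\rightarrow X$ and proper base change, or by noting the complex is the constant sheaf on $Sm(X)_s$ and concentrated on the finite set $\Sigma$ otherwise), but as written it is an unproved hypothesis of the Lefschetz-type formula you invoke.
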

\begin{proof}
By \cite[4.12]{saito} there exists a proper morphism
$h:Y\rightarrow X$ of $R$-varieties such that $h_K:Y_K\rightarrow
X_K$ is an isomorphism, $Y$ is regular, $Y_s=\sum_{i\in I}N_i E_i$
is a strict normal crossings divisor, and $\chi_{top}(W_Y)=0$. Now
the result follows from Theorem \ref{general}.
\end{proof}

\section{Appendix: the Poincar\'e polynomial}\label{Appendix}
Let $k$ be any field. It is, in general, a non-trivial problem to
decide whether the classes of two $k$-varieties $X,\,Y$ in
$K_0(Var_k)$ are distinct. (Larsen and Lunts formulated the
following question in \cite{lars}: does $[X]=[Y]$ in $K_0(Var_k)$
imply that $X$ and $Y$ are piecewise isomorphic? See
\cite{lalu-julienliu} for results in this direction.)

To distinguish elements in $K_0(Var_k)$, it is important to know
some ``computable'' realization morphisms on $K_0(Var_k)$. If $k$
has characteristic zero, we've encountered many of these in the
preceding sections, but in positive characteristic, we're less
equiped. In this section, we'll show how the so-called Poincar\'e
polynomial can be defined over arbitrary base fields by means of a
standard spreading out technique.
%The existence of the Poincar\'e
%polynomial and the virtual Betti numbers seems to be known to
%experts, but since we could not find a precise reference for their
%construction and main properties, we found it worthwhile to
%include the arguments here.

We recall the following notation: for any field $k$, any prime
$\ell$ invertible in $k$, and any separated $k$-scheme of finite
type $X$, we denote by $b_i(X)$ the $i$-th $\ell$-adic Betti
number of $X$~:
$$b_i(X)=\mathrm{dim}\,H^i(X\times_k k^s,\Q_\ell)$$
It is known that this value is independent of $\ell$ in the
following cases: \begin{itemize} \item $k$ has characteristic zero
(by comparison with singular cohomology) \item $k$ has
characteristic $p>0$ and $X$ is smooth and proper over $k$ (if $k$
is finite this follows from the cohomological expression for the
zeta function and purity of weight \cite[p.\,27]{katz-intro}; the
general case follows by spreading out to reduce to a finite base
field).
\end{itemize}
To be precise, $b_i(X)$ not only depends on the scheme $X$ but
also on the base field $k$; if we want to make this explicit, we
write $b_i(f)$ instead of $b_i(X)$, with $f:X\rightarrow \Spec k$
the structural morphism.
\subsection{Characteristic zero.}\label{sec-poin0} If $k$ is a field of characteristic zero, there
exists a unique ring morphism $P:K_0(Var_k)\rightarrow \Z[T]$
mapping the class $[X]$ of a smooth and proper $k$-variety to the
polynomial
$$P(X;T)=\sum_{i\geq 0}(-1)^ib_i(X)T^i$$
 Uniqueness and existence follow from Theorem
\ref{bittner}. The morphism $P$ can also be obtained by composing
the Hodge-Deligne realization $HD$ with the ring morphism
$$\Z[u,v]\rightarrow \Z[T]:a(u,v)\mapsto a(T,T)$$

For any element $\alpha$ of $K_0(Var_k)$, we call $P(\alpha)$ the
Poincar\'e polynomial of $\alpha$; for any separated $k$-scheme of
finite type $Y$, we put $P(Y;T)=P([Y])$ and we call this element
of $\Z[T]$ the Poincar\'e polynomial of $Y$. Then
$P(Y;T)=HD(Y;T,T)$, and  $P(Y;1)=HD(Y;1,1)$ is the Euler
characteristic $\chi_{top}(Y)$ of $Y$.
%If $Z$ is a separated $k$-scheme of finite type, we put
%$P(Z;T)=P(Z_{red};T)$.

If we write
$$P(Y;T)=\sum_{i\geq 0}(-1)^i\beta_i(Y)T^i$$
then $\beta_i(Y)$ is known as the $i$-th virtual Betti number of
$Y$. If $Y$ is proper and smooth, then $\beta_i(Y)=b_i(Y)$. Note
that, in general, $\beta_i(Y)$ can be negative: for instance,
$$P(\mathbb{G}_{m,k};T)=P(\mathbb{P}^1_k;T)-P(\{0\};T)-P(\{\infty\};T)=T^2-1$$
For $k=\C$,
$$\beta_i(Y)=\sum_{j\geq 0}(-1)^{i+j} \mathrm{dim}\,Gr_i^W H^j_{c}(Y(\C),\Q)$$

The invariants $P(Y;T)$ and $\beta_i(Y)$ not only depend on the
scheme $Y$ but also on the base field $k$; if we want to make this
explicit, we'll write $P(f;T)$ and $\beta_i(f)$, with
$f:Y\rightarrow \Spec k$ the structural morphism.
\subsection{Finite base field.}\label{sec-poinfin} We can also define a Poincar\'e polynomial
for a finite base field $k$, using Deligne's theory of weights.
Denote by $q$ the cardinality of $k$. Recall that, for any integer
$w\geq 0$, a Weil number of weight $w$ (w.r.t. $q$) is an
algebraic integer $\alpha$ such that $|i(\alpha)|=q^{w/2}$ for
each embedding $i:\Q(\alpha)\hookrightarrow \C$. A fundamental
result by Deligne \cite[3.3.4]{deligne-weilII} says the following:
if $X$ is a separated $k$-scheme of finite type, and $\ell$ a
prime invertible in $k$, then for any integer $i\geq 0$, each
eigenvalue $\alpha$ of the geometric Frobenius on $H^i_c(X\times_k
k^s,\Q_\ell)$ is a Weil number, and its weight $w(\alpha)$ is
contained in $\{0,\ldots,i\}$. Moreover, if $X$ is proper and
smooth over $k$, then $w(\alpha)=i$ (``purity of weight''
\cite[3.3.5]{deligne-weilII}).

\begin{definition}
Assume that $k$ is finite. For any separated $k$-scheme of finite
type $X$ and each pair of integers $i,j\geq 0$, we define
$\beta^j_i(X)$ as the number of weight $i$ eigenvalues (counted
with multiplicities) of the geometric Frobenius on
$H^j_{c}(X\times_k k^s,\Q_\ell)$. We put $\beta_i(X)=\sum_{j\geq
0}(-1)^{i+j}\beta^j_i(X)$, and we call this integer the $i$-th
virtual Betti number of $X$. We define the Poincar\'e polynomial
$P(X;T)$ of $X$ by
$$P(X;T)=\sum_{i\geq 0}(-1)^i\beta_i(X)T^i$$
\end{definition}
 The virtual Betti numbers
$\beta_i(X)$, and hence the Poincar\'e polynomial $P(X;T)$, are
independent of $\ell$~: as noted in
\cite[p.\,28\,(2b)]{katz-intro}, $(-1)^{i+1}\beta_i(X)$ is the
degree of the ``weight $i$ part'' of the zeta function of $X$
(beware that Katz' definition of virtual Betti number differs from
ours by a factor $(-1)^i$). By purity of weight,
$b_i(X)=\beta^i_i(X)=\beta_i(X)$ if $X$ is proper and smooth over
$k$. The invariants $P(X;T)$ and $\beta_i(X)$ not only depend on
the scheme $X$, but also on the base field $k$. If we want to make
the base field explicit, we'll write $P(f;T)$ and $\beta_i(f)$
instead, with $f:X\rightarrow \Spec k$ the structural morphism.
%Note that $\beta_i(f)=\beta_i(f_{red})$.

\begin{lemma}[Additivity and multiplicativity]\label{Paddmult}
Assume that $k$ is finite. There exists a unique ring morphism
$$P:K_0(Var_k)\rightarrow \Z[T]$$ which maps the class $[X]$ of
any $k$-variety $X$ to the Poincar\'e polynomial $P(X;T)$.
\end{lemma}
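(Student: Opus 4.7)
The plan is to verify that the assignment $[X]\mapsto P(X;T)$ is both additive with respect to the scissor relations and multiplicative with respect to the fiber product, which by the universal property of $K_0(Var_k)$ then produces the desired ring morphism; uniqueness is immediate because $K_0(Var_k)$ is generated as an abelian group by the classes of separated $k$-schemes of finite type, on all of which $P(X;T)$ has already been defined. The key conceptual move is to reorganize
\[
P(X;T)\;=\;\sum_{i\geq 0}(-1)^i\beta_i(X)T^i\;=\;\sum_{j\geq 0}(-1)^j\,w_j(X;T),\qquad w_j(X;T):=\sum_{i\geq 0}\beta^j_i(X)T^i,
\]
using the sign identity $(-1)^i(-1)^{i+j}=(-1)^j$. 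Here $w_j(X;T)$ encodes the Frobenius weight distribution on the single $\Q_\ell$-vector space $H^j_c(X\times_k k^s,\Q_\ell)$, and $P(X;T)$ is the alternating sum across $j$.

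For additivity, let $Y\hookrightarrow X$ be a closed immersion with open complement $U$. The long exact sequence of $\ell$-adic cohomology with compact supports
\[
\ldots\to H^j_c(U\times_k k^s,\Q_\ell)\to H^j_c(X\times_k k^s,\Q_\ell)\to H^j_c(Y\times_k k^s,\Q_\ell)\to H^{j+1}_c(U\times_k k^s,\Q_\ell)\to\ldots
\]
is equivariant for the geometric Frobenius. A Galois-equivariant map of $\ell$-adic $G_k$-representations automatically sends weight-$i$ generalized Frobenius eigenspaces to weight-$i$ generalized Frobenius eigenspaces, so for each fixed $i\geq 0$ the weight-$i$ pieces of the above sequence themselves form a long exact sequence of finite-dimensional $\Q_\ell$-vector spaces. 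Taking the alternating sum of their dimensions gives $\sum_j(-1)^j\beta^j_i(X)=\sum_j(-1)^j\beta^j_i(U)+\sum_j(-1)^j\beta^j_i(Y)$; multiplying through by $(-1)^i$ and summing over $i$ yields $P(X;T)=P(U;T)+P(Y;T)$, which is exactly the scissor relation.

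For multiplicativity, the K\"unneth formula supplies a Galois-equivariant isomorphism
\[
H^n_c\bigl((X\times_k Y)\times_k k^s,\Q_\ell\bigr)\;\cong\;\bigoplus_{i+j=n}H^i_c(X\times_k k^s,\Q_\ell)\otimes_{\Q_\ell}H^j_c(Y\times_k k^s,\Q_\ell),
\]
and the weight of a Frobenius eigenvalue on a tensor product equals the sum of the two constituent weights. Consequently $w_n(X\times_k Y;T)=\sum_{i+j=n}w_i(X;T)w_j(Y;T)$, and the standard Cauchy-product manipulation
\[
\sum_n(-1)^n\sum_{i+j=n}w_i(X;T)w_j(Y;T)\;=\;\Bigl(\sum_i(-1)^iw_i(X;T)\Bigr)\Bigl(\sum_j(-1)^jw_j(Y;T)\Bigr)
\]
yields $P(X\times_k Y;T)=P(X;T)\cdot P(Y;T)$; combined with the evident identity $P(\Spec k;T)=1$ this gives the ring-morphism structure. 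The only real subtlety is to ensure strict compatibility of the long exact sequence and the K\"unneth isomorphism with the Galois action, and hence with Deligne's weight decomposition; once this compatibility is granted, the remainder is bookkeeping.
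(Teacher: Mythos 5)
Your proof is correct and takes essentially the same route as the paper's, which simply invokes the excision long exact sequence for additivity and the K\"unneth formula for multiplicativity (and notes, as an alternative, that $P(X;T)$ factors through the \'etale realization). You have merely made explicit the weight bookkeeping --- the exactness of the weight-graded pieces of the long exact sequence and the additivity of weights under tensor product --- that the paper leaves implicit.
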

\begin{proof}
Uniqueness is obvious. Well-definedness and additivity follow
immediately from the excision long exact sequence;
multiplicativity from the K\"unneth formula. Alternatively,
$P(X;T)$ can be computed from the \'etale realization
$\acute{e}t(X)$.
\end{proof}

As noted above, the Poincar\'e polynomial still has the property
$$P(X;T)=\sum_{i\geq 0}(-1)^ib_i(X)T^i$$
for any proper and smooth $k$-variety $X$, by purity of weight;
however, it is not clear if this property uniquely defines the
morphism $P:K_0(Var_k)\rightarrow \Z[T]$ (unless we assume the
existence of resolution of singularities for $k$-varieties).

\subsection{Base field of characteristic $p>0$.}\label{sec-poinp}
Let $X$ be a Noetherian scheme, and let $B$ be a set. We denote by
$X^o$ the set of closed points of $X$. We say that a function
$a:X^o\rightarrow B$ is constructible, if there exists a
stratification $\mathscr{S}$ of $X$ into locally closed subsets,
such that $a$ is constant on $S\cap X^o$ for each member $S$ of
$\mathscr{S}$. Likewise, we say that a function $b:X\rightarrow B$
is constructible if there exists a stratification $\mathscr{T}$ of
$X$ into locally closed subsets, such that $b$ is constant on $T$
for each member $T$ of $\mathscr{T}$.  We denote by
$\mathcal{C}(X,B)$, resp. $\mathcal{C}(X^o,B)$, the ring of
constructible functions on $X$, resp. $X^o$, with values in $B$.

If $X$ is a Jacobson scheme (e.g. of finite type over a field, or
over $\Z$) then any constructible function $a:X^o\rightarrow B$
extends uniquely to a constructible function $a:X\rightarrow B$.

%\begin{definition}
%Let $k$ be a finite field, and $f:Y\rightarrow X$ a morphism of
%separated $k$-schemes of finite type, and assume that $X$ is
%irreducible, with generic point $\eta$. We define the generic
%Poincar\'e polynomial $P^{gen}(f;T)$ of $f$ by
%$P^{gen}(f;T)=P(f;T)(\eta)$.
%\end{definition}
%\begin{lemma}\label{bchange-P}
%Let $k$ be a finite field, and $f:Y\rightarrow X$ and
%$g:X'\rightarrow X$ morphisms of separated $k$-schemes of finite
%type. If we denote by $f':Y\times_X X' \rightarrow X'$ the
%morphism obtained from $f$ by base change, then
%$P(f';T)=P(f;T)\circ g$.
% If, moreover, $X$ and $X'$ are irreducible, and $g$ is dominant,
%then $P^{gen}(f;T)=P^{gen}(f';T)$.
%\end{lemma}
%\begin{proof}
%The second assertion follows immediately from the first. The first
%is easily reduced to the case where $X$ and $X'$ are points, say
%$X=\Spec k_1$ and $X'=\Spec k_2$, with $k\subset k_1\subset k_2$.
%If we denote by $q_1$ and $q_2$ the cardinality of $k_1$, resp.
%$k_2$, then there exists an integer $e>0$ such that $q_2=(q_1)^e$,
%and if we denote by $k^s$ an algebraic closure of $k_2$, then the
%geometric Frobenius of $k_2$ in $G(k^s/k_2)$ is the $e$-th power
%of the geometric Frobenius of $k_1$ in $G(k^s/k_1) \supset
%G(k^s/k_2)$. Now the result follows from the definition of the
%virtual Betti numbers.
%\end{proof}
\begin{prop}\label{Pgen}
Let $k$ be a finite field. For any separated $k$-scheme of finite
type $X$,
 there exists a unique ring morphism
$$P:K_0(Var_X)\rightarrow \mathcal{C}(X,\Z[T])$$
such that $P([Y])(x)=P(f_x;T)$ for every separated morphism of
finite type $f:Y\rightarrow X$ and every closed point $x$ of $X$.
Here $f_x:Y\times_X x\rightarrow \Spec k(x)$ is the morphism
obtained from $f$ by base change.

If $g:X'\rightarrow X$ is a morphism of separated $k$-schemes of
finite type, then the diagram
%$$\xymatrix{
%K_0(Var_X)\ar[rd] \ar[r] &\ar[r]& K_0(Var_{X'})\ar[ld]
%\\ &\Z[T] &}$$
$$\begin{CD}
K_0(Var_X)@>>> K_0(Var_{X'})
\\ @VPVV @VVPV
\\ \mathcal{C}(X,\Z[T]) @>(\cdot)\circ g>> \mathcal{C}(X',\Z[T])
\end{CD}$$
commutes (the horizontal arrows are the natural base change
morphisms).
\end{prop}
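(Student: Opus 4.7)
The plan is to define $P([Y/X])$, for a separated morphism of finite type $f:Y\to X$, as the unique constructible function on $X$ whose value at each closed point $x$ is $P(f_x;T)$, then verify the ring-morphism property and base-change compatibility. Uniqueness of $P$ is automatic: $X$ is a Jacobson scheme (being of finite type over the finite field $k$), so a constructible function on $X$ is determined by its restriction to $X^o$ (this is the observation made just before the statement); and $K_0(Var_X)$ is generated as an abelian group by classes $[Y/X]$, on which $P$ has been prescribed at each closed point.

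The essential step is the constructibility of the function $a_f:X^o\to\Z[T]$, $x\mapsto P(f_x;T)$. For this I appeal to Deligne's Weil~II: each higher direct image $R^if_!\Q_\ell$ is a mixed constructible $\Q_\ell$-sheaf on $X$, hence after passing to a suitable stratification of $X$ it becomes lisse and admits a finite weight filtration $W_\bullet$ whose graded pieces $\mathrm{gr}^W_w R^if_!\Q_\ell$ are lisse pure sheaves of weight $w$. By the very definition of purity, at any closed point $x$ of such a stratum the Frobenius eigenvalues on $(\mathrm{gr}^W_w R^if_!\Q_\ell)_{\bar x}$ are Weil numbers of weight $w$ relative to $|k(x)|$; hence $\beta^i_w(f_x)=\mathrm{rank}\,\mathrm{gr}^W_w R^if_!\Q_\ell$, which is constant on the stratum. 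Summing, $a_f(x)=\sum_{i,w}(-1)^{i+w}\beta^i_w(f_x)T^w$ is constant on the closed points of each stratum, hence is constructible on $X^o$; I then extend uniquely to a constructible function on $X$ and declare this to be $P([Y/X])$.

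Once $P$ is defined, additivity and multiplicativity are checked pointwise: for a closed immersion $Z\hookrightarrow Y$ of $X$-schemes with complement $U$, and for $X$-schemes $Y_1,Y_2$, Lemma~\ref{Paddmult} applied to the $k(x)$-varieties $Y_x,Z_x,U_x,(Y_1)_x,(Y_2)_x$ gives the scissor identity and the product identity at every closed point $x$, and uniqueness of extension propagates them to all of $X$. For base change along a morphism $g:X'\to X$ of separated $k$-schemes of finite type, note first that $g$ sends closed points to closed points, since $[k(x'):k]<\infty$ forces $[k(g(x')):k]<\infty$; writing $x=g(x')$, the fiber of the pulled-back $X'$-scheme over $x'$ is the $k(x')$-variety $Y_x\times_{k(x)}k(x')$, and the Poincar\'e polynomial is invariant under finite base change of finite fields because Frobenius is replaced by its $[k(x'):k(x)]$-th power, which sends Weil numbers of weight $w$ relative to $|k(x)|$ to Weil numbers of weight $w$ relative to $|k(x')|$. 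Hence the diagram commutes at every closed point of $X'$, and therefore on all of $X'$ by uniqueness.

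The main obstacle is the constructibility step; the other verifications are formal. Mere constructibility of each $R^if_!\Q_\ell$ as a $\Q_\ell$-sheaf would only yield that the individual Betti numbers $b_i(f_x)$ vary constructibly in $x$, whereas one genuinely needs Deligne's theory of weights to ensure that the finer invariants $\beta^i_w(f_x)$, and hence the Poincar\'e polynomial itself, are constructible.
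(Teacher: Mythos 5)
Your proposal is correct and follows essentially the same route as the paper's proof: uniqueness via the fact that a constructible function on the Jacobson scheme $X$ is determined by its values on $X^o$, existence of the constructible function $x\mapsto P(f_x;T)$ via mixedness of the sheaves $R^if_!(\Q_\ell)$ from Weil~II, and reduction of the scissor relations, multiplicativity and base-change compatibility to the case of a point, settled by Lemma~\ref{Paddmult} and the definition of the virtual Betti numbers. You merely spell out in more detail the weight-filtration argument and the invariance of weights under finite extensions of finite fields, both of which the paper leaves implicit.
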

\begin{proof}
Uniqueness of $P$ is obvious, since an element of
$\mathcal{C}(X,\Z[T])$ is determined by its values on $X^o$. To
prove its existence, first note that the function $x\mapsto
P(f_x;T)$ is constructible on $X^o$ since the sheaves
$R^if_{!}(\Q_\ell)$ are mixed \cite[3.3.1]{deligne-weilII}. Hence,
this function extends uniquely to a function $P(f;T)$ in
$\mathcal{C}(X,\Z[T])$. The invariant $P(\cdot\,;T)$ satisfies the
scissor relations in $K_0(Var_X)$~: since the property of being a
closed (resp. open) immersion is stable under base change, we can
reduce to the case where $X$ is a point, which was proven in Lemma
\ref{Paddmult}.

 Commutativity of the base change diagram is also
immediately reduced to the case where $X$ and $X'$ are points;
this case is clear from the definition of the virtual Betti
numbers.
\end{proof}

\begin{cor}
Let $k$ be any field of characteristic $p>0$. Using the notation
in Section \ref{sec-spread}, there exists a unique ring morphism
$$P:K_0(Var_k)\rightarrow \Z[T]$$ such that, for any object $A$ of
$\mathscr{A}_k$ and any separated $A$-scheme of finite type
$f:X\rightarrow \Spec A$,
$$(P\circ \phi)([X])=P(f;T)(\eta)\quad \in \Z[T]$$
where $\eta$ is the generic point of $\Spec A$.
\end{cor}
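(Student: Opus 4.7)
The plan is to assemble a compatible system of ring morphisms $P_A : K_0(Var_A) \to \Z[T]$ indexed by $A \in \mathscr{A}_k$ and to transport it across the isomorphism of Proposition \ref{spread}. The key input from Section \ref{sec-poinfin}, which is only directly available over a finite base field, will be promoted to a morphism over an arbitrary $A \in \mathscr{A}_k$ by composing Proposition \ref{Pgen} with evaluation at the generic point of $\Spec A$.

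First, since $k$ has characteristic $p>0$, every $A \in \mathscr{A}_k$ contains $\F_p$ and is a finitely generated $\F_p$-algebra, so $\Spec A$ is a separated $\F_p$-scheme of finite type. Proposition \ref{Pgen}, applied with base field $\F_p$ and $X = \Spec A$, therefore supplies a ring morphism
$$\widetilde{P}_A : K_0(Var_A) \longrightarrow \mathcal{C}(\Spec A, \Z[T]).$$
As $A$ is a subring of the field $k$, it is an integral domain, so $\Spec A$ has a unique generic point $\eta_A$; moreover, constructible functions on $\Spec A$ are defined at $\eta_A$ via the unique extension from closed points recalled before Proposition \ref{Pgen} (since $\Spec A$ is Jacobson). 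Evaluation at $\eta_A$ is then a ring morphism $\mathcal{C}(\Spec A, \Z[T]) \to \Z[T]$, and I set $P_A(\alpha) := \widetilde{P}_A(\alpha)(\eta_A)$.

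Next I would verify compatibility with the transition morphisms $\phi_A^{A'}$ of Section \ref{sec-spread}. For $A \subset A'$ in $\mathscr{A}_k$, the induced morphism $g : \Spec A' \to \Spec A$ sends $\eta_{A'}$ to $\eta_A$, because the contraction of $(0) \subset A'$ along an injection of integral domains is $(0) \subset A$. Combined with the commutative base-change square in Proposition \ref{Pgen}, this yields
$$\widetilde{P}_{A'}(\phi_A^{A'}(\alpha))(\eta_{A'}) = \bigl(\widetilde{P}_A(\alpha) \circ g\bigr)(\eta_{A'}) = \widetilde{P}_A(\alpha)(\eta_A),$$
i.e.\ $P_{A'} \circ \phi_A^{A'} = P_A$. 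Hence the family $\{P_A\}$ descends to a ring morphism
$$\widehat{P} : \varinjlim_{A \in \mathscr{A}_k} K_0(Var_A) \longrightarrow \Z[T],$$
and composing with the inverse of the isomorphism $\phi$ from Proposition \ref{spread} produces the desired $P : K_0(Var_k) \to \Z[T]$. Uniqueness follows from surjectivity of $\phi$, which forces the value of $P$ on every class via the stated formula.

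I do not foresee a serious obstacle; the only delicate point is checking that evaluation at the generic point is compatible with base change along $A \hookrightarrow A'$, but this is immediate from $g(\eta_{A'}) = \eta_A$ together with the pointwise definition of pullback of constructible functions.
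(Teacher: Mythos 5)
Your proposal is correct and is exactly the argument the paper intends: the paper's proof consists of the single sentence that the corollary follows from Proposition \ref{spread} and Proposition \ref{Pgen}, and you have supplied precisely the missing details (applying Proposition \ref{Pgen} over $\F_p$ to $\Spec A$, evaluating at the generic point, and using the base-change square to get compatibility with the transition maps before passing to the colimit).
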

\begin{proof}
This follows from Proposition \ref{spread} and Proposition
\ref{Pgen}.
\end{proof}

\begin{definition}
For any field $k$ of characteristic $p>0$ and any separated
$k$-scheme of finite type $X$, we define the Poincar\'e polynomial
$P(X;T)$ of $X$ as the image of $[X]$ under the ring morphism
$$P:K_0(Var_k)\rightarrow \Z[T]$$
Writing
$$P(X;T)=\sum_{i\geq 0}(-1)^i\beta_i(X)T^i$$
we call $\beta_i(X)\in \Z$ the $i$-th virtual Betti number of $X$.

If we want to make the base field explicit, we write $P(f;T)$ and
$\beta_i(f)$, with $f:X\rightarrow \Spec k$ the structural
morphism.
\end{definition}
Note that the definition of $P(X;T)$ and $\beta_i(X)$ does not
require the choice of a prime $\ell$ (since the definition over
finite fields is independent of $\ell$).

\begin{remark}
We should point out that, if $k$ is finitely generated, the
Poincar\'e polynomial can also be realized as the composition of
the realization
$$\mu_k:K_0(Var_k)\rightarrow K_0(Rep_{G_k}\Q_\ell)[T]:[X]\mapsto
\sum_{i\geq 0}\left(\sum_{j\geq 0}(-1)^j [Gr^W_i H^j_c(X\times_k
k^s,\Q_\ell)]\right) T^i$$ from \cite{naumann} with the forgetful
ring morphism
$$K_0(Rep_{G_k}\Q_\ell)[T]\rightarrow K_0(\Q_\ell)[T]\cong \Z[T]$$
%(here $K_0(\Q_\ell)$ denotes the Grothendieck ring of finite
%dimensional $\Q_\ell$-vector spaces; it is canonically isomorphic
%to $\Z$ by taking dimensions).
%In \cite{naumann}, the invariant $\mu_k$
% was only defined for $k$ finitely generated, but
%a spreading out argument can be applied to eliminate this
%hypothesis (see \cite[\S\,1.2]{hanamura-saito}).
\end{remark}

%\begin{lemma}\label{poin-rel}
%Let $k$ be a finite field. For each separated $k$-scheme of finite
%type $X$, there exists a unique ring morphism
%$$P(\cdot,T):K_0(Var_X)\rightarrow \mathcal{C}(X,\Z[T])$$ such
%that for every separated morphism of finite type $f:Y\rightarrow
%X$ and every closed point $x$ of $X$,
%$$P(f;T)(x)=P(f_x;T)$$ where $f_x:Y\times_X x\rightarrow \Spec
%k(x)$ is the morphism obtained by base change, and where
%$P(f_x;T)$ is the Poincar\'e polynomial defined in Section
%\ref{sec-poinfin}.
%
%If $g:X'\rightarrow X$ is a separated morphism of finite type,
%then the diagram
%$$\begin{CD}
%K_0(Var_X)@>>> K_0(Var_{X'}) \\@VP(\cdot,T)VV @VVP(\cdot,T) V
%\\ \mathcal{C}(X,\Z[T])@>(\cdot)\circ g>> \mathcal{C}(X',\Z[T])
%\end{CD}$$
%commutes (the horizontal arrow are the natural base change maps).
%\end{lemma}
%\begin{proof}
%Uniqueness of $P(\cdot,T)$ follows from the fact that a
%constructible function on $X^o$ extends uniquely to a
%constructible function on $X$. To show that it satisfies the
%scissor relations in Definition \ref{def-grothring}, we can
%immediately reduce to the case where $X$ is a point
%\end{proof}
\subsection{Arbitrary base field}
\begin{prop}\label{poin-euler}
For any field $k$ and any separated $k$-scheme of finite type $X$,
$P(X;1)=\chi_{top}(X)$.
\end{prop}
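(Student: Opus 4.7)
The plan is to prove the two ring morphisms $P(\cdot;1)\colon K_0(Var_k)\to\Z$ and $\chi_{top}\colon K_0(Var_k)\to\Z$ agree by reducing, in each characteristic, to a case in which both are visibly computed by the same cohomological formula. The identity is multiplicative and additive, so it suffices to verify it on classes of separated $k$-schemes of finite type $X$ (no need to invoke any generating set beyond that).

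In characteristic zero, I would appeal to Bittner's theorem (Theorem \ref{bittner}): it is enough to check the equality on smooth projective varieties $X$. For such $X$, Poincar\'e duality together with the fact that $H^i_c(X\times_k k^s,\Q_\ell)=H^i(X\times_k k^s,\Q_\ell)$ gives $\chi_{top}(X)=\sum_i (-1)^i b_i(X)$. By the definition recalled in Subsection \ref{sec-poin0}, $P(X;T)=\sum_i(-1)^i b_i(X)T^i$, so evaluating at $T=1$ gives exactly $\chi_{top}(X)$. (Equivalently, $P(X;T)=HD(X;T,T)$ and $\chi_{top}(X)=HD(X;1,1)$.)

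Over a finite field $k$, I would use the definition in Subsection \ref{sec-poinfin} directly. Writing $\beta^j_i(X)$ for the number of weight-$i$ Frobenius eigenvalues on $H^j_c(X\times_k k^s,\Q_\ell)$, so that $\beta_i(X)=\sum_j(-1)^{i+j}\beta^j_i(X)$, a straightforward reorganization yields
\[
P(X;1)=\sum_i(-1)^i\beta_i(X)=\sum_{i,j}(-1)^{j}\beta^j_i(X)=\sum_j(-1)^j\dim H^j_c(X\times_k k^s,\Q_\ell)=\chi_{top}(X),
\]
where the penultimate step uses that $\sum_i\beta^j_i(X)$ counts all Frobenius eigenvalues on $H^j_c$ with multiplicity, i.e.\ the dimension.

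For a general base field $k$ of characteristic $p>0$, I would invoke the spreading-out construction used to define $P$ (Proposition \ref{spread} together with Proposition \ref{Pgen}). Choose $A\in\mathscr{A}_k$ and an $A$-model $f_A\colon X_A\to\Spec A$ of $X$; then $P(X;T)=P(f_A;T)(\eta)$ where $\eta$ is the generic point of $\Spec A$. The function $x\mapsto P((f_A)_x;T)(1)=P((f_A)_x;1)$ on $(\Spec A)^o$ is constructible, and by the finite field case it coincides with $x\mapsto\chi_{top}(X_A\times_A k(x))$ on all closed points. Both constructible functions extend uniquely to $\Spec A$ and hence agree at $\eta$. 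The main (mildly non-trivial) step will be to identify the value at $\eta$ of $x\mapsto\chi_{top}(X_A\times_A k(x))$ with $\chi_{top}(X)$; this follows from proper/smooth base change for constructibility of $Rf_{A!}\Q_\ell$ on $\Spec A$ (so the Euler characteristic is constant on a dense open of $\Spec A$) combined with the invariance of $\ell$-adic Euler characteristics under algebraically closed extensions of the base field, which lets one pass from the value at a closed point of that open to the value over $k$.
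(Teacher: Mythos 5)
Your proposal is correct and follows essentially the same route as the paper's proof: in characteristic zero reduce to smooth proper generators of $K_0(Var_k)$, over a finite field unwind the definition of the virtual Betti numbers, and in general positive characteristic spread out over $A\in\mathscr{A}_k$ and compare the two constructible functions $x\mapsto P((f_A)_x;1)$ and $x\mapsto\chi_{top}(X_A\times_A k(x))$ on closed points, using constructibility of $R^if_{A!}\Q_\ell$ and base-change invariance of $\chi_{top}$. The paper's version is terser but contains no additional ideas.
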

\begin{proof}
If $k$ has characteristic zero, this follows from the fact that
the equality holds for smooth and proper $k$-varieties, since
their isomorphism classes generate $K_0(Var_k)$ by Hironaka's
resolution of singularities. If $k$ is finite, it follows
immediately from the definition. If $k$ is any field of
characteristic $p>0$, it follows from the finite field case and
the fact that for any object $A\in \mathscr{A}_k$ and any
separated morphism of finite type $f:X\rightarrow \Spec A$, the
function
$$\Spec A\rightarrow \Z:x\mapsto \chi_{top}(X\times_{\Spec A}x)$$
is constructible, by constructibility of the sheaves
$R^if_{!}(\Q_\ell)$ and proper base change \cite[VI(3.2)]{Milne}.
\end{proof}

\begin{prop}\label{components}
Let $k$ be any field, and $X$ a separated $k$-scheme of finite
type, of dimension $n$. Then the Poincar\'e polynomial $P(X;T)$
has degree $2n$, and the coefficient $\beta_{2n}(X)$ of $T^{2n}$
is equal to the number of irreducible components of dimension $n$
of $X\times_k k^{s}$.
\end{prop}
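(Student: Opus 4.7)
My plan is to establish the statement first over a finite field (and, separately, over $\mathbb{C}$) by a direct cohomological calculation based on Deligne's theory of weights, and then to propagate it to an arbitrary base field via the spreading-out morphism of Proposition~\ref{Pgen} in positive characteristic and via the Lefschetz principle (together with Bittner's presentation and the Hodge-Deligne realization) in characteristic zero. Throughout, write $r$ for the number of irreducible components of dimension $n$ in $X\times_k k^s$.

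The cohomological input I need is the following. Let $X_1,\dots ,X_r$ be the top-dimensional components of $X\times_k k^s$, and let $W\subset X\times_k k^s$ be the union of the remaining irreducible components, the singular loci of the $X_i$, and the pairwise intersections $X_i\cap X_j$ for $i\neq j$. Then $\dim W<n$, and $U:=(X\times_k k^s)\setminus W$ is the disjoint union $\bigsqcup_{i=1}^r U_i$ of smooth connected varieties of dimension $n$. Since $H^j_c(W,\Q_\ell)=0$ for $j\geq 2n-1$, the excision long exact sequence gives an isomorphism $H^{2n}_c(X\times_k k^s,\Q_\ell)\cong \bigoplus_{i} H^{2n}_c(U_i,\Q_\ell)$, and Poincar\'e duality identifies each summand with $\Q_\ell(-n)$. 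Hence $H^{2n}_c(X\times_k k^s,\Q_\ell)$ has dimension $r$ and is pure of weight $2n$, while $H^j_c=0$ for $j>2n$ by cohomological dimension.

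Over a finite field $k$ (and analogously over $\mathbb{C}$, using the weight filtration on mixed Hodge structures), Deligne's theorem bounds the weights on $H^j_c$ by $j$. Combined with the vanishing and purity above, this forces $\beta^j_{2n}(X)=0$ for $j<2n$ and $\beta^j_i(X)=0$ whenever $i>2n$ (because then necessarily $j\geq i>2n$ while $H^j_c=0$). Thus $\beta_{2n}(X)=\beta^{2n}_{2n}(X)=r$ and $\beta_i(X)=0$ for $i>2n$, which is exactly the assertion of the proposition in these two cases.

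To extend the result to an arbitrary field $k$ of characteristic $p>0$, I would choose $A\in\mathscr{A}_k$ and an $A$-model $f_A\colon X_A\to\Spec A$ of $X$. After shrinking $\Spec A$ (and passing to an \'etale extension of $A$ if necessary), one may arrange, by standard constructibility results from EGA~IV, §9, that $f_A$ is equidimensional of relative dimension $n$ and that the $r$ top-dimensional components of $X\times_k k^s$ spread out to flat closed subschemes of $X_A$ with geometrically irreducible fibres of dimension $n$ that cut out the top-dimensional part of every geometric fibre. Picking any closed point $x$ of $\Spec A$ — necessarily with finite residue field since $A$ is finitely generated over $\Z$ — the finite-field case applies to $X_A\times_A k(x)$, yielding a polynomial of degree $2n$ with leading coefficient $r$. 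By Proposition~\ref{Pgen} and constructibility of $P(f_A;T)$, this is the value of $P(X;T)$. In characteristic zero, I would instead invoke the Lefschetz principle to reduce to $k=\mathbb{C}$, using that $P$ factors through the Hodge-Deligne realization. The main technical obstacle is the spreading-out step, namely verifying that both the relative dimension and the geometric component count of the top-dimensional stratum are preserved on a dense open of $\Spec A$; this is routine but requires the careful application of the constructibility machinery.
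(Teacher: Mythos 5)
Your argument is correct, but it follows a genuinely different route from the paper's. The paper proceeds by induction on $n$: it first reduces, via additivity and a finite separable extension, to the case where $X$ is geometrically irreducible, and then handles the leading coefficient by resolution of singularities (writing $[X]=[Y]+(\text{lower-dimensional classes})$ with $Y$ smooth proper irreducible) in characteristic zero, and by the Galois-equivariant isomorphism $H^{2n}_c(X\times_k k^s,\Q_\ell)\cong\Q_\ell(-n)$ for geometrically irreducible $X$ over a finite field in positive characteristic. You instead compute $H^{2n}_c$ directly for arbitrary $X$ by excising the lower-dimensional and singular locus and applying Poincar\'e duality on each top-dimensional smooth piece, obtaining $\Q_\ell(-n)^{\oplus r}$ in one stroke; Deligne's weight bound then kills all other contributions to $\beta_i$ for $i\geq 2n$. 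This is uniform in the number of components, needs no induction on dimension, and in effect reproves (and generalizes to reducible $X$) the input the paper imports from Milne VI.11.3; your cohomological computation is moreover only ever invoked over finite fields and over $\C$, so the density of the smooth locus is not an issue. The price is a stronger spreading-out statement in positive characteristic: you must keep constant on a dense open of $\Spec A$ not just geometric irreducibility of fibres (the paper's appeal to EGA IV 9.7.7 after its reduction step) but the fibre dimension together with the number of $n$-dimensional geometric components; you correctly identify this as the main technical point, and it is covered by the constructibility results of EGA IV \S 9.5 and \S 9.7 combined with the constructibility of $P(f_A;T)$ from Proposition \ref{Pgen}. One further remark: avoiding resolution of singularities in the characteristic-zero step is not a net saving here, since the very definition of $P$ over such fields (via Bittner's presentation or the Hodge--Deligne realization) already depends on it; the Lefschetz-principle reduction to $\C$ is nonetheless consistent with the paper's framework and with the base-change compatibility of Lemma \ref{bchange-2}.
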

\begin{proof}
We may assume that $X$ is reduced. Passing to a finite separable
extension of $k$, we may assume that the irreducible components of
$X$ are geometrically irreducible. Now we proceed by induction on
$n$. If $n=0$, then the statement is clear, so assume that we have
proven the result for varieties of dimension $<n$ over any field.
Then taking away closed subvarieties from $X$ of dimension $<n$
does not change the value of $\beta_{i}(X)$ for $i\geq 2n$, so we
may as well assume that the connected components of $X$ are
geometrically irreducible. By additivity, it suffices to consider
the case where $X$ itself is geometrically irreducible.

First, assume that $k$ has characteristic zero. We may suppose
that $k$ is algebraically closed. The class $[X]$ of $X$ in
$K_0(Var_k)$ can be written as the class $[Y]$ of a smooth,
proper, irreducible $k$-variety $Y$ plus a $\Z$-linear combination
of classes $[Z_i]$ of $k$-varieties $Z_i$ of dimension $<n$, by
 Hironaka's resolution of
singularities. Hence, by the induction hypothesis, the Poincar\'e
polynomial of $X$ has degree at most $2n$, and
$\beta_{2n}(X)=b_{2n}(Y)=1$.

Now assume that $k$ has characteristic $p>0$.
 There exist an object $A\in \mathscr{A}_k$ and a model
$X'$ for $X$ over $A$; by \cite[9.7.7]{ega4.3}, we may assume that
$X\times_{\Spec A} x$ is geometrically irreducible for each closed
point $x$ of $\Spec A$. By definition of the Poincar\'e
polynomial, we may suppose that $k$ is finite. Then the Poincar\'e
polynomial of $X$ has degree at most $2n$, and $Gr^W_{2n}
H_c^j(X\times_k k^s,\Q_\ell)$ vanishes for $j\neq 2n$, because
 $R^if_{!}(\Q_\ell)$ is mixed of
weight $\leq i$ by \cite[3.3.1]{deligne-weilII}. Hence,
$$\beta_{2n}(X)=\mathrm{dim}\,Gr^W_{2n}H^{2n}_c(X\times_k k^s,\Q_\ell)$$
Moreover, by \cite[VI(11.3)]{Milne} there exists a
Galois-equivariant isomorphism
$$H^{2n}_c(X\times_k k^s,\Q_\ell)\rightarrow \Q_\ell(-n)$$ so $H^{2n}_c(X\times_k
k^s,\Q_\ell)$ has pure weight $2n$ and $\beta_{2n}(X)=1$.
\end{proof}

\subsection{Arbitrary base scheme}
\begin{definition}
For any separated morphism of finite type $f:Y\rightarrow X$ in
$(Sch)$, we denote by $P(f;T)$ the function
$$P(f;T):X\rightarrow \Z[T]:x\mapsto P(f_x;T)$$ where
$f_x:Y\times_X x\rightarrow \Spec k(x)$ is the morphism obtained
by base change. We call $P(f;T)$ the Poincar\'e ploynomial of $f$.
Writing $P(f;T)$ as
$$\sum_{i\geq 0}(-1)^i\beta_i(f)T^i$$ we call the function
$\beta_i(f):X\rightarrow \Z$ the $i$-th virtual Betti number of
$f$.
\end{definition}
\begin{lemma}[Base Change]\label{bchange-2}
Let $g:X'\rightarrow X$ be a morphism of schemes, and let
$f:Y\rightarrow X$ be a separated morphism of finite type. If we
denote by $f':Y\times_X X'\rightarrow X$ the morphism obtained
from $f$ by base change, then $P(f';T)=P(f;T)\circ g$.
\end{lemma}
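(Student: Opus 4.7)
The plan is to reduce Lemma \ref{bchange-2} to a fiberwise statement of the form ``the Poincar\'e polynomial of a $K$-scheme of finite type is invariant under any field extension $L/K$'', and then prove this invariance separately in characteristic zero and in positive characteristic.

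Since both $P(f';T)$ and $P(f;T)\circ g$ are functions on $X'$, it suffices to verify equality at each point $x' \in X'$. Setting $x = g(x')$, the canonical isomorphism
$$(Y \times_X X') \times_{X'} \Spec k(x') \;\cong\; (Y \times_X \Spec k(x)) \times_{k(x)} \Spec k(x')$$
identifies $f'_{x'}$ with the base change of $f_x$ along the field extension $k(x')/k(x)$. Thus the lemma reduces to showing: for any separated scheme of finite type $Z$ over a field $K$ and any field extension $L/K$, one has $P(Z_L/L;T) = P(Z/K;T)$ in $\Z[T]$.

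In characteristic zero, both the direct Poincar\'e polynomial $P(\cdot/K;T)$ and the composition ``base change to $L$, then $P(\cdot/L;T)$'' are ring morphisms $K_0(Var_K) \to \Z[T]$. By Theorem \ref{bittner} it is enough to check they agree on the class $[Y]$ of a smooth proper $K$-variety $Y$, in which case both evaluate to $\sum_{i \geq 0}(-1)^i b_i T^i$; the required equality $b_i(Y/K) = b_i(Y_L/L)$ follows from smooth and proper base change for $\ell$-adic cohomology applied to a compatible extension of algebraic closures $\bar{K} \hookrightarrow \bar{L}$ (or, equivalently, from comparison with singular cohomology after further embedding both into $\C$).

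In positive characteristic, we use the spreading-out definition. Choose $A \in \mathscr{A}_K$ together with a model $Z_A/A$ of $Z$, and choose $B \in \mathscr{A}_L$ containing the image of $A$ in $L$; then $Z_B := Z_A \times_A B$ is an $L$-model of $Z_L$. By construction, $P(Z/K;T)$ and $P(Z_L/L;T)$ are the values at the generic points $\eta_A \in \Spec A$ and $\eta_B \in \Spec B$ of the constructible functions $P(Z_A/A;T) \in \mathcal{C}(\Spec A,\Z[T])$ and $P(Z_B/B;T) \in \mathcal{C}(\Spec B,\Z[T])$ furnished by Proposition \ref{Pgen}. The base change square in that proposition, applied to $\pi:\Spec B \to \Spec A$, gives $P(Z_B/B;T) = P(Z_A/A;T) \circ \pi$; since $A \hookrightarrow B$ is an inclusion of integral domains, $\pi(\eta_B) = \eta_A$, and evaluation at $\eta_B$ yields the desired identity. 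The only real obstacle is bookkeeping: one must arrange compatibility of the models over $A$ and over $B$ and verify that $\pi$ preserves generic points; the substantive input is supplied entirely by Bittner's theorem, smooth and proper base change, and Proposition \ref{Pgen}.
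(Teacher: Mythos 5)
Your proposal is correct and follows essentially the same route as the paper: reduce to the case of a field extension, handle characteristic zero via Bittner's theorem and the invariance of $\ell$-adic Betti numbers of smooth proper varieties under extension of the (separably closed) base field, and handle positive characteristic via the spreading-out definition together with the base-change compatibility of Proposition \ref{Pgen} evaluated at generic points. The only cosmetic difference is that the paper phrases the positive-characteristic step as the commutativity of a single square out of $K_0(Var_A)$ (noting both compositions are $P(\cdot)(\eta_A)$ by definition), whereas you pass through an auxiliary $B\in\mathscr{A}_L$; the content is the same.
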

\begin{proof}
It suffices to consider the case where $X=\Spec k$ and $X'=\Spec
k'$ with $k\subset k'$ fields. If $k$ has characteristic zero, the
result follows from the fact that the $\ell$-adic Betti numbers
are invariant under extension of the base field
\cite[VI(4.3)]{Milne}. If $k$ has characteristic $p>0$, it
suffices to note that the diagram
$$\begin{CD}
K_0(Var_A)@>\phi^k_A >> K_0(Var_{k}) \\@V\phi^{k'}_AVV @VVPV
\\  K_0(Var_{k'}) @>>P> \Z[T]
\end{CD}$$
commutes for each object $A$ of $\mathscr{A}_k$ (both paths from
$K_0(Var_A)$ to $\Z[T]$ coincide with the morphism
$P(\cdot)(\eta)$ with $\eta$ the generic point of $\Spec A$).
\end{proof}

\begin{prop}\label{smoothbetti}
Let $X$ be a locally Noetherian scheme, and let $f:Y\rightarrow X$
be a smooth and proper morphism.
 Then $P(f;T)$ is locally
constant, and for any point $x$ of $X$ and any integer $i\geq 0$,
$\beta_i(f)(x)=b_i(f_x)$ where $f_x:Y\times_X x\rightarrow x$ is
the morphism obtained from $f$ by base change.
\end{prop}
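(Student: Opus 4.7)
The plan is to prove the pointwise equality $\beta_i(f)(x)=b_i(f_x)$ first, and then deduce local constancy from smooth and proper base change for $\ell$-adic sheaves.

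\textbf{Step 1: Reduction to a pointwise statement over a field.} Fix a point $x$ of $X$. By definition $\beta_i(f)(x)=\beta_i(f_x)$, where $f_x\colon Y\times_X x\to\Spec k(x)$ is smooth and proper over the field $k(x)$. So I need to show that for any smooth proper variety $Z$ over any field $K$, one has $\beta_i(Z)=b_i(Z)$, i.e.\ the $i$-th virtual Betti number agrees with the $i$-th $\ell$-adic Betti number.

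\textbf{Step 2: The field case.} If $K$ has characteristic zero, this is built into the construction of $P$ in Section \ref{sec-poin0}: by Theorem \ref{bittner}, $P$ was defined on $K_0(Var_K)$ to send the class of a smooth proper variety $Z$ to $\sum_{i\ge 0}(-1)^ib_i(Z)T^i$, so there is nothing to prove. If $K$ is finite, purity of weight \cite[3.3.5]{deligne-weilII} implies that all eigenvalues of geometric Frobenius on $H^i(Z\times_K K^s,\Q_\ell)\cong H^i_c(Z\times_K K^s,\Q_\ell)$ have weight exactly $i$, so $\beta_i(Z)=\beta^i_i(Z)=b_i(Z)$ by Section \ref{sec-poinfin}. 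Finally, if $K$ has characteristic $p>0$ arbitrary, choose an object $A\in\mathscr{A}_K$ (in the notation of Section \ref{sec-spread}) and a smooth proper $A$-model $Z_A$ of $Z$; this exists by \cite[8.8.2]{ega4.3} and spreading out of smoothness and properness \cite[8.10.5]{ega4.3}, \cite[17.7.8]{ega4.4}. Smooth and proper base change \cite[VI(4.2)]{Milne} shows that the Betti numbers of the geometric fibers of $Z_A\to\Spec A$ are locally constant, so shrinking $\Spec A$ we may assume $b_i(Z_A\times_A x)=b_i(Z)$ for every closed point $x$ of $\Spec A$. By the finite field case we have $\beta_i(Z_A\times_A x)=b_i(Z_A\times_A x)$, and by Lemma \ref{bchange-2} together with the definition of $P$ over $K$ via spreading out, $\beta_i(Z)=\beta_i(Z_A\times_A x)$. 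Combining gives $\beta_i(Z)=b_i(Z)$.

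\textbf{Step 3: Local constancy.} For the smooth proper morphism $f\colon Y\to X$, the smooth and proper base change theorem \cite[VI(4.2)]{Milne} implies that each sheaf $R^if_*\Q_\ell$ is lisse on $X$. Its rank at any geometric point $\bar x\to X$ is $\dim H^i(Y_{\bar x},\Q_\ell)=b_i(f_x)$, and this rank is locally constant since $R^if_*\Q_\ell$ is lisse. Combined with Step 2, the function $x\mapsto\beta_i(f)(x)=b_i(f_x)$ is locally constant on $X$, and therefore so is the polynomial-valued function $P(f;T)$.

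\textbf{Main obstacle.} The only subtle point is the characteristic $p$ case of Step 2: one must invoke the spreading out formalism of Section \ref{sec-spread} to move between $K$ and a finite residue field, and check that smoothness, properness, and the geometric Betti numbers are all compatible with this reduction. Once this is set up, the rest is essentially formal from the base change theorem and the definitions.
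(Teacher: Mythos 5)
Your proof is correct and follows essentially the same route as the paper: reduce to the residue field, handle characteristic zero by construction and finite fields by purity of weight, treat general characteristic $p$ by spreading out to a smooth proper model over a finitely generated algebra and comparing the constructible function $\beta_i(h)$ with the locally constant rank of the lisse sheaf $R^ih_*\Q_\ell$ at closed points, then evaluate at the generic point; local constancy of $P(f;T)$ is obtained exactly as in the paper from smooth and proper base change. No gaps.
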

\begin{proof}
%To prove the equality $P(f_{red};T)=P(f;T)\circ i$ we may assume
%that $X=\Spec k$ with $k$ a field, since for any point $x$ of $X$
%there is a canonical isomorphism
%$$(Y_{red}\times_{X_{red}}x)\cong (Y\times_X x)_{red}$$ Then the result
%follows immediately from the fact that $[Y]=[Y_{red}]$ in
%$K_0(Var_k)$.
%Now assume that $f$ is proper and smooth.
By definition, $\beta_i(f)(x)=\beta_i(f_x)$ for each $i\geq 0$. If
$k(x)$ has characteristic zero, then $\beta_i(f_x)=b_i(f_x)$ by
definition; if $k(x)$ is finite, the same holds by purity of
weight. If $k(x)$ has characteristic $p>0$, we can always find an
object $A$ of $\mathscr{A}_{k(x)}$ and a smooth and proper
$A$-model $h:Z\rightarrow \Spec A$ for $f_x$ by
\cite[8.10.5]{ega4.3} and \cite[17.7.8]{ega4.4}. By definition,
$P(f_x;T)=P(h;T)(\eta)$ where $\eta$ is the generic point of
$\Spec A$.

For any point $y$ of $\Spec A$, we denote by $h_y:Y\times_A
k(y)\rightarrow \Spec k(y)$ the morphism obtained by base change.
If $y$ is closed, then $k(y)$ is finite, and since $h$ is smooth
and proper, $\beta_i(h)(y)=b_i(h_y)$. However, both sides of the
equality are constructible as functions in $y\in \Spec A$\,: for
the left hand side this follows from Proposition \ref{Pgen}, and
for the right hand side by applying proper base change to the
lisse sheaf $R^ih_*(\Q_\ell)$ for any prime $\ell$ invertible in
$k(x)$ \cite[VI(2.3+4.2)]{Milne}. Hence,
$$\beta_i(f_x)=\beta_i(h)(\eta)=b_i(h_\eta)=b_i(f_x)$$ (the last
equality follows from invariance of $\ell$-adic Betti numbers
under extension of the base field \cite[VI(4.3)]{Milne}).

Finally, the fact that $P(f;T)$ is locally constant follows from
the fact that the function $x\mapsto b_i(f_x)$ is locally constant
on $X$~: we may assume that there exists a prime $\ell$ invertible
on $X$, and we apply proper base change to the lisse sheaf
$R^if_*(\Q_\ell)$.
\end{proof}

\begin{prop}[Constructibility]\label{conspropo}
Let $X$ be a Noetherian scheme. For any separated morphism of
finite type $f:Y\rightarrow X$,
 the map
$$P(f;T):X\rightarrow \Z[T]$$ is constructible.
\end{prop}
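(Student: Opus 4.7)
The plan is to spread $f$ out over a finitely generated $\Z$-algebra and then apply a single Noetherian induction. Constructibility being local on $X$, we may assume $X=\Spec R$ is affine (and Noetherian). By \cite[8.8.2]{ega4.3} and \cite[8.10.5]{ega4.3}, we descend $f$ to a separated morphism of finite type $f_0:Y_0\to X_0=\Spec R_0$ with $R_0\subset R$ a finitely generated $\Z$-subalgebra and $f\cong f_0\times_{R_0}R$. By Lemma~\ref{bchange-2}, $P(f;T)=P(f_0;T)\circ g$, where $g:X\to X_0$ is the structural morphism; since preimages of constructible functions under morphisms of Noetherian schemes are constructible, it suffices to prove constructibility of $P(f_0;T)$ on the Noetherian Jacobson scheme $X_0$. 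A $\Z[T]$-valued function on a Noetherian scheme is constructible iff its restriction to every integral closed subscheme is generically constant, so by Noetherian induction on $X_0$ the problem reduces to showing that, whenever $X_0$ is integral with generic point $\eta$, the function $P(f_0;T)$ is constant on a dense open subset of $X_0$.

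If $\mathrm{char}\,k(\eta)=p>0$, then $p$ vanishes on $X_0$, so $X_0$ is of finite type over $\F_p$ and Proposition~\ref{Pgen} gives constructibility directly. The substantive case is $\mathrm{char}\,k(\eta)=0$. There, Bittner's theorem~\ref{bittner} applied to the $k(\eta)$-variety $Y_{0,\eta}$ yields
\[
[Y_{0,\eta}]=\sum_i n_i\,[W_i]\quad\text{in } K_0(Var_{k(\eta)}),
\]
with each $W_i$ smooth and proper over $k(\eta)$. Since $k(\eta)=\mathrm{Frac}(R_0)$ is the filtered colimit of the localizations $R_0[1/s]$, combining Proposition~\ref{spread} with \cite[8.8.2]{ega4.3}, \cite[8.10.5]{ega4.3} and \cite[17.7.8]{ega4.4} lets us invert a single element $s\in R_0\setminus\{0\}$ and produce, on the nonempty affine open $U=D(s)\subset X_0$, smooth proper $U$-schemes $\widetilde W_i\to U$ extending $W_i$ together with a relation $[Y_0\times_{X_0}U]=\sum_i n_i[\widetilde W_i]$ in $K_0(Var_U)$, obtained by spreading out the finitely many scissor relations witnessing Bittner's identity. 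Specializing at $x\in U$ via the ring morphism $P:K_0(Var_{k(x)})\to\Z[T]$ yields $P(f_{0,x};T)=\sum_i n_i P(\widetilde W_{i,x};T)$; each term on the right is locally constant in $x\in U$ by Proposition~\ref{smoothbetti}, so $P(f_0;T)|_U$ is locally constant, closing the induction.

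The technical crux is the characteristic zero step: it is precisely the resolution of singularities underlying Theorem~\ref{bittner} that lets us replace an arbitrary $Y_{0,\eta}$ by a $\Z$-linear combination of smooth proper $k(\eta)$-models, after which Proposition~\ref{smoothbetti} applies. This is the only source of genuine local constancy of virtual Betti numbers at our disposal that does not already presuppose a constructibility statement of the very form we are trying to prove.
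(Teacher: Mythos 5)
Your proof is correct, and while it shares the paper's overall skeleton (Noetherian induction reducing to generic constancy on an integral base, descent to a finitely generated $\Z$-algebra via Lemma \ref{bchange-2}, mixedness of $R^if_!(\Q_\ell)$ -- packaged in Proposition \ref{Pgen} -- for the positive residue characteristic case, and Proposition \ref{smoothbetti} as the ultimate source of local constancy), it organizes the characteristic zero step genuinely differently. The paper runs a second induction on the dimension $n$ of the generic fiber $Y_\eta$: it compactifies $f$ to reduce to the proper case, resolves $Y_\eta$ by a proper birational $h':Z'\rightarrow Y_\eta$ with $Z'$ smooth and proper, spreads out $h'$ to a morphism $h:Z\rightarrow Y$ over a shrunken base with $Z$ smooth and proper over $X$, and uses that $h$ is an isomorphism away from loci whose generic fibers have dimension $<n$ to conclude by additivity. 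You instead apply Bittner's theorem once over the generic point to write $[Y_{0,\eta}]=\sum_i n_i[W_i]$ with $W_i$ smooth and proper, and then spread out the entire identity in the Grothendieck ring using the injectivity half of Proposition \ref{spread} (together with the cofinality of the localizations $R_0[1/s]$ in $\mathscr{A}_{k(\eta)}$ and \cite[8.10.5]{ega4.3}, \cite[17.7.8]{ega4.4} to make the models smooth and proper); specialization at each $x\in U$ through the ring morphism $P:K_0(Var_{k(x)})\rightarrow \Z[T]$ then reduces everything to Proposition \ref{smoothbetti} in one stroke. What your route buys is a cleaner logical structure: the dimension induction and compactification are outsourced to the proof of Theorem \ref{bittner}, and the only spreading-out needed is the formal statement of Proposition \ref{spread} rather than an ad hoc spreading-out of a resolution at each inductive stage. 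What it costs is a heavier reliance on the machinery of Section \ref{sec-spread} (in particular the injectivity of $\phi$, which the paper's proof never invokes) and, in the positive characteristic branch, the small verification -- implicit in your write-up but worth making explicit -- that the pointwise function $x\mapsto P(f_{0,x};T)$ agrees at non-closed points with the constructible extension furnished by Proposition \ref{Pgen}; this follows from the definition of the Poincar\'e polynomial over non-finite fields of characteristic $p$ together with the base change square in Proposition \ref{Pgen} applied to the closed immersion $\overline{\{x\}}\rightarrow X_0$.
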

\begin{proof}
By Noetherian induction, it suffices to find a non-empty open
subscheme $U$ of $X$ such that $P(f;T)$ if constant on $U$, so we
may assume that $X$ is integral and affine, say $X=\Spec B$, and
that there exists a prime $\ell$ invertible on $X$. By the
canonical isomorphism $(Y_{red}\times_X x)_{red}\cong (Y\times_X
x)_{red}$ for any point $x$ of $X$, we may suppose that $Y$ is
reduced.

By \cite[8.8.2]{ega4.3} there exists a finitely generated
sub-$\Z[1/\ell]$-algebra $C$ of $B$, and a reduced separated
$C$-scheme of finite type $Y'$, such that $Y$ is isomorphic to
$Y'\times_C B$ over $B$. By Lemma \ref{bchange-2}, we may assume
that $B=C$ and $Y=Y'$. Then the sheaves $R^if_{!}(\Q_\ell)$ are
mixed \cite[3.3.1]{deligne-weilII}, so there exists a non-empty
open subset $U$ of $X$ such that $P(f_x;T)=P(f_y;T)$ for any pair
of \textit{closed} points $x,y$ on $X$. By definition of the
Poincar\'e polynomial, this implies that $P(f_x;T)=P(f_y;T)$ for
any pair of points $x,y$ of $U$ which lie over a closed point of
$\Spec \Z[1/\ell]$.

Hence, we may assume that the generic point $\eta$ of $X$ lies
over the generic point of $\Spec \Z[1/\ell]$.
% and that
%$f:Y\rightarrow X$ is flat.
We proceed by induction on the
 dimension $n$ of $Y_\eta=Y\times_X \eta$.
% and the number
%$c$ of irreducible components of $Y$.

If $Y_\eta$ is empty, then there exists an open neighbourhood $V$
of $\eta$ in $X$ such that the fibers of $f$ over $V$ are empty
\cite[9.2.6]{ega4.3}, hence $P(f;T)=0$ on $V$. So assume that
$n\geq 0$ and that the result has been proven for morphisms for
which the dimension of the generic fiber is $<n$. Let
$\overline{f}:\overline{Y}\rightarrow X$ be a compactification of
the morphism $f$ (i.e. $\overline{f}$ is proper and there exists a
dense open immersion $j:Y\rightarrow \overline{Y}$ with
$f=\overline{f}\circ j$). Denote by $\partial \overline{Y}$ the
complement of $Y$ in $\overline{Y}$ (with its reduced closed
subscheme structure). Then $\partial \overline{Y}\times_X \eta$
has dimension $<n$,
%Shrinking $X$, we may assume that the relative
%dimension of the morphism $\partial\overline{Y}\rightarrow X$ is
%$<n$, by \cite[9.2.6]{ega4.3},
so by the induction hypothesis and additivity of the Poincar\'e
polynomial, we may as well assume that $Y=\overline{Y}$, i.e. that
$f$ is proper.

%%(????is relatieve dimensie, i.e.
%%supremum dimensies vd vezels, automatisch eindig???) JA: EGA IV-3 (9.2.6.1)
 Since $k(\eta)$ has characteristic
zero, and $Y_\eta$ is reduced, there exists a proper birational
morphism of $k(\eta)$-varieties $h':Z'\rightarrow Y_\eta$ such
that $Z'$ is proper and smooth over $k(\eta)$. Shrinking $X$, we
may suppose that $h$ is obtained by base change from a proper
birational morphism of $X$-varieties $h:Z\rightarrow Y$ with $Z$
smooth and proper over $X$, by \cite[8.8.2+9.6.1]{ega4.3} and
\cite[17.7.11]{ega4.4}. Then we can find open subschemes $U$ and
$V$ of $Z$, resp. $Y$, such that $h$ restricts to an isomorphism
$U\cong V$, and such that $(Z-U)\times_X \eta$ and $(Y-V)\times_X
\eta$ have dimension $<n$. By additivity and our induction
hypothesis, it suffices to prove the result for the proper and
smooth morphism $Z\rightarrow X$; this case was settled in
Proposition \ref{smoothbetti}.
\end{proof}

\begin{theorem}\label{thm-poin}
The Poincar\'e polynomial $P(\cdot,T)$ is the unique invariant
which associates to every separated morphism of finite type
$f:Y\rightarrow X$ in $(Sch)$ a function $P(f;T):X\rightarrow
\Z[T]$ with the following properties:
\begin{enumerate}
\item \textit{Constructibility:} \label{propcons} If $X$ is
Noetherian, then $P(f;T)$ is constructible. \item
\label{propbc}\textit{Base change:} If $g:X'\rightarrow X$ is a
morphism in $(Sch)$ and $f':Y\times_X X'\rightarrow X'$ is the
morphism obtained by base change, then
$$P(f';T)=P(f;T)\circ g$$
\item \label{propnorm} If $X=\Spec k$ with $k$ a finite field,
then $P(f;T)$ is the Poincar\'e polynomial defined in Section
\ref{sec-poinfin}.
\end{enumerate}
If $X$ is a Noetherian scheme, then there exists a unique ring
morphism
$$P(\cdot\,;T):K_0(Var_X)\rightarrow \mathcal{C}(X,\Z[T])$$
such that $P([Y];T)=P(f;T)$ for any separated $X$-scheme of finite
type $f:Y\rightarrow X$. If $g:X'\rightarrow X$ is a morphism of
Noetherian schemes, then the square
$$\begin{CD}
K_0(Var_X)@>>> K_0(Var_{X'}) \\@VP(\cdot,T)VV @VVP(\cdot,T) V
\\ \mathcal{C}(X,\Z[T])@>(\cdot)\circ g>> \mathcal{C}(X',\Z[T])
\end{CD}$$
commutes (the horizontal arrows are the natural base change
morphisms).
\end{theorem}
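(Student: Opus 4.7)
The plan is to split the argument into three pieces: existence of a function $P(f;T)$ satisfying properties (\ref{propcons})--(\ref{propnorm}), uniqueness, and the construction of the induced ring morphism on $K_0(Var_X)$ when $X$ is Noetherian.

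For existence, I would define $P(f;T) \in \mathcal{C}(X,\Z[T])$ pointwise by $P(f;T)(x) := P(f_x;T)$, where $f_x : Y\times_X x \to \Spec k(x)$ is the fiber of $f$ over $x$ and the right hand side is the Poincar\'e polynomial over the field $k(x)$ already defined in Sections \ref{sec-poin0}--\ref{sec-poinp} according to the characteristic of $k(x)$. Property (\ref{propnorm}) holds tautologically, property (\ref{propbc}) is exactly Lemma \ref{bchange-2}, and property (\ref{propcons}) is Proposition \ref{conspropo}.

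For uniqueness, suppose $P'$ is another such assignment. Applying property (\ref{propbc}) to the canonical morphism $\Spec k(x) \to X$ for each $x\in X$ gives $P'(f;T)(x) = P'(f_x;T)$, so it suffices to verify $P'(g;T) = P(g;T)$ for each separated morphism of finite type $g : Z \to \Spec k$ with $k$ a field. The case of a finite $k$ is property (\ref{propnorm}). For a general $k$, I would spread $g$ out: by \cite[8.8.2]{ega4.3} choose an object $A \in \mathscr{A}_k$ and an $A$-model $g_A : Z_A \to \Spec A$ of $g$. Since $A \hookrightarrow k$, the morphism $h : \Spec k \to \Spec A$ sends the unique point to the generic point $\eta$, hence by (\ref{propbc}) we have $P'(g;T) = P'(g_A;T)(\eta)$. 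Property (\ref{propcons}) guarantees that $P'(g_A;T)$ is constant on a dense open $U \subseteq \Spec A$ containing $\eta$. Since $A$ is of finite type over $\Z$, the scheme $\Spec A$ is Jacobson, so $U$ contains a closed point $x_0$ whose residue field is finite; property (\ref{propnorm}) therefore identifies $P'(g_A;T)(x_0)$ with the Poincar\'e polynomial of $(Z_A)_{x_0}$ over the finite field $k(x_0)$. The identical chain of reasoning for $P$, which we have just shown to satisfy (\ref{propcons})--(\ref{propnorm}), yields the same value at $x_0$, so $P'(g;T) = P(g;T)$.

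For the ring morphism on $K_0(Var_X)$ with $X$ Noetherian, I would send $[Y/X]$ to $P(f;T)$ for $f : Y \to X$. Well-definedness amounts to the scissor relations and multiplicativity in $\mathcal{C}(X,\Z[T])$; since elements of $\mathcal{C}(X,\Z[T])$ are determined by their pointwise values, each relation reduces at every $x \in X$ to the corresponding identity for Poincar\'e polynomials of $k(x)$-varieties, which is Lemma \ref{Paddmult} when $k(x)$ is finite and is built into the definitions in Sections \ref{sec-poin0}--\ref{sec-poinp} (via Bittner's theorem or spreading out) otherwise. The commutativity of the base change square is immediate from property (\ref{propbc}). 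The main hurdle will be the spreading-out step in the uniqueness proof: one has to choose the model over a finitely generated $\Z$-subalgebra so that $\Spec k$ hits the generic point (which is automatic with $A\in\mathscr{A}_k$), and then exploit both the Jacobson property of $\Spec A$ and property (\ref{propcons}) to transfer the values prescribed by (\ref{propnorm}) over finite residue fields to $\eta$. Everything else is a direct translation of results already obtained earlier in the paper.
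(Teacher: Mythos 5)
Your proposal is correct and follows essentially the same route as the paper's own proof: existence via Lemma \ref{bchange-2} and Proposition \ref{conspropo}, uniqueness by reducing to base fields via property (\ref{propbc}) and then spreading out to a finitely generated $\Z$-algebra, where constructibility together with the Jacobson property transfers the values prescribed by (\ref{propnorm}) at closed points (finite residue fields) to the generic point, and the ring-morphism statement by pointwise reduction to Lemma \ref{Paddmult} and the definitions over fields. The only difference is that you spell out the ``constant on a dense open containing $\eta$ plus a closed point in that open'' step that the paper compresses into ``a constructible function is determined by its values on closed points''; there is no gap.
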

\begin{proof}
We proved in Lemma \ref{bchange-2} and Proposition \ref{conspropo}
that the Poincar\'e polynomial satisfies $(\ref{propcons})$ and
$(\ref{propbc})$, and $(\ref{propnorm})$ is clear by definition.

Let us show that such an invariant is unique. By $(\ref{propbc})$,
$P(\cdot;T)$ is uniquely determined by its values on morphisms
$f:Y\rightarrow X$ with $X=\Spec k$ and $k$ a field. If
$f:X\rightarrow \Spec k$ a seperated morphism of finite type, we
can find a finitely generated sub-$\Z$-algebra $C$ of $k$ and a
separated morphism of finite type $h:Z\rightarrow \Spec C$ such
that $X$ is $k$-isomorphic to $Z\times_C k$, by
\cite[8.8.2]{ega4.3}. Then
$$P(f;T)=P(h;T)(\eta)$$ with $\eta$ the generic point
of $\Spec C$, by $(\ref{propbc})$. The function $P(h;T)$ is a
constructible function, by $(\ref{propcons})$, so it is uniquely
determined by its values on the closed points of $\Spec C$, which
have finite residue field. Hence, $P(h;T)$ is uniquely determined,
by $(\ref{propbc})$ and $(\ref{propnorm})$.

It only remains to show that $P(\cdot,T)$ satisfies the scissor
relations in $K_0(Var_X)$, if $X$ is a Noetherian scheme. Since
the property of being a closed (resp. open) immersion is stable
under base change, we can reduce to the case where $X$ is a point;
this case is clear from Lemma \ref{Paddmult} and the definition of
the Poincar\'e polynomial.
\end{proof}

\bibliographystyle{hplain}
\bibliography{wanbib,wanbib2}
\end{document}